\DeclarePairedDelimiter\ceil{\lceil}{\rceil}
\DeclarePairedDelimiter\floor{\lfloor}{\rfloor}
\newtheorem{theorem}{Theorem}
\newtheorem{prop}{Proposition}
\newtheorem{lemma}{Lemma}
\newtheorem{remark}{Remark}
\DeclareMathOperator*{\argmin}{arg\,min}
\title{Inference for Median
 and a Generalization of HulC}
\author[1]{Manit Paul}
\author[2]{Arun Kumar Kuchibhotla}
\affil[1]{Department of Statistics \& Data Science, University of Pennsylvania}
\affil[2]{Department of Statistics \& Data Science, Carnegie Mellon University}
\date{}
\begin{document}

\maketitle

\begin{abstract}
It is well-documented in the literature that sample splitting offers significant methodological and theoretical advantages in statistical inference. This, for example, includes cross-fitting in double machine learning, universal inference for parametric inference, and split conformal prediction. The recently proposed inference method, HulC, also falls into this category. HulC operates by viewing the target functional of interest as an approximate median of an estimator and applying the classical distribution-free confidence intervals for median with minimal sample size. When the estimators are asymptotically normal, HulC intervals are shown to be 50\% wider than the Wald intervals asymptotically, on average, for $95\%$ coverage. Interestingly, this ratio of widths converges to a non-degenerate distribution. In this paper, we propose a generalization of HulC that are only 25\% wider than the Wald intervals for asymptotically normal estimators, irrespective of the nominal coverage. Furthermore, similarly to HulC, these generalized intervals remain valid for a significantly wider class of problems with non-normal limiting distributions. To better understand width properties under non-normal limiting distributions, we analyze distribution-free confidence intervals for the median when the Lebesgue density at the median is either zero or infinite. Surprisingly, we find that properly scaled, the interval width converges to a non-degenerate random variable.
\end{abstract}

\section{Introduction}
\label{sec:introduction}
The interval estimation of the median of a distribution based on independent and identically distributed (i.i.d.) observations is a well-studied problem in Statistics. Suppose $X, X_1,\cdots,X_n$ are i.i.d.\ univariate random variables from a distribution $F$. Any $\theta_0\in\mathbb{R}$ is said to be a median of $F$ if
\begin{equation}\label{eq:median-defining-property}
\mathbb{P}(X \le \theta_0) \ge \frac{1}{2}\quad\mbox{and}\quad \mathbb{P}(X \ge \theta_0) \ge \frac{1}{2}.
\end{equation}
It is clear that median of a distribution is not necessarily unique without further assumptions on the distribution $F$. If $\theta_0$ satisfying~\eqref{eq:median-defining-property} is a continuity point of $F$, then the median is unique and is given by $\theta_0$. Constructing valid confidence intervals for any median $\theta_0$ is a problem interesting in itself but has wide-range implications for confidence intervals of arbitrary statistical functionals. 

\cite{kuchibhotla2023median} proved that non-trivial asymptotically valid confidence intervals can be constructed for a statistical functional $\theta(P)\in\mathbb{R}$ based on i.i.d. observations $W_1, \ldots, W_N\sim P$ (in a measurable space) if and only if there exists a non-trivial estimator sequence $\{\widehat{\theta}_m\}_{m\ge1}$ that satisfies
\[
\limsup_{m\to\infty}\,\left(\frac{1}{2} - \min\left\{\mathbb{P}(\widehat{\theta}_m \ge \theta(P)),\,\mathbb{P}(\widehat{\theta}_m \le \theta(P))\right\}\right)_+ = 0.
\]
This condition means that both $\mathbb{P}(\widehat{\theta}_m \le \theta(P))$ and $\mathbb{P}(\widehat{\theta}_m \ge \theta(P))$ are asymptotically above $1/2$, or equivalently, $\theta(P)$ is asymptotically the median of $\widehat{\theta}_n$. One can use the methods developed for median for inference for $\theta(P)$ given an asymptotically median unbiased estimator $\widehat{\theta}_n$ as follows: (1) randomly split the data of $N$ observations into $B = B_N$ non-overlapping blocks; (2) compute versions of $\widehat{\theta}_N$ on each block, naming them $\widehat{\theta}^{(1)}, \ldots, \widehat{\theta}^{(B)}$; (3) now treat $\widehat{\theta}^{(j)}, 1\le j\le B_N$ as i.i.d. observations for a distribution with median $\theta(P)$, and apply the inference methods developed for median. This is, in essence, the general-purpose inference method HulC developed in~\cite{kuchibhotla2021hulc} but with $B = \lceil\log_2(2/\alpha)\rceil$. 
Although HulC shares some features with subsampling or resampling methods well-established in the literature, its validity guarantees remain superior to these existing methods. In particular, there exist several examples where subsampling/bootstrap is provably invalid (uniformly) but HulC remains uniformly valid. On the flip side, when considering the width of the confidence interval under regularity assumptions (i.e., asymptotic normality of estimators), HulC is provably asymptotically wider than the Wald/boostrap/resampling methods and additionally,
\[
\frac{\mbox{Width of HulC}}{\mbox{Width of Wald}} \overset{d}{\to} \max_{1\le j\le B = \lceil\log_2(2/\alpha)\rceil}|Z_j|,
\]
where $Z_1, \ldots, Z_B$ are IID standard normal random variables. This implies that the ratio of widths converges to a non-degenerate distribution. For bootstrap or subsampling, this ratio is asymptotically equal one. 

In this paper, we propose a generalization of HulC called GHulC by allowing $B$ to be larger than $\lceil\log_2(2/\alpha)\rceil$ and even diverge with sample size $N$. Under the standard assumptions (i.e., asymptotic normality), letting $B = B_N$ diverge arbitrary slowly, we prove that
\begin{equation}\label{eq:GHulC-width-ratio}
\frac{\mbox{Width of GHulC}}{\mbox{Width of Wald}} \overset{d}{\to} \sqrt{\frac{\pi}{2}} \approx 1.25.
\end{equation}
(This is, in fact, convergence in probability but to keep the comparison with the result for HulC, we use the notation for convergence in distribution.)
Limit law~\eqref{eq:GHulC-width-ratio} shows that GHulC confidence intervals are approximately 25\% wider than the Wald, while the HulC confidence intervals are approximately 50\% wider for $95\%$ coverage. Readers familiar with Pitman efficiency might recognize $\sqrt{\pi/2}$ as the Pitman efficiency of the sample median against the sample mean for normal mean estimation problem. From this point of view, the limit law~\eqref{eq:GHulC-width-ratio} is expected because GHulC treats the target as the median while Wald treats it as the mean of the estimator. It must be recalled here that for non-normal limiting distributions (as illustrated in~\cite{kuchibhotla2021hulc} and~\cite{mallick2023new}), it is easier to view the statistical functional of interest as the median rather than the mean, and this is precisely the aspect that makes HulC/GHulC more broadly applicable.

Going beyond the standard setting of asymptotic normality, one might ask if the ratio as in~\eqref{eq:GHulC-width-ratio} is meaningful when the Wald interval may be inaccessible. For example, when the limiting distribution of the estimator depends on nuisance components that are unknown or difficult/impossible to estimate, the (oracle) Wald interval might be too optimistic. As an illustrative example, we discuss extensively the width properties of non-parametric distribution-free confidence intervals for median of a distribution beyond the standard assumptions. Our surprising finding is that under the standard setting, the scaled width converges to one, but in the non-standard setting, the scaled width converges to a non-degenerate distribution. Note that the oracle Wald interval always has a degenerate scaled width, and additionally, bootstrap/resampling intervals are provably invalid beyond the standard setting. Given the description of HulC/GHulC, this study has strong implications for the width properties of GHulC. To the best of our knowledge, this is the first study of width of confidence intervals for median in non-standard settings. 

The organization of the remaining paper and our contributions are as follows. We first focus on the problem of confidence intervals for the median of a univariate distribution and then consider the generalization of HulC. The contributions of this paper are as follows:
\begin{enumerate}
\item We rederive a distribution-free finite sample confidence interval for any median of a distribution and provide explicit conditions on $n, \alpha$ such that the confidence interval can be expressed in terms of the order statistics. (Note that any bounded confidence interval is bound to not cover the population median if the sample size is too small.)
\item We derive precise (asymptotically sharp) and easily computable upper and lower bounds for quantiles of the Binomial$(n, 1/2)$ distribution. These precise bounds yield quick computation of the proposed confidence interval and also allow us to characterize in finite samples the closeness between the width of the nonparametric interval and that of the Wald interval (under standard conditions on $F$).
\item Under non-standard conditions on $F$, allowing for zero density at $\theta_0$ and/or jump discontinuity at $\theta_0$, we derive the rate of convergence as well as a precise characterization of the limiting distribution of scaled width. The fact that the scaled width converges to a non-degenerate distribution is the most surprising finding of our work.
\item In addition to the asymptotic limits, we provide a finite sample concentration inequality for the scaled width of the confidence interval under non-standard conditions.
\item Finally, we consider the implications of the results for the median to inference for general parameters/functionals extending the HulC approach. 
\end{enumerate}

\paragraph{Organization.} In Section~\ref{sec:methods}, we introduce the non-parametric distribution-free finite sample confidence intervals for the population median without any assumptions on $F$; this is the Tukey-Scheffe confidence interval~\citep{scheffe1945non}. In Section~\ref{sec:width-analysis}, we show that under the standard conditions required for asymptotic normality of the sample median (or the assumptions under which Bahadur representation exists), the confidence interval that we discuss in this paper performs asymptotically as well as the Wald confidence interval i.e.,\ the width ratio converges to one asymptotically. In addition to this, we also analyze the width of this confidence interval under several non-standard cases discussed in~\cite{knight1998bootstrapping} and~\cite{ghosh1981bahadur}. In Section~\ref{sec:hulc_extension}, we present generalized HulC methodology and compare the performance with HulC of~\cite{kuchibhotla2021hulc}. We present a simulation study in Section~\ref{subsec:sim_ghulc} to corroborate our theoretical results. Finally, we conclude the article with a summary and a discussion of some future directions in Section~\ref{sec:extension}.

The proofs of all the main results along with auxiliary results are presented in the appendix. In Section~\ref{appsec:related-literature}, we review the rich literature on non-parametric distribution-free confidence intervals for the median, and also review the validity of bootstrap/resampling methods. Throughout the paper, we reserve the notation $\alpha\in(0, 1)$ to denote the required miscoverage for the confidence intervals. We use $\mbox{Bin}(n, p)$ to denote the binomial distribution with index $n$ and success probability $p\in(0, 1)$, and use $[n]$ to denote $\{1, 2, \ldots, n\}$.

\section{Confidence Intervals for Median}
\label{sec:methods}     
Suppose $X_1, \ldots, X_n$ are independent random variables all with median $\theta_0$, i.e., $\mathbb{P}(X_i \le \theta_0) \ge 1/2$ and $\mathbb{P}(X_i \ge \theta_0) \ge 1/2$ for $i\in[n]$. The non-parametric confidence intervals for the median are derived from the following stochastic dominance (see $(1.1)$ of \cite{klenke2010stochastic}),
 \begin{equation}
 \label{eq:stoch_dominance}
      \mathbb{P}\left( \sum_{i=1}^n \mathbf{1}\{X_i \le \theta_0\} \geq k \right) \geq  \mathbb{P}\left(M_n \geq k \right)\mbox{  }\mbox{and  }
         \mathbb{P}\left(\sum_{i=1}^n \mathbf{1}\{X_i \ge \theta_0\} \geq k \right) \geq  \mathbb{P}\left(M_n \geq k \right),
 \end{equation}
where $M_n \sim \text{Bin}(n,1/2)$ and $k \in \{0,1,\cdots,n\}$. \Cref{alg:proposed-conf-int} provides the construction of the confidence interval for $\theta_0$. Here, we do not require $X_i$'s to be identically distributed.
\begin{algorithm}
    \caption{A distribution-free finite sample confidence interval for the population median}
    \label{alg:proposed-conf-int}
    \KwIn{Sample: $X_1,\cdots,X_n$ and Confidence Level: $1-\alpha$}
    \KwOut{Distribution-free confidence interval for the population median with finite sample coverage}
    Define $c_{n,\alpha}$ to be the smallest integer $x$ such that $\mathbb{P}(Y_n \geq \floor{n/2}-x) \ge 1 - \alpha/2$, where $Y_n\sim\text{Bin}(n,1/2)$, i.e.,
    \[
    c_{n,\alpha} := \inf\{x:\,\mathbb{P}( Y_n \geq \floor{n/2}-x) \ge 1 - \alpha/2\}.
    \]\\
    Compute the sets
    \begin{align*}
      \widehat{\mathrm{CI}}_{1,n,\alpha} &:= \left\{\theta\in\mathbb{R}:\, \sum_{i=1}^n \mathbf{1}\{X_i \le \theta\} \ge \left\lfloor \frac{n}{2}\right\rfloor - c_{n,\alpha}\right\}, \\
    \widehat{\mathrm{CI}}_{2,n,\alpha} &:= \left\{\theta\in\mathbb{R}:\, \sum_{i=1}^n \mathbf{1}\{X_i \ge \theta\} \ge \left\lfloor \frac{n}{2}\right\rfloor - c_{n,\alpha}\right\}.     
    \end{align*}
    \\
    Return the confidence interval $\widehat{\mathrm{CI}}_{n,\alpha}=\widehat{\mathrm{CI}}_{1,n,\alpha} \cap \widehat{\mathrm{CI}}_{2,n,\alpha}$; see~\eqref{eq:rewritten-CI}.
\end{algorithm}

We shall show that the confidence interval in \Cref{alg:proposed-conf-int} ensures finite sample coverage for all sample sizes and for all distributions of $X_1, \ldots, X_n$ having a common median. Moreover, if the sample size is greater than a certain threshold (depending on $\alpha$), the confidence interval can be represented in terms of order statistics. This is the content of the following theorem (proved in \Cref{appendix:theorem_asym_cn}). 
\begin{theorem}
\label{thm:assym_of_cn}
Suppose $X_1, \ldots, X_n$ are independent random variables all with median $\theta_0$, i.e., $\mathbb{P}(X_i \le \theta_0) \ge 1/2$ and $\mathbb{P}(X_i \ge \theta_0) \ge 1/2$ for all $i \in [n]$. Then the confidence interval returned by~\Cref{alg:proposed-conf-int} satisfies the following:
\begin{enumerate}
\item For all $n\ge1$ and for any $\alpha\in(0, 1)$, $\mathbb{P}(\theta_0\in\widehat{\mathrm{CI}}_{n,\alpha}) \ge 1 - \alpha$.
\label{conclu:coverage-guarantee}
\item For any $\alpha\in(0, 1)$,
\begin{equation}\label{eq:rewritten-CI}
\widehat{\mathrm{CI}}_{n,\alpha} = 
\begin{cases}
\left[X_{(\floor{n/2}-c_{n,\alpha})},\ X_{(\ceil{n/2}+c_{n,\alpha}+1)}\right], &\mathrm{if } \quad n \ge \log_2(2/\alpha),\\
\mathbb{R}, &\mathrm{if } \quad n < \log_2(2/\alpha).
\end{cases}
\end{equation}
\label{conclu:form-of-conf-interval}
\item For any $\alpha\in(0, 1)$, the $c_{n,\alpha}$ defined in Step 1 of~\Cref{alg:proposed-conf-int} satisfies,
\[
c_{n,\alpha}-\frac{\sqrt{n}z_{\alpha/2}}{2} ~\in~ \begin{cases}
    \left[-\frac{1}{2\sqrt{n}}\max\left\{\frac{z_{\alpha/2}^3}{5},\,\sqrt{1 + \frac{2\ln(2) - 1}{n^2}}\right\} - 1.5 ,\,\, 1\right], &\mathrm{if } \quad n \ge \log_2(2/\alpha),\\
    \{\floor{n/2}-(\sqrt{n}z_{\alpha/2})/2\}, &\mathrm{if } \quad n < \log_2(2/\alpha).
\end{cases}
\]
\label{conclu:c_{n,alpha}_bounds}
\end{enumerate}
\end{theorem}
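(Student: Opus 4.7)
My plan addresses the three conclusions in order, with the bulk of the technical effort concentrated on Conclusion 3. For Conclusion 1 (finite-sample coverage), the key observation is stochastic domination. Write $N_1 := \sum_{i=1}^n \mathbf{1}\{X_i \le \theta_0\}$ and $N_2 := \sum_{i=1}^n \mathbf{1}\{X_i \ge \theta_0\}$; the defining property of a median gives $\mathbb{P}(X_i \le \theta_0), \mathbb{P}(X_i \ge \theta_0) \ge 1/2$, so each $N_j$ stochastically dominates $Y_n \sim \text{Bin}(n, 1/2)$. By the defining property of $c_{n,\alpha}$ we have $\mathbb{P}(Y_n \ge \lfloor n/2\rfloor - c_{n,\alpha}) \ge 1 - \alpha/2$, and stochastic domination upgrades this to $\mathbb{P}(N_j \ge \lfloor n/2\rfloor - c_{n,\alpha}) \ge 1 - \alpha/2$ for $j \in \{1,2\}$. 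A union bound over the two failure events then yields $\mathbb{P}(\theta_0 \in \widehat{\mathrm{CI}}_{n,\alpha}) \ge 1 - \alpha$.

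For Conclusion 2 I would use the order-statistic equivalences $\{\sum_{i=1}^n \mathbf{1}\{X_i \le \theta\} \ge k\} = \{\theta \ge X_{(k)}\}$ and $\{\sum_{i=1}^n \mathbf{1}\{X_i \ge \theta\} \ge k\} = \{\theta \le X_{(n-k+1)}\}$, valid for $1 \le k \le n$. With $k = \lfloor n/2\rfloor - c_{n,\alpha}$, intersecting the two half-line constraints immediately produces the interval in \eqref{eq:rewritten-CI}, provided $k \ge 1$. The threshold $n \ge \log_2(2/\alpha)$ emerges from observing that $c_{n,\alpha} \le \lfloor n/2\rfloor - 1$ holds if and only if $\mathbb{P}(Y_n \ge 1) = 1 - 2^{-n} \ge 1 - \alpha/2$, which rearranges to $n \ge \log_2(2/\alpha)$. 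In the complementary regime, $c_{n,\alpha} \ge \lfloor n/2\rfloor$, the indicator-sum constraints become vacuous, and $\widehat{\mathrm{CI}}_{n,\alpha} = \mathbb{R}$.

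For Conclusion 3 I would analyze $c_{n,\alpha}$ through a sharp normal approximation to $Y_n$. Writing $\epsilon_n := n/2 - \lfloor n/2 \rfloor \in \{0, 1/2\}$, the defining condition rewrites as $\mathbb{P}((Y_n - n/2)/(\sqrt{n}/2) \ge -(2x + 2\epsilon_n)/\sqrt{n}) \ge 1 - \alpha/2$, whose leading-order root is $x \approx \sqrt{n} z_{\alpha/2}/2 - \epsilon_n$. For the upper bound $c_{n,\alpha} - \sqrt{n} z_{\alpha/2}/2 \le 1$, I would substitute $x = \lceil \sqrt{n} z_{\alpha/2}/2\rceil + 1$ and verify directly that the resulting binomial tail exceeds $1 - \alpha/2$ using a de Moivre--Laplace inequality with continuity correction, where the extra unit absorbs both ceiling rounding and normal-approximation slack. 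For the matching lower bound, I would show that any integer $x$ smaller than $\sqrt{n} z_{\alpha/2}/2 - 1.5$ (modulo the explicit $O(1/\sqrt{n})$ correction) makes the binomial probability strictly below $1 - \alpha/2$. The $z_{\alpha/2}^3/(5\sqrt{n})$ factor in the statement points to a third-order Cornish--Fisher / Edgeworth correction in the binomial quantile, while the $\sqrt{1 + (2\ln 2 - 1)/n^2}/\sqrt{n}$ factor has the flavor of a Stirling-based bound on the central binomial coefficient $\binom{n}{\lfloor n/2\rfloor}$ that controls the local mass of $Y_n$ near its median.

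The principal obstacle is Conclusion 3, and specifically the sharpness of the explicit constants. Off-the-shelf Berry--Esseen delivers only $O(1/\sqrt{n})$ error in probability, which translates to $O(1)$ error in the quantile --- enough for a qualitative bound but too crude for the constants $1.5$, $z_{\alpha/2}^3/5$, and $\sqrt{1 + (2\ln 2 - 1)/n^2}$ appearing in the statement. I therefore expect the technical weight to be carried by an auxiliary lemma providing a refined non-asymptotic Edgeworth-type expansion for $\mathbb{P}(Y_n \le k)$, combined with separate case analysis for $n$ even and $n$ odd (since $\epsilon_n$ switches between $0$ and $1/2$), and an inversion step using monotonicity of the binomial CDF to turn the probability-scale expansion into the quantile-scale bound on $c_{n,\alpha}$.
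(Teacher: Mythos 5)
Your arguments for Conclusions~\ref{conclu:coverage-guarantee} and~\ref{conclu:form-of-conf-interval} are correct and essentially identical to the paper's: stochastic domination of $\sum_i\mathbf{1}\{X_i\le\theta_0\}$ and $\sum_i\mathbf{1}\{X_i\ge\theta_0\}$ over $\mathrm{Bin}(n,1/2)$ plus a union bound for coverage, the order-statistic equivalences $\{\sum_i \mathbf{1}\{X_i \le \theta\} \ge k\} = \{\theta \ge X_{(k)}\}$ for the interval form, and the observation that $\lfloor n/2\rfloor - c_{n,\alpha}\ge 1$ iff $1-2^{-n}\ge 1-\alpha/2$ iff $n\ge\log_2(2/\alpha)$ for the sample-size threshold.

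For Conclusion~\ref{conclu:c_{n,alpha}_bounds}, however, there is a genuine gap: you correctly diagnose that Berry--Esseen is too crude (an $O(n^{-1/2})$ error in probability inverts to an $O(1)$ error in the quantile, with an unspecified constant), but the replacement you propose --- a ``refined non-asymptotic Edgeworth-type expansion'' with continuity correction and a Cornish--Fisher inversion --- is exactly the hard step, and you defer it to an auxiliary lemma whose existence with explicit, usable constants is not established. Non-asymptotic Edgeworth remainders for lattice variables carry oscillatory correction terms and are not available off the shelf in a form that would yield the constants $1.5$, $z_{\alpha/2}^3/5$, and $\sqrt{1+(2\ln 2-1)/n^2}$. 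The paper avoids expansions altogether: it invokes the exact two-sided sandwich of \cite{zubkov2012full}, $C_n(k-1)\le G_n(k-1)\le C_n(k)\le G_n(k)\le C_n(k+1)$ with $C_n(k)=\Phi(\mathrm{sgn}(k/n-1/2)\sqrt{2nH(k/n,1/2)})$, which locates $k=G_n^{-1}(1-\alpha/2)$ within an interval of length one around the solution of $H(k/n,1/2)=z_{\alpha/2}^2/(2n)$. The quantile inversion then reduces to solving a biquadratic inequality via the two-sided bound $2(x-\tfrac12)^2+\tfrac43(x-\tfrac12)^4\le H(x,\tfrac12)\le 2(x-\tfrac12)^2+(16\ln 2-8)(x-\tfrac12)^4$ (Lemma~\ref{lem:kl_bound}); the quartic term is precisely the source of the $z_{\alpha/2}^3/(5\sqrt{n})$ correction (not a third-order Edgeworth term), and the $\sqrt{1+(2\ln2-1)/n^2}$ term arises from the separate boundary case $z_{\alpha/2}^2\le 2nH(\lceil n/2\rceil/n,1/2)$, where $k$ is pinned to $\lfloor n/2\rfloor$ or $\lfloor n/2\rfloor+1$ --- a case your even/odd analysis of $\epsilon_n$ gestures at but does not isolate. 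Without an explicit substitute for the Zubkov--Serov sandwich, your plan for Conclusion~\ref{conclu:c_{n,alpha}_bounds} does not close.
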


\paragraph{Outline of the proof.}
We use~\eqref{eq:stoch_dominance} and find that the interval $\mathcal{I} := [\lfloor n/2\rfloor - c_{n,\alpha},\,n]$ that contains both $\sum_{i=1}^n \mathbf{1}\{X_i \le \theta_0\}$ and $\sum_{i=1}^n \mathbf{1}\{X_i \ge \theta_0\}$ with probability at least $1-\alpha$; this is done in Step 1 of Algorithm~\ref{alg:proposed-conf-int}. The confidence interval $\widehat{\mathrm{CI}}_{n,\alpha}$ is the set of all $\theta\in\mathbb{R}$ such that both $\sum_{i=1}^n \mathbf{1}\{X_i \le \theta\}$ and $\sum_{i=1}^n \mathbf{1}\{X_i \ge \theta\}$ belong to the interval $\mathcal{I}$; this is done in Step 2 of~\Cref{alg:proposed-conf-int}. This implies that the confidence interval returned by Algorithm~\ref{alg:proposed-conf-int} has a finite sample coverage for all distributions with median $\theta_0$, proving part 1 of Theorem~\ref{thm:assym_of_cn}. 

For part 2 of Theorem~\ref{thm:assym_of_cn}, we note that the confidence interval $\widehat{\mathrm{CI}}_{n,\alpha}$ can be represented in terms of the order statistics if and only if $\mathcal{I}\subseteq[1, n]$ ($c_{n,\alpha} = \lfloor n/2\rfloor$ implies $\mathcal{I}$ contains $0$). In part 2 of Theorem~\ref{thm:assym_of_cn}, we argue that $\mathcal{I}\subseteq[1, n]$ if and only if $n \ge \log_2(2/\alpha)$. This condition on the sample size has an interesting connection to the impossibility of the existence of a finite width confidence intervals for the median $\theta_0$; see Remark~\ref{rem:impossibility-lanke}.

For part 3 of Theorem~\ref{thm:assym_of_cn}, note by the central limit theorem $\mbox{Bin}(n, 1/2)$ is approximately $N(n/2, n/4)$ and hence, $c_{n,\alpha}$ is asymptotically equal to $\sqrt{n/4}z_{\alpha/2}$. To obtain a result that is valid for all $n\ge1$, we use the universal inequalities for the distribution function of binomial law mentioned in \cite{zubkov2012full}. These universal inequalities precisely bound the binomial distribution function using the normal CDF and the KL divergence, $\mathrm{KL}(p, q)$, between two Bernoulli random variables with success probabilities $p$ and $q$, respectively. To verify part-$3$ of \Cref{thm:assym_of_cn}, we prove precise bounds for $\mathrm{KL}(p, 1/2)$ for all $p\in[0, 1]$ and obtain bounds for quantiles of $\mathrm{Bin}(n, 1/2)$. \Cref{fig:sample_size_1} show $c_{n,\alpha}-\sqrt{n}z_{\alpha/2}/2$ and the corresponding upper and lower bounds in \Cref{thm:assym_of_cn} under two scenarios, one when $\alpha$ is fixed to be either $0.01$ or $0.05$ and $n\geq \log_2(2/\alpha)$. \Cref{fig:sample_size_1} show $c_{n,\alpha} - \sqrt{n}z_{\alpha/2}$ and the corresponding upper and lower bounds in~\Cref{thm:assym_of_cn} when the sample size $n$ is fixed to be either 15 or 25 and $\alpha>2^{-(n-1)}$. We observe that the difference between the upper and lower bounds for $c_{n,\alpha}-\sqrt{n}z_{\alpha/2}/2$ mentioned in \Cref{thm:assym_of_cn} lies between three to five in all the four figures. This shows that the bounds obtained for $c_{n,\alpha}-\sqrt{n}z_{\alpha/2}/2$ are very precise. The asymptotics of $c_{n,\alpha}$ plays a very important role in our study of the asymptotics of the width of $\widehat{\mathrm{CI}}_{n,\alpha}$.

\begin{figure}[!htb]
    \centering
    \includegraphics[width=\textwidth,keepaspectratio]{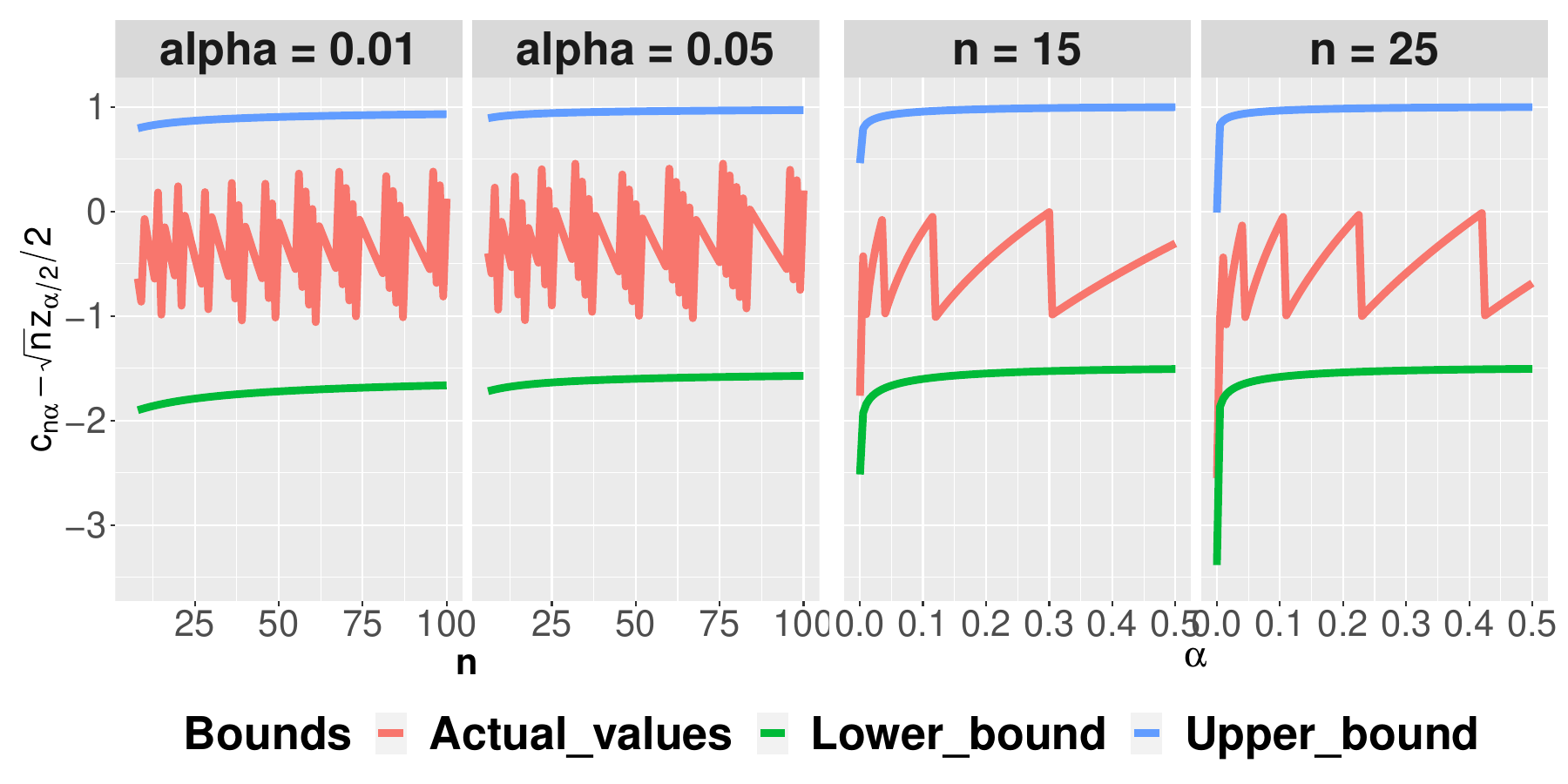}
    \caption{In the first two plots (from left-hand side) we see the plot of $c_{n,\alpha}-\sqrt{n}z_{\alpha/2}/2$ as $n$ varies ($n\geq \log_2(2/\alpha)$) for fixed $\alpha=0.01, 0.05$. In the following two plots we see the plot of $c_{n,\alpha}-\sqrt{n}z_{\alpha/2}/2$ as $\alpha$ varies ($\alpha>2^{-(n-1)}$) for fixed $n=15, 25$.}
    \label{fig:sample_size_1}
\end{figure}

\begin{remark}[Sample size condition and impossibility]\label{rem:impossibility-lanke}
Theorem~2 of \cite{lanke1974interval} claims that there do not exist real-valued, locally bounded functions $g_L$ and $g_R$ on $\mathbb{R}^n$ such that
\[
\mathbb{P}\bigl(g_L(\boldsymbol{X}_n)<\theta<g_R(\boldsymbol{X}_n)\bigr)>1-\alpha
\]
for every random vector $\boldsymbol{X}_n$ whose components are independent copies of a continuous random variable symmetric about $\theta$, provided that $\alpha<2^{-(n-1)}$, or equivalently, $n<\log_2(2/\alpha)$. This statement is false. However, the proof in~\cite{lanke1974interval} does show that it is impossible to construct a locally bounded confidence interval when
$n<\log_2(1/\alpha).$
Since
$\log_2(2/\alpha)-\log_2(1/\alpha)=1,$
our confidence intervals are almost surely bounded whenever the sample size exceeds the absolute minimum required sample size by one observation. Moreover, our confidence interval is shift-invariant.

The only nontrivial bounded confidence interval of which we are aware when $n=\log_2(1/\alpha)$ uses the fact that
\begin{equation}\label{eq:basic-inequality}
\mathbb{P}\bigl(\min\{a,X_i\}\le \theta_0\le \max\{a,X_i\}\bigr)\ge \frac{1}{2}
\qquad\text{for all } a\in\mathbb{R},
\end{equation}
which implies that
\[
\mathbb{P}\left(
\sum_{i=1}^n
\mathbf{1}\{\min\{a,X_i\}\le \theta_0\le \max\{a,X_i\}\}
\ge \lfloor n/2\rfloor-c_{n,2\alpha}
\right)\ge 1-\alpha.
\]
This confidence interval is valid for all $n\ge \log_2(1/\alpha)$, but it is never shift-invariant because $a$ is arbitrary. We conjecture that no shift-invariant, locally bounded confidence interval for the median exists when
$n<\log_2(2/\alpha).$

To verify~\eqref{eq:basic-inequality}, consider three cases. If $a<\theta_0$, then the definition of the median implies that $\mathbb{P}(X_i\ge \theta_0)\ge 1/2$. Hence, with probability at least $1/2$,
\[
\min\{a,X_i\}=a<\theta_0
\qquad\text{and}\qquad
\max\{a,X_i\}=X_i\ge \theta_0.
\]
If $a=\theta_0$, the claim is immediate. Finally, if $a>\theta_0$, the same argument, with the inequalities reversed, gives the result.
\end{remark}

\begin{remark}[Finite sample exact coverage of median]
\label{rem:fin_samp_exact}
The distribution-free confidence interval $\widehat{\mathrm{CI}}_{n,\alpha}$ provides exact coverage in finite samples i.e.\ $\mathbb{P}(\theta_0 \in \widehat{\mathrm{CI}}_{n,\alpha}) = 1 - \alpha$ provided $\theta_0$ is a continuity point of the distribution functions and there exists an integer $ r \in [1, n]$ such that $\alpha = 1 - \sum_{i = r}^{n+1 - r} \binom{n}{i} 2^{-n}$ (refer to \eqref{eq:coverage-lower-bound-Guilbaud} in \Cref{appsec:related-literature} for the details). Exact coverage for other $\alpha$'s requires randomization.
    
\end{remark}
\begin{remark}[Comparison with Wald and bootstrap confidence intervals]\label{rem:comparison}
It is important to note that the Wald confidence interval and the classical bootstrap confidence interval of median (discussed in \Cref{appsec:related-literature}) both ensure only the asymptotic coverage (i.e.\ for large enough sample sizes) to be close to $1-\alpha$. On the other hand, $\widehat{\mathrm{CI}}_{n,\alpha}$ ensures the coverage to be at least $1-\alpha$ for all sample sizes $n$. Moreover as mentioned in Example $5.24$ of \cite{vaart_1998} and Section $3.4.3$ of \cite{bose2018u} respectively, the required confidence level is guaranteed (asymptotically) for both the confidence intervals under the additional assumption of the differentiability of the distribution function at the median and the boundedness of the derivative of the distribution function at the median away from zero. As discussed in Section~\ref{appsec:related-literature}, such a requirement is also necessary for their validity as proved in~\cite{knight1998bootstrapping}. Hence, in cases, where the aforementioned assumption does not hold, $\widehat{\mathrm{CI}}_{n,\alpha}$ performs much better when compared to the Wald or the bootstrap confidence interval. See \Cref{sec:simulation} for a demonstration.  
\end{remark}
\begin{remark}[Coverage of quantiles close to the median]\label{rem:cover_quantile}
If $X_1, \ldots, X_n$ are independent and identically distributed from a CDF $F$, then the proof of~\Cref{thm:assym_of_cn} also implies coverage of quantiles close to the median, i.e., setting $\theta_{h}$ as the $(1/2 + h)$-th quantile of $F$, $\widehat{\mathrm{CI}}_{n,\alpha}$ acts as an asymptotically valid confidence interval for $\theta_h$ if $nh \to 0$. The miscoverage error rate depends quadratically on $nh$; see \Cref{thm:quantile_coverage} for details.     
\end{remark}

\section{Width Analysis}\label{sec:width-analysis}
In this section, we analyze the width of the confidence interval given by \Cref{alg:proposed-conf-int}. We start by noting that \cite{pena2019median} shows that the non-parametric confidence interval $\widehat{\mathrm{CI}}_{n,\alpha}$ minimizes the expected width in the subclass of symmetric distributions among all distribution-free finite sample valid confidence intervals of median.  Although the validity of the confidence interval holds even without the assumption of continuity of the distribution and identical distributions, analyzing the width will benefit from such an assumption. Technically, we can allow for non-identical distributions, but for notational convenience, we assume $X_i$'s are IID from CDF $F$. The continuity assumption implies the uniqueness of the median which gives any confidence interval a chance to shrink (or equivalently, the width of the confidence interval to converge to zero with growing sample size). If median of $F$ is not unique, then there exists an interval $[\theta_-, \theta_+]$ such that any point $\theta_0\in[\theta_-, \theta_+]$ is a median of $F$. From Theorem~\ref{thm:assym_of_cn}, it follows that with probability at least $1-2\alpha$, $\widehat{\mathrm{CI}}_{n,\alpha}$ contains both $\theta_-$ and $\theta_+$. Hence, $\mathrm{Width}(\widehat{\mathrm{CI}}_{n,\alpha}) \ge |\theta_+ - \theta_-|$ with probability at least $1 - 2\alpha$, which implies that the width cannot shrink to zero as $n\to\infty$ (unless $|\theta_+ - \theta_-| \to 0$). For this reason, we will assume in the remaining part of this section that $\theta_0$ is a continuity point of $F$.

Considering the asymptotics of the width of one confidence interval without a benchmark is not insightful. We now introduce the oracle confidence interval for the median that assumes knowledge of the ``smoothness'' of the true distribution. Suppose for the true distribution function $F(\cdot)$ with median $\theta_0$, there exist $\{a_n\}_{n\ge1}$ and a non-decreasing function $\psi:\mathbb{R}\to\mathbb{R}$ such that
\begin{equation}\label{eq:non-standard-setting}
\lim_{n\to\infty}\, n^{1/2}(F(\theta_0 + t/a_n) - F(\theta_0)) ~=~ \psi(t)\quad\mbox{for all}\quad t\in\mathbb{R}.
\end{equation}
Then it follows from~\citet[Theorem 2]{knight1998bootstrapping} that the sample median $X_{(\lceil n/2\rceil)}$ satisfies
\[
a_n(X_{\lceil n/2\rceil} - \theta_0) ~\overset{d}{\to}~ \psi^{-1}(N(0, 1/4)).
\]
Monotonicity of $\psi(\cdot)$ implies that the oracle Wald interval is given by
\begin{equation}\label{eq:oracle-Wald}
\widehat{\mathrm{CI}}_{n,\alpha}^{\mathtt{Wald}} := \left[X_{\lceil n/2\rceil} + \frac{\psi^{-1}(-z_{\alpha/2}/2)}{a_n},\, X_{\lceil n/2\rceil} + \frac{\psi^{-1}(z_{\alpha/2}/2)}{a_n}\right].
\end{equation}
Note that $\widehat{\mathrm{CI}}_{n,\alpha}^{\mathtt{Wald}}$ is inactionable because $\psi(\cdot)$ and $a_n$ are unknown in practice. 
Recall $z_{\alpha/2}$ is the $(1-\alpha/2)$-th quantile of standard Gaussian. With the oracle Wald interval as in~\eqref{eq:oracle-Wald}, we define the width ratio as
\[
\mathrm{WR}_{n,\alpha} ~:=~  ({\mbox{Width of $\widehat{\mathrm{CI}}_{n,\alpha}$}})/({\mbox{Width of $\widehat{\mathrm{CI}}_{n,\alpha}^{\mathtt{Wald}}$}}).
\]
As a special case, if $\psi(t) = M|t|^{\rho}\mbox{sign}(t)$ and $a_n = n^{1/(2\rho)}$, then we get $\psi^{-1}(z) = (z/M)^{1/\rho}\mbox{sign}(z)$. This implies that the width of the oracle Wald interval is $2n^{-1/(2\rho)}(z_{\alpha/2}/(2M))^{1/\rho}$. Although our results extend to the setting of~\eqref{eq:non-standard-setting} with general $\psi(\cdot)$, we focus on the case $\psi(t) = M|t|^{\rho}\mbox{sign}(t)$ for brevity. 
\subsection{Width analysis under standard assumptions}
To begin with, we assume that the underlying distribution function $F$ is differentiable at the population median $\theta_0$ with $F'(\theta_0)>0$. This is a standard assumption under which one can obtain the Bahadur representation for the sample median~\citep{bahadur1966note,ghosh1971new}. Under this assumption, \citet[Section 2.6.3]{serfling2009approximation} shows that the width ratio $\mathrm{WR}_{n,\alpha} \stackrel{P}{\rightarrow} 1$ as $n \rightarrow \infty$. For completeness, we provide the details in \Cref{thm:bahadur_asump} (in the Appendix).

We now provide a finite sample version of the convergence of $\mathrm{WR}_{n,\alpha}$ under a quantification of continuous differentiability. Formally, we assume that there exist $M, C, \delta, \eta > 0$ such that 
\begin{equation}
\label{asump:fin_bahadur}
|F(\theta_0+h)-F(\theta_0)-Mh| \leq C|h|^{1+\delta} \quad \forall\quad |h|<\eta,    
\end{equation}
It is easy to observe that this assumption implies that $F$ is differentiable at the population median $\theta_0$ and $F'(\theta_0)=M>0$. Note that the assumption is only required for $h$ in the neighborhood of zero. Under this assumption, we have the following result.
\begin{theorem}
\label{thm:bahadur_fin_sample}
Let $X_1,X_2,\ldots,X_n \stackrel{iid}{\sim} F$ with $F(\theta_0) = 1/2$. Suppose $F$ satisfies~\eqref{asump:fin_bahadur}. 
Define $\zeta :=(M/2)\min\{\eta,(M/2C)^{1/\delta}\}.$ Then for any $\alpha\in[0, 1]$ and $n\ge\log_2(2/\alpha),$ such that $n\geq 49\log(2n/\alpha)/\zeta^2$, with probability at least $1-6n^{-2}$, 
\begin{equation}\label{eq:bahadur_finite_sample_ineq}
  \left|\mathrm{WR}_{n,\alpha} - 1\right| \leq \frac{1 + 14\log(n)/z_{\alpha/2}}{n^{1/4}} + \sqrt{\frac{\log(2/\alpha)}{8n}} + \frac{2C(14)^{1+\delta}(\log(2n/\alpha))^{(1+\delta)/2}}{z_{\alpha/2}M^{1+\delta}n^{\delta/2}}.
\end{equation}
\end{theorem}
Theorem~\ref{thm:bahadur_fin_sample} studies the rate of convergence of $\mathrm{WR}_{n,\alpha}$ to $1$. For fixed values of $\alpha, M, C, \eta, \delta$, the rate of convergence is $\max\{\log(n)n^{-1/4}, (\log(n))^{(1+\delta)/2}n^{-\delta/2}\}$. For example, with $\delta = 1$, the rate of convergence in $\log(n)/n^{1/4}$. It should be noted here that the appearance of $\log(n)$ factors is only because the bound is guaranteed to hold with probability at least $1 - 6/n^2$ which is converging to $1$ as $n\to\infty$.

The advantage of Theorem~\ref{thm:bahadur_fin_sample} over the asymptotic statement in~\citet[Section 2.6.3]{serfling2009approximation} is that we can allow $\alpha$ and $M$ to tend to $0$ as $n\to\infty$. For example, if $\delta = 1$, then for every fixed $\alpha\in[0, 1]$ (assuming $C = O(1)$), the right hand side of~\eqref{eq:bahadur_finite_sample_ineq} converges to zero as $n\to\infty$ if $M^{-1} = O(n^{1/4}/\sqrt{\log(n)})$; for general $\delta> 0$, this condition becomes $M^{-1} = O(n^{\delta/(2 + 2\delta)}/\sqrt{\log(n)})$. It may be worth pointing that $\delta$ can be larger than 1.
 
A more detailed version of the result and its proof is provided in \Cref{appendix:theorem_bahadur_finsample}. Interested readers may refer to \Cref{subsec:bahadur} for a simulation study comparing the performance of the distribution-free confidence interval with the vanilla Wald confidence interval under the standard assumption that the distribution function is differentiable at the median and the derivative is bounded away from zero. 
\subsection{Width analysis under non-standard assumptions}
In this subsection, we analyze the width of the confidence interval when the assumption that the distribution function $F$ is differentiable at the population median $\theta_0$ with $F'(\theta_0)>0$ does not hold. General theory for the asymptotic limits of the sample median exists under such non-standard assumptions \citep{ghosh1981bahadur,knight1998bootstrapping,knight2002limiting}. We study the general case when density may be zero or may not even exist. The surprising finding of this subsection (and this paper as well) is that $\mathrm{WR}_{n,\alpha}$ does not converge in probability to a constant but converges in distribution to a non-degenerate random variable. 

We provide a finite-sample analysis of the width ratio $\mathrm{WR}_{n,\alpha}$ under non-standard cases and thereby precisely characterize the width of the distribution-free confidence interval.
\begin{theorem}
\label{thm:asym_result_irreg_case}
Let $X_1,X_2,\cdots,X_n \stackrel{iid}{\sim} F $. Suppose that $F$ is a continuous CDF\ with median $\theta_0$ and
\begin{equation}\label{eq:assumption-non-standard}
|F(\theta_0+h)-F(\theta_0)-M|h|^{\rho}\mathrm{sgn}(h)| \leq C|h|^{\rho+\Delta} \quad \forall \quad |h|<\eta,   
\end{equation}
where $0<M,C,\Delta,\eta,\rho<\infty$. Let $\delta = \Delta/ \rho$ and $\zeta =(M/2)\min\{\eta^{\rho},(M/2C)^{1/\delta}\}$. Also, define
\begin{equation*}
    \begin{split}
         Q &= \left(\frac{c_{n,\alpha} + \lceil n/2 \rceil+1}{n} - \frac{1}{2}\right) - \left(\frac{1}{n}\sum_{i=1}^n \mathbf{1}\left\{F(X_i) \le \frac{c_{n,\alpha} + \lceil n/2 \rceil+1}{n}\right\} - \frac{c_{n,\alpha} + \lceil n/2 \rceil+1}{n}\right), 
    \end{split}
\end{equation*}
and
\[
\mathscr{G}(a,b) := |a|^{1/\rho}\mathrm{sgn}(a) - |a - b|^{1/\rho}\mathrm{sgn}(a-b).
\]
Then for every $n\ge2, \alpha\in[0,1]$ such that $n\geq \max\{\log_2(2/\alpha), 49\log(2n/\alpha)/\zeta^2, 4z_{\alpha/2}^2\}$, with probability at least $1 - 1350 n^{-2}$, 

\begin{equation}\label{eq:non-regular-finite-sample}
\begin{split}
&\left|\mathrm{WR}_{n,\alpha} - \frac{n^{1/(2\rho)}}{2^{1 - (1/\rho)}z_{\alpha/2}^{1/\rho}}\mathscr{G}(Q,z_{\alpha/2}/\sqrt{n}) \right|\\  &\quad\leq  \max\{4, 2C_{\rho, \alpha}\} \left[\frac{208(\log n)^{3/4}}{n^{1/4}} + \frac{C}{n^{\delta/2}}\left(\frac{14\sqrt{\log(2n/\alpha)}}{M}\right)^{1+\delta} \right]^{\min\{1, 1/\rho\}} \\
   &\quad+ \max\{2, C_{\rho, \alpha}\}\left[\frac{4.2(\log(2/\alpha))^{1/4}\sqrt{\log(2n/\alpha)}}{n^{1/4}} + \frac{9.5 \sqrt{\log n}}{\sqrt{n}} \right]^{\min\{1, 1/\rho\}} ,
\end{split}
\end{equation}
where $C_{\rho, \alpha}$ is a constant depending on $\rho$ and $\alpha$ (see \eqref{eq:crho} for details). Moreover, for any fixed $\alpha\in[0, 1]$, as $n\to\infty$,
\begin{equation}\label{eq:limiting-distr-non-standard}
\mathrm{WR}_{n,\alpha} ~\stackrel{d}{\xrightarrow{}}~ \frac{1}{2}\mathscr{G}\left(\frac{Z}{z_{\alpha/2}} + 1 ,2\right),\quad \mathrm{where}\quad  Z \sim N(0, 1).
\end{equation}
\end{theorem}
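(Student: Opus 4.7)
The plan is to follow the blueprint of Theorems~\ref{thm:bahadur_fin_sample} and~\ref{cor:irregular_case}, carefully tracking where the exponent $\min\{1,1/\rho\}$ enters. Introduce the shorthand $r_2 := \lceil n/2\rceil + c_{n,\alpha}+1$, $r_1 := \lfloor n/2\rfloor - c_{n,\alpha}$, $h_i := X_{(r_i)} - \theta_0$, $a_i := U_{(r_i)} - 1/2$ with $U_i := F(X_i) \stackrel{iid}{\sim}\mathrm{Unif}(0,1)$, and $f(y) := |y|^{1/\rho}\mathrm{sgn}(y)$, so that $\mathscr{G}(a,b) = f(a) - f(a-b)$. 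First I would use a DKW-type concentration for $F_n$ together with part~\ref{conclu:c_{n,alpha}_bounds} of Theorem~\ref{thm:assym_of_cn} to conclude that, with probability at least $1 - O(n^{-2})$, both $|h_1|$ and $|h_2|$ are of order $(\log(2n/\alpha)/n)^{1/(2\rho)}/M^{1/\rho}$ and in particular below $\eta$, so that~\eqref{eq:assumption-non-standard} applies and gives $|a_i - M|h_i|^{\rho}\mathrm{sgn}(h_i)| \le C|h_i|^{\rho+\Delta}$.

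Two ingredients will then drive the rest. The first is inverting $g(h) := M|h|^{\rho}\mathrm{sgn}(h)$: writing $h_i = g^{-1}(a_i - e_i)$ with $|e_i| \le C|h_i|^{\rho+\Delta}$, I compare $g^{-1}(a_i - e_i)$ with $g^{-1}(a_i)$ using a uniform two-regime estimate of the shape
\[
|f(y_1) - f(y_2)| \;\le\; \max\{2, C_{\rho,\alpha}\}\,|y_1 - y_2|^{\min\{1,1/\rho\}},
\]
valid on the relevant local range: the cheap H\"older bound $||y_1|^{1/\rho} - |y_2|^{1/\rho}| \le 2|y_1-y_2|^{1/\rho}$ when $\rho \ge 1$, and a mean-value argument when $\rho<1$ whose local Lipschitz constant $(1/\rho)|y|^{1/\rho-1}$ vanishes at $0$ and gets absorbed into $C_{\rho,\alpha}$. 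This yields the inversion bound matching the $C/n^{\delta/2}$ piece of~\eqref{eq:non-regular-finite-sample}. The second ingredient is a Bahadur--Kiefer representation: since $F_n(U_{(r_2)}) = r_2/n$, the quantity $a_2 - Q$ is exactly the canonical uniform Bahadur remainder, of order $(\log n)^{3/4}/n^{3/4}$ with probability $1 - O(n^{-2})$ via Kiefer's concentration. For $a_1$, the decomposition $a_1 = a_2 - (U_{(r_2)} - U_{(r_1)})$ combined with the Beta$(r_2-r_1, n-r_2+r_1+1)$ law of $U_{(r_2)} - U_{(r_1)}$ (mean $(r_2-r_1)/(n+1)$, variance $O(n^{-3/2})$) and the identity $(r_2-r_1)/n = z_{\alpha/2}/\sqrt{n} + O(1/n)$ from Theorem~\ref{thm:assym_of_cn}(3) yields a matching bound for $a_1 - (Q - z_{\alpha/2}/\sqrt{n})$. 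Pushing both bounds through $f$ using the displayed inequality produces the $(\log n)^{3/4}/n^{1/4}$ and $(\log(2/\alpha))^{1/4}\sqrt{\log(2n/\alpha)}/n^{1/4} + \sqrt{\log n}/\sqrt{n}$ pieces of~\eqref{eq:non-regular-finite-sample}, each raised to the power $\min\{1,1/\rho\}$.

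Assembling these via the triangle inequality
\[
M^{1/\rho}(h_2 - h_1) = \bigl[f(a_2) - f(a_1)\bigr] + M^{1/\rho}\bigl(h_2 - g^{-1}(a_2)\bigr) - M^{1/\rho}\bigl(h_1 - g^{-1}(a_1)\bigr)
\]
and then replacing $f(a_2)$ by $f(Q)$ and $f(a_1)$ by $f(Q - z_{\alpha/2}/\sqrt{n})$ delivers~\eqref{eq:non-regular-finite-sample}. For the limit law~\eqref{eq:limiting-distr-non-standard}, I would exploit the homogeneity $\mathscr{G}(Q, z_{\alpha/2}/\sqrt{n}) = n^{-1/(2\rho)}\mathscr{G}(\sqrt{n}Q, z_{\alpha/2})$, which reduces the problem to showing $\sqrt{n}\,Q \convd W \sim N(z_{\alpha/2}/2,\,1/4)$. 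This follows from $\sqrt{n}(r_2/n - 1/2) \to z_{\alpha/2}/2$ by Theorem~\ref{thm:assym_of_cn}(3) and the CLT $\sqrt{n}(F_n(r_2/n) - r_2/n) \convd N(0, 1/4)$ (since $r_2/n \to 1/2$), combined via Slutsky. The continuity of $\mathscr{G}(\cdot, z_{\alpha/2})$ on $\mathbb{R}$ and the continuous mapping theorem, together with the finite-sample bound~\eqref{eq:non-regular-finite-sample} vanishing in probability for fixed $\alpha, M, C, \delta, \eta$, conclude the argument.

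The hardest part will be packaging the passage from $a_i$ to $f(a_i)$ uniformly across the two qualitatively different regimes $\rho\ge 1$ and $\rho<1$: for $\rho\ge 1$ an $\epsilon$-error in $a_i$ produces only an $\epsilon^{1/\rho}$-error in $f(a_i)$ via the cheap H\"older inequality, while for $\rho<1$ one must use a mean-value argument whose local Lipschitz constant $(1/\rho)|a_i|^{1/\rho-1}$ is moderate when $a_i \approx z_{\alpha/2}/(2\sqrt{n})$ and must be reinserted into the $\rho$-dependent prefactor $C_{\rho,\alpha}$ of~\eqref{eq:crho}. The $\min\{1, 1/\rho\}$ exponent in~\eqref{eq:non-regular-finite-sample} is the natural unifying bookkeeping device across these two behaviors and is what ultimately produces the stated rate.
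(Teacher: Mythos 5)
Your proposal follows essentially the same route as the paper's proof in Appendix~\ref{appendix_thm_asym_irreg}: concentrate the two order statistics near $\theta_0$ so that assumption~\eqref{eq:assumption-non-standard} applies, invert the map $h\mapsto M|h|^{\rho}\mathrm{sgn}(h)$ (the paper phrases this as transforming to $T_i=|X_i-\theta_0|^{\rho}\mathrm{sgn}(X_i-\theta_0)$ with c.d.f. $H$ satisfying the linear Bahadur-type condition), control the uniform Bahadur--Kiefer remainder at rate $(\log n)^{3/4}n^{-3/4}$ via Reiss's Theorem 6.3.1, push errors through $y\mapsto|y|^{1/\rho}\mathrm{sgn}(y)$ with the global H\"older bound for $\rho\ge1$ and a local Lipschitz bound (absorbed into $C_{\rho,\alpha}$) for $\rho<1$, and finish with $\sqrt{n}Q\convd N(z_{\alpha/2}/2,1/4)$ plus continuous mapping. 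The only cosmetic difference is that you replace the lower order statistic's contribution using the Beta law of the spacing $U_{(r_2)}-U_{(r_1)}$, whereas the paper uses a Chernoff bound on $\sqrt{n}(Q_{1,\boldsymbol{X},n}-Q_{2,\boldsymbol{X},n})-z_{\alpha/2}$; these are interchangeable.
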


Note that assumption~\eqref{eq:assumption-non-standard} of Theorem~\ref{thm:asym_result_irreg_case} allows for the density to be zero and infinity depending on whether $\rho \ge 1$ or $\rho < 1$. Note that the oracle Wald interval, under~\eqref{eq:assumption-non-standard}, converges to zero at the rate of $n^{1/(2\rho)}$. This is expected because if the density is infinity at the median (i.e., $\rho < 1$), then the sample median converges to $\theta_0$ at a rate faster than $n^{1/2}$ and accordingly, the width of the confidence interval shrinks at a rate faster than $n^{1/2}.$ 
\begin{remark}[Properties of the Limiting Distribution]
\label{cor:irregular_case}
If $\rho \geq 1$, then $\mathscr{G}(Z/z_{\alpha/2} + 1, 2) \le 2$ because of the fact that $h(x) = |x|^{1/\rho}\mathrm{sgn}(x)$ is H{\"o}lder continuous with $|h(x) - h(y)| \leq 2^{1-(1/\rho)} |x - y|^{1/\rho}$ (see Proposition~\ref{prop:holder} for a proof). Therefore \Cref{thm:asym_result_irreg_case} implies that under $\rho \geq 1$, the width ratio $\mathrm{WR}_{n,\alpha}$ is bounded in probability by $1$ i.e.\ $\mathrm{WR}_{n,\alpha} \leq 1 + o_P(1)$. Accordingly, and perhaps surprisingly, the non-parametric interval is asymptotically better than the inactionable oracle Wald interval. From the limiting distribution, it can be verified that with an asymptotic probability of $\alpha$, the width of the $(1-\alpha)$ non-parametric interval is at most $2^{(1/\rho) - 1}$-fraction of the width of the $(1-\alpha)$ Wald interval for $\alpha \in (0,1)$ and for all $\rho \ge 1$ (see Proposition~\ref{prop:boundedness_of_limit} for a proof).
\end{remark}
Theorem~\ref{thm:asym_result_irreg_case} generalizes Theorem~\ref{thm:bahadur_fin_sample} allowing for $\rho \neq 1$. Note that for $\rho = 1$,
\[
\mathscr{G}(a, b) = |a| \mathrm{sgn}(a) - |a - b| \mathrm{sgn}(a - b) = a - (a - b) = b,
\]
which implies $\mathscr{G}(Z/z_{\alpha/2}, 2)/2 = 1$. In other words, we recover the guarantee of \Cref{thm:bahadur_fin_sample} for $\rho = 1$. An interesting consequence of Theorem~\ref{thm:asym_result_irreg_case} is that the width ratio converges to a non-degenerate distribution shown in~\eqref{eq:limiting-distr-non-standard} if $\rho\neq1$. Similar to Theorem~\ref{thm:bahadur_fin_sample}, the variables $M, \alpha$ can be allowed to tend to zero with sample size $n$. For example, for any fixed $\alpha\in(0, 1)$ (assuming $C = O(1)$), the right hand side of~\eqref{eq:non-regular-finite-sample} converges to zero as long as $M^{-1} = O(n^{\delta/(2 + 2\delta)}/\sqrt{\log n})$. 

For $\alpha = 0.1, 0.05, 0.01$, the limiting densities of the width ratio are shown in \Cref{fig:non_reg_equal} as $\rho$ varies from $0.75$ to $10$. It can be seen that for all values of $\alpha, \rho$ the density of $\mathrm{WR}_{n,\alpha}$ has a sharp peak at one. Moreover the variance of the ratio increases as $\alpha$ increases. The limiting distribution is right-skewed for $\rho <1$. The limiting distribution is left-skewed for $\rho \ge 1$ and has a bounded support (see \Cref{cor:irregular_case}).  
\begin{figure}[!htb]
    \centering
    \includegraphics[width=\textwidth,keepaspectratio]{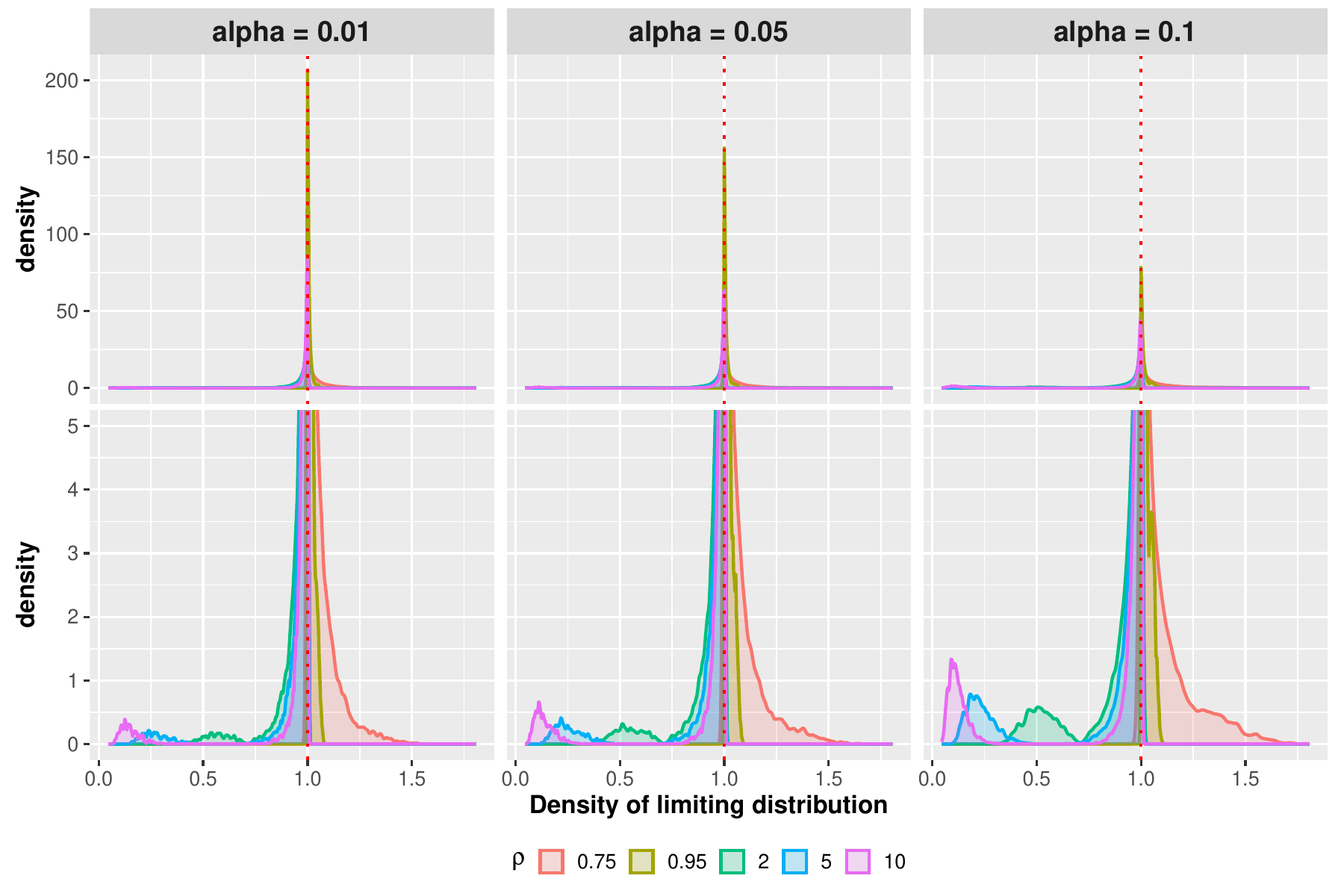}
    \caption{Limiting density of the ratio $\mathrm{WR}_{n,\alpha}$ for different values of the level of significance $\alpha = 0.01, 0.05, 0.1$ as $\rho$ varies from $0.5$ to $10$. The first row shows the exact density plots and the second row shows the zoomed-in plots for better clarity. For $\rho \geq 1$, the width ratio $\mathrm{WR}_{n,\alpha}$ is bounded above in probability by $1$.}
    \label{fig:non_reg_equal}
\end{figure}

A more detailed version of \Cref{thm:asym_result_irreg_case} and its proof are provided in \Cref{appendix_thm_asym_irreg}. Refer to \Cref{sec:simulation} for simulations to study $(i)$ the convergence of the width ratio $\mathrm{WR}_{n,\alpha}$ to $\mathscr{G}((Z/ z_{\alpha/2}) + 1, 2)$, $(ii)$ the coverage and width of $\widehat{\mathrm{CI}}_{n,\alpha}$ in comparison to that of subsampling and bootstrap based confidence intervals. 

Although Theorem~\ref{thm:asym_result_irreg_case} relaxes the assumption of the Bahadur representation, they require the existence of a density (in the extended real line) at $\theta_0$. As a final result, we provide an extension of Theorem~\ref{thm:asym_result_irreg_case} explicitly allowing for the non-existence of density at the median $\theta_0.$

\begin{theorem}
\label{thm:nonreg_unequal_limits}
Let $X_1,X_2,\cdots,X_n \stackrel{iid}{\sim} F $. Suppose that $F$ is a continuous CDF with median $\theta_0$ and
\begin{equation}\label{eq:non-differentiable-non-standard}
|F(\theta_0+h)-F(\theta_0)-|h|^{\rho}\mathrm{sgn}(h)[M_{-}\mathbf{1}\{h<0\}+ M_+\mathbf{1}\{h>0\}]| \leq C|h|^{\rho+\Delta} \quad \forall \quad |h|<\eta,   
\end{equation}
where $0<M_{-},M_+,C,\Delta,\eta,\rho<\infty$. Set 
\[
M = \min\{M_-, M_+\}, \quad \delta = \Delta/ \rho,\quad  \zeta =(M/2)\min\{\eta^{\rho},(M/2C)^{1/\delta}\}. 
\]
Also, define $Q$ as in Theorem~\ref{thm:asym_result_irreg_case} and,
\begin{equation*}
\begin{split}
    &\overline{\mathscr{G}}(a,b) = |a|^{1/\rho}\mathrm{sgn}(a)\left[\frac{\mathbf{1}\{a <0 \}}{M_{-}^{1/\rho}} + \frac{\mathbf{1}\{a >0 \}}{M_{+}^{1/\rho}} \right] -|a-b|^{1/\rho}\mathrm{sgn}(a-b)\left[\frac{\mathbf{1}\{a < b \}}{M_{-}^{1/\rho}} + \frac{\mathbf{1}\{a >b \}}{M_{+}^{1/\rho}} \right].
\end{split}
\end{equation*}
Then for every $n\ge2, \alpha\in[0,1]$ such that $n\geq \max\{\log_2(2/\alpha), 49\log(2n/\alpha)/\zeta^2, 4z_{\alpha/2}^2\}$, with probability at least $1 - 1350 n^{-2}$, inequality~\eqref{eq:non-regular-finite-sample} holds true when $\mathscr{G}$ is replaced with $\overline{\mathscr{G}}$. Moreover for any fixed $\alpha\in[0, 1]$ we have the following distributional convergence as $n\to\infty$
\[
\mathrm{WR}_{n,\alpha} \stackrel{d}{\rightarrow} \frac{1}{2^{-1/\rho}z_{\alpha/2}^{1/\rho}\left[M_-^{-1/\rho} + M_+^{-1/\rho} \right]} \overline{\mathscr{G}}(W,z_{\alpha/2}),\quad \mathrm{where}\quad  W \sim N(z_{\alpha/2}/2,1/4).
\]
\end{theorem}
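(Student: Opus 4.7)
The plan is to reduce the asymmetric setting of~\eqref{eq:non-differentiable-non-standard} to the symmetric one of~\Cref{thm:asym_result_irreg_case} via a piecewise-monotone change of variables. Define the strictly increasing bijection
\[
g(h) := M_+ h^\rho \mathbf{1}\{h\ge 0\} - M_-(-h)^\rho \mathbf{1}\{h<0\},
\]
with inverse $g^{-1}(y) = |y|^{1/\rho}\mathrm{sgn}(y)\bigl[M_-^{-1/\rho}\mathbf{1}\{y<0\} + M_+^{-1/\rho}\mathbf{1}\{y>0\}\bigr]$, and set $Y_i := g(X_i-\theta_0)$. A direct calculation from~\eqref{eq:non-differentiable-non-standard} shows that the CDF $H$ of $Y_i$ satisfies $H(0)=1/2$ and $|H(y) - 1/2 - y| \le C^{\star}|y|^{1+\delta}$ for all $|y| < \eta^{\star}$, where $\delta = \Delta/\rho$, $C^{\star} = C\max\{M_-^{-1-\delta}, M_+^{-1-\delta}\}$, and $\eta^{\star} = \min\{M_-,M_+\}\eta^{\rho}$. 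In particular, $H$ satisfies assumption~\eqref{asump:fin_bahadur} with $M=1$ and exponent $\delta$.

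Since $g$ is a strictly increasing bijection, $Y_{(k)} = g(X_{(k)} - \theta_0)$ for every $k$, and since $F$ is continuous $H(Y_i) = F(X_i)$; hence the quantity $Q$ in the statement is unchanged when computed from $\{Y_i\}$ in place of $\{X_i\}$. I would then invoke the argument proving~\Cref{thm:asym_result_irreg_case} with $\rho=1$, $M=1$ applied to $\{Y_i\}$. That argument rests on the Bahadur-type representation of the uniform order statistics $\{F(X_i)\}=\{H(Y_i)\}$ and provides, beyond the stated width inequality, individual pointwise approximations
\[
Y_{(s)} = Q + R_1, \qquad Y_{(r)} = Q - z_{\alpha/2}/\sqrt{n} + R_2,
\]
with $s=\lceil n/2\rceil + c_{n,\alpha} + 1$, $r=\lfloor n/2\rfloor - c_{n,\alpha}$, and $|R_1|, |R_2|$ controlled, on an event of probability at least $1 - 1350 n^{-2}$, by the right-hand side of~\eqref{eq:non-regular-finite-sample} divided by $\sqrt{n}$.

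Reversing the transformation, $\mathrm{Width}(\widehat{\mathrm{CI}}_{n,\alpha}) = g^{-1}(Y_{(s)}) - g^{-1}(Y_{(r)})$, and a direct check of definitions gives $M^{1/\rho}\bigl[g^{-1}(Q) - g^{-1}(Q - z_{\alpha/2}/\sqrt{n})\bigr] = \mathscr{G}(Q, z_{\alpha/2}/\sqrt{n})$. Consequently $M^{1/\rho}\mathrm{Width}(\widehat{\mathrm{CI}}_{n,\alpha}) - \mathscr{G}(Q, z_{\alpha/2}/\sqrt{n})$ decomposes as the difference of two increments of $g^{-1}$, one at $(Y_{(s)}, Q)$ and one at $(Y_{(r)}, Q - z_{\alpha/2}/\sqrt{n})$. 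After multiplying by $n^{1/(2\rho)}$ I would bound each increment using the global $1/\rho$-H\"older inequality $|g^{-1}(a) - g^{-1}(b)| \le 2\max\{M_-^{-1/\rho}, M_+^{-1/\rho}\}|a-b|^{1/\rho}$ when $\rho \ge 1$, and, when $\rho < 1$, a localization to the high-probability event on which both $|Q|$ and $|Q - z_{\alpha/2}/\sqrt{n}|$ exceed a constant multiple of $1/\sqrt{n}$, on which $g^{-1}$ is locally Lipschitz with exactly the right scaling. This is what produces the $\min\{1, 1/\rho\}$ exponent in~\eqref{eq:non-regular-finite-sample}; combined with the bounds on $R_1, R_2$, it delivers the finite-sample inequality.

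The distributional limit then follows from homogeneity and continuous mapping. Since $\mathscr{G}$ is positively homogeneous of degree $1/\rho$, one has $n^{1/(2\rho)}\mathscr{G}(Q, z_{\alpha/2}/\sqrt{n}) = \mathscr{G}(\sqrt{n}Q, z_{\alpha/2})$; the CLT for the empirical CDF gives $\sqrt{n}Q \overset{d}{\to} W \sim N(z_{\alpha/2}/2, 1/4)$; and $a \mapsto \mathscr{G}(a, z_{\alpha/2})$ is continuous because the sign-dependent factors vanish continuously at the potential kinks $a = 0$ and $a = z_{\alpha/2}$. Hence $n^{1/(2\rho)}M^{1/\rho}\mathrm{Width}(\widehat{\mathrm{CI}}_{n,\alpha}) \overset{d}{\to} \mathscr{G}(W, z_{\alpha/2})$, as claimed. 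The main obstacle is the regime $\rho < 1$, where $g^{-1}$ has a vertical tangent at $0$ and is not globally H\"older: one must simultaneously restrict to the high-probability event on which $|Q| \wedge |Q - z_{\alpha/2}/\sqrt{n}| \gtrsim 1/\sqrt{n}$ and on which $Y_{(s)}, Y_{(r)}$ share the signs of $Q$ and $Q - z_{\alpha/2}/\sqrt{n}$ respectively, and then quantify the probability of the complement; this is precisely where the sample-size requirement $n \ge 4z_{\alpha/2}^2$ and the $1350 n^{-2}$ probability overhead in the statement enter.
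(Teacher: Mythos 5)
Your proposal follows essentially the same route as the paper's proof in \Cref{appendix:nonreg_unequal_limits}: reduce to the regular case by a strictly increasing power-type transformation of $X_i-\theta_0$, obtain the Bahadur-type representation $Y_{(s)}\approx Q$, $Y_{(r)}\approx Q-z_{\alpha/2}/\sqrt{n}$ for the transformed order statistics, invert through the piecewise map $g^{-1}$ (the paper's $\hat h=\tilde h\circ h$), control the error by H\"older/Lipschitz bounds on $g^{-1}$ with a case split on $\rho$, and finish with the CLT for $\sqrt{n}Q$ plus homogeneity and continuous mapping. The only substantive difference is bookkeeping: you absorb $M_{-},M_{+}$ into the forward map so the transformed c.d.f.\ has unit slope at $0$, whereas the paper keeps the pure map $t\mapsto|t|^{\rho}\mathrm{sgn}(t)$ and carries the sign-dependent constants through sandwich inequalities; these are equivalent.

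One concrete misstep: your description of the $\rho<1$ regime is inverted. For $\rho<1$ we have $1/\rho>1$, so $g^{-1}(y)\propto|y|^{1/\rho}\mathrm{sgn}(y)$ has a \emph{horizontal} (not vertical) tangent at $0$ and is locally Lipschitz in any neighborhood of the origin; its derivative grows like $|y|^{1/\rho-1}$ away from $0$. The localization you therefore need is an \emph{upper} bound $|\sqrt{n}Q|\vee|\sqrt{n}Q-z_{\alpha/2}|\le E_0$ on a high-probability event (this is exactly what the paper does, yielding the Lipschitz constant $C_{\rho,\alpha}=(2/\rho)E_0^{(1/\rho)-1}$ on $[-E_0,E_0]$), not the lower bound $|Q|\wedge|Q-z_{\alpha/2}/\sqrt{n}|\gtrsim1/\sqrt{n}$ you propose; a vertical tangent at $0$ occurs only for $\rho>1$, where your global H\"older-$(1/\rho)$ bound already disposes of it without any localization. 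The sign-agreement event you mention is likewise unnecessary, since the factor $2$ in the global H\"older bound already covers arguments of opposite sign. These are fixable within your architecture, but as written the plan for $\rho<1$ rests on the wrong picture of where $g^{-1}$ fails to be Lipschitz.
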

If $\rho = 1$, then \eqref{eq:non-differentiable-non-standard} implies that the left derivative of $F$ at $\theta_0$ is $M_-$ and the right derivative at $\theta_0$ is $M_+$. If $M_- = M_+$, then condition~\eqref{eq:non-differentiable-non-standard} becomes~\eqref{eq:assumption-non-standard}. 
\begin{remark}(Properties of Limiting Distribution)
\label{rem:limit_unequal}
If $\rho \geq 1$, then $\overline{\mathscr{G}}(W,z_{\alpha/2})$ is bounded above by $ 2^{1 - (1/\rho)}z_{\alpha/2}^{1/\rho}\max\{ M_-^{-1/\rho} , M_+^{-1/\rho} \}$ (see Proposition~\ref{prop:unequal_limits_bound_G} for a proof). Therefore \Cref{thm:nonreg_unequal_limits} implies that under $\rho \geq 1$, the width ratio $\mathrm{WR}_{n,\alpha}$ is bounded in probability by $2\max\{ M_-^{-1/\rho} , M_+^{-1/\rho} \}/ (M_-^{-1/\rho} + M_+^{-1/\rho})$. Note that if $M_- = M_+ = M$ the upper bound simplifies to $\mathrm{WR}_{n,\alpha} \leq 1 + o_P(1)$ i.e.\ we recover the bound in \Cref{cor:irregular_case} when the density exists in the extended real line.    
\end{remark}
\Cref{fig:non_reg_unequal} shows the limiting density of the width ratio $\mathrm{WR}_{n,\alpha}$ when $\rho \in \{0.75, 0.95, 2, 5, 10\}$ and $(M_-, M_+) \in\{(0.5, 0.5), (0.2, 0.8), (0.4, 0.6)\}$. It can be observed that the variance of the limiting distributions decrease with increase in $\rho$. 
The proof of \Cref{thm:nonreg_unequal_limits} follows similar techniques as those of \Cref{thm:asym_result_irreg_case}. A more detailed version of the theorem and its proof is provided in \Cref{appendix:nonreg_unequal_limits}. 
\begin{figure}[!htb]
    \centering
    \includegraphics[width=\textwidth,keepaspectratio]{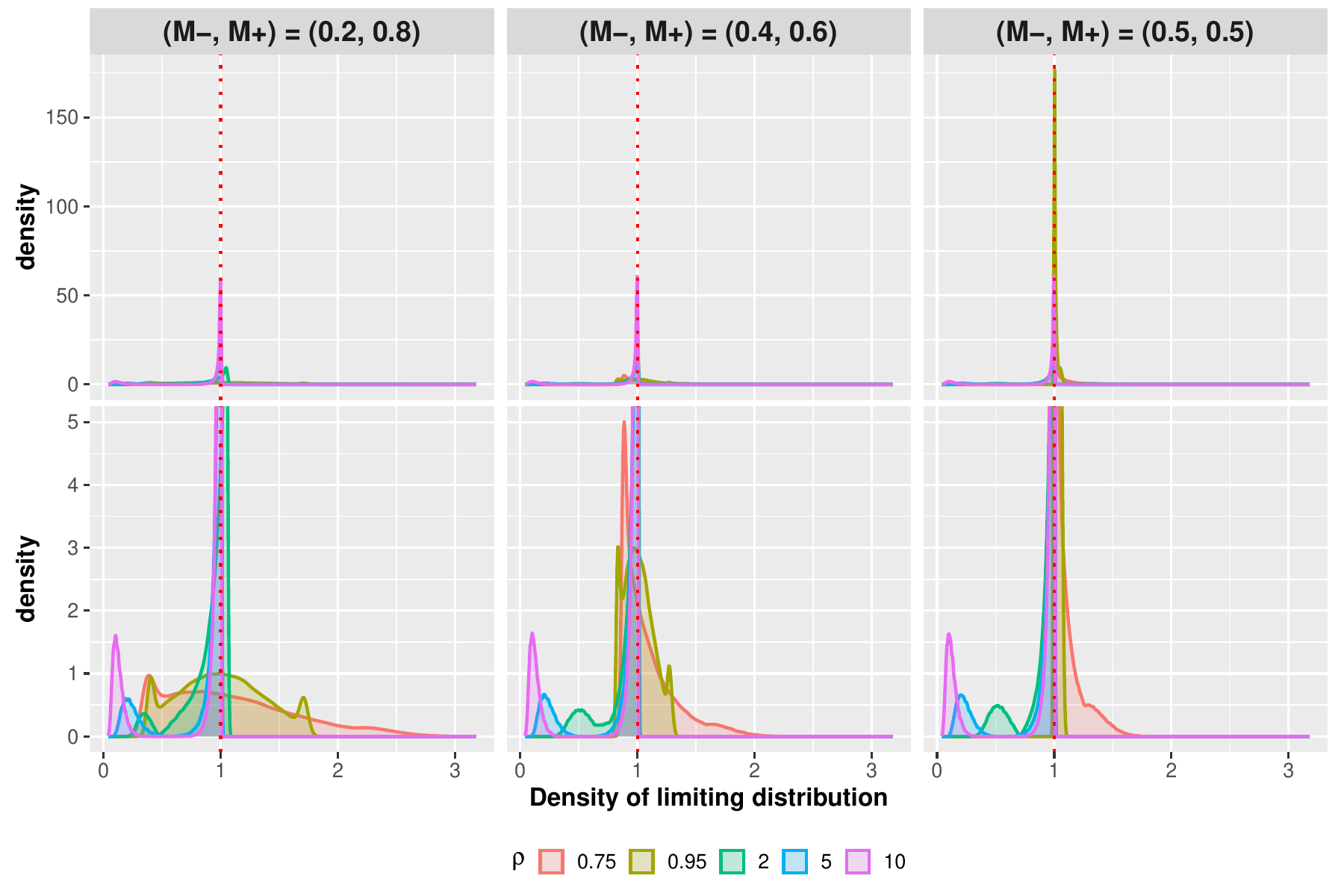}
    \caption{Limiting density of the ratio $\mathrm{WR}_{n,\alpha}$ for different values of $(M_-, M_+) \in \{(0.5, 0.5), (0.2, 0.8), (0.4, 0.6)\}$ at the level of significance $\alpha = 0.01$ as $\rho$ varies from $0.5$ to $10$. The first row shows the exact density plots and the second row shows the zoomed-in plots for better clarity. For $\rho \geq 1$, the width ratio $\mathrm{WR}_{n,\alpha}$ is bounded above in probability by $2\max\{ M_-^{-1/\rho} , M_+^{-1/\rho} \}/ (M_-^{-1/\rho} + M_+^{-1/\rho})$.}
    \label{fig:non_reg_unequal}
\end{figure}

All the results presented in this section assume that the growth rate of $F(\cdot)$ on either side of $\theta_0$ is the same (except maybe for some constants). Even in Theorem~\ref{thm:nonreg_unequal_limits}, our assumption~\eqref{eq:non-differentiable-non-standard} implies $|F(\theta_0 + h) - F(\theta_0)| \asymp |h|^{\rho}$ as $h\to0$. The techniques involved can be easily generalized to handle more general cases where, for example, $|F(\theta_0 + h) - F(\theta_0)| \asymp |h|^{\rho_1}\mathbf{1}\{h > 0\} + |h|^{\rho_2}\mathbf{1}\{h < 0\}$ as $h\to0.$ One can derive a general result under~\eqref{eq:non-standard-setting}, but for brevity, we do not pursue it in this work.

\section{Generalized HulC}\label{sec:hulc_extension}

In this section, we propose a generalized version of HulC (Hull based Confidence Regions) where we update the method developed in \cite{kuchibhotla2021hulc}. The median bias of an estimator $\widehat{\theta}$ for a ``target'' $\theta_0$ is defined as 
\begin{equation*}
    \mbox{Med-Bias}_{\theta_0}(\widehat{\theta}) = \left(\frac{1}{2} - \min\left\{\mathbb{P}(\widehat{\theta} \ge \theta_0),\,\mathbb{P}(\widehat{\theta} \le \theta_0)\right\}\right)_+.
\end{equation*}
Define
\[
P(n,k) = \sum_{i = \lfloor n/2 \rfloor - k}^{\lceil n/2 \rceil + k} \binom{n}{k}\frac{1}{2^{n}}. 
\]

\begin{algorithm}
    \caption{Confidence Interval of $\theta_0$ (GHulC)}
    \label{alg:ghulc}
    \KwIn{Sample: $W_1,\cdots, W_N$ and Confidence Level: $1-\alpha$, estimation procedure: $\mathcal{A}(\cdot)$, Number of batches: $B \ge \log_2(2/\alpha)$.}
    \KwOut{A confidence interval $\widehat{\mathrm{CI}}_{N,\alpha}^{\mathtt{GHulC}}$ such that $\mathbb{P}(\theta_0 \in \widehat{\mathrm{CI}}_{N,\alpha}^{\mathtt{GHulC}}) \geq 1 - \alpha$.}
    Randomly split the data $W_1,\cdots,W_N$ into $B$ disjoint sets $\{\{W_i: i \in S_j\} : 1 \leq j \leq B\}$. These need not be equal sized sets, but having approximately equal sizes yields good width properties. \\
    Compute estimators $\widehat \theta_j := \mathcal{A}(\{W_i: i \in S_j\})$ for $1 \leq j \leq B$. \\
    Compute $c_{B,\alpha} = \inf\{x:\,\mathbb{P}( Y_{B} \geq \lfloor B/2 \rfloor-x) \ge 1 - \alpha/2\}$ where $Y_{B}\sim\text{Bin}(B,1/2)$. \\
    Generate a uniform(0,1) random variable $U$ and set, 
    \begin{equation*}
        \tau_{\alpha} = \frac{P(B,c_{B,\alpha}) - (1-\alpha)}{P(B,c_{B,\alpha}) - P(B,c_{B,\alpha} - 1)}, \quad \quad c_{B,\alpha}^* = \begin{cases}
            c_{B,\alpha} - 1 &\mbox{ if } U \leq  \tau_{\alpha}, \\
            c_{B,\alpha} &\mbox{ if } U >  \tau_{\alpha}.
        \end{cases}
    \end{equation*} . \\
    Return the confidence interval $\widehat{\mathrm{CI}}_{N,\alpha}^{\mathtt{GHulC}}$ 
    \[
    \widehat{\mathrm{CI}}_{N,\alpha}^{\mathtt{GHulC}} := \left[\widehat \theta_{(\lfloor{B/2} \rfloor -c_{B,\alpha}^*)},\ \widehat \theta_{(\lceil{B/2}\rceil +c_{B,\alpha}^*+1)}\right].
    \]
\end{algorithm}
The algorithm for Generalized HulC (GHulC) is proposed in \Cref{alg:ghulc}.

Instead of returning the range of estimators obtained from each split, GHulC splits the data into a larger number of disjoint subsets and instead returns a confidence interval for the population median of the estimators. The advantage of using GHulC is that the algorithm provides confidence intervals of smaller width (when $B > \log_2(2/\alpha)$) while still maintaining the required coverage for asymptotically median unbiased estimators.

\subsection{Coverage analysis of GHulC}
To show that GHulC maintains the required coverage for asymptotically median unbiased estimators we will use Theorem~\ref{thm:quantile_coverage} with $\widehat{\theta}_j, 1\le j\le B$ as the sample. Let $\mathcal{E}_{B}$ be the maximum of median biases of these estimators, i.e., 
\[
\mathcal{E}_B := \max_{1\le j\le B}\mbox{Med-Bias}_{\theta_0}(\widehat{\theta}_j).
\]

\begin{theorem}
\label{thm:coverage_ghulc}
    If $W_1, \ldots, W_N$ are independent observations, then for any $B \ge \log_2(2/\alpha)$,
    \begin{equation}\label{eq:miscoverage-GHulC}
    \begin{split}
    \mathbb{P}(\theta_0 \notin \widehat{\mathrm{CI}}_{N,\alpha}^{\mathtt{GHulC}}) \le~ \alpha\left(1 + 2B^2\mathcal{E}_B^2e^{2B\mathcal{E}_B}\right) \quad \mbox{for every }\alpha \in (0,1).
    \end{split}
    \end{equation}
    Hence, if $B\mathcal{E}_B \to 0$ as $N\to\infty$, then $\widehat{\mathrm{CI}}_{N,\alpha}^{\mathtt{GHulC}}$ is an asymptotically valid $1-\alpha$ confidence interval for $\theta_0$. Additionally, if $\mathbb P(\widehat \theta_j \leq \theta_0) = \mathbb P(\widehat \theta_1 \leq \theta_0) $ and $\mathbb P(\widehat \theta_j = \theta_0) = 0$ for all $j \in \{1, \cdots, B\}$, then
    \[
    \mathbb{P}(\theta_0 \notin \widehat{\mathrm{CI}}_{N,\alpha}^{\mathtt{GHulC}}) \geq \alpha \quad \mbox{for every }\alpha \in (0,1). 
    \]
\end{theorem}
The proof of \Cref{thm:coverage_ghulc} is provided in \Cref{appendix:thm:coverage_ghulc}. The upper bound on the mis-coverage probability of the GHulC confidence interval is exactly the same as that of HulC (see Theorem-2 of \cite{kuchibhotla2021hulc}). Thus in terms of coverage GHulC procedure enjoys the same properties as that of HulC. Theorem~\ref{thm:coverage_ghulc} implies that if $B\mathcal{E}_B\to 0$ as $n\to\infty$ (and allowing for $B\to\infty$), then the miscoverage probability of GHulC converges to $\alpha$, i.e., GHulC is asymptotically exactly with $1-\alpha$ coverage. For details refer to the discussion in remark 2.3 in \cite{kuchibhotla2021hulc}. Moreover GHulC yields shorter confidence intervals when compared to HulC, as shown below. 

Another thing to note is that the coverage probability $P(B, c)$ increases in steps as $c$ increases over the positive integers. This can lead to conservative coverage i.e.\ miscoverage probability strictly less than $\alpha$. This is because there might not exist positive integer $c$ such that $P(B, c) = 1 - \alpha$. To solve this problem we adopt a randomization procedure. We know from previous derivations that $c_{B,\alpha}$ is the smallest positive integer so that $P(B, c_{B,\alpha}) \geq 1 - \alpha$. Thus we randomize between $c_{B,\alpha} - 1$ and $c_{B,\alpha}$ with probability $\tau_{\alpha}$ (see step-4 of \Cref{alg:ghulc}) to ensure that the coverage probability is exactly $1-\alpha$. 

\subsection{Width analysis of GHulC}
Throughout this section we shall assume that $B\mathcal{E}_B \rightarrow 0$ as $N \rightarrow \infty$ so that \Cref{thm:coverage_ghulc} implies the required coverage. We shall analyze the width of confidence interval returned by GHulC under two different assumptions. 
\begin{theorem}
    \label{thm:easy_width_ghulc}
Suppose $ \widehat{\mathrm{CI}}_{N,\alpha}^{\mathtt{GHulC}}$ is the confidence interval returned by GHulC (\Cref{alg:ghulc}) using $B$ splits of approximately equal sizes. Let $\widehat \theta^m$ be an estimator of $\theta_0$ based on a sample of size $m$ and let $r_m$ be its rate of convergence i.e., 
\[
r_m(\widehat \theta^m - \theta_0) = O_p(1) \quad \mathrm{ as }\quad  m \rightarrow \infty. 
\]
We assume the following regarding the distribution function $\Tilde F_{N/B}(\cdot)$ of $r_{N/B}(\widehat \theta_j^{N/B} - \theta_0)$, 
\[
|\Tilde F_{N/B}(x) - \Tilde F_{N/B}(0)| > \mathscr{C}|x|^{\rho} \quad \forall \quad |x| < \Tilde{\Delta},
\]
for some $\mathscr{C}, \Tilde{\Delta} > 0$. Let $|\Tilde F_{N/B}(0) - (1/2)| \leq \mathcal{E}_B$. Then we have the following with probability greater than or equal to $1 - \delta$, 
\begin{equation*}
    \begin{split}
        &  \mathrm{Width}(\widehat{\mathrm{CI}}_{N,\alpha}^{\mathtt{GHulC}}) \le \frac{2}{\mathscr{C}^{1/\rho}r_{N/B}} \left\{\frac{5\log(2/\delta) + \sqrt{2\log(2/\alpha)}}{2\sqrt{B}} + \frac{2}{B} + \mathcal{E}_B \right\}^{1/\rho},
    \end{split}
\end{equation*}
provided $B$ is large enough to ensure that the right hand side is less than $2 \Tilde{\Delta}/r_{N/B}$. 
\end{theorem}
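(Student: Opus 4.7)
I plan to bound the width by transferring the growth assumption on $\tilde F_{N/B}$ to a concentration bound for uniform order statistics. Set $Z_j := r_{N/B}(\widehat\theta_j - \theta_0)$; since the batches $\{W_i : i\in S_j\}$ are disjoint and $W_1,\ldots,W_N$ are independent, the $\widehat\theta_j$'s are i.i.d.\ (with common batch size $N/B$), so the $Z_j$'s are i.i.d.\ with c.d.f.\ $\tilde F_{N/B}$ and
\[
r_{N/B}\,\mathrm{Width}(\widehat{\mathrm{CI}}_{N,\alpha}^{\mathtt{G-HulC}}) = Z_{(j_2)} - Z_{(j_1)} \le |Z_{(j_1)}| + |Z_{(j_2)}|,
\]
where $j_1 := \lfloor B/2\rfloor - c_{B,\alpha}^*$ and $j_2 := \lceil B/2\rceil + c_{B,\alpha}^* + 1$. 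The hypothesis $|\tilde F_{N/B}(x) - \tilde F_{N/B}(0)| > \mathscr{C}|x|^\rho$ on $|x|<\tilde\Delta$ inverts there to
\[
|x| \le \left(\frac{|\tilde F_{N/B}(x) - \tilde F_{N/B}(0)|}{\mathscr{C}}\right)^{1/\rho},
\]
so once each $|Z_{(j_i)}|$ is known to lie in $(-\tilde\Delta,\tilde\Delta)$, it suffices to control $|\tilde F_{N/B}(Z_{(j_i)}) - \tilde F_{N/B}(0)|$ from above.

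Letting $V_i := \tilde F_{N/B}(Z_i) \sim \mathrm{Uniform}[0,1]$, monotonicity identifies $V_{(j)}$ with $\tilde F_{N/B}(Z_{(j)})$, and a triangle inequality using $|\tilde F_{N/B}(0) - 1/2| \le \mathcal E_B$ gives
\[
|\tilde F_{N/B}(Z_{(j)}) - \tilde F_{N/B}(0)| \le |V_{(j)} - 1/2| + \mathcal E_B.
\]
A one-sided Chernoff/Bernstein bound for uniform order statistics---applied to the upper tail at $j_2$ and the lower tail at $j_1$ with budget $\delta$ each---yields on an event of probability at least $1 - 2\delta$,
\[
\max_{i\in\{1,2\}}|V_{(j_i)} - 1/2| \le \left|\frac{j_i}{B} - \frac{1}{2}\right| + \sqrt{\frac{\log(2/\delta)}{2B}} \le \frac{c_{B,\alpha}^* + 2}{B} + \sqrt{\frac{\log(2/\delta)}{2B}}.
\]
By Part~\ref{conclu:c_{n,alpha}_bounds} of Theorem~\ref{thm:assym_of_cn} at sample size $B$ and level $\alpha$, $c_{B,\alpha}^* \le c_{B,\alpha} \le \sqrt{B}\,z_{\alpha/2}/2 + 1$, and Proposition~\ref{prop:mills_ineq} gives $z_{\alpha/2} \le \sqrt{2\log(2/\alpha)}$. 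Substituting and bounding $\sqrt{\log(2/\delta)/(2B)} + O(1/B)$ by $5\log(2/\delta)/(2\sqrt{B}) + 2/B$ (valid whenever $\log(2/\delta)\ge 1/2$), the right-hand side becomes at most
\[
\frac{5\log(2/\delta) + \sqrt{2\log(2/\alpha)}}{2\sqrt{B}} + \frac{2}{B} + \mathcal E_B.
\]
Raising to the power $1/\rho$, dividing by $\mathscr{C}^{1/\rho}$, using $|Z_{(j_1)}|+|Z_{(j_2)}| \le 2\max_i|Z_{(j_i)}|$, and dividing by $r_{N/B}$ produces the claimed inequality.

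The main work is bookkeeping; the one genuine subtlety is that the inversion used to pass from a bound on $|V_{(j_i)} - \tilde F_{N/B}(0)|$ to one on $|Z_{(j_i)}|$ is only licensed inside $|x|<\tilde\Delta$, which is essentially what we are trying to conclude. I resolve this by a short contradiction argument: by monotonicity of $\tilde F_{N/B}$, if $|Z_{(j_i)}| \ge \tilde\Delta$ then $|V_{(j_i)} - \tilde F_{N/B}(0)| \ge \mathscr{C}\tilde\Delta^\rho$, and the sample-size proviso (that the final bound is below $2\tilde\Delta/r_{N/B}$) translates exactly to the concentration estimate being strictly less than $\mathscr{C}\tilde\Delta^\rho$, contradicting the previous display. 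Hence on the concentration event the inversion is valid. The randomization defining $c_{B,\alpha}^* \in \{c_{B,\alpha}-1, c_{B,\alpha}\}$ is innocuous because only an upper bound on $c_{B,\alpha}^*$ enters, and choosing the Bernstein/Chernoff slack so that constants assemble into exactly $5\log(2/\delta)/(2\sqrt{B})$ is a routine but careful step.
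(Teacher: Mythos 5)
Your argument is correct and follows the same overall route as the paper's proof in the appendix: reduce the width of $\widehat{\mathrm{CI}}_{N,\alpha}^{\mathtt{G-HulC}}$ to the deviation of two uniform order statistics around $1/2$, absorb the median bias via $|\Tilde F_{N/B}(0)-1/2|\le\mathcal{E}_B$, control $|j_i/B-1/2|$ through the bound on $c_{B,\alpha}$ from part~\ref{conclu:c_{n,alpha}_bounds} of \Cref{thm:assym_of_cn} together with \Cref{prop:mills_ineq}, and invert the growth condition $|\Tilde F_{N/B}(x)-\Tilde F_{N/B}(0)|>\mathscr{C}|x|^{\rho}$ to convert a probability-scale bound into a bound on the width. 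Two execution choices differ, both legitimately. First, the paper's concentration input is Lemma 3.1.1 of \cite{reiss2012approximate}, which is where the factor $5\xi$ (hence $5\log(2/\delta)$) originates; you instead use an elementary Hoeffding/Chernoff tail for $\mathrm{Beta}$ order statistics, which yields the sharper $\sqrt{\log(2/\delta)/(2B)}$ and must then be deliberately inflated to $5\log(2/\delta)/(2\sqrt{B})$ to match the stated constant --- harmless since $\delta<1/2$ forces $\log(2/\delta)>\log 4$, but worth stating that the loosening also has to absorb the extra $O(1/B)$ from your slightly larger bound on $|j_i/B-1/2|$. Second, the paper applies the growth condition at the \emph{deterministic} thresholds $c_{\pm}$, verifying $\mathbb{P}(\widehat\theta_{(r^+)}-\theta_0\le c_+)\ge 1-2e^{-\xi}$ directly under the proviso $c_+<\Tilde\Delta/r_{N/B}$, which sidesteps the domain-restriction circularity from the outset; you apply it at the random points $Z_{(j_i)}$ and repair the circularity with a contradiction argument, which is valid because monotonicity gives $|\Tilde F_{N/B}(Z)-\Tilde F_{N/B}(0)|\ge\mathscr{C}\Tilde\Delta^{\rho}$ whenever $|Z|\ge\Tilde\Delta$, while the proviso makes your concentration bound strictly smaller than $\mathscr{C}\Tilde\Delta^{\rho}$. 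A minor point in your favor: \Cref{alg:ghulc} returns the interval built from the randomized $c_{B,\alpha}^*$, and you correctly note that only the upper bound $c_{B,\alpha}^*\le c_{B,\alpha}$ enters, whereas the paper's proof silently works with $c_{B,\alpha}$. One shared implicit assumption (present in the paper's proof as well) is the continuity of $\Tilde F_{N/B}$ needed for the probability integral transform identifying $\Tilde F_{N/B}(Z_{(j)})$ with a uniform order statistic; neither proof states it, and for a fully rigorous version the one-sided tail events should be phrased so that only $Z_{(j_2)}\le c$ and $Z_{(j_1)}\ge -c$ are needed, which is exactly what your upper-tail/lower-tail split delivers.
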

\Cref{thm:easy_width_ghulc} states that the rate of $\mathrm{Width}(\widehat{\mathrm{CI}}_{N,\alpha}^{\mathtt{GHulC}})$ is $B^{1/(2\rho)}r_{N/B}$ i.e.,\ $B^{1/(2\rho)}r_{N/B}\mathrm{Width}(\widehat{\mathrm{CI}}_{N,\alpha}^{\mathtt{GHulC}}) = O_P(1)$ as $N \rightarrow \infty$. The proof of \Cref{thm:easy_width_ghulc} can be seen in \Cref{appendix:easy_ghulc}. 

The next theorem is based on a stronger assumption. The assumption made is similar in nature to those made in \Cref{thm:asym_result_irreg_case} and \Cref{thm:nonreg_unequal_limits}. 
\begin{theorem}
  \label{thm:width_ghulc}  
Suppose $ \widehat{\mathrm{CI}}_{N,\alpha}^{\mathtt{GHulC}}$ is the confidence interval returned by GHulC (\Cref{alg:ghulc}) using approximately equal $B$ splits. Let $\widehat \theta^m$ be an estimator of $\theta_0$ based on a sample of size $m$ and let $r_m$ be its rate of convergence i.e., 
\[
r_m(\widehat \theta^m - \theta_0) = O_p(1) \quad \mathrm{ as }\quad m \rightarrow \infty. 
\]
We assume the following regarding the distribution function $\Tilde F_{N/B}(\cdot)$ of $r_{N/B}(\widehat \theta_j^{N/B} - \theta_0)$, 
\[
\left|\Tilde F_{N/B}(t) - \Tilde F_{N/B}(0) - M_N|t|^{\rho}\mathrm{sgn}(t) \right| \leq C_N |t|^{\rho+\Delta} \quad \forall \quad |t|< \eta,
\]
where $0< M_N,C_N,\Delta,\eta,\rho< \infty$. Let $|\Tilde F_{N/B}(0) - (1/2)| \leq \mathcal{E}_B$ and $B\mathcal{E}_B \to 0$ as $N\to\infty$. Then the following distributional convergence holds as $N/B,B \rightarrow \infty$, 
\begin{equation*}
    \begin{split}
       (B)^{1/2\rho}r_{N/B}M_N^{1/\rho}\mathrm{Width}(\widehat{\mathrm{CI}}_{N,\alpha}^{\mathtt{GHulC}}) 
    \stackrel{d}{\xrightarrow{}} \mathscr{G}(W, z_{\alpha/2}),
    \end{split}
\end{equation*}
where $\mathscr{G}(a,b) := |a|^{1/\rho}\mathrm{sgn}(a) - |a - b|^{1/\rho}\mathrm{sgn}(a-b)$ and $W \sim N(z_{\alpha/2}/2,1/4)$.
\end{theorem}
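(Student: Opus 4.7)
The plan is to reduce to Theorem~\ref{thm:asym_result_irreg_case} applied to the scaled, centered estimators treated as iid observations. Define $Z_j := r_{N/B}(\widehat\theta_j^{N/B}-\theta_0)$ for $j=1,\dots,B$; since the splits are disjoint, $Z_1,\dots,Z_B$ are iid with c.d.f.\ $\widetilde F_{N/B}$. Translation-invariance of the width operator and linear scaling give
\begin{equation*}
r_{N/B}\cdot\mathrm{Width}(\widehat{\mathrm{CI}}_{N,\alpha}^{\mathtt{G-HulC}}) \;=\; Z_{(\lceil B/2\rceil + c_{B,\alpha}^*+1)} - Z_{(\lfloor B/2\rfloor - c_{B,\alpha}^*)},
\end{equation*}
which, up to the $\pm 1$ randomization encoded in $c_{B,\alpha}^*$, is exactly the width of the distribution-free median interval of Algorithm~\ref{alg:proposed-conf-int} computed on $Z_1,\dots,Z_B$ at level $\alpha$.

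Next I would verify the hypotheses of Theorem~\ref{thm:asym_result_irreg_case} for this sample of size $B$ with parameters $(M_N,C_N,\rho,\Delta,\eta)$. The assumption of the present theorem gives the local expansion around $0$, whereas Theorem~\ref{thm:asym_result_irreg_case} requires the expansion around the true median $\widetilde m_B$ of $\widetilde F_{N/B}$. Combining $|\widetilde F_{N/B}(0)-1/2|\le\mathcal{E}_B$ with the local power law gives $|\widetilde m_B|=O((\mathcal{E}_B/M_N)^{1/\rho})$, and a short Taylor-type computation then transfers the expansion to $\widetilde m_B$ while preserving $(M_N,\rho,\Delta)$ and perturbing $C_N$ only by a $1+o(1)$ factor, provided $B\mathcal{E}_B\to 0$ (which is our standing hypothesis for coverage).

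Applying Theorem~\ref{thm:asym_result_irreg_case} to $Z_1,\dots,Z_B$ then yields
\begin{equation*}
B^{1/(2\rho)}M_N^{1/\rho}\bigl(Z_{(\lceil B/2\rceil + c_{B,\alpha}+1)} - Z_{(\lfloor B/2\rfloor - c_{B,\alpha})}\bigr) \;\stackrel{d}{\longrightarrow}\;\mathscr{G}(W,z_{\alpha/2}),\quad W\sim N(z_{\alpha/2}/2,1/4).
\end{equation*}
To pass from the deterministic index $c_{B,\alpha}$ to the randomized $c_{B,\alpha}^*\in\{c_{B,\alpha}-1,c_{B,\alpha}\}$, note that this replacement perturbs the width by at most one pair of consecutive central order-statistic spacings of $Z$; passing through the quantile transform $\widetilde F_{N/B}$ shows the corresponding uniform spacings are $O_p(1/B)$, and reapplying the inverse power law makes the resulting $Z$-spacing vanishing on the $M_N^{1/\rho}B^{1/(2\rho)}$ scale. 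Slutsky's theorem then combines everything to yield the claimed limit law.

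The main obstacle is the transfer step described in the second paragraph: the power law $|t|^\rho\mathrm{sgn}(t)$ is not translation-covariant when $\rho\ne 1$, so one must verify that $B\mathcal{E}_B\to 0$ forces $|\widetilde m_B|$ to be of order smaller than $B^{-1/(2\rho)}M_N^{-1/\rho}$, which is precisely the resolution at which Theorem~\ref{thm:asym_result_irreg_case} reads off its limit; once this is in hand, the remaining steps are direct consequences of Theorem~\ref{thm:asym_result_irreg_case} and standard spacings arguments for uniform order statistics.
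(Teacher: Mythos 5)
Your reduction to the median-interval machinery on $Z_j := r_{N/B}(\widehat\theta_j^{N/B}-\theta_0)$ is the right starting point, and you correctly identify both the scale at which the limit is read off and the role of the randomized index $c_{B,\alpha}^*$. The gap is in the transfer step, and it is not merely a technicality to be checked: the claim that the expansion $|\widetilde F_{N/B}(t)-\widetilde F_{N/B}(0)-M_N|t|^{\rho}\mathrm{sgn}(t)|\le C_N|t|^{\rho+\Delta}$ can be re-centered at the true median $\widetilde m_B$ ``while preserving $(M_N,\rho,\Delta)$ and perturbing $C_N$ only by a $1+o(1)$ factor'' is false for $\rho\ne1$ whenever $\widetilde m_B\ne0$. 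Writing $\widetilde F_{N/B}(\widetilde m_B+h)-\widetilde F_{N/B}(\widetilde m_B)\approx M_N\bigl(|\widetilde m_B+h|^{\rho}\mathrm{sgn}(\widetilde m_B+h)-|\widetilde m_B|^{\rho}\mathrm{sgn}(\widetilde m_B)\bigr)$, for $|h|\ll|\widetilde m_B|$ this is $\approx \rho M_N|\widetilde m_B|^{\rho-1}h$, i.e.\ locally \emph{linear} with a slope that is not $\asymp|h|^{\rho-1}$; the discrepancy from $M_N|h|^{\rho}\mathrm{sgn}(h)$ is of order $|h|\,|\widetilde m_B|^{\rho-1}$ (for $\rho>1$), which cannot be dominated by $C'|h|^{\rho+\Delta}$ uniformly over small $h$. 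Since the hypothesis of Theorem~\ref{thm:asym_result_irreg_case} is a uniform statement over all $|h|<\eta$, the recentered distribution does not satisfy it, and the theorem cannot be invoked as a black box. Your observation that $B\mathcal{E}_B\to0$ makes $|\widetilde m_B|=o(B^{-1/(2\rho)}M_N^{-1/\rho})$ is correct and is the right intuition for why the limit is unaffected, but turning that into a proof forces you to reopen the proof of Theorem~\ref{thm:asym_result_irreg_case} and track where the assumption is actually used, rather than cite its statement.

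This is in fact how the paper proceeds (Theorem~\ref{thm:width_ghulc_1} in \Cref{appendix:width_ghulc}): it never recenters at the median of $\widetilde F_{N/B}$. It keeps the expansion anchored at $0$ (equivalently at $\theta_0$), transforms via $H_{N/B}(t)=F_{N/B}(\theta_0+|t|^{1/\rho}\mathrm{sgn}(t))$ exactly as in the proof of Theorem~\ref{thm:asym_result_irreg_case}, and injects the median bias as an additive slack at the cdf level: the concentration bounds for the relevant order statistics become $|H_{N/B}(T_{(k)})-H_{N/B}(0)|\le A_{B,\xi}+\mathcal{E}_B$, and the error $\gamma_{\xi,B,N}$ contains $\mathcal{E}_B+C_N(2(A_{B,\xi}+\mathcal{E}_B)/M_N)^{1+\delta}$. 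The limit then follows because $\sqrt{B}\gamma_{\xi,B,N}\to0$ under $B\mathcal{E}_B\to0$. Carrying the bias additively in probability space, rather than geometrically in the parameter space, is what sidesteps the non-translation-covariance of $|t|^{\rho}\mathrm{sgn}(t)$. (Your spacings argument for replacing $c_{B,\alpha}$ by $c_{B,\alpha}^*$ is sound and is, if anything, more careful than the paper on that point.)
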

\Cref{thm:width_ghulc} states that under suitable regularity conditions on the distribution function $\Tilde F_{N/B}$ we can obtain the exact limiting distribution (non-degenerate in most scenarios) of the scaled width of the confidence interval returned by GHulC as $N,B \rightarrow \infty$. As in \Cref{thm:easy_width_ghulc}, the rate of convergence of $\mathrm{Width}(\widehat{\mathrm{CI}}_{N,\alpha}^{\mathtt{GHulC}})$ in \Cref{thm:width_ghulc} is $(B)^{1/2\rho}r_{N/B}$. The main idea behind the proof of \Cref{thm:width_ghulc} is same as that for \Cref{thm:asym_result_irreg_case}. A finite-sample generalisation of \Cref{thm:width_ghulc} has been stated and proved in \Cref{appendix:width_ghulc}.

\begin{remark}[Interpretation of the rate of convergence of width of $ \widehat{\mathrm{CI}}_{N,\alpha}^{\mathtt{GHulC}}$]\label{rem:interpretation_ghulc} We note that the rate of convergence of the width of confidence interval returned by GHulC is $(B)^{1/2\rho}r_{N/B}$. Thus the rate is composed of two components: $(B)^{1/2\rho}$ which models the regularity of the distribution of properly scaled and centered estimator $r_{N/B}(\widehat \theta_j^{N/B} - \theta_0)$ at $0$; and $r_{N/B}$ which is the rate of convergence of each estimator $\widehat \theta_j^{N/B}$ based on a sample of size roughly $N/B$. 
\end{remark}

\subsubsection{Comparison with Wald confidence intervals}
In this sub-section, we shall see how the width of the confidence interval returned by GHulC compares to that of Wald confidence interval, under asymptotic normality. We assume that GHulC is using approximately equal $B$ splits. Suppose the following holds, 
\[
\sqrt{N}(\widehat \theta - \theta_0) \stackrel{d}{\rightarrow} N(0, 1), \quad \quad
\sqrt{N/B}(\widehat \theta_j - \theta_0) \stackrel{d}{\rightarrow} N(0, 1),
\]
as $B,N/B \rightarrow \infty$ for $1 \leq j \leq B$. Here $\widehat \theta$ is the estimator based on the entire data. We also assume that $\Tilde F_{N/B}$ (the distribution function of $\sqrt{N/B}(\widehat \theta_j - \theta_0)$) satisfies the following, 
\begin{equation}\label{eq:local-limit-CLT}
\left| \Tilde F_{N/B}(t) - \Tilde F_{N/B}(0) - (1/(\sqrt{2 \pi}))t \right| \leq C_N |t|^2 \quad \mbox{ for } |t| < \eta,
\end{equation}
where $0 < C_N,\eta < \infty$. The exponent $2$ on the right hand side can be replaced with $1 + \delta$ for any $\delta > 0$. We note that a result of the type~\eqref{eq:local-limit-CLT} can often be obtained either using Edgeworth expansions or local limit theorems; see, e.g., Chapters VI and VII of~\cite{Petrov1975Sums}. Compared to the setting of \Cref{thm:width_ghulc}, we see that $M_N = 1/(\sqrt{2\pi}), \mbox{ }r_{N/B} = \sqrt{N/B},\mbox{ } \rho = 1$. Since $\rho = 1$, $\mathscr{G}(W, z_{\alpha/2}) = W - (W - z_{\alpha/2}) = z_{\alpha/2}$. Using \Cref{thm:width_ghulc} gives us the following as $B,N/B \rightarrow \infty$, 
\begin{equation*}
    \begin{split}
        &\sqrt{B}\sqrt{N/B}\frac{1}{\sqrt{2 \pi}} \mathrm{Width}(\widehat{\mathrm{CI}}_{N,\alpha}^{\mathtt{GHulC}}) \stackrel{d}{\rightarrow} z_{\alpha/2} \\
    \implies & \sqrt{N} \mathrm{Width}(\widehat{\mathrm{CI}}_{N,\alpha}^{\mathtt{GHulC}}) \stackrel{P}{\rightarrow}  \sqrt{2 \pi}  z_{\alpha/2}. 
    \end{split}
\end{equation*}
The oracle Wald confidence interval for this problem is given by $\widehat{\mathrm{CI}}_{N,\alpha}^{\mathtt{Wald}} = [\widehat \theta \pm \sigma z_{\alpha/2}/ \sqrt{N}]$. Hence,
\[
\frac{ \mathrm{Width}(\widehat{\mathrm{CI}}_{N,\alpha}^{\mathtt{GHulC}}) }{ \mathrm{Width}(\widehat{\mathrm{CI}}_{N,\alpha}^{\mathtt{Wald}}) } \stackrel{P}{\rightarrow} \sqrt{\frac{\pi}{2}} > 1. 
\]
We should note here that the Wald confidence interval explicitly makes use of the asymptotic normality of the estimator. GHulC, on the other hand, relies mainly on the (asymptotic) median-unbiasedness of the estimator. It was shown in section 2.3 of \cite{kuchibhotla2021hulc} that the the ratio of the expected width of HulC confidence interval to that of the Wald interval is approximately equal to $\sqrt{\log_2(\log_2(2/\alpha))}$ which grows slowly to $\infty$ as $\alpha \rightarrow 0$. The advantage of GHulC is that the ratio of the widths is independent of $\alpha$ and thus GHulC can potentially produce much smaller valid confidence intervals than HulC as $\alpha \rightarrow 0$.

\subsection{Simulations}
\label{subsec:sim_ghulc}
To understand the validity and power of the confidence intervals generated by GHulC we consider the following numerical example of multivariate quantile regression. Suppose $(X_i,Y_i) \in \mathbb{R}^4 \times \mathbb R$, $1 \leq i \leq n$ are independent and identically distributed random vectors from the linear model,
\[
Y_i = \theta_0^\top X_i + \epsilon_i\quad \mbox{for} \quad i \in \{1, \cdots, n\}. 
\]
We define the estimator $\widehat \theta_n$ as follows,
\[
\widehat \theta_n = \argmin_{\theta \in \mathbb{R}^4}\sum_{i = 1}^n |Y_i - \theta^\top X_i|.
\]

We wish to obtain a valid $(1 - \alpha)$ confidence interval for $\theta_{0,1} = e_1^\top \theta_0$ where $e_1 = (1, 0, 0, 0)$. It can be easily checked that the estimator $\widehat \theta_{n, 1} = e_1^\top \widehat \theta_n$ is asymptotically median-unbiased for $\theta_{0,1}$; see~\cite{knight1998limiting}. 
We generate the random vectors $\{X_i\}_{i = 1}^n$ from the following distribution,
\begin{equation*}
    \begin{split}
        X_i \stackrel{iid}{\sim} \mathcal N(\mu, \Sigma) \quad \mbox{where} \quad \mu = (2, 3, 4, 5), \mbox{ } \Sigma = \begin{pmatrix}
            1 & 0.6 & 0.3 & 0.2 \\
            0.6 & 1 & 0.4 & 0.3 \\
             0.3 & 0.4 & 1 & 0.5 \\
              0.2 & 0.3 & 0.5 & 1
        \end{pmatrix} . 
    \end{split}
\end{equation*}
We suppose that $\epsilon_i$ and $X_i$ are independent and $F_i(x) = \mathbb{P}(\epsilon_i \leq x) = 0.5(1 +  \mathrm{sgn}(x)|x|^{\beta})$ where $x \in [-1,1]$ for some $\beta > 0$. If $\beta = 1$ then this is the standard setting of error distribution with density bounded away from zero. If $\beta < 1$ then the rate of convergence of the quantile estimator is faster than $n^{1/2}$. If $\beta > 1$ then the rate of convergence is slower than $n^{1/2}$. We generate data for values of $\beta \in \left[0,2\right)$ and compare the performance of HulC and GHulC (at level $\alpha = 0.05$) for higher values of $B$. In particular for the purpose of simulations, we have take $B = 12, 18, 24$ which are multiples of $\lceil \log_2(2/\alpha) \rceil = 6$ (for $\alpha = 0.05$). The performance of each procedure is based on $1000$ Monte Carlo replications for each sample size ($n = 200, 500, 1000, 2000$) and each $\beta$. We observe from \Cref{fig:compare_ghulc_multi} that like HulC, the generalized version of HulC also maintains the coverage at the nominal level of $0.95$ for all sample sizes. Moreover from \Cref{fig:compare_ghulc_multi}, we can also infer that GHulC with higher value of $B$ yields confidence intervals of smaller width. \Cref{fig:ghulc_c_multi} clearly suggests that the dispersion of the width of the confidence interval returned by GHulC also tends to decrease with increasing values of $B$. Interested readers can further refer to \Cref{appendix:further_simulations} (figures~\ref{fig:compare_ghulc} and \ref{fig:ghulc_c}) for an application of GHulC to compute confidence interval of the parameter of interest using univariate quantile regression.

\begin{figure}[!ht]
    \centering
    \includegraphics[width=\textwidth,keepaspectratio]{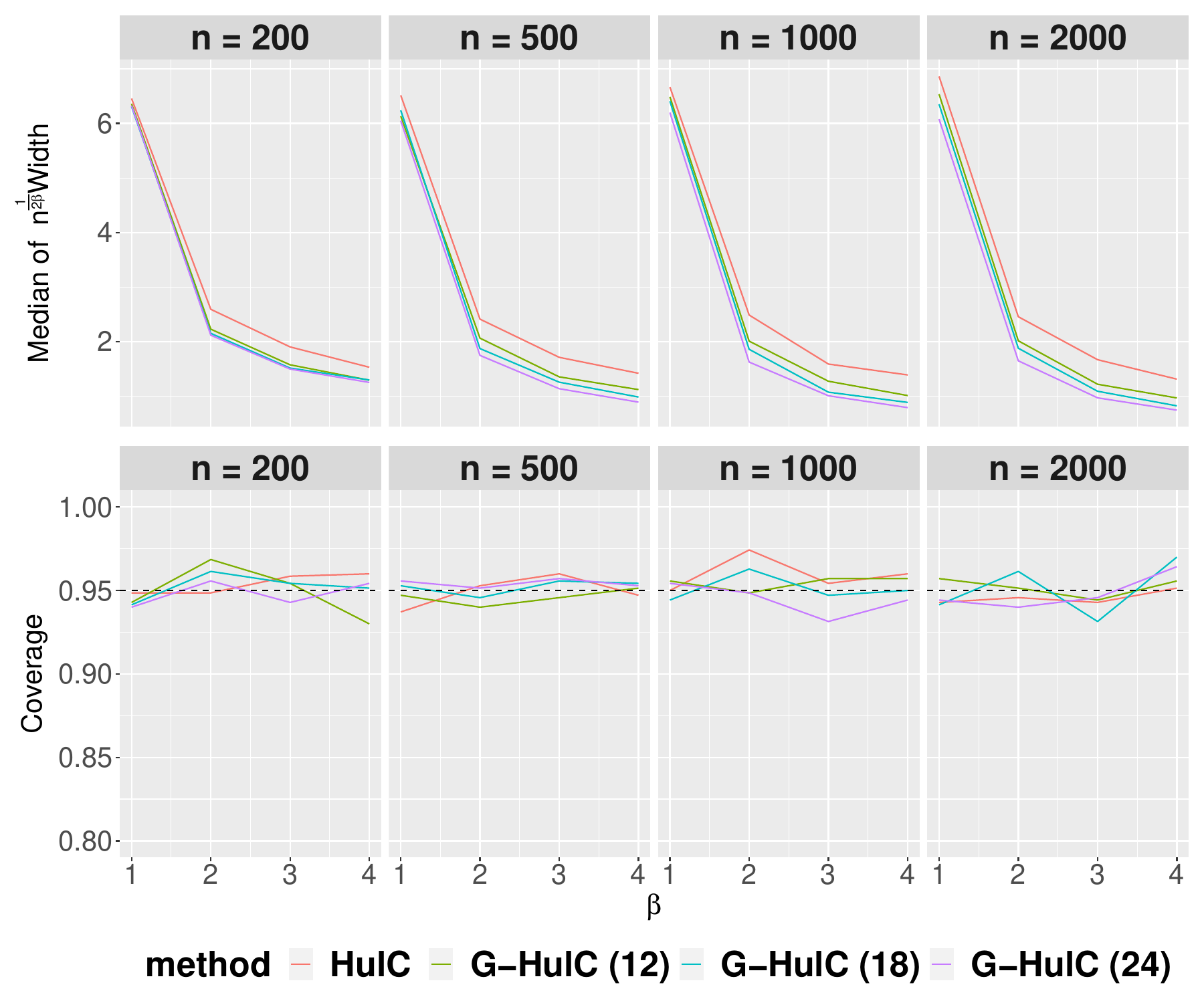}
    \caption{Comparison of the coverage and median of the scaled width ($n^{1/(2\beta)}$Width) of HulC and GHulC (for $B > \log_2(2/\alpha)$) in multivariate quantile regression under non-standard conditions. The sample size is mentioned at the top of each plot and the smoothness parameter of the distribution $\beta$ is on the $x$-axis. The tuning parameter $B$ is mentioned in the parenthesis.}
    \label{fig:compare_ghulc_multi}
\end{figure}

\begin{figure}[!ht]
    \centering
    \includegraphics[width=\textwidth,keepaspectratio]{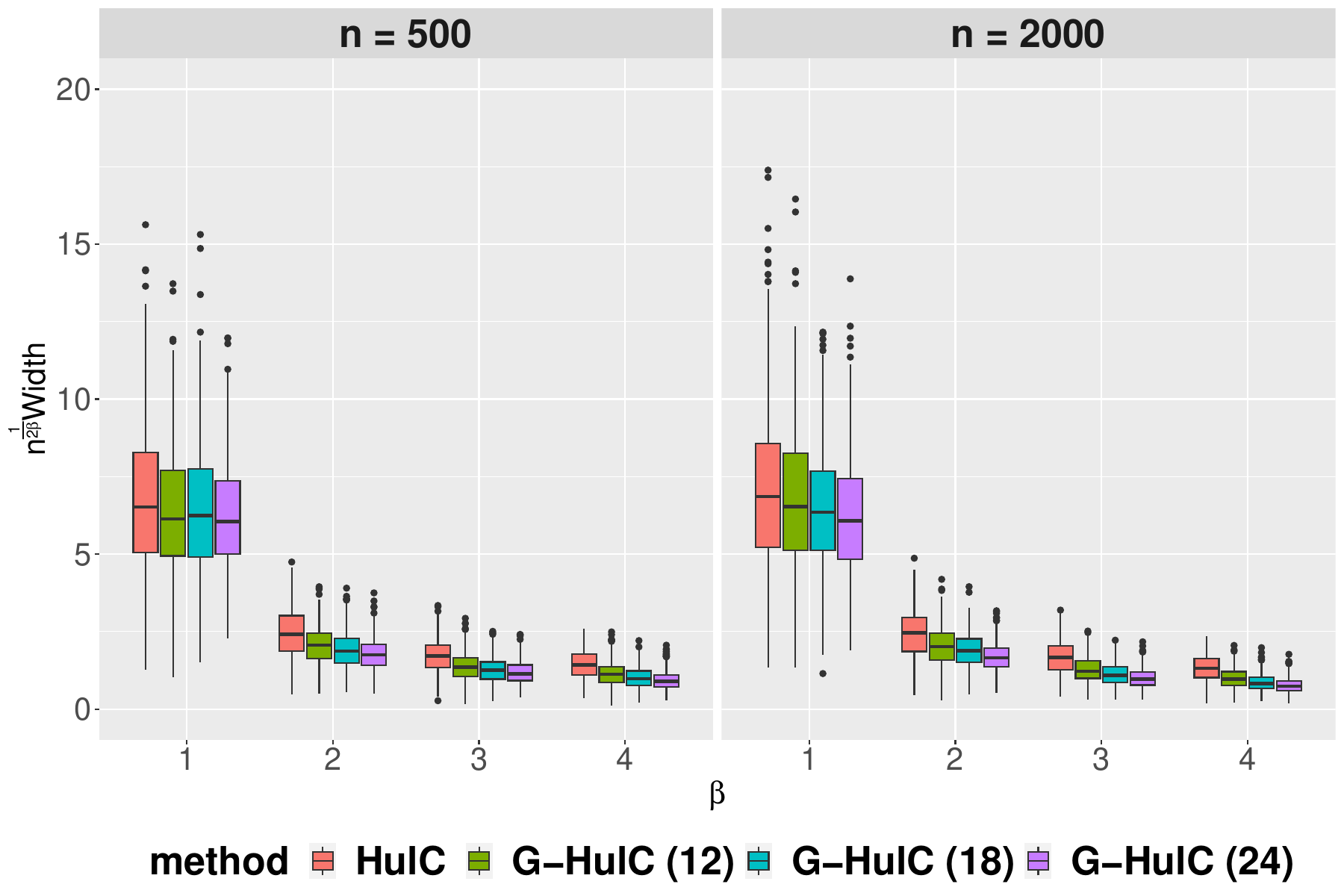}
    \caption{Comparison of box-plots of the scaled width ($n^{1/(2\beta)}$Width) of HulC and GHulC (for $B > \log_2(2/\alpha)$) in multivariate quantile regression under non-standard conditions. The sample size is mentioned at the top of each plot and the smoothness parameter of the distribution $\beta$ is on the $x$-axis. The tuning parameter $B$ is mentioned in the parenthesis.}
    \label{fig:ghulc_c_multi}
\end{figure}

\section{Conclusions and Future Directions}
\label{sec:extension}

In this paper, we study the coverage and width of a distribution-free confidence interval for the median of a distribution. Under the standard assumption that the Lebesgue density at median is bounded away from zero, we show that the width of the distribution-free CI matches that of the Wald interval asymptotically. Under more general assumptions allowing for a zero/infinite density or even a non-existent density, we show that the width when properly scaled converges to a non-degenerate distribution. This is the first ``natural'' example where such a phenomenon is observed. In both standard and non-standard cases, we supplement the asymptotic statements with non-asymptotic analogues. 

Inference for the median is an age-old problem in statistics. In addition to studying the basic properties of the confidence intervals for the median, we show wide ranging implications of our results for inference of arbitrary functionals for which asymptotically median unbiased estimators exist. In particular, we develop a generalization of HulC that provides confidence intervals of improved width.

This work can be extended in several directions. Firstly, it is of interest to know if similar conclusions hold true for distribution-free confidence intervals of other quantiles. Secondly, inference for shift parameter in other location models are also of interest. Inference for shift parameter of a unimodal location family (\cite{edelman1990confidence, paul2025finite}) or a symmetric location family (\cite{lanke1974interval}) are some interesting examples.

\bibliography{references}
\bibliographystyle{plainnat}
\newpage
\appendix
\setcounter{section}{0}
\setcounter{equation}{0}
\setcounter{figure}{0}
\setcounter{remark}{0}
\renewcommand{\thesection}{S.\arabic{section}}
\renewcommand{\theequation}{E.\arabic{equation}}
\renewcommand{\thefigure}{A.\arabic{figure}}
\renewcommand{\theremark}{R.\arabic{remark}}
  \begin{center}
  \Large {\bf Appendix to ``Inference for Median and a Generalization of HulC''}
  \end{center}
\section{Literature Review on Inference for Median}\label{appsec:related-literature}
One of the most commonly used confidence intervals is based on the asymptotic normality of the properly normalized sample median (see Example $5.24$ of \citet[page 54]{vaart_1998} and~\citet[Sec. 2.6]{serfling2009approximation} for details). Formally, suppose $X_1, \ldots, X_n$ are independent and identically distributed random variables with distribution function $F$ such that $F(\theta_0) = 1/2$ and $F'(\theta_0) > 0$. Let $X_{(1)} \le X_{(2)} \le \cdots \le X_{(n)}$ denote the increasing rearrangement (i.e., order statistics) of $X_1, X_2, \ldots, X_n$. The condition $F'(\theta_0) > 0$ implies that $\theta_0$ is a continuity point of $F$ and hence $\theta_0$ is the unique median of $F$. A natural estimator of the median is the sample median $\widehat{\theta}_n$ given by
\[
\widehat{\theta}_n := \begin{cases}
    X_{((n+1)/2)}, &\text{  if $n$ is odd,} \\
    X_{(n/2)}, &\text{ if $n$ is even,} 
\end{cases}
\]
and satisfies
\[
n^{1/2}(\widehat{\theta}_n - \theta_0) \overset{d}{\to} N\left(0, \frac{1}{4(F'(\theta_0))^2}\right).
\]
Using this result, a Wald-type confidence interval can be constructed as 
\begin{equation}
\label{eq:as_norm_cn}
    \widehat{\mathrm{CI}}_{n,\alpha}^{\mathtt{AN}} ~:=~ \left[\widehat{\theta}_n - \frac{\widehat{\sigma}z_{\alpha/2}}{\sqrt{n}},\, \widehat{\theta}_n + \frac{\widehat{\sigma}z_{\alpha/2}}{\sqrt{n}}\right],
\end{equation}
where $\widehat{\sigma}$ is a consistent estimator of the asymptotic standard deviation of the sample median $\widehat{\theta}_n$. Such an estimator can be obtained using kernel density estimator or spacings as described in~\citet[Sec. 6]{sen1966distribution} or using bootstrap as described in~\cite{ghosh1984note}. Jackknife is known to be inconsistent for estimating the asymptotic variance of median~\citep{martin1990using}. 

Another standard method for constructing a confidence interval for $\theta_0$ when no assumptions are given on $F$ is to consider the closed intervals whose end-points are order statistics i.e.\ intervals of the form $[X_{(r)},X_{(s)}]$. One of the first papers that used order statistics to construct distribution-free finite sample confidence intervals for quantiles is \cite{scheffe1945non}. It is well known (see \cite{guilbaud1979interval}, \cite{david2004order}) that for any distribution function $F$,
\begin{equation}\label{eq:coverage-lower-bound-Guilbaud}
    \mathbb{P}(\theta_0 \in [X_{(r)},X_{(s)}]) ~\geq~ \sum_{i=r}^{s-1} \dbinom{n}{i} 2^{-n},
\end{equation}
and the equality holds if and only if $F$ is continuous at the point $\theta_0$. It is generally recommended to take $s=n-r+1$ and then $r$ is chosen so that the required confidence coefficient is achieved, but this choice of $s$ need not necessarily yield the shortest interval. This interval shall be referred to as the $S$-interval in this work. 

Note that the right-hand side of~\eqref{eq:coverage-lower-bound-Guilbaud} can only take a finite set of values when $s, r$ are varied and hence, cannot be set equal to any arbitrary $1-\alpha$. For this reason, alternative distribution-free confidence intervals are studied in the literature. For example, it is possible to construct a confidence interval of median by taking a confidence interval of the form $[Y_{(r)}, Y_{(s)}]$ where $Y_i$'s are the pairwise averages $(X_j+X_k)/2$ where $1 \leq j,k \leq n$. The order statistics $Y_{(r)}$ and $Y_{(s)}$ are chosen in such a way that the required confidence coefficient is attained. This confidence interval originates from the well-known Wilcoxon signed-rank test. For details regarding this work, refer to \citet[Sec. $5.7$]{gibbons2014nonparametric}, \cite{lehmann1963nonparametric}, and \cite{noether1967wilcoxon}. This interval shall be referred to as the $W$-interval in this work.

Another approach in this direction was made by \cite{noether1973some} towards constructing a confidence interval for the center of a {\em symmetric} distribution. He considered a family of confidence intervals, $\{(0.5(X_{(g)}+X_{(n+1-h)}), 0.5(X_{(h)}+X_{(n+1-g)}))| 1\leq g <h, \text{  } g+h\leq n+1\}$, whose end-points are averages of order statistics and then considered the interval with the shortest expected length (referred to as the $(g,h)$-confidence interval). He also compared the asymptotic length of the $(g,h)$-confidence interval with $S$-, $W$-, and the Wald confidence interval for three different distributions namely, normal, logistic, and the double-exponential distribution. He observed that the Wald confidence interval is optimal for normal distributions (i.e.\ it has the shortest asymptotic length), the $W$-interval is optimal for logistic distributions, and the $S$-interval is optimal for double exponential distributions. 

\cite{guilbaud1979interval} considered similar intervals to construct a confidence interval of the median for general (not necessarily symmetric) distributions. He considered intervals of the form $[(X_{(r)}+X_{(r+t)})/2,(X_{(s-t)}+X_{(s)})/2]$ where $1 \leq r \leq s=n-r+1$ and $0 \leq t \leq s-r$. He provided a lower bound, which does not depend on the distribution function $F$, on the probability that $\theta_0$ lies in the aforementioned intervals. Therefore, this result gives an idea about the probability of $\theta_0$ being in the mentioned confidence interval in the general case when no assumptions are made about $F$. Other works in this direction include \cite{guilbaud2006confidence}, where weighted means of adjacent sign intervals for interval estimation of median i.e.\ intervals of the form $[wX_{(r)}+(1-w)X_{(r+1)},(1-w)X_{(s-1)}+wX_{(s)}]$ with $0\leq w \leq 1$ and $1 \leq r < s=n-r+1$ are considered. He obtained the best possible lower bound (which does not depend on the underlying distribution) for the coverage probability of this confidence interval for the general case (without any assumptions on the distributions), for the class of all symmetric distributions, and for the class of all symmetric and unimodal distributions. \cite{nagaraja2020distribution} discusses various distribution-free methods of constructing approximate confidence intervals for quantiles. The main contribution of their work is however the usage of asymmetric spacings of order statistics to construct confidence intervals for quantiles and in particular of median (see Proposition-$5$ of the paper). Their work is an extension of \cite{goh2004smoothing} which uses symmetric spacings to estimate the population median.

Apart from these approaches, one can also obtain a confidence interval of median through bootstrap methods. A popular method of obtaining a confidence interval of the median is through classical bootstrap. \cite{bickel1981some} prove the consistency of the bootstrap under the standard conditions of $F'(\theta_0) > 0$; also see Section $3.4.3$ of \cite{bose2018u} for more details. Another method of interval estimation of the median is by combining information from subsamples as discussed in \cite{knight2002second}. They obtain an estimate of the population median by taking weighted averages of sample medians from non-overlapping subsamples or from balanced overlapping subsamples. Theorem 1 of \cite{knight2002second} provides a result on the distributional convergence of the properly scaled and centered version of this estimate of population median, which we can use to obtain a confidence interval of the required confidence coefficient.

However in all the above works, very little seems to have been done in the direction of analyzing the width of the confidence interval beyond the standard conditions. Under the standard conditions, \citet[Sec. 2.6.3]{serfling2009approximation} analyzes the width of confidence intervals based on order statistics. Except for the methods based on distribution-free lower bounds on coverage, all other methods are not even consistent (i.e., do not have valid coverage) under non-standard conditions. The limiting distribution is non-normal and the rate of convergence of the sample median can be different from $n^{1/2}$ under non-standard conditions; see, for example,~\cite{knight1998bootstrapping,knight1998delta}. In fact,~\cite{Smirnov52} characterized all possible limiting distributions of the sample quantiles; also, see~\cite{knight2002limiting} for a detailed discussion. Moreover, it is well-known that classical Efron's bootstrap is inconsistent under non-standard conditions as shown in~\cite{huang1996bootstrapping} and~\cite{knight1998bootstrapping}. The consistency of $m$-of-$n$ bootstrap with $m/n\to0$ follows from the results of~\cite{huang1996bootstrapping}, but only in the case when the distribution function has a finite non-zero left and right derivatives at $\theta_0$. Under more complicated non-standard conditions, we consider, both $m$-of-$n$ bootstrap and subsampling are not readily applicable because the rate of convergence is usually unknown. It may be worthwhile to mention here that subsampling with an estimated rate of convergence can be applied as suggested in~\cite{bertail1999subsampling}.

It is worth noting that one can also obtain a distribution-free finite-sample valid confidence interval of median using suitable concentration bounds (such as the Hoeffding's inequality). Refer to Proposition~\ref{prop:hoeff} to see such a distribution-free finite sample confidence interval $\widehat{\mathrm{CI}}_{n,\alpha}^{\mathtt{Hoeff}}$ of median based on Hoeffding's inequality and its coverage guarantee. The confidence interval $\widehat{\mathrm{CI}}_{n,\alpha}^{\mathtt{Hoeff}}$ can be conservative as it ignores the exact binomial distribution of $n\widehat{F}_n(\theta_0)$, but serves as a good approximation to the confidence interval based on the exact binomial distribution.

\section{Auxiliary Results}
\label{appendix}
\begin{lemma}
\label{lem:kl_bound}
We have the following upper and lower bound for $H(x,1/2) =  x\ln(2x) + (1-x) \ln(2(1-x))$ for $x \in [0,1]$,
\begin{equation} 
    2\left( x - \frac{1}{2}\right)^2 + \frac{4}{3} \left(x - \frac{1}{2} \right)^4 ~\leq~ H\left(x,\frac{1}{2} \right) ~\leq~ 2\left( x - \frac{1}{2}\right)^2 + (16\ln(2) - 8) \left(x - \frac{1}{2} \right)^4.
\end{equation}
\end{lemma}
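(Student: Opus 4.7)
I would begin by substituting $t = x - \tfrac{1}{2}$, so that $x = \tfrac{1}{2} + t$ and $1-x = \tfrac{1}{2} - t$ with $t \in [-\tfrac{1}{2}, \tfrac{1}{2}]$. Setting $f(t) := H(\tfrac{1}{2}+t, \tfrac{1}{2})$, the desired inequalities become
\[
2t^2 + \tfrac{4}{3}t^4 \;\le\; f(t) \;\le\; 2t^2 + (16\ln 2 - 8)\,t^4.
\]
Note first that $f$ is even: $f(-t) = f(t)$. I would rewrite $f$ by splitting
\[
f(t) \;=\; \tfrac{1}{2}\bigl[\ln(1+2t) + \ln(1-2t)\bigr] + t\bigl[\ln(1+2t) - \ln(1-2t)\bigr]
= \tfrac{1}{2}\ln(1-4t^2) + t\,\ln\!\left(\tfrac{1+2t}{1-2t}\right).
\]

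Next, I would expand both logarithmic terms in their Taylor series on $|t| < \tfrac{1}{2}$. Using $\ln(1-u) = -\sum_{k\ge 1} u^k/k$ and the standard series for $\ln\bigl(\tfrac{1+u}{1-u}\bigr) = 2\sum_{k\ge 0} u^{2k+1}/(2k+1)$, a short computation collapses the two contributions into the single power series
\[
f(t) \;=\; \sum_{k=1}^{\infty} \frac{4^k\, t^{2k}}{2k(2k-1)}
\;=\; 2t^2 + \tfrac{4}{3}t^4 + \tfrac{32}{15}t^6 + \tfrac{32}{7}t^8 + \cdots,
\]
which has strictly positive coefficients. The lower bound then follows at once on $|t| < \tfrac{1}{2}$ by dropping all terms of order $t^6$ and above; continuity of $f$ (with $f(\pm\tfrac{1}{2}) = \ln 2$) extends the inequality to the closed interval.

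For the upper bound, I would consider the ratio $g(t) := (f(t) - 2t^2)/t^4$ for $t \neq 0$ and set $g(0) := 4/3$. From the series above,
\[
g(t) \;=\; \sum_{k=2}^{\infty} \frac{4^k\, t^{2(k-2)}}{2k(2k-1)},
\]
which is a power series in $t^2$ with nonnegative coefficients. Hence $g$ is nondecreasing in $|t|$ on $[0,\tfrac{1}{2}]$, so $g(t) \le g(\tfrac{1}{2})$. A direct calculation gives $f(\tfrac{1}{2}) = \ln 2$ (using $H(1,\tfrac{1}{2}) = \ln 2$), and therefore
\[
g(\tfrac{1}{2}) \;=\; \frac{\ln 2 - \tfrac{1}{2}}{(\tfrac{1}{2})^4} \;=\; 16\ln 2 - 8,
\]
which is exactly the constant in the claimed upper bound. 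Multiplying through by $t^4$ yields the upper inequality on $(-\tfrac{1}{2},\tfrac{1}{2}) \setminus \{0\}$, and the remaining cases $t = 0$ and $t = \pm\tfrac{1}{2}$ follow by direct inspection.

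\paragraph{Main obstacle.} The only subtlety is justifying the power-series manipulation at the endpoints $t = \pm\tfrac{1}{2}$, where the original logarithms blow up but the series $\sum 1/[2k(2k-1)]$ still converges (to $\ln 2$, by telescoping). I would handle this by working on $|t|<\tfrac{1}{2}$ via the series and passing to the closed interval by continuity of $f$, rather than invoking Abel's theorem explicitly. Everything else reduces to elementary manipulation of two convergent Taylor series.
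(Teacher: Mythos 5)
Your proof is correct, and it takes a genuinely different route from the paper's. The paper defines the difference $f(x) = 2(x-\tfrac12)^2 + (16\ln 2 - 8)(x-\tfrac12)^4 - \ln 2 - x\ln x - (1-x)\ln(1-x)$ and runs a fairly intricate calculus argument: it computes $f^{(1)}$ through $f^{(4)}$ (and up to $f^{(6)}$ for the lower bound), shows the low-order derivatives vanish at $x=\tfrac12$, and then controls the sign of $f^{(1)}$ on $[\tfrac12,1]$ by a root-counting argument — the key observation being that $f^{(2)}(x)=0$ reduces to a biquadratic with at most four real roots, which caps the number of critical points of $f$; the endpoint value $f(1)=0$ then closes the argument. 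Your approach instead expands $H(\tfrac12+t,\tfrac12)$ as the power series $\sum_{k\ge 1} 4^k t^{2k}/(2k(2k-1))$, whose coefficients are all positive: the lower bound is then immediate by truncation (and shows $\tfrac43$ is the exact Taylor coefficient, hence optimal), while the upper bound follows because the ratio $(f(t)-2t^2)/t^4$ is a power series in $t^2$ with nonnegative coefficients, hence nondecreasing in $|t|$ and maximized at $t=\tfrac12$, where it equals $16(\ln 2 - \tfrac12) = 16\ln 2 - 8$. Your series identity checks out (the $k=1,2$ terms are $2t^2$ and $\tfrac43 t^4$, and the endpoint sum $\sum_k 1/(2k(2k-1))$ is the alternating harmonic series, equal to $\ln 2 = H(1,\tfrac12)$), and your handling of the endpoints by continuity is adequate. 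What your argument buys is transparency: it explains at a glance why both constants are sharp (the lower one at $t=0$, the upper one at $t=\pm\tfrac12$) and avoids the delicate bookkeeping of roots of high-order derivatives; the paper's argument, by contrast, generalizes more mechanically to perturbations of the candidate bounding polynomial but is considerably harder to verify.
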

\begin{proof}[Proof of \Cref{lem:kl_bound}]
We shall first prove the upper bound for $H(x,1/2)$. Let $f(\cdot)$ be the following function,
\[
f(x) = 2(x - 0.5)^2 + (16\ln(2) - 8)(x - 0.5)^4 - \ln(2) - x\ln(x) - (1-x)\ln(1-x).
\]
Since $f(\cdot)$ is symmetric about $0.5$, it is enough to show that $f(x) \geq 0$ for all $x \geq 0.5$. We note that $f(0.5) = 0$. We compute the derivatives of various order of the function $f(\cdot)$,
\[
\begin{cases}
    f^{(1)}(x) &= 4(x - 0.5) + (64\ln(2)-32)(x-0.5)^3 - (\ln(x) - \ln(1-x)), \\
    f^{(2)}(x) &= 4 + (192\ln(2) - 96)(x - 0.5)^2 - ((1/x)+(1/(1-x))),\\
    f^{(3)}(x) &= (384\ln(2) - 192)(x-0.5) -(-(1/x^2)+(1/(1-x)^2)),\\
    f^{(4)}(x) &= (384\ln(2) - 192) - 2((1/x^3)+(1/(1-x)^3)).
\end{cases}
\]
We observe that $f^{(1)}(0.5)=f^{(2)}(0.5)=f^{(3)}(0.5)=0$ and $f^{(4)}(0.5)=(384\ln(2) - 192)-32>0$. Since $f^{(4)}(x)$ is a continuous function on $(0,1)$ there exists $\epsilon >0$ such that $f^{(4)}(x) > 0$ for all $x \in (0.5,0.5+\epsilon)$. This implies that $f^{(3)}(\cdot)$ is a strictly increasing function on $\left[0.5,0.5+\epsilon \right)$. Hence $f^{(3)}(x)>f^{(3)}(0.5)=0$ for all $x \in (0.5,0.5+\epsilon)$. Repeating the same argument two more times we obtain that $f^{(1)}(x) > 0$ for all $x \in (0.5,0.5+\epsilon)$. However we note that $\lim_{x \xrightarrow{} 1}f^{(1)}(x) = -\infty$. Since $f^{(1)}(x)$ is a continuous function, by intermediate value property we can say that there exists $0.5<y<1$ such that $f^{(1)}(y)=0$. Let $0.5<z<1$ be the first root of $f^{(1)}(x)=0$. $f^{(1)}(\cdot)$ is continuous on the compact interval $[0.5,z]$ and hence attains maximum in this interval. The maximum is not attained at end-points as $f^{(1)}(0.5+\epsilon/2)>f^{(1)}(0.5)=f^{(1)}(z)=0$. Suppose $f^{(1)}(\cdot)$ attains the maximum at $u \in (0.5,z)$. This implies that $f^{(2)}(u) = 0$. Since $f^{(2)}(\cdot)$ is symmetric about $0.5$, we also have $f^{(2)}(1-u)=0$. Moreover $0.5$ is a double root of $f^{(2)}(x)=0$ because of the symmetry of $f^{(2)}(\cdot)$ about $0.5$ and $f^{(2)}(0.5)=0$. Therefore we have got four real roots of $f^{(2)}(x)=0$ viz $0.5,0.5,u,1-u$. We now note that the equation $f^{(2)}(x)=0$ is essentially the following biquadratic equation,
\[
4x(1-x)+ (192\ln(2) - 96)(x - 0.5)^2x(1-x)-1 = 0,
\]
and hence $0.5,0.5,u,1-u$ is the exhaustive set of roots of $f^{(2)}(x)=0$. If there exists $z<z'<1$ such that $f^{(1)}(z')=0$, then proceeding as before we can obtain $u' \in (z,z')$ such that $f^{(2)}(u')=0$. However this is not possible as $f^{(2)}(\cdot)$ can admit at most four real roots. Hence $f^{(1)}(x)=0$ has only two roots in $[0.5,1]$, one at $0.5$ and the other at $z$. We can say the following about $f^{(1)}(\cdot)$,
\[
\begin{cases}
f^{(1)}(x) & \geq 0 \quad \mbox{on}\quad x \in [0.5,z],\\
f^{(1)}(x) &< 0 \quad \mbox{on} \quad x \in \left(z,1\right].
\end{cases}
\]
Therefore $f(\cdot)$ increases from $0.5$ to $z$ and thereafter decreases from $z$ to $1$. For $x \in [0.5,z]$, $f(x) \geq f(0.5) = 0$. For $x \in \left(z,1\right]$, $f(x) \geq f(1) = 0.5 + (1/16)(16\ln(2)-8)-\ln(2)=0$. This proves the upper bound for $H(x,1/2)$.

For proving the lower bound on $H(x,1/2)$, we proceed in the same way as for proving the upper bound. We define $f(\cdot)$ the same way as before with $(16\ln(2)-8)$ replaced by $4/3$,
\[
f(x) = 2(x - 0.5)^2 + (4/3)(x - 0.5)^4 - \ln(2) - x\ln(x) - (1-x)\ln(1-x).
\]
Note that this time we have, $f^{(1)}(0.5)=f^{(2)}(0.5)=f^{(3)}(0.5)=f^{(4)}(0.5)=f^{(5)}(0.5)=0$. We also have $f^{(6)}(0.5)=-24(32+32)<0$. Therefore like before, we can find $\epsilon>0$ such that $f^{(1)}(x)<0$ for all $x \in (0.5,0.5+\epsilon)$. From the previous analysis, we know that $f^{(1)}(\cdot)$ can not have more that one root in $\left(0.5,1\right]$. If $f^{(1)}(x)$ does not have any root in $\left(0.5,1\right]$, we can say that $f^{(1)}(x) \leq 0$ for all $x \in [0.5,1]$ (as otherwise by intermediate value property we can find a root of $f^{(1)}(\cdot)$ in $\left(0.5,1\right]$). Therefore $f(\cdot)$ is a decreasing function in $[0.5,1]$ implying that $f(x) \leq f(0.5) = 0$ for all $x \in [0.5,1]$. If $f^{(1)}(x)$ has a root $z \in \left(0.5,1\right]$, we observe that $f^{(1)}(x)<0$ in both the intervals $(0.5,z)$ and $\left(z,1\right]$. Therefore $f^{(1)}(x) \leq 0$ for all $x \in [0.5,1]$ implying that $f(x) \leq f(0.5) = 0$ for all $x \in [0.5,1]$. This completes the proof of the lemma.  
\end{proof}

\begin{prop}
\label{prop:hoeff}
$X_1,\cdots,X_n$ are independently distributed random variables from distributions $F_1,\cdots, F_n$ (respectively) with median $\theta_0$ i.e.\ $\mathbb P(X_i \geq \theta_0) \geq 1/2$ and $\mathbb P(X_i \leq \theta_0) \geq 1/2$ for all $i \in [n]$. Then we have the following for all sample sizes $n \geq 1$ and for all  distribution functions $\{F_i\}_{i = 1}^n$ with median $\theta_0$, 
\begin{equation}
    \mathbb{P}\left(\theta_0\in\widehat{\mathrm{CI}}_{n,\alpha}^{\mathtt{Hoeff}}\right) \ge 1 - \alpha,
\end{equation}
where,
\begin{equation}\label{eq:DKW-CI-median}
\begin{split}
\widehat{\mathrm{CI}}_{n,\alpha}^{\mathtt{Hoeff}} &:= \left\{\theta\in\mathbb{R}:\,\sum_{i = 1}^n \textbf{1}\{X_i \leq \theta \} \ge \frac{n}{2} -  \sqrt{\frac{n\log(2/\alpha)}{2}} \right\}  \\
& \quad \quad \bigcap \left\{\theta\in\mathbb{R}:\,\sum_{i = 1}^n \textbf{1}\{X_i \geq \theta\} \ge  \frac{n}{2} - \sqrt{\frac{n\log(2/\alpha)}{2}} \right\},\\
\end{split}
\end{equation}
\end{prop}
\begin{proof}[Proof of Proposition~\ref{prop:hoeff}]
Hoeffding's inequality implies the following two bounds for all sample sizes $n \geq 1$ and for any $\lambda > 0$,
\begin{equation*}
    \begin{split}
        \mathbb{P}\left(n^{1/2}\left(\mathbb P(X \leq \theta_0) - \frac{1}{n}\sum_{i = 1}^n \textbf{1}\{X_i \leq \theta_0 \} \right) \ge \lambda\right) \leq & e^{-2\lambda^2}, \\
        \mathbb{P}\left(n^{1/2}\left(\mathbb P(X \geq \theta_0) - \frac{1}{n}\sum_{i = 1}^n \textbf{1}\{X_i \geq \theta_0 \} \right) \ge \lambda\right) \leq & e^{-2\lambda^2}. \\
    \end{split}
\end{equation*}
Setting $\lambda = \sqrt{\log(2/\alpha)/2}$ and using $\mathbb P(X \leq\theta_0)\geq1/2$ and $\mathbb P(X \leq\theta_0)\geq1/2$ we get that,
\begin{equation*}
    \begin{split}
        \mathbb{P}\left(\sum_{i = 1}^n \textbf{1}\{X_i \leq \theta_0 \} \le   (n/2) - \sqrt{n\log(2/\alpha)/2} \right) \leq & \alpha/2, \\
        \mathbb{P}\left(\sum_{i = 1}^n \textbf{1}\{X_i \geq \theta_0 \} \le  (n/2) -\sqrt{n\log(2/\alpha)/2} \right)  \leq & \alpha/2. \\
    \end{split}
\end{equation*}
Combining the above two probability inequalities through union bound we have the following, 
\begin{equation*}
    \begin{split}
        \mathbb{P}\left(\left\{\sum_{i = 1}^n \textbf{1}\{X_i \leq \theta_0 \} \le   (n/2) - \sqrt{n\log(2/\alpha)/2} \right\} \bigcup \left\{ \sum_{i = 1}^n \textbf{1}\{X_i \geq \theta_0 \} \le   (n/2) - \sqrt{n\log(2/\alpha)/2}\right\}\right) \leq \alpha . 
    \end{split}
\end{equation*}
This implies the following upper bound on the miscoverage probability, 
\[
\mathbb{P}\left(\theta_0\notin\widehat{\mathrm{CI}}_{n,\alpha}^{\mathtt{Hoeff}}\right) \leq \alpha.
\]
This completes the proof of the proposition. 
\end{proof}

\begin{prop}
\label{prop:mills_ineq}
We have the following inequality for $0<\alpha<1$,
\begin{equation}
    z_{\alpha/2} \leq \sqrt{2\log(2/\alpha)}.
\end{equation}
\end{prop}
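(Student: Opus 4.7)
By definition, $z_{\alpha/2}$ is the number satisfying $\mathbb{P}(Z > z_{\alpha/2}) = \alpha/2$ for $Z \sim N(0,1)$. Since the survival function $t \mapsto \mathbb{P}(Z > t)$ is strictly decreasing on $\mathbb{R}$, the claim $z_{\alpha/2} \le \sqrt{2\log(2/\alpha)}$ is equivalent to showing
\[
\mathbb{P}\bigl(Z > \sqrt{2\log(2/\alpha)}\bigr) \;\le\; \frac{\alpha}{2}.
\]
So the whole proposition reduces to a standard sub-Gaussian tail bound applied at the specific value $t = \sqrt{2\log(2/\alpha)}$.

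The plan is to establish the Chernoff-type inequality $\mathbb{P}(Z > t) \le e^{-t^2/2}$ for every $t \ge 0$. I would prove this in one line by optimizing the exponential Markov bound: for any $\lambda > 0$,
\[
\mathbb{P}(Z > t) \;\le\; e^{-\lambda t}\,\mathbb{E}[e^{\lambda Z}] \;=\; e^{-\lambda t + \lambda^2/2},
\]
and then choosing $\lambda = t$ yields $\mathbb{P}(Z > t) \le e^{-t^2/2}$. (Alternatively, one can sharpen this to $\tfrac{1}{2}e^{-t^2/2}$ by checking that $f(t) := \tfrac{1}{2}e^{-t^2/2} - \mathbb{P}(Z > t)$ vanishes at $0$ and at $\infty$ and has a single interior maximum, but the weaker bound already suffices.)

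Substituting $t = \sqrt{2\log(2/\alpha)}$ (which is non-negative because $\alpha \in (0,1)$ forces $\log(2/\alpha) > 0$) gives
\[
\mathbb{P}\bigl(Z > \sqrt{2\log(2/\alpha)}\bigr) \;\le\; \exp\!\bigl(-\log(2/\alpha)\bigr) \;=\; \frac{\alpha}{2},
\]
which by the monotonicity argument above yields $z_{\alpha/2} \le \sqrt{2\log(2/\alpha)}$, as desired. There is no genuine obstacle in this argument; the only thing to be careful about is making explicit that $\alpha \in (0,1)$ guarantees the argument of the square root is non-negative so that the Chernoff bound applies.
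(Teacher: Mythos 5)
Your proof is correct, but it takes a genuinely different route from the paper's. The paper first establishes the Mill's-ratio inequality $1-\Phi(x) \le \phi(x)/x$ (by integrating the bound $\phi(t) \le (1+t^{-2})\phi(t)$ and recognizing the right side as an exact derivative), then evaluates at $x=\sqrt{2\log(2/\alpha)}$ to get $1-\Phi(x) \le \tfrac{1}{2\sqrt{\pi\log(2/\alpha)}}\,\tfrac{\alpha}{2} < \tfrac{\alpha}{2}$, and concludes by monotonicity of $\Phi$. You instead use the sub-Gaussian Chernoff bound $\mathbb{P}(Z>t)\le e^{-\lambda t+\lambda^2/2}\big|_{\lambda=t}=e^{-t^2/2}$, which at $t=\sqrt{2\log(2/\alpha)}$ evaluates to exactly $\alpha/2$, and then invoke the same monotonicity step. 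Your version is slightly cleaner for this particular target: the Chernoff bound hits $\alpha/2$ on the nose, whereas the paper's Mill's-ratio bound carries the extra prefactor $\bigl(2\sqrt{\pi\log(2/\alpha)}\bigr)^{-1}$ and so implicitly relies on the (true but unremarked) fact that this prefactor is less than $1$ for all $\alpha\in(0,1)$. The paper's bound is the sharper of the two for large $t$, but that sharpness is not needed here; both arguments are elementary and both correctly reduce the claim to a tail estimate plus strict monotonicity of the Gaussian survival function.
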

\begin{proof}[Proof of Proposition~\ref{prop:mills_ineq}]
To prove this, we shall first show that, $x(1-\Phi(x)) \leq \phi(x)$. Consider the function, $f(x)=-(\phi(x)/x)$. We observe that,
\begin{equation}
    \frac{df}{dx} = \left( 1+ \frac{1}{x^2}\right)\phi(x).
\end{equation}
We perform the following computation,
\begin{equation}
    \begin{split}
        1-\Phi(x) &= \int_{t=x}^\infty \phi(x) dx\\
        & \leq \int_{t=x}^\infty \left( 1+ \frac{1}{x^2}\right)\phi(x) dx \\
        & = \int_{t=x}^\infty \frac{df}{dx} dx \\
        &= \lim_{t \xrightarrow{} 0}f(t) - f(x) \\
        &= -f(x) \\
        &= \frac{\phi(x)}{x}.
    \end{split}
\end{equation}
Thus we have shown that,
\begin{equation}
    1-\Phi(x) \leq \frac{\phi(x)}{x}.
\end{equation}
Substituting $\sqrt{2\log(2/\alpha)}$ in place of $x$ in the above equation, we have,
\begin{equation}
    \begin{split}
        1-\Phi(\sqrt{2\log(2/\alpha)}) &\leq \frac{\phi(\sqrt{2\log(2/\alpha)})}{\sqrt{2\log(2/\alpha)}} \\
        &= \frac{1}{2\sqrt{\pi\log(2/\alpha)}}\exp (-\log (2/\alpha)) \\
        &< \alpha/2 \\
        &= 1-\Phi(z_{\alpha/2}).
    \end{split}
\end{equation}
The above deduction implies that $\Phi(\sqrt{2\log(2/\alpha)}) \geq \Phi(z_{\alpha/2})$ and since $\Phi(.)$ is a monotonically increasing function, we can say that $ z_{\alpha/2} \leq \sqrt{2\log(2/\alpha)}$. This completes the proof. 
\end{proof}

\begin{prop}
\label{prop:holder}
Let $\rho\ge 1$ and define
\[
h(x):=|x|^{1/\rho}\,\mbox{sgn}(x), \qquad x\in\mathbb{R}.
\]
Then for all $x,y\in\mathbb{R}$,
\[
|h(x)-h(y)|
\;\le\;
2^{\,1-\frac{1}{\rho}}\,
|x-y|^{\frac{1}{\rho}}.
\]
Equivalently, $h$ is H\"older continuous of order $1/\rho$ with H\"older constant
$2^{\,1-\frac{1}{\rho}}$.
\end{prop}

\begin{proof}[Proof of Proposition~\ref{prop:holder}]
Set $p:=1/\rho\in(0,1]$. We show that for all $x,y\in\mathbb{R}$,
\[
|\,|x|^{p}\mbox{sgn}(x)-|y|^{p}\mbox{sgn}(y)\,|
\le 2^{1-p}|x-y|^{p}.
\]
We split into two cases.

\medskip
\noindent\textit{Case} 1: $xy\ge 0$ (same sign, or one is zero).
Then $\mbox{sgn}(x)=\mbox{sgn}(y)$ (possibly $0$), and therefore
\[
|h(x)-h(y)|=\big||x|^{p}-|y|^{p}\big|.
\]
For $p\in(0,1]$, the function $t\mapsto t^{p}$ is concave on $[0,\infty)$, hence
subadditive: for all $u,v\ge 0$,
\begin{equation}\label{eq:subadd}
(u+v)^{p}\le u^{p}+v^{p}.
\end{equation}
Assume without loss of generality that $|x|\ge |y|$. Applying
\eqref{eq:subadd} with $u=|x|-|y|$ and $v=|y|$ gives
\[
|x|^{p} = \big((|x|-|y|)+|y|\big)^{p}\le (|x|-|y|)^{p}+|y|^{p},
\]
so $\big||x|^{p}-|y|^{p}\big|\le (|x|-|y|)^{p}$. Using
$||x|-|y||\le |x-y|$ yields
\[
|h(x)-h(y)|
\le
\big||x|-|y|\big|^{p}
\le
|x-y|^{p}
\le
2^{1-p}|x-y|^{p},
\]
since $2^{1-p}\ge 1$.

\medskip
\noindent\textit{Case} 2: $xy<0$ (opposite signs).
Without loss of generality, let $x\ge 0$ and $y\le 0$. Then
$h(x)=x^{p}$ and $h(y)=-|y|^{p}$, hence
\[
|h(x)-h(y)|=x^{p}+|y|^{p}.
\]
Again by concavity of $t\mapsto t^{p}$ on $[0,\infty)$, Jensen's inequality gives
\[
\frac{x^{p}+|y|^{p}}{2}
\le
\left(\frac{x+|y|}{2}\right)^{p}.
\]
Multiplying both sides by $2$ yields
\[
x^{p}+|y|^{p}\le 2^{1-p}(x+|y|)^{p}.
\]
Since $|x-y|=x+|y|$ in this case, we conclude that
\[
|h(x)-h(y)|\le 2^{1-p}|x-y|^{p}.
\]

\medskip
Combining the two cases completes the proof.
\end{proof}

\begin{theorem}
    \label{thm:quantile_coverage}
Suppose $X_1,X_2,\cdots,X_n \stackrel{iid}{\sim} F$ and let $\theta_h$ be the $(0.5+h)$-th population quantile of the distribution $F$ i.e.\ if $X \sim F$ then $\mathbb{P}(X \leq \theta_h) \geq 0.5 +h$ and $\mathbb{P}(X \geq \theta_h) \geq 0.5-h$. If $\widehat{\mathrm{CI}}_{n,\alpha}$ is the confidence interval returned by \Cref{alg:proposed-conf-int} then
\[
\inf_{h\in[-\varepsilon, \varepsilon]}\,\mathbb{P}(\theta_h \in \widehat{\mathrm{CI}}_{n,\alpha})  ~\geq~ 1- \alpha - 2\alpha n(n-1)(1+2\varepsilon)^{n-2}\varepsilon^2.
\]

\end{theorem}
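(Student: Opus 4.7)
The idea is to reduce $\{\theta_h\notin\widehat{\mathrm{CI}}_{n,\alpha}\}$ to two binomial tail events that are close to the baseline $\mathrm{Bin}(n,1/2)$, and then show that the $h\mapsto -h$ symmetrization of those tails kills the linear-in-$h$ deviation, leaving only an $O(\varepsilon^{2})$ correction carrying an extra factor of $\alpha$. Set $k:=\lfloor n/2\rfloor-c_{n,\alpha}$. By Part~\ref{conclu:form-of-conf-interval} of \Cref{thm:assym_of_cn}, whenever $n\ge\log_{2}(2/\alpha)$ (else $\widehat{\mathrm{CI}}_{n,\alpha}=\mathbb{R}$ and the claim is vacuous), $\theta_h\in\widehat{\mathrm{CI}}_{n,\alpha}$ iff $\sum_{i}\mathbf{1}\{X_i\le\theta_h\}\ge k$ and $\sum_{i}\mathbf{1}\{X_i\ge\theta_h\}\ge k$. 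Writing $p_{\le}:=\mathbb{P}(X\le\theta_h)$ and $p_{<}:=\mathbb{P}(X<\theta_h)$, the quantile hypothesis gives $p_{\le}\ge 1/2+h$ and $1-p_{<}\ge 1/2-h$, while the two counts are $\mathrm{Bin}(n,p_{\le})$ and $\mathrm{Bin}(n,1-p_{<})$ respectively. A union bound combined with the monotonicity of $p\mapsto \mathbb{P}(\mathrm{Bin}(n,p)<k)$ yields
\begin{equation*}
\mathbb{P}(\theta_h\notin\widehat{\mathrm{CI}}_{n,\alpha})\;\le\;\tilde g(\tfrac{1}{2}+h)+\tilde g(\tfrac{1}{2}-h),\qquad \tilde g(p):=\mathbb{P}(\mathrm{Bin}(n,p)<k),
\end{equation*}
where by definition of $c_{n,\alpha}$ we have $\tilde g(1/2)\le \alpha/2$.

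The decisive step is to bound $\tilde g(\tfrac{1}{2}+h)+\tilde g(\tfrac{1}{2}-h)-2\tilde g(\tfrac{1}{2})$ by something of order $\alpha h^{2}$. I would work under the baseline measure via the likelihood ratio $\phi_j(h):=(1+2h)^{j}(1-2h)^{n-j}$; the identity $(1/2\pm h)^{j}(1/2\mp h)^{n-j}=(1/2)^{n}\phi_j(\pm h)$ gives $\tilde g(1/2\pm h)=\mathbb{E}_{1/2}[\mathbf{1}\{Y\le k-1\}\phi_Y(\pm h)]$ for $Y\sim\mathrm{Bin}(n,1/2)$. A second-order Taylor expansion $\phi_j(h)=1+2(2j-n)h+R_j(h)$ with remainder $R_j(h)=\int_{0}^{h}(h-s)\phi_j''(s)\,ds$ has an odd linear-in-$h$ term; forming $\phi_j(h)+\phi_j(-h)$ kills it, giving
\begin{equation*}
\tilde g(\tfrac{1}{2}+h)+\tilde g(\tfrac{1}{2}-h)-2\tilde g(\tfrac{1}{2})=\mathbb{E}_{1/2}\bigl[\mathbf{1}\{Y\le k-1\}\bigl(R_Y(h)+R_Y(-h)\bigr)\bigr].
\end{equation*}

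To close the argument I would bound $|\phi_j''(s)|$ uniformly in $|s|\le\varepsilon$. Differentiating twice gives, with $u=1+2s$ and $v=1-2s$,
\[
\phi_j''(s)=4\bigl[j(j-1)u^{j-2}v^{n-j}-2j(n-j)u^{j-1}v^{n-j-1}+(n-j)(n-j-1)u^{j}v^{n-j-2}\bigr].
\]
Since $u,v\in[1-2\varepsilon,1+2\varepsilon]$, every factor $u^{a}v^{b}$ with $a+b=n-2$ is at most $(1+2\varepsilon)^{n-2}$, and the algebraic identity $j(j-1)+2j(n-j)+(n-j)(n-j-1)=n(n-1)$ yields $|\phi_j''(s)|\le 4n(n-1)(1+2\varepsilon)^{n-2}$ independently of $j$. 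Hence $|R_Y(\pm h)|\le 2n(n-1)(1+2\varepsilon)^{n-2}h^{2}$, and taking expectation against $\mathbf{1}\{Y\le k-1\}$ uses $\mathbb{P}(Y\le k-1)=\tilde g(1/2)\le\alpha/2$ to give the bound $2\alpha n(n-1)(1+2\varepsilon)^{n-2}\varepsilon^{2}$ on the symmetrized difference. Adding $2\tilde g(1/2)\le\alpha$ delivers the claim. The main obstacle is recognizing that both the $O(\varepsilon^{2})$ rate and the $\alpha$ prefactor on the correction require the cancellation of the linear term in $h$; attempting to bound $\tilde g(1/2\pm h)-\tilde g(1/2)$ in isolation, e.g.\ by the mean value theorem, loses both features and produces an $O(\varepsilon)$ bound without the $\alpha$ factor.
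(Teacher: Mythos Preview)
Your proof is correct and follows essentially the same route as the paper: both reduce the miscoverage to the pair of binomial tails $\mathbb{P}(\mathrm{Bin}(n,1/2\pm h)<k)$, rewrite these as expectations under $\mathrm{Bin}(n,1/2)$ against the likelihood ratio $\phi_j(h)=(1+2h)^j(1-2h)^{n-j}$, Taylor-expand to second order, and exploit that the linear term $2(2j-n)h$ cancels upon summing the $+h$ and $-h$ contributions, so only the second-order remainder (bounded via $|\phi_j''|\le 4n(n-1)(1+2\varepsilon)^{n-2}$) survives and is weighted by $\mathbb{P}(Y\le k-1)\le\alpha/2$. Your use of the integral remainder together with the identity $j(j-1)+2j(n-j)+(n-j)(n-j-1)=n(n-1)$ is a slightly cleaner way to obtain the same uniform-in-$j$ bound on $|\phi_j''|$, but the underlying mechanism is identical.
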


\begin{proof}[Proof of \Cref{thm:quantile_coverage}]
We have $X_1,X_2,\cdots,X_n \stackrel{iid}{\sim} F$. We recall the following from \Cref{alg:proposed-conf-int},
    \[
    \begin{cases}
      \widehat{\mathrm{CI}}_{1,n,\alpha} := \left\{\theta\in\mathbb{R}:\, \sum_{i=1}^n \mathbf{1}\{X_i \le \theta\} \ge \left\lfloor \frac{n}{2}\right\rfloor - c_{n,\alpha}\right\}, \\
    \widehat{\mathrm{CI}}_{2,n,\alpha} := \left\{\theta\in\mathbb{R}:\, \sum_{i=1}^n \mathbf{1}\{X_i \ge \theta\} \ge \left\lfloor \frac{n}{2}\right\rfloor - c_{n,\alpha}\right\}.     
    \end{cases}
    \]
Moreover the $100(1-\alpha)\%$ confidence interval that we work with is $\widehat{\mathrm{CI}}_{n,\alpha}=\widehat{\mathrm{CI}}_{1,n,\alpha} \cap \widehat{\mathrm{CI}}_{2,n,\alpha}$. We want to compute the probability that our confidence interval for the median contains $\theta_h$. We observe the following, 
\begin{equation*}
\begin{split}
    \mathbb{P}(\theta_h \in \widehat{\mathrm{CI}}_{1,n,\alpha}) &= \mathbb{P}(\sum_{i=1}^n \mathbf{1}\{X_i \le \theta\} \ge \left\lfloor n/2 \right\rfloor - c_{n,\alpha})    \\
    &\geq \mathbb{P}(W_n \geq \left\lfloor n/2 \right\rfloor - c_{n,\alpha}) \quad \mathrm{where} \quad W_n \sim \mathrm{Bin}(n,0.5+h).
\end{split}
\end{equation*}
Similarly we have, 
\begin{equation*}
\begin{split}
    \mathbb{P}(\theta_h \in \widehat{\mathrm{CI}}_{2,n,\alpha}) &= \mathbb{P}(\sum_{i=1}^n \mathbf{1}\{X_i \ge \theta\} \ge \left\lfloor n/2 \right\rfloor - c_{n,\alpha})    \\
    &\geq \mathbb{P}(Z_n \geq \left\lfloor n/2 \right\rfloor - c_{n,\alpha}) \quad \mbox{where $Z_n \sim \mbox{Bin}(n,0.5-h)$}.
\end{split}
\end{equation*}
Suppose $Y_n \sim \mbox{Bin}(n,0.5)$. We consider the following functions $\{f_k(\cdot)\}_{k=0}^n$,
\[
f_k(h) = \frac{\mathbb{P}(W_n=k)}{\mathbb{P}(Y_n=k)} = (1+2h)^k(1-2h)^{n-k} \quad \mbox{for $h \in (-0.5,0.5)$ and $k \in \{0,1,\cdots,n\}$}. 
\]
We can check that $f_k(0)=1$ and $f_k'(0)=2(2k-n)$ (the derivative is taken w.r.t.\ $h$) for $k=0,\cdots,n$. We also have the following bound on $f_k''(h)$ for $h \in [-\varepsilon,\varepsilon]$ ,
\begin{equation*}
    \begin{split}
        f_k''(h) &= 4k(k-1)(1+2h)^{k-2}(1-2h)^{n-k} - 8k(n-k)(1+2h)^{k-1}(1-2h)^{n-k-1} \\
        &\quad+4(n-k)(n-k-1)(1+2h)^k(1-2h)^{n-k-2} \\
        & \leq 4(1+2\varepsilon)^{n-2}\{k(k-1) -2k(n-k) +(n-k)(n-k-1) \} \\
        &= 4(1+2\varepsilon)^{n-2}\{(2k-n)^2 -n \}.
    \end{split}
\end{equation*}
Therefore if $k < \left\lfloor n/2 \right\rfloor - c_{n,\alpha}$ then $f_k''(h) \leq u_{n,\varepsilon}$ for $h \in (-\varepsilon,\varepsilon)$ where $u_{n,\varepsilon} = 4n(n-1)(1+2\varepsilon)^{n-2}$. For $h\in [-\varepsilon,\varepsilon]$ we have the following bounds on $\mathbb{P}(W_n < \left\lfloor n/2 \right\rfloor - c_{n,\alpha})$ and $\mathbb{P}(Z_n < \left\lfloor n/2 \right\rfloor - c_{n,\alpha})$,
\begin{equation*}
\begin{split}
    \mathbb{P}(W_n < \left\lfloor n/2 \right\rfloor - c_{n,\alpha}) &= \sum_{k<\left\lfloor n/2 \right\rfloor - c_{n,\alpha}} \mathbb{P}(W_n=k) \\
    &= \sum_{k < \left\lfloor n/2 \right\rfloor - c_{n,\alpha}} f_k(h) \mathbb{P}(Y_n =k) \\
    &\leq \sum_{k < \left\lfloor n/2 \right\rfloor - c_{n,\alpha}} [f_k(0) +f_k'(0)h + (1/2)u_{n,\varepsilon}\varepsilon^2]\mathbb{P}(Y_n =k) \\
    &= \mathbb{P}(Y_n < \left\lfloor n/2 \right\rfloor - c_{n,\alpha}) + \sum_{k<\left\lfloor n/2 \right\rfloor - c_{n,\alpha}}2(2k-n)h\mathbb{P}(Y_n=k) \\
    & \quad +\frac{u_{n,\varepsilon}}{2}\varepsilon^2 \mathbb{P}(Y_n < \left\lfloor n/2 \right\rfloor - c_{n,\alpha})\\
    & \leq \frac{\alpha}{2} + \sum_{k<\left\lfloor n/2 \right\rfloor - c_{n,\alpha}}2(2k-n)h\mathbb{P}(Y_n=k) + \frac{u_{n,\varepsilon} \alpha}{4}\varepsilon^2.
\end{split}
\end{equation*}
Similarly we have, 
\begin{equation*}
    \begin{split}
        \mathbb{P}(Z_n < \left\lfloor n/2 \right\rfloor - c_{n,\alpha}) &= \sum_{k<\left\lfloor n/2 \right\rfloor - c_{n,\alpha}} \mathbb{P}(Z_n=k) \\
    &= \sum_{k <\left\lfloor n/2 \right\rfloor - c_{n,\alpha}} f_k(-h) \mathbb{P}(Y_n =k) \\
    &\leq \frac{\alpha}{2} - \sum_{k<\left\lfloor n/2 \right\rfloor - c_{n,\alpha}}2(2k-n)h\mathbb{P}(Y_n=k) + \frac{u_{n,\varepsilon} \alpha}{4}\varepsilon^2.
    \end{split}
\end{equation*}
Combining the above derivations we have the following for $h \in [-\varepsilon,\varepsilon]$,
\begin{equation*}
\begin{split}
    \mathbb{P}(\theta_h \notin \widehat{\mathrm{CI}}_{n,\alpha}) &= \mathbb{P}(\theta_h \in (\widehat{\mathrm{CI}}_{1,n,\alpha} \cap \widehat{\mathrm{CI}}_{2,n,\alpha})^c ) \\
    &= \mathbb{P}(\theta_h \in \widehat{\mathrm{CI}}_{1,n,\alpha}^c \cup \widehat{\mathrm{CI}}_{2,n,\alpha}^c )  \\
    &\leq \mathbb{P}(\theta_h \in \widehat{\mathrm{CI}}_{1,n,\alpha}^c) + \mathbb{P}(\theta_h \in \widehat{\mathrm{CI}}_{2,n,\alpha}^c) \\
    &\leq \mathbb{P}(W_n < \left\lfloor n/2 \right\rfloor - c_{n,\alpha}) + \mathbb{P}(Z_n < \left\lfloor n/2 \right\rfloor - c_{n,\alpha}) \\
    &\leq \alpha + \frac{u_{n,\varepsilon} \alpha}{2}\varepsilon^2. 
\end{split}
\end{equation*}
Thus for $h \in [-\varepsilon,\varepsilon]$ we can say that $\mathbb{P}(\theta_h \in \widehat{\mathrm{CI}}_{n,\alpha}) \geq 1- \alpha -(u_{n,\varepsilon}\alpha/2)\varepsilon^2$. We note that for fixed sample size $n$ as $\varepsilon \xrightarrow{} 0$, $u_{n,\varepsilon}\xrightarrow{} 4n(n-1)$. Thus the coverage probability of $\theta_h$, $\mathbb{P}(\theta_h \in \widehat{\mathrm{CI}}_{n,\alpha}) \geq 1-\alpha + o(h^2)$ as $h \xrightarrow{} 0$. 
\end{proof}

\begin{lemma}
\label{lem:conc_uniform}
If $U_1,\cdots,U_n$ are i.i.d.\ random variables from $\text{Uniform}(0,1)$ distribution, we have the following concentration inequality for the $k$th order statistic $U_{k:n}$,
\begin{equation}\label{eq:best-Dumbgun-version}
\mathbb{P}\left(\left|U_{k:n} - \frac{k}{n + 1}\right| \le \sqrt{\frac{2k}{n + 1}\left(1 - \frac{k}{n+1}\right)\frac{2\log(n)}{n + 1}} + \left|1 - \frac{2k}{n + 1}\right|\frac{2\log(n)}{n + 1}\right) \ge 1 - 2n^{-2}.
\end{equation}
The following simplified inequalities suffice, respectively, for $k \ll (n + 1)/2$ and $k\approx (n+1)/2$. But they are, nonetheless, valid for all $k\in\{1, 2, \ldots, n\}$.
\begin{equation}
\begin{split}
\mathbb{P}\left(\left|U_{k:n} - \frac{k}{n + 1}\right| \le \sqrt{\frac{4k\log(n)}{(n + 1)^2}} + \frac{2\log(n)}{n + 1}\right) &\ge 1 - 2n^{-2},\\
\mathbb{P}\left(\left|U_{k:n} - \frac{k}{n+1}\right| \le \sqrt{\frac{\log n}{n + 1}} + \left|1 - \frac{2k}{n + 1}\right|\frac{2\log(n)}{n+1}\right) &\ge 1 - 2 n^{-2}.
\end{split}
\end{equation}
\end{lemma}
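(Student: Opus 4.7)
The plan is to reduce the tail of $U_{k:n}$ to Bernstein's inequality for the counting process $Y_n(t) := \sum_{i=1}^n \mathbf{1}\{U_i \le t\} \sim \mathrm{Bin}(n,t)$, exploiting the standard quantile--counting duality $\{U_{k:n} \le t\} = \{Y_n(t) \ge k\}$. First I would apply Bernstein's inequality to $Y_n(t)$ at a fixed $t$: solving $2\exp\bigl(-s^2/(2nt(1-t) + \tfrac{2}{3}s)\bigr) = 2n^{-2}$ for $s$ and using $\sqrt{a+b}\le \sqrt{a}+\sqrt{b}$, one obtains, with probability at least $1 - 2n^{-2}$,
\[
|Y_n(t) - nt| \le 2\sqrt{nt(1-t)\log n} + \tfrac{4}{3}\log n.
\]

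Write $p_k := k/(n+1)$ and evaluate the Bernstein bound at $t_+ := p_k + \Delta_+$ and $t_- := p_k - \Delta_-$. The duality gives $\{U_{k:n} > t_+\} \subseteq \{nt_+ - Y_n(t_+) \ge nt_+ - k + 1\}$ and $\{U_{k:n} < t_-\} \subseteq \{Y_n(t_-) - nt_- \ge k - nt_-\}$, with the centring constants simplifying to $nt_+ - k + 1 = n\Delta_+ + (1-p_k)$ and $k - nt_- = n\Delta_- + p_k$. Requiring each of the two tails to have probability at most $n^{-2}$ therefore reduces to the two inequalities
\[
n\Delta_+ + (1-p_k) \ge 2\sqrt{n t_+(1-t_+)\log n} + \tfrac{4}{3}\log n, \qquad n\Delta_- + p_k \ge 2\sqrt{n t_-(1-t_-)\log n} + \tfrac{4}{3}\log n.
\]
Solving each for the smallest admissible $\Delta_\pm$, approximating $t_\pm(1-t_\pm)$ by $p_k(1-p_k)$ (the perturbation is of lower order), and taking the maximum of $\Delta_+$ and $\Delta_-$ then produces the two-sided bound on $|U_{k:n} - p_k|$ after a union bound (the factor of two in the failure probability is absorbed into the $2n^{-2}$).

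The delicate step, and the part I expect to be the main obstacle, is producing the factor $|1-2p_k|$ in the second term of the stated inequality. The ``free'' constants $(1-p_k)$ and $p_k$ on the left-hand sides of the two inequalities above translate, after taking the maximum of $\Delta_+$ and $\Delta_-$, into a shift of $-\min(p_k, 1-p_k) = -\tfrac12 + \tfrac12|1-2p_k|$ relative to the symmetric bound. Carefully combined with the $\tfrac{4}{3}\log n$ Bernstein correction and dividing through by $n$, this yields the advertised $|1 - 2p_k| \cdot 2\log n/(n+1)$ term, with the symmetric $\tfrac12$ piece absorbed into the square-root part using $\sqrt{a+b}\le \sqrt{a}+\sqrt{b}$. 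The two simplified inequalities then follow immediately from the main bound: the first from $|1 - 2p_k|\le 1$ (bounding the asymmetric term crudely), and the second from $p_k(1-p_k) \le \tfrac14$ (bounding the variance factor in the square root).
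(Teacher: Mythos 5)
Your reduction to the binomial counting process and the duality $\{U_{k:n} \le t\} = \{\sum_i \mathbf{1}\{U_i \le t\} \ge k\}$ is a legitimate starting point, and your algebra for the centring constants $nt_\pm - k + 1 = n\Delta_+ + (1-p_k)$ and $k - nt_- = n\Delta_- + p_k$ is correct. The gap is exactly where you flag it, and it is not repairable by the device you propose. Bernstein's inequality for $\mathrm{Bin}(n,t)$ forces an additive correction of order $\tfrac{4}{3}\log n$ on the deviation of the \emph{count}, hence an additive $\tfrac{4}{3}\log n/n$ on $\Delta_\pm$, with an \emph{absolute} constant. The lemma instead claims a linear-in-$\log n$ term with coefficient $|1-2k/(n+1)|$, which vanishes when $k=(n+1)/2$ and is of order $n^{-1/2}$ in the central regime $k = n/2 + O(\sqrt{n})$ where the paper actually uses the lemma; there the claimed linear term is $O(\log n \cdot n^{-3/2})$, a full factor of $\sqrt{n}$ smaller than what Bernstein delivers. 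Your hope of cancelling the Bernstein correction against the ``free'' constants $p_k$ and $1-p_k$ fails on orders of magnitude: those contribute only $O(1/n)$ to $\Delta_\pm$, while the term you need to kill is $O(\log n/n)$. Even the refined one-sided Bernstein constants ($c = t$ for the lower tail, $c = 1-t$ for the upper) leave a correction of about $\tfrac{2}{3}\log n/n$ at $p_k = 1/2$, not zero. Of the three displays, only the first simplified inequality (which carries an unconditional $+2\log(n)/(n+1)$ slack) is within reach of your argument; the main inequality and the third display are strictly sharper than any Bernstein-type bound near $p_k = 1/2$.

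The paper avoids this by working with the exact law $U_{k:n}\sim\mathrm{Beta}(k,n-k+1)$ and Proposition 2.1 of D\"umbgen (1998), which bounds the tails by $\exp(-(n+1)\Psi(x,k/(n+1)))$ with $\Psi(x,p)=p\log(p/x)+(1-p)\log((1-p)/(1-x))$; monotonicity of $x\mapsto\Psi(x,p)$ on either side of $p$ converts this into $\mathbb{P}((n+1)\Psi(U_{k:n},k/(n+1))\ge\eta)\le 2e^{-\eta}$, and the same proposition supplies the sharp inversion of the KL ball into an explicit interval. The $|1-2p|$ coefficient arises from the third-order term in the expansion of $\Psi(\cdot,p)$ about $p$, i.e.\ from the skewness of the Beta law, which is precisely the information Bernstein discards. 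If you want to salvage a counting-process argument you would need a Bennett/Chernoff--KL bound for the binomial together with its sharp inversion, at which point you have essentially reconstructed D\"umbgen's proposition.
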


\begin{proof}[Proof of \Cref{lem:conc_uniform}]
It is well-known that $U_{k:n} \sim \mbox{Beta}(k, n-k+1)$. Here $U_{k:n}$ represents the $k$-th smallest among $n$ iid uniform random variables $U_1, \ldots, U_n$. Proposition 2.1 of \cite{dumbgen1998new} implies that
\begin{equation}\label{eq:tail-bounds-KL}
\begin{split}
\mathbb{P}(U_{k:n} \ge x) ~&\le~ \exp(-(n+1)\Psi(x, k/(n+1))), \quad\mbox{if }x \ge k/(n + 1),\\
\mathbb{P}(U_{k:n} \le x) ~&\le~ \exp(-(n+1)\Psi(x, k/(n+1))), \quad\mbox{if }x \le k/(n + 1),
\end{split}
\end{equation}
where
\[
\Psi(x, p) = p\log\left(\frac{p}{x}\right) + (1 - p)\log\left(\frac{1-p}{1-x}\right).
\]
Note that 
\[
\frac{\partial}{\partial x}\Psi(x, p) = -\frac{p}{x} + \frac{1 - p}{1 - x} = \frac{x - p}{x(1- x)}.
\]
This implies that $x\mapsto \Psi(x, p)$ is a decreasing function for $x \le p$ and an increasing function for $x > p$.

Consider now the event
\[
\left\{\Psi(U_{k:n}, k/(n + 1)) \ge \eta, U_{k:n} \ge k/(n+1)\right\}.
\]
The function $x\mapsto \Psi(x, k/(n+1))$ is increasing on $[k/(n+1), 1]$ and in particular, there is a unique solution $x_{\eta} \ge k/(n+1)$ to the equation $\Psi(x, k/(n+1)) = \eta$. Therefore, 
\[
U_{k:n} \ge \frac{k}{n+1}\mbox{ and }\Psi(U_{k:n}, k/(n+1)) \ge \eta\quad\Rightarrow\quad U_{k:n} \ge x_{\eta}.
\]
Hence,
\begin{equation}
    \label{eq:larger-than-k/n}
    \begin{split}
        \mathbb{P}(\Psi(U_{k:n}, k/(n + 1)) \ge \eta, U_{k:n} \ge k/(n+1)) &\le \mathbb{P}(U_{k:n} \ge x_{\eta})\\ 
        &\le \exp(-(n+1)\Psi(x_{\eta}, k/(n+1))) \\
        &= \exp(-(n+1)\eta).
    \end{split}
\end{equation}
Here the second inequality follows from~\eqref{eq:tail-bounds-KL} because $x_{\eta} \ge k/(n+1)$. A similar argument corresponding to the event $\{\Psi(U_{k:n}, k/(n+1)) \ge \eta, U_{k:n} \le k/(n+1)\}$ implies that
\begin{equation}
    \label{eq:less-than-k/n}
    \mathbb{P}(\Psi(U_{k:n}, k/(n + 1)) \ge \eta, U_{k:n} \le k/(n+1)) \le \exp(-(n+1)\eta). 
\end{equation}
Combining~\eqref{eq:larger-than-k/n} and~\eqref{eq:less-than-k/n}, we get that
\[
\mathbb{P}(\Psi(U_{k:n}, k/(n+1)) \ge \eta) \le 2\exp(-(n+1)\eta)\quad\mbox{for all}\quad \eta > 0.
\]
Replacing $\eta$ with $\eta/(n+1)$, we get
\begin{equation}\label{eq:final-KL-bound-Order-statistics}
\mathbb{P}((n+1)\Psi(U_{k:n}, k/(n+1)) \ge \eta) \le 2\exp(-\eta)\quad\mbox{for any}\quad \eta > 0.
\end{equation}
Therefore, with probability at least $1 - 2n^{-2}$, 
\[
\Psi(U_{k:n}, k/(n + 1)) \le \frac{2\log(n)}{n + 1}.
\]
Proposition 2.1 of \cite{dumbgen1998new} again implies then that with probability at least $1 - 2n^{-2},$
\begin{align*}
-&\sqrt{\frac{2k}{n + 1}\left(1 - \frac{k}{n+1}\right)\frac{2\log(n)}{n + 1}} - \left(1 - \frac{2k}{n+1}\right)_-\frac{2\log(n)}{n + 1}\\
&\le 
U_{k:n} - \frac{k}{n + 1}\\ 
&\le \sqrt{\frac{2k}{n + 1}\left(1 - \frac{k}{n+1}\right)\frac{2\log(n)}{n + 1}} + \left(1 - \frac{2k}{n+1}\right)_+\frac{2\log(n)}{n + 1},
\end{align*}
where $(x)_+ = \max\{0, x\}$. This can be succinctly written as
\begin{equation}
\mathbb{P}\left(\left|U_{k:n} - \frac{k}{n + 1}\right| \le \sqrt{\frac{2k}{n + 1}\left(1 - \frac{k}{n+1}\right)\frac{2\log(n)}{n + 1}} + \left|1 - \frac{2k}{n + 1}\right|\frac{2\log(n)}{n + 1}\right) \ge 1 - 2n^{-2}.
\end{equation}
For $k\ll (n+1)/2$, we can use $(1 - k/(n+1)) \in [0, 1]$ and $|1 - 2k/(n+1)| \le 1$, and further simplify this to write
\[
\mathbb{P}\left(\left|U_{k:n} - \frac{k}{n + 1}\right| \le \sqrt{\frac{4k\log(n)}{(n + 1)^2}} + \frac{2\log(n)}{n + 1}\right) \ge 1 - 2n^{-2}.
\]
This completes the proof of the lemma. 
\end{proof}

\begin{lemma}
\label{lem:width_distr}
Let $X_1,X_2,...,X_n \stackrel{iid}{\sim} F $. Suppose that $F$ is a continuous CDF Then for every sample size $n\geq \log_2(2/\alpha)$, with probability at least $1 - 2n^{-2}$,
\[
 \left|(F(X_{(\ceil{{n}/{2}}+c_{n,\alpha}+1)})-F(X_{(\floor{{n}/{2}}-c_{n,\alpha})}))-\frac{z_{\alpha/2}}{\sqrt{n}} \right| ~\leq~ \frac{5.18+0.25z_{\alpha/2}^2+2\log n + 2(4 + \sqrt{n}z_{\alpha/2})^{1/2}\sqrt{\log n}}{n+1}.
\]    
\end{lemma}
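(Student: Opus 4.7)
The plan is to first reduce the claim to a statement about uniform order statistics, then apply \Cref{lem:conc_uniform} together with Part~3 of \Cref{thm:assym_of_cn}. Since $F$ is continuous, the variables $U_i := F(X_i)$ are i.i.d.\ Uniform$(0,1)$, and by monotonicity $F(X_{(k)}) = U_{(k:n)}$ for each $k$. Writing $k_1 := \lceil n/2 \rceil + c_{n,\alpha} + 1$ and $k_2 := \lfloor n/2 \rfloor - c_{n,\alpha}$, the quantity to bound becomes $|U_{(k_1:n)} - U_{(k_2:n)} - z_{\alpha/2}/\sqrt{n}|$.

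Next, I would use the classical distributional identity $U_{(k_1:n)} - U_{(k_2:n)} \stackrel{d}{=} U_{(k_1-k_2:n)}$, which follows from integrating the joint density of $(U_{(k_2:n)}, U_{(k_1:n)})$: the difference has a $\mathrm{Beta}(k_1-k_2,\, n-(k_1-k_2)+1)$ distribution, coinciding with the law of the $(k_1-k_2)$-th uniform order statistic in a sample of size $n$. This lets me invoke \Cref{lem:conc_uniform} directly for the difference, at the cost of only a single $1-2n^{-2}$ probability budget (crucial for matching the stated tail, since a union bound over $U_{(k_1:n)}$ and $U_{(k_2:n)}$ would give only $1-4n^{-2}$). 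With $k = k_1 - k_2 \in \{2c_{n,\alpha}+1,\ 2c_{n,\alpha}+2\}$, the first simplified form of \Cref{lem:conc_uniform} gives, with probability at least $1 - 2n^{-2}$,
\[
\left|U_{(k_1:n)} - U_{(k_2:n)} - \frac{k_1 - k_2}{n+1}\right| \le \frac{2\sqrt{(k_1-k_2)\log n}}{n+1} + \frac{2\log n}{n+1}.
\]
Part~3 of \Cref{thm:assym_of_cn} gives $c_{n,\alpha} \le \sqrt{n}\,z_{\alpha/2}/2 + 1$, hence $k_1 - k_2 \le 4 + \sqrt{n}\,z_{\alpha/2}$, which fed into the square root yields the $2(4+\sqrt{n}z_{\alpha/2})^{1/2}\sqrt{\log n}/(n+1)$ term of the claimed bound.

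To close the proof, I would apply the triangle inequality and absorb the deterministic error $|(k_1-k_2)/(n+1) - z_{\alpha/2}/\sqrt{n}|$. Rewriting this as $|\sqrt{n}(k_1 - k_2 - \sqrt{n}\,z_{\alpha/2}) - z_{\alpha/2}|/(\sqrt{n}(n+1))$ and invoking Part~3 of \Cref{thm:assym_of_cn} to bound $|2c_{n,\alpha} - \sqrt{n}\,z_{\alpha/2}| \le 3 + z_{\alpha/2}^3/(5\sqrt{n}) + 1/\sqrt{n}$ gives a numerator of the form $5 + z_{\alpha/2}^3/(5\sqrt{n}) + z_{\alpha/2}/\sqrt{n} + O(1/\sqrt{n})$. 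To turn the cubic term $z_{\alpha/2}^3/(5\sqrt{n})$ into the $0.25\,z_{\alpha/2}^2$ appearing in the statement, I would use the sample-size constraint $n \ge \log_2(2/\alpha)$ combined with \Cref{prop:mills_ineq} to get $z_{\alpha/2}/\sqrt{n} \le \sqrt{2\log 2} < 5/4$, so that $z_{\alpha/2}^3/(5\sqrt{n}) \le 0.25\,z_{\alpha/2}^2$. Summing these contributions produces the stated constant $5.18 + 0.25\,z_{\alpha/2}^2$.

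The main obstacle is not conceptual but careful bookkeeping of constants: matching the specific numerical value $5.18$ (rather than a generic $O(1)$ bound) requires judiciously combining the upper and lower bounds for $c_{n,\alpha}$ from \Cref{thm:assym_of_cn} and controlling the residual $z_{\alpha/2}/\sqrt{n}$ uniformly via the assumption $n \ge \log_2(2/\alpha)$. The conceptual content is entirely captured by two ingredients: the distributional identity that collapses the difference of two uniform order statistics into one, and the Dümbgen-style KL tail bound underlying \Cref{lem:conc_uniform}.
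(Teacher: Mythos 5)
Your proposal is correct and follows essentially the same route as the paper's proof: reduce to uniform order statistics, use the spacing identity $U_{(k_1:n)}-U_{(k_2:n)}\stackrel{d}{=}U_{(k_1-k_2:n)}$ (the paper cites Theorem 1.6.7 of Reiss for this) so that Lemma~\ref{lem:conc_uniform} applies once with a single $1-2n^{-2}$ budget, then control the deterministic centering term via Part~3 of Theorem~\ref{thm:assym_of_cn} and the bound $z_{\alpha/2}\le\sqrt{2\log(2)\,n}$ to absorb $z_{\alpha/2}^3/(5\sqrt{n})$ into $0.25\,z_{\alpha/2}^2$. All the key ingredients and the constant bookkeeping match the paper's argument.
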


{
\color{blue}

}
\begin{proof}[Proof of \Cref{lem:width_distr}]
We shall use \Cref{lem:conc_uniform} to perform finite sample analysis of $F(X_{(\ceil{{n}/{2}}+c_{n,\alpha}+1 )})-F(X_{(\floor{{n}/{2}}-c_{n,\alpha})})$ where $F$ is assumed to be a continuous CDF with median $\theta_0$. Under the assumption that $F$ is a continuous CDF, $Y_i=F(X_i)$ are i.i.d.\ from $U \stackrel{d}{=} U(0,1)$ for $i=1,2,\cdots,n$. Note that, we have $Y_{(\ceil{{n}/{2}}+c_{n,\alpha}+1)}=F(X_{(\ceil{{n}/{2}}+c_{n,\alpha}+1 )})$ and $Y_{(\floor{{n}/{2}}-c_{n,\alpha})}=F(X_{(\floor{{n}/{2}}-c_{n,\alpha})})$. Theorem 1.6.7 of~\cite{reiss2012approximate} implies that the spacing $Y_{(r)}-Y_{(s)}$ has the same distribution as $Y_{(r-s)}$. Thus we obtain that
\[
Y_{(\ceil{{n}/{2}}+c_{n,\alpha}+1)}-Y_{(\floor{{n}/{2}}-c_{n,\alpha})} ~\overset{d}{=}~ 
\begin{cases}
Y_{(2c_{n,\alpha}+1)},  &\mbox{if }n\mbox{ is even},\\
Y_{(2c_{n,\alpha} + 2)}, &\mbox{if }n\mbox{ is odd.}
\end{cases}
\]
We now apply the concentration inequality with $k = 2c_{n,\alpha}+1$ and $k = 2c_{n,\alpha} + 2$ for $n$ even and $n$ odd, respectively. For notational convenience, set $k_{n,\alpha} = 2c_{n,\alpha} +1+ \mathbbm{1}\{n\mbox{ odd}\}$. We thus obtain that,
\begin{equation}
\label{eq:modified_kl}
    \mathbb{P}\left(\left|Y_{(\ceil{{n}/{2}}+c_{n,\alpha}+1)}-Y_{(\floor{{n}/{2}}-c_{n,\alpha})}-\frac{k_{n,\alpha}}{n+1}\right| \le \sqrt{\frac{4k_{n,\alpha}\log n}{(n+1)^2}}+ \frac{2\log n}{n+1}\right) \geq 1-2n^{-2}.
\end{equation}
\Cref{thm:assym_of_cn} states that,
\begin{equation*}
  -\frac{1}{2\sqrt{n}}\max\left\{\frac{z_{\alpha/2}^3}{5},\,\sqrt{1 + \frac{2\ln(2) - 1}{n^2}}\right\} - 1.5 \le c_{n,\alpha} - \frac{\sqrt{n}z_{\alpha/2}}{2} \le 1.   
\end{equation*}
We observe that,
\begin{equation*}
    \begin{split}
  -\frac{1}{2\sqrt{n}}\max\left\{\frac{z_{\alpha/2}^3}{5},\,\sqrt{1 + \frac{2\ln(2) - 1}{n^2}}\right\} - 1.5 &=- \frac{z_{\alpha/2}^3}{10\sqrt{n}}  -\frac{1}{2\sqrt{n}}\sqrt{1 + \frac{2\ln(2) - 1}{n^2}} -1.5 \\
  &\geq - \frac{z_{\alpha/2}^3}{10\sqrt{n}} - \sqrt{\frac{\ln(2)}{2}}-1.5 \\
  &\geq - \frac{z_{\alpha/2}^3}{10\sqrt{n}} - 2.1.
    \end{split}
\end{equation*}
Hence we can say that,
\[
- \frac{z_{\alpha/2}^3}{10\sqrt{n}} - 2.1\le c_{n,\alpha} - \frac{\sqrt{n}z_{\alpha/2}}{2} \le 1.
\]
Note that we are concerned with $n \geq \log_2(2/\alpha) = \log(2/\alpha)/\log2$. Hence $z_{\alpha/2}\leq \sqrt{2\log(2/\alpha)}\leq \sqrt{2\log(2) n}$. Using this inequality we have, $z_{\alpha/2}^3/(5\sqrt{n}) \leq z_{\alpha/2}^2/4$. Using this and the above simplified bounds obtained from \Cref{thm:assym_of_cn}, we have,
\[
\left|\frac{k_{n,\alpha}-\sqrt{n}z_{\alpha/2}}{n+1} \right| \leq \frac{0.25z_{\alpha/2}^2+4}{n+1}.
\]
We also have,
\begin{equation}
    \begin{split}
        \left|\frac{\sqrt{n}z_{\alpha/2}}{n+1} - \frac{z_{\alpha/2}}{\sqrt{n}} \right| &= \left|\frac{z_{\alpha/2}}{\sqrt{n}(n+1)} \right| \leq \left|\frac{\sqrt{2\log(2)}}{n+1} \right| \leq \frac{1.18}{n+1}.
    \end{split}
\end{equation}
Combining the above three inequalities we obtain that with probability greater than or equal to $1-2n^{-2}$ the following inequality holds true,
\begin{equation}
    \begin{split}
        \left|(Y_{(\ceil{{n}/{2}}+c_{n,\alpha}+1)}-Y_{(\floor{{n}/{2}}-c_{n,\alpha})})-(z_{\alpha/2}/\sqrt{n}) \right| &\leq \frac{0.25z_{\alpha/2}^2+5.18}{n+1} + \sqrt{\frac{4k_{n,\alpha}\log n}{(n+1)^2}}+ \frac{2\log n}{n+1} \\
        &\leq \frac{0.25z_{\alpha/2}^2+5.18}{n+1} + \sqrt{\frac{4(4+\sqrt{n}z_{\alpha/2})\log n}{(n+1)^2}}+ \frac{2\log n}{n+1}.
    \end{split}
\end{equation}
Thus, we have shown that for $n \geq \log_2(2/\alpha)$, the following event occurs with probability greater than or equal to $1-2n^{-2}$,
\[
\left|(F(X_{(\ceil{{n}/{2}}+c_{n,\alpha}+1)})-F(X_{(\floor{{n}/{2}}-c_{n,\alpha})}))-(z_{\alpha/2}/\sqrt{n}) \right| \leq \frac{0.25z_{\alpha/2}^2+5.18}{n+1} + \sqrt{\frac{4(4+\sqrt{n}z_{\alpha/2})\log n}{(n+1)^2}}+ \frac{2\log n}{n+1}.
\]
This completes the proof of the lemma. Note that we can further simplify the bounds as follows,
\begin{equation*}
    \begin{split}
    & \frac{0.25z_{\alpha/2}^2+5.18}{n+1} + \sqrt{\frac{4(4+\sqrt{n}z_{\alpha/2})\log n}{(n+1)^2}}+ \frac{2\log n}{n+1} \\
    \leq &  \frac{0.25z_{\alpha/2}^2+5.18}{n+1} + \frac{4\sqrt{\log n}}{n+1} + \frac{2\sqrt{\sqrt{n}z_{\alpha/2}\log n}}{n+1}+\frac{2\log n}{n+1} \\
    \leq & \frac{0.25z_{\alpha/2}^2+5.18}{n+1} + \frac{6+2\sqrt{z_{\alpha/2}}}{(n+1)^{3/4}}(\log(n)+1) \\
    \leq & \frac{0.5\log(2/\alpha)+5.18}{n+1} + \frac{6+2(2\log(2/\alpha))^{1/4}}{(n+1)^{3/4}}(\log(n)+1).    
    \end{split}
\end{equation*}
Therefore we can say that for $n \geq \log_2(2/\alpha)$, the following event occurs with probability greater than or equal to $1-2n^{-2}$,
\begin{equation*}
    \begin{split}
   &  \left|(F(X_{(\ceil{{n}/{2}}+c_{n,\alpha}+1)})-F(X_{(\floor{{n}/{2}}-c_{n,\alpha})}))-(z_{\alpha/2}/\sqrt{n}) \right|  \\
   \leq & \frac{0.5\log(2/\alpha)+5.18}{n+1} + \frac{6+2(2\log(2/\alpha))^{1/4}}{(n+1)^{3/4}}(\log(n)+1),    
    \end{split}
\end{equation*}
which can be also written as,
\begin{equation*}
    \begin{split}
& \left|\sqrt{n}(F(X_{(\ceil{{n}/{2}}+c_{n,\alpha}+1)})-F(X_{(\floor{{n}/{2}}-c_{n,\alpha})}))-z_{\alpha/2} \right| \\
& \leq \frac{0.5\log(2/\alpha)+5.18}{\sqrt{n}} + \frac{6+2(2\log(2/\alpha))^{1/4}}{n^{1/4}}(\log(n)+1) .   
    \end{split}
\end{equation*}
\end{proof}

\begin{lemma}
\label{lem:conc_anbn}
Let $X_1,X_2,\cdots,X_n \stackrel{iid}{\sim} F $. Suppose that $F$ is a continuous CDF with median $\theta_0$. Then for every sample size $n\geq \log_2(2/\alpha)$, we have the following, 
\[
\begin{cases}
    \mathbb{P}\left(\left|F(X_{(k_{n,\alpha}+1)})-F(\theta_0) \right| \leq A_n \right) &\geq 1- 2n^{-2},\\
    \mathbb{P}\left(\left|F(X_{(n-k_{n,\alpha})})-F(\theta_0) \right| \leq B_n \right) &\geq 1- 2n^{-2},
\end{cases}
\]
where $k_{n,\alpha}=G_n^{-1}(1-(\alpha/2))$ and $A_n,B_n$ are defined as follows,
\begin{equation}
\begin{split}
    A_n &= \frac{z_{\alpha/2}}{2\sqrt{n}}+\frac{2}{n}+\sqrt{\frac{\log (n)}{n+1}} + \left(\frac{z_{\alpha/2}}{\sqrt{n}} + \frac{4}{n}\right)\frac{2\log (n)}{n+1},\\
    B_n &= \frac{z_{\alpha/2}}{2\sqrt{n}}+\frac{3}{2n}+\sqrt{\frac{\log (n)}{n+1}} + \left(\frac{z_{\alpha/2}}{\sqrt{n}} + \frac{3}{n}\right)\frac{2\log (n)}{n+1}.
\end{split}
\end{equation}
\end{lemma}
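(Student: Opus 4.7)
The plan is to reduce the claim to a statement about uniform order statistics and then invoke the concentration inequality of Lemma~\ref{lem:conc_uniform} together with the quantile bound in Part~\ref{conclu:c_{n,alpha}_bounds} of Theorem~\ref{thm:assym_of_cn}. Because $F$ is continuous, $U_i := F(X_i)$ are i.i.d.\ $\mathrm{Uniform}(0,1)$ and $F(X_{(j)}) = U_{(j)}$ for every $j$. Also, continuity plus the median property give $F(\theta_0) = 1/2$. Reading the indices $k_{n,\alpha}+1 = \lfloor n/2\rfloor - c_{n,\alpha}$ and $n - k_{n,\alpha} = \lceil n/2\rceil + c_{n,\alpha}+1$ (i.e.\ the two endpoints of $\widehat{\mathrm{CI}}_{n,\alpha}$), the task splits neatly into a ``lower'' and an ``upper'' bound, handled symmetrically.

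First I would write a triangle inequality of the form
\[
\left|U_{(k_{n,\alpha}+1)} - \tfrac{1}{2}\right| \;\le\; \left|U_{(k_{n,\alpha}+1)} - \tfrac{k_{n,\alpha}+1}{n+1}\right| + \left|\tfrac{k_{n,\alpha}+1}{n+1} - \tfrac{1}{2}\right|,
\]
and bound the two summands separately. The first summand is controlled with probability at least $1 - 2n^{-2}$ by Lemma~\ref{lem:conc_uniform} applied at $k = k_{n,\alpha}+1$; since $(k_{n,\alpha}+1)/(n+1) \le 1/2$, the variance factor $\tfrac{2k}{n+1}\bigl(1 - \tfrac{k}{n+1}\bigr)$ is at most $1/2$, giving a $\sqrt{\log(n)/(n+1)}$ term, and the bias factor $|1 - 2k/(n+1)|$ gets bounded using Theorem~\ref{thm:assym_of_cn}. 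The second summand is deterministic: writing $\tfrac{k_{n,\alpha}+1}{n+1} - \tfrac{1}{2} = \tfrac{\lfloor n/2\rfloor - c_{n,\alpha}}{n+1} - \tfrac{1}{2}$ and substituting the bound $\bigl|c_{n,\alpha} - \sqrt{n}z_{\alpha/2}/2\bigr| \le C$ (with $C$ a universal constant, obtained by simplifying Part~\ref{conclu:c_{n,alpha}_bounds} of Theorem~\ref{thm:assym_of_cn} together with $z_{\alpha/2} \le \sqrt{2n\log 2}$ as in the proof of Lemma~\ref{lem:width_distr}), yields the leading $z_{\alpha/2}/(2\sqrt{n})$ term plus a remainder of order $1/n$. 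Combining these two contributions reproduces the expression for $A_n$. The upper endpoint is handled in the same way with $k = n-k_{n,\alpha}$ and $(n-k_{n,\alpha})/(n+1) \ge 1/2$, producing $B_n$; the small constant-level differences between $A_n$ and $B_n$ arise from the asymmetry between $\lfloor n/2\rfloor$ and $\lceil n/2\rceil$ (i.e.\ the offset for odd $n$).

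The main bookkeeping obstacle is making the simplifications that turn the raw output of Lemma~\ref{lem:conc_uniform} into the closed-form $A_n, B_n$. Concretely, one needs (i) to replace $\sqrt{(2k/(n+1))(1-k/(n+1))\cdot 2\log n/(n+1)}$ by $\sqrt{\log n/(n+1)}$ using $k/(n+1) \le 1/2$ (and the dual inequality for the upper endpoint), (ii) to express $|1 - 2k/(n+1)|$ as at most $z_{\alpha/2}/\sqrt{n} + c/n$ via Theorem~\ref{thm:assym_of_cn}, and (iii) to collect all the constants coming from the floor/ceiling, the additive constants in the quantile bound, and the $1/(n+1)$ versus $1/n$ denominators. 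This is routine but requires careful tracking; once it is done, the two inequalities follow each with probability at least $1-2n^{-2}$ from a single application of Lemma~\ref{lem:conc_uniform}.
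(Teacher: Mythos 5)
Your approach is the same as the paper's: reduce to uniform order statistics via $U_i = F(X_i)$, apply Lemma~\ref{lem:conc_uniform} at $k = k_{n,\alpha}+1$ and $k = n-k_{n,\alpha}$, and combine with the deterministic bound on $|k/(n+1) - 1/2|$ obtained from Part~3 of Theorem~\ref{thm:assym_of_cn} through the triangle inequality. The structure is right, but you have the index identification backwards. Since $c_{n,\alpha} = G_n^{-1}(1-\alpha/2) - \lceil n/2\rceil = k_{n,\alpha} - \lceil n/2\rceil$, we have $k_{n,\alpha}+1 = \lceil n/2\rceil + c_{n,\alpha} + 1$ (the \emph{upper} endpoint of $\widehat{\mathrm{CI}}_{n,\alpha}$) and $n - k_{n,\alpha} = \lfloor n/2\rfloor - c_{n,\alpha}$ (the lower endpoint), not the other way around as you state. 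Consequently your claim that $(k_{n,\alpha}+1)/(n+1) \le 1/2$ is false: $k_{n,\alpha} = G_n^{-1}(1-\alpha/2) \ge (n-1)/2$, so $2(k_{n,\alpha}+1) \ge n+1$ and the ratio is at least $1/2$; the paper uses exactly this fact to remove the modulus in $|1 - 2(k_{n,\alpha}+1)/(n+1)|$ with the opposite sign from yours.

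The variance simplification in step (i) survives your error because $p(1-p) \le 1/4$ on either side of $1/2$, but the deterministic bookkeeping does not: the constants $2/n$ and $4/n$ in $A_n$ come from applying the upper bound $k_{n,\alpha} \le n/2 + \sqrt{n}z_{\alpha/2}/2 + 1$ to the index $k_{n,\alpha}+1$, while the $3/(2n)$ and $3/n$ in $B_n$ come from applying the same quantile bounds to $n - k_{n,\alpha}$. With the swapped identification you would attach each set of constants to the wrong order statistic, and since $A_n > B_n$ the resulting claim for $F(X_{(k_{n,\alpha}+1)})$ would be stronger than what your computation actually yields. This is a sign/bookkeeping error rather than a conceptual gap — once the identification is corrected, your outline coincides with the paper's proof — but as written the derivation does not reproduce the stated $A_n$ and $B_n$.
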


\begin{proof}[Proof of \Cref{lem:conc_anbn}]
Using \Cref{lem:conc_uniform} with $k=k_{n,\alpha}+1$, we have the following inequality with probability greater than or equal to $1-2n^{-2}$ for all sample sizes $n \geq \log_2(2/\alpha)$,
\begin{equation*}
    \begin{split}
    \left| F(X_{(k_{n,\alpha}+1)}) - \frac{k_{n,\alpha}+1}{n+1}\right| &\leq \sqrt{\frac{2(k_{n,\alpha}+1)}{n+1}\left( 1- \frac{k_{n,\alpha}+1}{n+1}\right)\frac{2\log(n)}{n+1}}+\left|1-\frac{2(k_{n,\alpha}+1)}{n+1} \right|\frac{2\log(n)}{n+1} \\
     & \leq \sqrt{\frac{1}{4}\frac{4\log(n)}{n+1}}+\left(\frac{2(k_{n,\alpha}+1)}{n+1} -1\right)\frac{2\log(n)}{n+1}\\
    &\leq \sqrt{\frac{\log(n)}{n+1}} +\left(\frac{2((n/2)+(\sqrt{n}z_{\alpha/2}/2)+2)}{n+1} -1\right)\frac{2\log(n)}{n+1} \\
    &\leq \sqrt{\frac{\log (n)}{n+1}} + \left(\frac{z_{\alpha/2}}{\sqrt{n}} + \frac{4}{n}\right)\frac{2\log (n)}{n+1}.
    \end{split}
\end{equation*}
The above follows from the application of the lower and upper bound on $k_{n,\alpha}$ mentioned in \eqref{eq:hd_new_bound}. In the above computation we also used the fact that $2(k_{n,\alpha}+1)\geq n+1$. This holds because $k_{n,\alpha}=G_n^{-1}(1-(\alpha/2))\geq (n-1)/2$ which implies that $2(k_{n,\alpha}+1)\geq n+1$. By another application of \eqref{eq:hd_new_bound} we obtain,
\begin{equation*}
    \begin{split}
        \left|\frac{k_{n,\alpha}+1}{n+1} - \frac{1}{2} \right| &= \left( \frac{k_{n,\alpha}+1}{n+1} - \frac{1}{2}\right) \\
        &\leq \left(\frac{(n/2)+(\sqrt{n}z_{\alpha/2}/2)+2}{n} - \frac{1}{2}\right)\\
        &= \frac{z_{\alpha/2}}{2\sqrt{n}}+\frac{2}{n}.
    \end{split}
\end{equation*}
Combining the above two inequalities using triangle inequality we obtain that,
\[
\mathbb{P}\left(\left|F(X_{(k_{n,\alpha}+1)})-F(\theta_0) \right| \leq A_n \right) \geq 1- 2n^{-2},
\]
where $A_n$ is as defined in the lemma. Note that since $F(\theta_0)=1/2$, we have replaced $1/2$ with $F(\theta_0)$. We proceed in a similar manner to prove the other inequality. Using \Cref{lem:conc_uniform} with $k=n-k_{n,\alpha}$, we have the following inequality with probability greater than or equal to $1-2n^{-2}$ for all sample sizes $n \geq \log_2(2/\alpha)$,
\begin{equation*}
\begin{split}
    \left| F(X_{(n-k_{n,\alpha})}) - \frac{n-k_{n,\alpha}}{n+1}\right| &\leq \sqrt{\frac{2(n-k_{n,\alpha})}{n+1}\left( 1- \frac{n-k_{n,\alpha}}{n+1}\right)\frac{2\log(n)}{n+1}}+\left|1-\frac{2(n-k_{n,\alpha})}{n+1} \right|\frac{2\log(n)}{n+1} \\
    &\leq \sqrt{\frac{1}{4}\frac{4\log(n)}{n+1}}+\left(1-\frac{2(n-k_{n,\alpha})}{n+1} \right)\frac{2\log(n)}{n+1} \\
    &\leq \sqrt{\frac{\log(n)}{n+1}}+\left(1-\frac{2(n-((n/2)+(\sqrt{n}z_{\alpha/2}/2)+1))}{n+1} \right)\frac{2\log(n)}{n+1} \\
    &\leq \sqrt{\frac{\log (n)}{n+1}} + \left(\frac{z_{\alpha/2}}{\sqrt{n}} + \frac{3}{n}\right)\frac{2\log (n)}{n+1}.
\end{split}    
\end{equation*}
We showed earlier that $2(k_{n,\alpha}+1)\geq n+1$. This implies that $2(n-k_{n,\alpha}) \leq n+1$. We have used this in the above derivation to remove the modulus. Using \eqref{eq:hd_new_bound} we obtain that,
\begin{equation*}
    \begin{split}
        \left|\frac{n-k_{n,\alpha}}{n+1} - \frac{1}{2} \right| &= \left( \frac{1}{2} - \frac{n-k_{n,\alpha}}{n+1}\right) \\
        &\leq \left(\frac{1}{2} - \frac{n-((n/2)+(\sqrt{n}z_{\alpha/2}/2)+1)}{n+1} \right)\\
        & = \frac{\sqrt{n}z_{\alpha/2}+3}{n+1} \\
        &= \frac{z_{\alpha/2}}{2\sqrt{n}}+\frac{3}{2n}.
    \end{split}
\end{equation*}
Combining the above two inequalities using triangle inequality we obtain that,
\[
\mathbb{P}\left(\left|F(X_{(n-k_{n,\alpha})})-F(\theta_0) \right| \leq B_n \right) \geq 1- 2n^{-2},
\]
where $B_n$ is as defined in the lemma. We note that $\ceil{n/2}+c_{n,\alpha}+1=k_{n,\alpha}+1$ and $\floor{n/2}-c_{n,\alpha}=n-k_{n,\alpha}$. Thus the confidence interval of $\theta_0$ for $n\geq \log_2(2/\alpha)$, $[X_{(\floor{n/2}-c_{n,\alpha})},X_{(\ceil{n/2}+c_{n,\alpha}+1)}]$ is same as the confidence interval $[X_{(n-k_{n,\alpha})},X_{(k_{n,\alpha}+1)}]$. 
\end{proof}

\begin{prop}
 \label{thm:sukhatme}   
 Let $X_1,X_2,...,X_n \stackrel{iid}{\sim} F $. Assume that $F$ is a continuous CDF whose $p$-th quantile is $\eta_p$ (i.e.\ $F^{-1}(p)=\eta_p$). Let $\{k_n\}$ be a sequence of positive integers such that $k_n=np+O(n^{1/2})$. Then we have,
\begin{equation}
    F(X_{(k_n)})=\frac{k_n}{n}-F_n(\eta_p)+F(\eta_p)+R_n,
\end{equation}
where $\sqrt{n}R_n \xrightarrow{P} 0$. 
\end{prop}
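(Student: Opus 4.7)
The plan is to reduce the claim to a standard Bahadur-type representation for the uniform empirical process. Because $F$ is continuous, $U_i := F(X_i)$ are i.i.d.\ Uniform$(0,1)$, with $F(\eta_p)=p$, $F(X_{(k_n)}) = U_{(k_n)}$ and $F_n(\eta_p) = G_n(p)$, where $G_n(t) := n^{-1}\sum_{i=1}^n \mathbf{1}\{U_i \le t\}$ is the uniform empirical CDF. The stated identity is therefore equivalent to
\begin{equation*}
U_{(k_n)} - \frac{k_n}{n} \;=\; -(G_n(p)-p) + R_n, \qquad \sqrt{n}R_n \xrightarrow{P} 0.
\end{equation*}

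I would first exploit a deterministic identity involving the empirical process $\alpha_n(t) := \sqrt{n}(G_n(t) - t)$. Continuity of $F$ makes the $U_i$'s a.s.\ distinct, so $G_n(U_{(k_n)}) = k_n/n$ exactly. Consequently
\begin{equation*}
\sqrt{n}\Bigl(U_{(k_n)} - \frac{k_n}{n}\Bigr) \;=\; \sqrt{n}\bigl(U_{(k_n)} - G_n(U_{(k_n)})\bigr) \;=\; -\alpha_n(U_{(k_n)}).
\end{equation*}
It therefore suffices to prove $\alpha_n(U_{(k_n)}) - \alpha_n(p) \xrightarrow{P} 0$, since this would give
\begin{equation*}
\sqrt{n}\Bigl(U_{(k_n)} - \frac{k_n}{n}\Bigr) \;=\; -\alpha_n(p) + o_p(1) \;=\; -\sqrt{n}(G_n(p) - p) + o_p(1),
\end{equation*}
which rearranges to the desired expression with $\sqrt{n}R_n \xrightarrow{P} 0$.

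Two ingredients complete the argument. The first is the concentration $U_{(k_n)}-p = O_p(n^{-1/2})$, which follows from $k_n/n - p = O(n^{-1/2})$ together with $\mathrm{Var}(U_{(k_n)}) = O(n^{-1})$ (using $U_{(k_n)} \sim \mathrm{Beta}(k_n, n-k_n+1)$), or alternatively from \Cref{lem:conc_uniform}. The second is the asymptotic equicontinuity of $\alpha_n$ at $p$, namely $\sup_{|t-p|\le c/\sqrt{n}} |\alpha_n(t) - \alpha_n(p)| \xrightarrow{P} 0$ for every fixed $c>0$. Combining the two through a standard good-event argument (choose $c$ large so the first bound holds with probability $>1-\varepsilon$, then apply the uniform bound on that event) yields $\alpha_n(U_{(k_n)}) - \alpha_n(p) \xrightarrow{P} 0$.

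The main obstacle is the equicontinuity bound, which I would establish by a direct second-moment calculation rather than invoking heavy process theory. For $0< h \le c/\sqrt{n}$ the increment $\alpha_n(p+h) - \alpha_n(p)$ is a normalized sum of $n$ centered Bernoulli summands $\mathbf{1}\{p<U_i\le p+h\} - h$ with variance $h(1-h) \le h$, so Chebyshev gives pointwise fluctuations of order $h^{1/2} = O(n^{-1/4})$. Promoting this to the supremum over the $c/\sqrt{n}$-neighborhood of $p$ costs at most a $\sqrt{\log n}$ factor via a dyadic chaining or maximal inequality (equivalently, Donsker's theorem for the uniform empirical process), which still vanishes as $n\to\infty$. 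This finishes the sketch.
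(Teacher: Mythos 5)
Your argument is correct, but it takes a genuinely different route from the paper's. The paper's proof is a one-line reduction: set $Y_i=F(X_i)$, note these are i.i.d.\ Uniform$(0,1)$ with $p_n-p=(k_n/n)-p=O(n^{-1/2})$, and then cite Theorem~1 of Ghosh (1971) applied to the uniforms. You perform the same reduction to uniforms but then prove the representation from scratch: the exact identity $G_n(U_{(k_n)})=k_n/n$ (valid a.s.\ since the $U_i$ are distinct) converts the claim into $\alpha_n(U_{(k_n)})-\alpha_n(p)\xrightarrow{P}0$, which you obtain from $U_{(k_n)}-p=O_p(n^{-1/2})$ together with the asymptotic equicontinuity (local oscillation) of the uniform empirical process. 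This is sound: the good-event argument is the standard one, and the equicontinuity step, though only sketched, is classical (it follows either from Donsker's theorem plus a.s.\ continuity of the Brownian bridge, or from the oscillation-modulus bound you outline, which in fact yields the sharper rate $R_n=O_p(n^{-3/4}\sqrt{\log n})$ rather than merely $o_p(n^{-1/2})$). What the paper's route buys is brevity by outsourcing the analytic work to Ghosh's lemma, which is itself an elementary device avoiding any appeal to the oscillation modulus; what your route buys is a self-contained proof that makes the source of the remainder term transparent and quantifiable. One small point worth making explicit: $F_n(\eta_p)=G_n(p)$ only up to an event contained in $\bigcup_i\{F(X_i)=p\}$, which is null because $F(X_i)$ is uniform, so the identification is a.s.\ rather than pointwise.
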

\begin{proof}[Proof of Proposition~\ref{thm:sukhatme}]
We observe that if $F$ is a continuous distribution function then $F(X_1), F(X_2),\cdots, F(X_n)$ are i.i.d.\ $\text{Uniform}(0,1)$ random variables. Let $Y_i=F(X_i)$ for $i=1,2,\cdots,n$. The $Y_i$'s are i.i.d.\ from $U \stackrel{d}{=} \text{Uniform}(0,1)$. Applying Theorem-$1$ of \cite{ghosh1971new} on $Y_1,Y_2,...,Y_n \stackrel{iid}{\sim} U $ and noting that $p_n-p=(k_n/n)-p=O(n^{-1/2})$ and $F(\eta_p)=p$, we obtain the desired result. 
\end{proof}

\begin{lemma}
\label{lem:joint_distribution}
   $(Q_{1,\boldsymbol{X},n},Q_{2,\boldsymbol{X},n})$ has the following asymptotic joint distribution,
   \[
   \sqrt{n}(Q_{1,\boldsymbol{X},n},Q_{2,\boldsymbol{X},n}) \stackrel{d}{\xrightarrow{}}  N\left(\left(\frac{z_{\alpha/2}}{2}, -\frac{z_{\alpha/2}}{2}\right), \frac{1}{4}\textbf{1}\textbf{1}^{T} \right).
   \]
\end{lemma}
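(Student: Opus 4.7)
The plan is to reduce both components to a common centered empirical quantity. Since $F$ is continuous, set $U_i := F(X_i) \stackrel{iid}{\sim} \mathrm{Uniform}(0,1)$ and let $\widehat{G}_n$ denote their empirical CDF. Write $p_{1,n} := (\lceil n/2\rceil + c_{n,\alpha}+1)/n$ and $p_{2,n} := (\lfloor n/2\rfloor - c_{n,\alpha})/n$, so that (matching the definition of $Q$ in Theorem~\ref{thm:asym_result_irreg_case}) $Q_{k,\boldsymbol{X},n} = (p_{k,n} - 1/2) - (\widehat{G}_n(p_{k,n}) - p_{k,n})$ for $k = 1, 2$. Part 3 of Theorem~\ref{thm:assym_of_cn} gives $c_{n,\alpha} = \sqrt{n}\,z_{\alpha/2}/2 + O(1)$, so the deterministic drifts satisfy $\sqrt{n}(p_{1,n} - 1/2) \to z_{\alpha/2}/2$ and $\sqrt{n}(p_{2,n} - 1/2) \to -z_{\alpha/2}/2$.

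The key step is to show that the noise terms evaluated at $p_{k,n}$ coincide asymptotically with the noise term at $1/2$:
\[
\sqrt{n}\bigl[(\widehat{G}_n(p_{k,n}) - p_{k,n}) - (\widehat{G}_n(1/2) - 1/2)\bigr] \;=\; o_p(1), \quad k \in \{1,2\}.
\]
Eventually $p_{1,n} > 1/2 > p_{2,n}$, so $\widehat{G}_n(p_{1,n}) - \widehat{G}_n(1/2) = n^{-1}\sum_i \mathbf{1}\{U_i \in (1/2, p_{1,n}]\}$ is a scaled binomial sum with variance $(p_{1,n}-1/2)(1-(p_{1,n}-1/2))/n = O(n^{-3/2})$; hence the displayed quantity has variance $O(n^{-1/2})$ and Chebyshev's inequality delivers the claim. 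The case $k=2$ is symmetric. This is the elementary form of stochastic equicontinuity of the uniform empirical process; one could equivalently invoke Donsker's theorem and the continuity of the Brownian bridge at $1/2$.

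Granting this reduction, the classical CLT yields $Z_n := \sqrt{n}(\widehat{G}_n(1/2) - 1/2) \convd Z \sim N(0, 1/4)$. Combining the drift with the asymptotic equivalence,
\[
\sqrt{n}\,(Q_{1,\boldsymbol{X},n},\, Q_{2,\boldsymbol{X},n}) \;=\; \bigl(z_{\alpha/2}/2 - Z_n,\; -z_{\alpha/2}/2 - Z_n\bigr) + o_p(1) \;\convd\; \bigl(z_{\alpha/2}/2 - Z,\; -z_{\alpha/2}/2 - Z\bigr).
\]
The limiting vector is bivariate normal with means $(z_{\alpha/2}/2, -z_{\alpha/2}/2)$; each component has variance $\mathrm{Var}(Z) = 1/4$, and the two components share the same noise $Z$, so their covariance equals $\mathrm{Var}(Z) = 1/4$. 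This yields exactly the covariance $\tfrac14 \mathbf{1}\mathbf{1}^{T}$ claimed in the lemma.

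I do not anticipate a serious obstacle: the only substantive step is the equicontinuity reduction, which here collapses to a one-line second-moment computation since the $L^2$-increment of $\widehat{G}_n - \mathrm{id}$ over an interval of length $O(n^{-1/2})$ around $1/2$ shrinks at rate $n^{-3/4}$, which is $o(n^{-1/2})$ after CLT scaling. The perfect collinearity in the limiting covariance is natural, as both $Q_{1,\boldsymbol{X},n}$ and $Q_{2,\boldsymbol{X},n}$ differ asymptotically only by a deterministic shift of size $z_{\alpha/2}/\sqrt{n}$; everything else is bookkeeping.
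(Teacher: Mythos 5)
Your proof is correct, but it takes a genuinely different route from the paper's. The paper proves joint normality of the two centered empirical sums $\bigl(\widehat{G}_n(p_{1,n})-p_{1,n},\ \widehat{G}_n(p_{2,n})-p_{2,n}\bigr)$ directly, via the Cram\'er--Wold device: for each $(l_1,l_2)$ it builds a triangular array, verifies the Lyapunov conditions, computes the limiting variance $\tfrac14(l_1+l_2)^2$ from the covariance $p_{2,n}(1-p_{1,n})$ of the two indicators, and then adds the deterministic drifts by Slutsky. You instead collapse both noise terms onto the single quantity $\widehat{G}_n(1/2)-1/2$ by bounding the variance of the increment of the centered empirical process over an interval of length $O(n^{-1/2})$ around $1/2$, and then invoke the ordinary CLT once; joint convergence is then automatic because both coordinates are affine functions of one scalar sequence. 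Your variance computation is right: the increment has variance $O(n^{-3/2})$, so after multiplying by $\sqrt{n}$ it is $o_p(1)$ by Chebyshev. Your route is more elementary (no triangular-array CLT, no Cram\'er--Wold) and it makes transparent \emph{why} the limiting covariance is the rank-one matrix $\tfrac14\mathbf{1}\mathbf{1}^{T}$: the two coordinates are asymptotically perfectly collinear, differing only by the deterministic shift $z_{\alpha/2}/\sqrt{n}$. It also handles the degenerate direction $l_1+l_2=0$ cleanly, which in the paper's argument corresponds to a vanishing Lyapunov normalizer and strictly speaking needs a separate (Chebyshev-type) justification. The paper's approach is the one that would generalize if the two evaluation points did not converge to the same quantile, but for this lemma both arguments land on the same conclusion.
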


\begin{proof}[Proof of \Cref{lem:joint_distribution}]
We decompose $Q_{1,\boldsymbol{X},n}$ and $Q_{2,\boldsymbol{X},n}$ as follows,
\begin{equation}
    \begin{split}
        Q_{1,\boldsymbol{X},n} &= P_{n} + S_{n}, \\
        Q_{2,\boldsymbol{X},n} &= R_{n} + T_{n},
    \end{split}
\end{equation}
where,
\begin{equation}
    \begin{split}
        P_n &= \left(\frac{k_{n,\alpha}+1}{n} - \frac{1}{2}\right), \\
        R_n &= \left(\frac{n-k_{n,\alpha}}{n} - \frac{1}{2}\right), \\
        S_n &= \frac{1}{n}\sum_{i=1}^n \mathbf{1}\{F(X_i) \le (k_{n,\alpha}+1)/n\} - \frac{k_{n,\alpha}+1}{n},\\
        T_n &= \frac{1}{n}\sum_{i=1}^n \mathbf{1}\{F(X_i) \le (n-k_{n,\alpha})/n\} - \frac{n-k_{n,\alpha}}{n}.
    \end{split}
\end{equation}
We shall obtain the asymptotic joint distribution of $(S_n,T_n)$ by deriving the asymptotic distributions of the linear combinations $l_1S_n + l_2T_n$ where $l_1,l_2 \in \mathbb{R}$. We shall use Lyapounov CLT for this purpose. We define $Y_{ni}$ for $i=1,\cdots,n$ as follows,
\begin{equation}
    Y_{ni} = \frac{l_1(\mathbf{1}\{F(X_i) \le p_{1n} \}-p_{1n}) + l_2(\mathbf{1}\{F(X_i) \le p_{2n} \}-p_{2n})}{\sqrt{n(l_1^2p_{1n}(1-p_{1n})+l_2^2p_{2n}(1-p_{2n})+2l_1l_2p_{2n}(1-p_{1n}))}},
\end{equation}
where $p_{1n}=(k_{n,\alpha}+1)/n$ and $p_{2n}=(n-k_{n,\alpha})/n$. Note that for each $n$, $Y_{ni}$ for $i=1,\cdots,n$ are mutually independent. The following conditions can be easily verified:
\begin{alignat}{2}
   &\mathbb{E}(Y_{ni}) &&= 0 \quad\quad \mbox{for all}\quad n,i,\\
   &\sum_{i=1}^n \mathbb{E}(Y_{ni}^2) &&= 1 \quad\quad \mbox{for all}\quad n, \\
  & \lim_{n\xrightarrow{}0}\sum_{i=1}^n \mathbb{E}(Y_{ni}^4) &&= 0.
\end{alignat}
Thus using Lyapounov CLT we obtain that $\sum_{i=1}^n Y_{ni} \stackrel{d}{\xrightarrow{}} N(0,1)$. In other words, for any $l_1,l_2 \in \mathbb{R}$ we have,
\begin{equation}
    \sqrt{n}(l_1S_n + l_2T_n) \stackrel{d}{\xrightarrow{}} N\left(0,\frac{1}{4}(l_1+l_2)^2\right).
\end{equation}
To obtain above, we used the fact that both $p_{1n}$ and $p_{2n}$ converge to $1/2$ as $n$ goes to infinity. Hence the asymptotic joint distribution of $(S_n,T_n)$ is as follows,
\begin{equation}
    \sqrt{n}(S_n,T_n) \stackrel{d}{\xrightarrow{}} N\left(\left(0,0\right), \frac{1}{4}\textbf{1}\textbf{1}^{T} \right).
\end{equation}
From earlier computations we know that $\sqrt{n}P_n \xrightarrow{} z_{\alpha/2}/2$ and $\sqrt{n}R_n \xrightarrow{} -z_{\alpha/2}/2$. Applying Slutsky's theorem we get that,
\begin{equation}
    \sqrt{n}(Q_{1,\boldsymbol{X},n},Q_{2,\boldsymbol{X},n}) \stackrel{d}{\xrightarrow{}} (W_1,W_2),
\end{equation}
where, 
\begin{equation}
    (W_1,W_2) \sim N\left(\left(\frac{z_{\alpha/2}}{2}, -\frac{z_{\alpha/2}}{2}\right), \frac{1}{4}\textbf{1}\textbf{1}^{T} \right).
\end{equation}
   
This completes the proof of the lemma.
\end{proof}

\begin{prop}
    \label{prop:boundedness_of_limit}
Suppose $\mathscr{G}(a, b) = |a|^{1/\rho}\mbox{sgn}(a) - |a - b|^{1/\rho}\mbox{sgn}(a - b)$. The following holds true for any $\alpha \in (0,1)$ and for any $\rho \geq 1$, 
\[
 \mathbb{P}\left(\frac{1}{2}\mathscr{G}((Z/z_{\alpha/2})+1, 2)  \leq 2^{ (1/\rho) - 1} \right)  = \alpha \quad \mbox{where} \quad Z\sim N(0,1). 
\]
\end{prop}

\begin{proof}[Proof of Proposition~\ref{prop:boundedness_of_limit}]
We begin the proof by showing the following equivalence for any $a \in \mathbb{R}$ and $\rho > 1$,
\begin{equation}
\label{eq:imp_equiv}
    \mathscr{G}(a,2)\le 2^{1/\rho}
\quad\Longleftrightarrow\quad
a\le 0 \ \text{or}\ a\ge 2.
\end{equation}
For $\rho \geq 1$, the function $h(x)=x^{1/\rho}$ is concave on $[0,\infty)$. Hence, for any $y\ge 0$, the increment
\[
\Delta_y(x)\;:=\;h(x+y)-h(x)
\]
is nonincreasing in $x\ge 0$ (since $h'$ is nonincreasing). Therefore,
\begin{equation}
   \label{eq:concave_prop} 
   h(x+y)-h(x)\le h(0+y)-h(0)=y^{1/\rho}
\qquad\text{for all }x,y\ge 0.
\end{equation}
We now consider three cases.

\medskip\noindent
\textit{Case} 1: $a\ge 2$. Here $\mbox{sgn}(a)=\mbox{sgn}(a-2)=1$ and hence, 
\[
\mathscr{G}(a,2)=a^{1/\rho}-(a-2)^{1/\rho}.
\]
Writing $a=(a-2)+2$ and applying \eqref{eq:concave_prop} with $x=a-2$ and $y=2$ yields
\[
\mathscr{G}(a,2)=\big((a-2)+2\big)^{1/\rho}-(a-2)^{1/\rho}\le 2^{1/\rho}.
\]

\medskip\noindent
\textit{Case} 2: $a\le 0$. Then $\mbox{sgn}(a)=\mbox{sgn}(a-2)=-1$, and using $|a|=-a$, $|a-2|=2-a=(-a)+2$ gives
\[
\mathscr{G}(a,2)=|a-2|^{1/\rho}-|a|^{1/\rho}=(2-a)^{1/\rho}-(-a)^{1/\rho}.
\]
We apply \eqref{eq:concave_prop} with $x=-a$ and $y=2$ to obtain
\[
\mathscr{G}(a,2)=\big((-a)+2\big)^{1/\rho}-(-a)^{1/\rho}\le 2^{1/\rho}.
\]
\medskip\noindent
\textit{Case} 3: $0<a<2$. Then $\mbox{sgn}(a)=1$ and $\mbox{sgn}(a-2)=-1$, hence
\[
\mathscr{G}(a,2)=a^{1/\rho}+(2-a)^{1/\rho}.
\]
By concavity of $h(x)=x^{1/\rho}$ and Jensen's inequality,
\[
\frac{a^{1/\rho}+(2-a)^{1/\rho}}{2}
\ge \left(\frac{a+(2-a)}{2}\right)^{1/\rho}
=1,
\]
This implies that $\mathscr{G}(a,2)\ge 2$. If $\rho>1$, then $2>2^{1/\rho}$, and thus
$\mathscr{G}(a,2)>2^{1/\rho}$ for all $a\in(0,2)$. Combining the three cases shows that $\mathscr{G}(a,2)\le 2^{1/\rho}$ holds for
$a\le 0$ or $a\ge 2$, while for $\rho>1$ it fails on $(0,2)$, proving the
claimed equivalence. (At the boundary points, $\mathscr{G}(0,2)=\mathscr{G}(2,2)=2^{1/\rho}$
under the usual convention $\mbox{sgn}(0)=0$.) 

Using the above equivalence we can derive the following probability statement, 
\begin{equation*}
    \begin{split}
        \mathbb{P}\left(\frac{1}{2}\mathscr{G}((Z/z_{\alpha/2})+1, 2)  \leq 2^{ (1/\rho) - 1} \right) = &  \mathbb{P}\left(\mathscr{G}((Z/z_{\alpha/2})+1, 2)  \leq 2^{ 1/\rho } \right) \\
        \stackrel{\eqref{eq:concave_prop}}{=} & \mathbb P\left( (Z/z_{\alpha/2})+1 \notin (0,2)\right)  \\
        = & \mathbb P \left(Z \leq - z_{\alpha/2} \right) + \mathbb P \left( Z \geq z_{\alpha/2}\right) \\
        = & \alpha. 
    \end{split}
\end{equation*}
\end{proof}

\begin{prop}
\label{prop:unequal_limits_bound_G}
Let $\rho\ge 1$, $M_->0$, $M_+>0$, and for $a,b\in\mathbb{R}$ define
\[
\overline{\mathscr{G}}(a,b)
=
|a|^{1/\rho}\,\mbox{sgn}(a)
\left[
\frac{\mathbf{1}\{a<0\}}{M_-^{1/\rho}}
+
\frac{\mathbf{1}\{a>0\}}{M_+^{1/\rho}}
\right]
-
|a-b|^{1/\rho}\,\mbox{sgn}(a-b)
\left[
\frac{\mathbf{1}\{a<b\}}{M_-^{1/\rho}}
+
\frac{\mathbf{1}\{a>b\}}{M_+^{1/\rho}}
\right].
\]
Then, for every $a\in\mathbb{R}$,
\[
\overline{\mathscr{G}}(a,z_{\alpha/2})
\le
2^{\,1-1/\rho}\, z_{\alpha/2}^{1/\rho}\,
\max\!\left\{M_-^{-1/\rho},\,M_+^{-1/\rho}\right\}.
\]

\end{prop}

\begin{proof}
Write $b=z_{\alpha/2}>0$. We consider three cases depending on the value of $a$.

\medskip
\noindent\textit{Case} 1: $a\le 0$. Then $\mathbf{1}\{a<0\}=1$, $\mathbf{1}\{a>0\}=0$, and also $a<b$ since $b>0$.
Hence
\[
\overline{\mathscr{G}}(a,b)
=
\frac{|a|^{1/\rho}\,\mbox{sgn}(a)}{M_-^{1/\rho}}
-
\frac{|a-b|^{1/\rho}\,\mbox{sgn}(a-b)}{M_-^{1/\rho}}
=
\frac{(b-a)^{1/\rho}-(-a)^{1/\rho}}{M_-^{1/\rho}},
\]
because $\mbox{sgn}(a)=\mbox{sgn}(a-b)=-1$ and $|a|=-a$, $|a-b|=b-a$.
Since $\rho\ge 1$, the function $x\mapsto x^{1/\rho}$ is concave on $[0,\infty)$,
so the increment $x\mapsto (x+b)^{1/\rho}-x^{1/\rho}$ is nonincreasing in $x\ge 0$.
Taking $x=-a\ge 0$ yields
\[
(b-a)^{1/\rho}-(-a)^{1/\rho}
=
\big((-a)+b\big)^{1/\rho}-(-a)^{1/\rho}
\le
b^{1/\rho}.
\]
Therefore,
\[
\overline{\mathscr{G}}(a,b)\le b^{1/\rho}M_-^{-1/\rho}
\le 2^{\,1-1/\rho} b^{1/\rho}\max\!\{M_-^{-1/\rho},M_+^{-1/\rho}\}.
\]

\medskip
\noindent\textit{Case} 2: $a\ge b$. Then $\mathbf{1}\{a<b\}=0$, $\mathbf{1}\{a>b\}=1$, and also $\mbox{sgn}(a)=\mbox{sgn}(a-b)=1$.
Thus
\[
\overline{\mathscr{G}}(a,b)
=
\frac{a^{1/\rho}}{M_+^{1/\rho}}
-
\frac{(a-b)^{1/\rho}}{M_+^{1/\rho}}
=
\frac{a^{1/\rho}-(a-b)^{1/\rho}}{M_+^{1/\rho}}.
\]
By the same concavity argument applied to the increment $x\mapsto (x+b)^{1/\rho}-x^{1/\rho}$,
with $x=a-b\ge 0$, we have $a^{1/\rho}-(a-b)^{1/\rho}\le b^{1/\rho}$. Hence
\[
\overline{\mathscr{G}}(a,b)\le b^{1/\rho}M_+^{-1/\rho}
\le 2^{\,1-1/\rho} b^{1/\rho}\max\!\{M_-^{-1/\rho},M_+^{-1/\rho}\}.
\]

\medskip
\noindent\textit{Case} 3: $0<a<b$. Then $\mbox{sgn}(a)=1$ and $\mbox{sgn}(a-b)=-1$, and the indicators select
$M_+$ in the first bracket and $M_-$ in the second bracket. Therefore
\[
\overline{\mathscr{G}}(a,b)
=
\frac{a^{1/\rho}}{M_+^{1/\rho}}
+
\frac{(b-a)^{1/\rho}}{M_-^{1/\rho}}
\le
\max\!\{M_-^{-1/\rho},M_+^{-1/\rho}\}\,\Big(a^{1/\rho}+(b-a)^{1/\rho}\Big).
\]
Using concavity of $x\mapsto x^{1/\rho}$ again, Jensen's inequality gives
\[
\frac{a^{1/\rho}+(b-a)^{1/\rho}}{2}
\le
\left(\frac{a+(b-a)}{2}\right)^{1/\rho}
=
\left(\frac{b}{2}\right)^{1/\rho},
\]
so $a^{1/\rho}+(b-a)^{1/\rho}\le 2^{\,1-1/\rho} b^{1/\rho}$. Hence
\[
\overline{\mathscr{G}}(a,b)
\le
2^{\,1-1/\rho} b^{1/\rho}\max\!\{M_-^{-1/\rho},M_+^{-1/\rho}\}.
\]

\medskip
Combining the three cases proves the stated bound for all $a\in\mathbb{R}$.
\end{proof}
\section{Proof of Theorem~\ref{thm:assym_of_cn}}
\label{appendix:theorem_asym_cn}
At first we shall compute the value of $c_{n,\alpha}$ which has been defined in Step 1 of Algorithm~\Cref{alg:proposed-conf-int}. Let $G_n(\cdot)$ denote the cumulative distribution function of $\mbox{Bin}(n, 1/2)$ and let 
\[
G_n^{-1}(p) = \inf\{x\in\mathbb{R}:\,G_n(x)\ge p\},
\] 
denote the quantile function (inverse CDF) of $\mbox{Bin}(n, 1/2)$ distribution.
Let us first consider the case that $n$ is even. In this case,
\begin{equation}
\begin{split}
    c_{n,\alpha} &=\inf \{x: \mathbb{P}( Y_n \geq (n/2)-x) \geq 1-\alpha/2 \} \text{  where  }Y_n \sim \text{Bin}(n,1/2) \\
    &= \inf \left\{ x: 1-\mathbb{P}\left(Y_n < \frac{n}{2}-x\right) \geq 1-\alpha/2 \right\} \text{  where  }Y_n \sim \text{Bin}(n,1/2) \\
    &= \inf \{ x: 1-G_n((n/2)-x-1) \geq 1-\alpha/2\} \\
    &= \inf \{ x: G_n((n/2)+x) \geq 1-\alpha/2 \} \\
    &= G_n^{-1}(1-(\alpha/2))-(n/2).
\end{split}
\end{equation}
The second last inequality follows from the following computation,
\begin{equation}
\begin{split}
    G_n\big(\frac{n}{2}-c_{n,\alpha}-1\big) &= \frac{1}{2^n}\sum_{t=0}^{\frac{n}{2}-c_{n,\alpha}-1} \dbinom{n}{t} \\
    &= \frac{1}{2^n}\sum_{t=0}^{\frac{n}{2}-c_{n,\alpha}-1} \dbinom{n}{n-t} \\
    &= \frac{1}{2^n}\sum_{t=\frac{n}{2}+c_{n,\alpha}+1}^{n} \dbinom{n}{t} \\
    &= 1- \frac{1}{2^n}\sum_{t=0}^{\frac{n}{2}+c_{n,\alpha}} \dbinom{n}{t} \\
    &= 1- G_n\big(\frac{n}{2}+c_{n,\alpha}\big).
\end{split}
\end{equation}
We approach in a similar manner for the case when $n$ is odd.
\begin{equation}
    \begin{split}
        c_{n,\alpha} &=\inf \left\{x: \mathbb{P}\left( Y_n \geq \floor{\frac{n}{2}}-x\right) \geq 1-\alpha/2 \right\} \text{  where  }Y_n \sim \text{Bin}(n,1/2) \\
    &= \inf \left\{ x: 1-\mathbb{P}\left(Y_n < \floor{\frac{n}{2}}-x\right) \geq 1-\alpha/2 \right\} \text{  where  }Y_n \sim \text{Bin}(n,1/2) \\
    &= \inf \left\{ x: 1-G_n\left(\floor{\frac{n}{2}}-x-1\right) \geq 1-\alpha/2 \right\} \\
    &= \inf \{ x: G_n(\floor{{n}/{2}}+x+1) \geq 1-\alpha/2 \} \\
    &= G_n^{-1}(1-(\alpha/2))-\ceil{n/2}.
    \end{split}
\end{equation}
The second last inequality again follows from the following computation,
\begin{equation}
\begin{split}
    G_n\big(\floor{\frac{n}{2}}-c_{n,\alpha}-1\big) &= \frac{1}{2^n}\sum_{t=0}^{\floor{\frac{n}{2}}-c_{n,\alpha}-1} \dbinom{n}{t} \\
    &= \frac{1}{2^n}\sum_{t=0}^{\floor{\frac{n}{2}}-c_{n,\alpha}-1} \dbinom{n}{n-t} \\
    &= \frac{1}{2^n}\sum_{t=\floor{\frac{n}{2}}+c_{n,\alpha}+2}^{n} \dbinom{n}{t} \\
    &= 1- \frac{1}{2^n}\sum_{t=0}^{\floor{\frac{n}{2}}+c_{n,\alpha}+1} \dbinom{n}{t} \\
    &= 1- G_n\big(\floor{\frac{n}{2}}+c_{n,\alpha}+1\big).
\end{split}    
\end{equation}
Hence combining both the cases (for even and odd sample sizes) we conclude that, for any $n \in \mathbb{N}$, 
\begin{equation}\label{eq:formula-for-c_{n,alpha}}
c_{n,\alpha}~=~G_n^{-1}(1-(\alpha/2))-\ceil{(n/2)}.
\end{equation}
\paragraph{Proof of~\ref{conclu:coverage-guarantee}}
From the derivation of the value of $c_{n,\alpha}$, we obtain that $$c_{n,\alpha}\in\{x:\,\mathbb{P}( Y_n \geq \floor{n/2}-x ) \ge 1 - \alpha/2\}.$$ This holds because $G_n(G_n^{-1}(1-(\alpha/2))) \geq 1-(\alpha/2)$. Hence we can say the following,
\begin{equation}
    \mathbb{P}( Y_n \geq \floor{n/2}-c_{n,\alpha}) \ge 1 - \alpha/2\,\quad\mbox{where}\quad Y_n \sim \text{Bin}(n,1/2).
\end{equation}
We now note that under the assumption that $X_1, X_2, \ldots, X_n$ are independent random variables from distributions with median $\theta_0$. Thus, we obtain,
\begin{equation}
    \begin{split}
        \mathbb{P}(\theta_0\in\widehat{\mathrm{CI}}_{1,n,\alpha}) &= \mathbb{P} \left\{ \sum_{i=1}^n \mathbf{1}\{X_i \le \theta_0\} \ge \lfloor n/2\rfloor - c_{n,\alpha}\right\} \\
        &\ge \mathbb{P} \left\{ Y_n \ge \lfloor n/2\rfloor - c_{n,\alpha}\right\} \text {   where  } Y_n \sim \text{Bin}(n,1/2)\\
        & \ge 1 - \alpha/2.
    \end{split}
\end{equation}
Similarly we have that,
\begin{equation}
    \begin{split}
        \mathbb{P}(\theta_0\in\widehat{\mathrm{CI}}_{2,n,\alpha}) &= \mathbb{P} \left\{ \sum_{i=1}^n \mathbf{1}\{X_i \ge \theta_0\} \ge \lfloor n/2\rfloor - c_{n,\alpha}\right\} \\
        &\ge \mathbb{P} \left\{ Y_n \ge \lfloor n/2\rfloor - c_{n,\alpha}\right\} \text {   where  } Y_n \sim \text{Bin}(n,1/2)\\
        & \ge 1 - \alpha/2.
    \end{split}
\end{equation}
Combining the above two equations, the coverage probability of $\widehat{\mathrm{CI}}_{n,\alpha}=\widehat{\mathrm{CI}}_{1,n,\alpha} \cap \widehat{\mathrm{CI}}_{2,n,\alpha}$ is obtained as,
\begin{equation}
    \begin{split}
      \mathbb{P}(\theta_0\in\widehat{\mathrm{CI}}_{n,\alpha}) &=  \mathbb{P}(\theta_0\in\widehat{\mathrm{CI}}_{1,n,\alpha} \cap \widehat{\mathrm{CI}}_{2,n,\alpha}) \\
      &= \mathbb{P}(\theta_0\in\widehat{\mathrm{CI}}_{1,n,\alpha})+\mathbb{P}(\theta_0\in\widehat{\mathrm{CI}}_{2,n,\alpha})-\mathbb{P}(\theta_0\in\widehat{\mathrm{CI}}_{1,n,\alpha} \cup \widehat{\mathrm{CI}}_{2,n,\alpha}) \\
      &\ge (1 - \alpha/2) + (1 - \alpha/2) -1 \\
      &= 1-\alpha.    
    \end{split}
\end{equation}
This proves the first part of the theorem. 

\paragraph{Proof of~\ref{conclu:form-of-conf-interval}}
We observe that if $\lfloor n/2\rfloor - c_{n,\alpha} =0$, then 
\[
\widehat{\mathrm{CI}}_{n,\alpha} = \widehat{\mathrm{CI}}_{1,n,\alpha} \cap \widehat{\mathrm{CI}}_{2,n,\alpha} = \mathbb{R} \cap \mathbb{R} = \mathbb{R},
\]
because both $\sum_{i=1}^n \mathbf{1}\{X_i \le \theta\}$ and $\sum_{i=1}^n \mathbf{1}\{X_i \ge \theta\}$ are greater than or equal to $0$ no matter what the value of $\theta$ is. Hence we shall obtain a non-trivial confidence region only when $\lfloor n/2\rfloor - c_{n,\alpha} >0$. This condition can be further simplified as follows,
\begin{equation}
    \begin{split}
        \floor{n/2} - c_{n,\alpha} > 0 & \iff \floor{n/2} - (G_n^{-1}(1-(\alpha/2))-\ceil{n/2}) > 0\\
        & \iff G_n^{-1}(1-(\alpha/2)) < \floor{n/2}+ \ceil{n/2} \\
        & \iff G_n^{-1}(1-(\alpha/2)) < n \\
        & \iff G_n^{-1}(1-(\alpha/2)) \leq n-1 \\
        & \iff G_n(n-1) \geq 1-(\alpha/2) \\
        & \iff 1-\frac{1}{2^n} \geq 1-(\alpha/2)  \\
        & \iff n \geq \log_2(2/\alpha).
    \end{split}
\end{equation}
Therefore we conclude that if $n < \log_2(2/\alpha)$, then $\widehat{\mathrm{CI}}_{n,\alpha}=\mathbb{R}$. If $n \geq \log_2(2/\alpha)$, then the confidence interval is non-trivial as $\lfloor n/2\rfloor - c_{n,\alpha} >0$. 

We now show that if $n\ge\log_2(2/\alpha)$, then $\widehat{\mathrm{CI}}_{n,\alpha}$ can be represented in terms of the order statistics. Recall that
\[
\widehat{\mathrm{CI}}_{n,\alpha} = \widehat{\mathrm{CI}}_{1,n,\alpha} \cap \widehat{\mathrm{CI}}_{2,n,\alpha},
\]
where,
 \[
    \begin{cases}
      \widehat{\mathrm{CI}}_{1,n,\alpha} = \left\{\theta\in\mathbb{R}:\, \sum_{i=1}^n \mathbf{1}\{X_i \le \theta\} \ge \left\lfloor \frac{n}{2}\right\rfloor - c_{n,\alpha}\right\}, \\
    \widehat{\mathrm{CI}}_{2,n,\alpha} = \left\{\theta\in\mathbb{R}:\, \sum_{i=1}^n \mathbf{1}\{X_i \ge \theta\} \ge \left\lfloor \frac{n}{2}\right\rfloor - c_{n,\alpha}\right\}.     
    \end{cases}
 \]
Observe that if $\lfloor n/2\rfloor - c_{n,\alpha} > 0$, then
\begin{equation}
\begin{split}
    X_{(\floor{n/2}-c_{n,\alpha})} \leq \theta & \iff \text{At least $\floor{n/2}-c_{n,\alpha}$ many $X_i$'s are less than or equal to $\theta$} \\
    & \iff \sum_{i=1}^n \mathbf{1}\{X_i \le \theta\} \geq \floor{n/2}-c_{n,\alpha}.
\end{split}
\end{equation}
This implies that,
\begin{equation}
    \begin{split}
      \widehat{\mathrm{CI}}_{1,n,\alpha} &= \left\{\theta\in\mathbb{R}:\, \sum_{i=1}^n \mathbf{1}\{X_i \le \theta\} \ge \left\lfloor \frac{n}{2}\right\rfloor - c_{n,\alpha}\right\} \\
      &= \left\{\theta\in\mathbb{R}:\,  X_{(\floor{n/2}-c_{n,\alpha})} \leq \theta\right\} \\
      &= \left[ X_{(\floor{n/2}-c_{n,\alpha})}, \infty \right).
    \end{split}
\end{equation}
We also note that under the condition $\lfloor n/2\rfloor - c_{n,\alpha} > 0$, the following happens,
\begin{equation}
    \begin{split}
        X_{(\ceil{{n}/{2}}+c_{n,\alpha}+1)} \ge \theta & \iff X_{(\ceil{{n}/{2}}+c_{n,\alpha}+1)}, \cdots, X_{(n)} \ge \theta \\
        & \iff \text{At least $n-(\ceil{{n}/{2}}+c_{n,\alpha}+1)+1= \floor{n/2}-c_{n,\alpha}$ many $X_i$'s are $\ge \theta$} \\
        & \iff \sum_{i=1}^n \mathbf{1}\{X_i \ge \theta\} \geq \floor{n/2}-c_{n,\alpha}.
    \end{split}
\end{equation}
This implies that,
\begin{equation}
    \begin{split}
      \widehat{\mathrm{CI}}_{2,n,\alpha} &= \left\{\theta\in\mathbb{R}:\, \sum_{i=1}^n \mathbf{1}\{X_i \ge \theta\} \ge \left\lfloor \frac{n}{2}\right\rfloor - c_{n,\alpha}\right\} \\
      &= \left\{\theta\in\mathbb{R}:\,  X_{(\ceil{{n}/{2}}+c_{n,\alpha}+1)} \ge \theta\right\} \\
      &= \left( -\infty, X_{(\ceil{{n}/{2}}+c_{n,\alpha}+1)}\right].
    \end{split}
\end{equation}
This shows that, 
\begin{equation}
    \begin{split}
      \widehat{\mathrm{CI}}_{n,\alpha} &= \widehat{\mathrm{CI}}_{1,n,\alpha} \cap \widehat{\mathrm{CI}}_{2,n,\alpha} \\
      &= \left[ X_{(\floor{n/2}-c_{n,\alpha})}, \infty \right) \cap \left( -\infty, X_{(\ceil{{n}/{2}}+c_{n,\alpha}+1)}\right] \\
      &= \left[X_{(\floor{n/2}-c_{n,\alpha})}, X_{(\ceil{{n}/{2}}+c_{n,\alpha}+1)}\right].
    \end{split}
\end{equation}
This completes the proof of the second part of the theorem. 
\paragraph{Proof of~\ref{conclu:c_{n,alpha}_bounds}}
We shall now establish the relation between $c_{n,\alpha}$ and the normal quantile, $z_{\alpha/2}$. We have already seen that, for any $n \in \mathbb{N}$, $c_{n,\alpha}=G_n^{-1}(1-(\alpha/2))-\ceil{(n/2)}$. Suppose $G_n^{-1}(1-(\alpha/2))=k$. We know that $G_n^{-1}(1-(\alpha/2))=k$ iff the following two inequalities hold true:
\begin{equation}\label{eq:defining_equation_k}
    \begin{split}
        G_n(k-1) < 1 - \frac{\alpha}{2} \le G_n(k).
    \end{split}
\end{equation}
Note that $1 - \alpha/2 \ge 1 - 2^{-n}$ (or equivalently, $n \le \log_2(2/\alpha)$) if and only if $k = n$ because $G_n(n-1) = 1 - 2^{-n}$ and $G_n(n) = 1$. Therefore, for $\alpha\in(0, 1)$ satisfying $\log_2(2/\alpha) < n$, $k \leq n-1$. 
For such $k$, the universal inequalities for binomial distribution mentioned in Theorem $1$ of \cite{zubkov2012full} imply that,
\[
C_{n}(k-1) \le G_n(k-1) \le C_{n}(k) \le G_n(k) \le C_n(k+1)\quad\mbox{for all}\quad k\in\{1, 2, \ldots, n-1\}, 
\]
where for $1 \le k\le n-1$,
\[
C_n(k) = \Phi\left( \mathrm{sgn}\left(\frac{k}{n} - \frac{1}{2}\right)\sqrt{2nH\left(\frac{k}{n},\,\frac{1}{2}\right)}\right),\quad\mbox{with}\quad H(x, 1/2) = x\ln(2x) + (1-x)\ln(2 - 2x).
\]
Suppose $k'\in\{1, 2, \ldots, n\}$ is such that $C_n(k') < 1 - \alpha/2 \le C_n(k'+1)$, then
\[
G_n(k' - 1) ~\le~ C_n(k') ~<~ 1 - \frac{\alpha}{2} ~\le~ C_n(k'+1) ~\le~ G_n(k'+1).
\]
This implies $k\in\{k', k' + 1\}$ or equivalently, $k' \le k \le k' + 1$. Note that for $\alpha < 1$,
\[
C_n(\lfloor n/2\rfloor) = \Phi\left(-\sqrt{2nH\left(\frac{\lfloor n/2\rfloor}{n},\,\frac{1}{2}\right)}\right) \le \frac{1}{2} < 1 - \frac{\alpha}{2}\quad\mbox{and}\quad C_n(\lceil n/2\rceil) = \Phi\left(\sqrt{2nH\left(\frac{\lceil n/2\rceil}{n},\,\frac{1}{2}\right)}\right) > \frac{1}{2}.
\]
Hence, $k' = \lfloor n/2\rfloor$ if and only if 
\[
\Phi\left(\sqrt{2nH\left(\frac{\lceil n/2\rceil}{n},\,\frac{1}{2}\right)}\right) \ge 1 - \frac{\alpha}{2}\quad\Leftrightarrow\quad H\left(\frac{\lceil n/2\rceil}{n},\,\frac{1}{2}\right) \ge \frac{z_{\alpha/2}^2}{2n}.
\]
If $n$ is even, this cannot never occur as $H(\lceil n/2\rceil/n, 1/2) = 0$ in this case. If $n$ is odd, then $\lceil n/2\rceil = (n + 1)/2$ and the inequality $H(\lceil n/2\rceil/n, 1/2) \ge z_{\alpha/2}^2/(2n)$ becomes
\begin{equation}\label{eq:inequality-k'-equals-n/2}
\left(\frac{1}{2} + \frac{1}{2n}\right)\log\left(1 + \frac{1}{n}\right) + \left(\frac{1}{2} - \frac{1}{2n}\right)\log\left(1 - \frac{1}{n}\right) ~=~ H\left(\frac{1}{2} + \frac{1}{2n},\,\frac{1}{2}\right) \ge \frac{z_{\alpha/2}^2}{2n}.
\end{equation}
By \Cref{lem:kl_bound} we get $H(x, 1/2) \ge 2(x - 1/2)^2 + (4/3)(x - 1/2)^4$. This implies that inequality~\eqref{eq:inequality-k'-equals-n/2} holds only if
\[
2\left(\frac{1}{2n}\right)^2 + \frac{4}{3}\left(\frac{1}{2n}\right)^4 \ge \frac{z_{\alpha/2}^2}{2n}\quad\Leftrightarrow\quad \frac{1}{n} + \frac{1}{6n^3} \ge z_{\alpha/2}^2.
\]
Assume $n$ is large enough so that $H(\lceil n/2\rceil/n, 1/2) < z_{\alpha/2}^2/(2n)$. Then $k' > n/2$ and hence,
\[
C_n(k') < 1 - \alpha/2\quad\Leftrightarrow\quad H\left(\frac{k'}{n},\,\frac{1}{2}\right) \le \frac{z_{\alpha/2}^2}{2n}.
\]
Similarly, $C_n(k'+1) \ge 1 - \alpha/2$ becomes
\[
H\left(\frac{k' + 1}{n},\,\frac{1}{2}\right) \ge \frac{z_{\alpha/2}^2}{2n}.
\]
Hence, the defining inequality for $k'$ is
\[
H\left(\frac{k'}{n},\,\frac{1}{2}\right) \le \frac{z_{\alpha/2}^2}{2n} \le H\left(\frac{k'+1}{n},\,\frac{1}{2}\right).
\]
From \Cref{lem:kl_bound} we know that,
\[
2(x - 1/2)^2 + (4/3)(x - 1/2)^4 \le H(x, 1/2) \le 2(x - 1/2)^2 + 3.2(x - 1/2)^4\quad\mbox{for all}\quad x\in[0, 1].
\]
Therefore, $k'$ satisfies
\[
2\left(\frac{k'}{n} - \frac{1}{2}\right)^2 + (4/3)\left(\frac{k'}{n} - \frac{1}{2}\right)^4 \le \frac{z_{\alpha/2}^2}{2n} \le 2\left(\frac{k' + 1}{n} - \frac{1}{2}\right)^2 + 3.2\left(\frac{k' + 1}{n} - \frac{1}{2}\right)^4.
\]
These inequalities are equivalent to
\[
\frac{k'}{n} - \frac{1}{2} \le \frac{1}{2}\sqrt{\sqrt{9 + \frac{6z_{\alpha/2}^2}{n}} - 3}\quad\Leftrightarrow\quad k' \le \frac{n}{2} + \frac{\sqrt{n}z_{\alpha/2}}{2}\sqrt{\frac{1}{(1/2)\sqrt{1 + (2/3)z_{\alpha/2}^2/n} + 1/2}},
\]
and
\[
\frac{k' + 1}{n} - \frac{1}{2} \ge \frac{1}{4}\sqrt{\sqrt{25 + \frac{80z_{\alpha/2}^2}{2n}} - 5}\quad\Leftrightarrow\quad k' \ge \frac{n}{2} + \frac{\sqrt{n}z_{\alpha/2}}{2}\sqrt{\frac{1}{(1/2)\sqrt{1 + (8/5)z_{\alpha/2}^2/n} + (1/2)}} - 1.
\]
Therefore, we conclude that if $z_{\alpha/2}^2 \ge 2nH(\lceil n/2\rceil/n, 0.5)$,
\begin{equation}\label{eq:inequalities-z-large}
\frac{n}{2}+\frac{\sqrt{n}z_{\alpha/2}}{2}\sqrt{\frac{2}{\sqrt{1 + (8/5)z_{\alpha/2}^2/n} + 1}} - 1 ~\le~ k ~\le~ \frac{n}{2} + \frac{\sqrt{n}z_{\alpha/2}}{2}\sqrt{\frac{2}{\sqrt{1 + (2/3)z_{\alpha/2}^2/n} + 1}} + 1.    
\end{equation}
Both upper and lower bounds are asymptotically the same as $n/2 + \sqrt{n}z_{\alpha/2}/2$. 
The right-hand side is less than $n/2 + \sqrt{n}z_{\alpha/2}/2 + 1$ for all $n\ge1$. To simplify the left-hand side, define $f(c) = \sqrt{2/(1 + \sqrt{1 + c})} - 1$. Clearly, $f''(c) \ge 0, f''(c) \le 2^{-3/2}/4 + (3/8)2^{-5/2}$ for all $c \ge 0$ and hence,
\[
-\frac{1}{8}c = f(0) + f'(0)c \le f(c) \le f(0) = 0.
\]
This implies
\begin{align*}
\frac{\sqrt{n}z_{\alpha/2}}{2}\left(\sqrt{\frac{2}{1 + \sqrt{1 + (8/5)z_{\alpha/2}^2/n}}} - 1\right) &\ge \frac{\sqrt{n}z_{\alpha/2}}{2}\left(-\frac{1}{8}\frac{8}{5}\frac{z_{\alpha/2}^2}{n}\right) = -\frac{1}{10}\frac{z_{\alpha/2}^3}{\sqrt{n}}.
\end{align*}
Therefore, we conclude that if $z_{\alpha/2}^2 \ge 2nH(\lceil n/2\rceil/n, 0.5)$, then
\begin{equation}\label{eq:final_ineq_n-large}
\frac{n}{2} + \frac{\sqrt{n}z_{\alpha/2}}{2} - \frac{z_{\alpha/2}^3}{10\sqrt{n}} - 1 \le k \le \frac{n}{2} + \frac{\sqrt{n}z_{\alpha/2}}{2} + 1.
\end{equation}
If, on the other hand, $z_{\alpha/2}^2 \le 2nH(\lceil n/2\rceil/n, 0.5)$, then
\[
\lfloor n/2\rfloor \le k \le \lfloor n/2\rfloor + 1\quad\Rightarrow\quad \frac{n}{2} - 1 \le k \le \frac{n}{2} + 1.
\]
From \Cref{lem:kl_bound} and the fact that $\lceil n/2\rceil \le (n + 1)/2$, we get
\[
H\left(\frac{\lceil n/2\rceil}{n},\,\frac{1}{2}\right) \le 2\left(\frac{1}{2} + \frac{1}{2n} - \frac{1}{2}\right)^2 + (16\ln(2) - 8)\left(\frac{1}{2} + \frac{1}{2n} - \frac{1}{2}\right)^4 \le \frac{1}{2n^2} + (\ln(2) - 1/2)\frac{1}{n^4}.
\]
Therefore, $z_{\alpha/2}^2 \le 2nH(\lceil n/2\rceil/n, 0.5)$ implies $nz_{\alpha/2}^2 \le n^{-1} + (2\ln(2) - 1)n^{-3}$. This further implies that if $z_{\alpha/2}^2 \le 2nH(\lceil n/2\rceil/n, 0.5)$, then
\begin{equation}\label{eq:inequalities-z-small}
\frac{n}{2} + \frac{\sqrt{n}z_{\alpha/2}}{2} - \frac{\sqrt{n^{-1} + (2\ln(2) - 1)n^{-3}}}{2} - 1 \le k \le \frac{n}{2} + \frac{\sqrt{n}z_{\alpha/2}}{2} + 1.    
\end{equation}
Combining inequalities~\eqref{eq:final_ineq_n-large} and~\eqref{eq:inequalities-z-small}, we get for all $n\ge\log_2(2/\alpha)$,
\begin{equation}
\label{eq:hd_new_bound}
-\frac{1}{2\sqrt{n}}\max\left\{\frac{z_{\alpha/2}^3}{5},\,\sqrt{1 + \frac{2\ln(2) - 1}{n^2}}\right\} - 1 \le k - \left(\frac{n}{2} + \frac{\sqrt{n}z_{\alpha/2}}{2}\right) \le 1.
\end{equation}

For $n \le \log_2(2/\alpha)$, $k = n$. In this case, the confidence interval is anyway $\mathbb{R}$ and there is no need to compare $k$ to the quantile from normal approximation. Using the fact that $c_{n,\alpha}=k-\lceil{n/2\rceil}$, we have for all $n\geq \log_2(2/\alpha)$, 
\[
-\frac{1}{2\sqrt{n}}\max\left\{\frac{z_{\alpha/2}^3}{5},\,\sqrt{1 + \frac{2\ln(2) - 1}{n^2}}\right\} - 1.5 \le c_{n,\alpha} - \frac{\sqrt{n}z_{\alpha/2}}{2} \le 1.
\]
This completes the proof of the theorem. 

\section{Proof of Theorem~\ref{thm:bahadur_fin_sample}}
\label{appendix:theorem_bahadur_finsample}
We state and prove a stronger version of the result stated in \Cref{thm:bahadur_fin_sample}.
\begin{theorem}
\label{thm:bahadur_fin_sample_1}
Let $X_1,X_2,\cdots,X_n \stackrel{iid}{\sim} F $. Suppose that $F$ is a continuous CDF with median $\theta_0$. We assume that the following holds for the distribution function $F$,
\begin{equation*}
\label{asump:fin_bahadur_1}
|F(\theta_0+h)-F(\theta_0)-Mh| \leq C|h|^{1+\delta} \quad\mbox{for all}\quad |h|<\eta,    
\end{equation*}
where $0<M,C,\delta,\eta<\infty$. Define
\begin{equation*}
    \begin{split}
     \zeta &:=(M/2)\min\{\eta,(M/2C)^{1/\delta}\}  , \\
  A_n &:= \frac{z_{\alpha/2}}{2\sqrt{n}}+\frac{2}{n}+\sqrt{\frac{\log (n)}{n+1}} + \left(\frac{z_{\alpha/2}}{\sqrt{n}} + \frac{4}{n}\right)\frac{2\log (n)}{n+1}.   
    \end{split}
\end{equation*}
Then for every sample size $n\geq \log_2(2/\alpha)$, we have the following with probability greater than or equal to $(1-6n^{-2})\mathbbm{1}\{A_n<\zeta\}$, 
\begin{align*}
    \left|\mathrm{Width}(\widehat{\mathrm{CI}}_{n,\alpha}) - \frac{z_{\alpha/2}}{\sqrt{n}M}\right| &\leq \frac{1}{M}\left(\frac{0.25z_{\alpha/2}^2+5.18}{n+1} + \sqrt{\frac{4(4+\sqrt{n}z_{\alpha/2})\log n}{(n+1)^2}}+ \frac{2\log n}{n+1}\right)\\ 
    &+ \left(\frac{2}{M}\right)^{2+\delta}CA_n^{1+\delta} .
\end{align*}
\end{theorem}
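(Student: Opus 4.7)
}
The plan is to exploit the explicit order-statistic representation of $\widehat{\mathrm{CI}}_{n,\alpha}$ from Theorem~\ref{thm:assym_of_cn} (valid since $n \ge \log_2(2/\alpha)$), linearize $F$ around $\theta_0$ using assumption~\eqref{asump:fin_bahadur}, and then import the empirical-process-type control from Lemmas~\ref{lem:conc_anbn} and~\ref{lem:width_distr}. Write $r_+ := \lceil n/2\rceil + c_{n,\alpha} + 1$ and $r_- := \lfloor n/2\rfloor - c_{n,\alpha}$, so that $\mathrm{Width}(\widehat{\mathrm{CI}}_{n,\alpha}) = X_{(r_+)} - X_{(r_-)}$. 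The overall strategy is: (i) show the two endpoints $X_{(r_\pm)}$ fall in the neighborhood $|h| < \eta$ where the linearization is valid; (ii) apply the linearization to convert $M\,\mathrm{Width}$ into $F(X_{(r_+)}) - F(X_{(r_-)})$ up to a $C|h|^{1+\delta}$-remainder; (iii) apply Lemma~\ref{lem:width_distr} to approximate this $F$-gap by $z_{\alpha/2}/\sqrt{n}$.

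For step (i), I would invoke Lemma~\ref{lem:conc_anbn} to get that, with probability at least $1-4n^{-2}$, both $|F(X_{(r_+)}) - 1/2|$ and $|F(X_{(r_-)}) - 1/2|$ are at most $A_n$ (the $B_n$ term is dominated by $A_n$ up to a minor adjustment, which is fine). On the event $\{A_n < \zeta\}$, I convert this $F$-control into $x$-control as follows: the assumption gives $F(\theta_0 + h) - 1/2 \ge Mh - C|h|^{1+\delta}$ for $h\in(0,\eta)$, and for $h \le (M/(2C))^{1/\delta}$ the remainder is dominated, yielding $F(\theta_0+h) - 1/2 \ge (M/2)h$. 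Hence if $|X_{(r_\pm)} - \theta_0| \ge \min\{\eta,(M/(2C))^{1/\delta}\}$ then $|F(X_{(r_\pm)}) - 1/2| \ge \zeta > A_n$, a contradiction. This forces $|X_{(r_\pm)} - \theta_0| \le 2A_n/M$, and in particular the two endpoints lie in the domain of assumption~\eqref{asump:fin_bahadur}.

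For step (ii), apply the assumption separately at $h = X_{(r_+)} - \theta_0$ and $h = X_{(r_-)} - \theta_0$, take the difference, and use the triangle inequality to obtain
\[
\bigl| M\,\mathrm{Width}(\widehat{\mathrm{CI}}_{n,\alpha}) - \bigl(F(X_{(r_+)}) - F(X_{(r_-)})\bigr) \bigr| \;\le\; C\bigl(|X_{(r_+)} - \theta_0|^{1+\delta} + |X_{(r_-)} - \theta_0|^{1+\delta}\bigr) \;\le\; 2C(2A_n/M)^{1+\delta},
\]
where the last bound uses the endpoint control from step (i). For step (iii), Lemma~\ref{lem:width_distr} (which gives a uniform-spacing concentration inequality) controls $|F(X_{(r_+)}) - F(X_{(r_-)}) - z_{\alpha/2}/\sqrt{n}|$ by the explicit quantity appearing in the first summand of the claimed bound, with probability at least $1 - 2n^{-2}$. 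Combining all three events by a union bound gives the $1 - 6n^{-2}$ probability, and dividing through by $M$ produces the stated inequality, with the $(2/M)^{2+\delta}C A_n^{1+\delta}$ term coming exactly from $(1/M)\cdot 2C(2A_n/M)^{1+\delta}$.

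The main obstacle is the passage from $F$-control to $x$-control in step (i): we only have the one-sided quantitative bound from assumption~\eqref{asump:fin_bahadur}, so we must carefully use the triangle inequality $Mh - C|h|^{1+\delta} \le F(\theta_0+h) - 1/2 \le Mh + C|h|^{1+\delta}$ and verify that the absorption $C|h|^{1+\delta} \le (M/2)|h|$ holds precisely on $|h| \le (M/(2C))^{1/\delta}$, which is exactly why $\zeta$ is defined as $(M/2)\min\{\eta,(M/(2C))^{1/\delta}\}$. Everything else is bookkeeping: tracking constants through the triangle inequality and the union bound, and noting that $A_n < \zeta$ is implicit in the probability statement via the indicator $\mathbf{1}\{A_n < \zeta\}$.
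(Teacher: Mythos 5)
Your proposal is correct and follows essentially the same route as the paper: the paper packages your step (i) (the passage from $F$-control via Lemma~\ref{lem:conc_anbn} to $x$-control using the absorption $C|h|^{1+\delta}\le (M/2)|h|$ on $|h|\le (M/2C)^{1/\delta}$) into its Lemma~\ref{lem:order_stat_behav}, and then carries out exactly your steps (ii) and (iii) -- linearization at the two order statistics, triangle inequality, Lemma~\ref{lem:width_distr} for the $F$-spacing, and a union bound giving $1-6n^{-2}$ on the event $\{A_n<\zeta\}$. The constant bookkeeping, including bounding the $B_n$-term by $A_n$ to arrive at $(2/M)^{2+\delta}CA_n^{1+\delta}$, matches the paper's computation.
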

Let us see how the simpler version presented in \Cref{thm:bahadur_fin_sample} follows from this more general statement. We shall first obtain a bound on $A_n$. 
\begin{equation*}
    \begin{split}
        A_n &= \frac{z_{\alpha/2}}{2\sqrt{n}}+\frac{2}{n}+\sqrt{\frac{\log (n)}{n+1}} + \left(\frac{z_{\alpha/2}}{\sqrt{n}} + \frac{4}{n}\right)\frac{2\log (n)}{n+1} \\
        &= \frac{z_{\alpha/2}}{\sqrt{n}}\left( \frac{1}{2} + \frac{2 \log(n)}{n+1} \right) + \sqrt{\frac{log n}{n+1}} \left(\frac{2}{n}\sqrt{\frac{n+1}{\log n}} + 1 + \frac{8}{n}\sqrt{\frac{\log n}{n+1}} \right) \\
        &\leq 1.1 \frac{z_{\alpha/2}}{\sqrt{n}} + 5.1 \sqrt{\frac{\log n}{n + 1}} \\
        & \leq \sqrt{\frac{\log(2n/\alpha)}{n}}\left( \frac{1.1 z_{\alpha/2}}{\sqrt{\log(2n/\alpha)}} + 5.1 \sqrt{\frac{\log n}{\log(2n/\alpha}} \right) \\
        &\leq \sqrt{\frac{\log(2n/\alpha)}{n}} \left( \frac{1.1 \sqrt{2 \log(2/\alpha)}}{\sqrt{\log(4/\alpha)}} + 5.1 \sqrt{\frac{\log n}{\log(2n}} \right)   \\
        & \leq \sqrt{\frac{\log(2n/\alpha)}{n}} (1.1\sqrt{2} + 5.1 ) \\
        &\leq 7 \sqrt{\frac{\log(2n/\alpha)}{n}}.
    \end{split}
\end{equation*}
Thus the condition $A_n < \zeta$ will be satisfied if $7\sqrt{\log(2n/\alpha)/n} < \zeta$ i.e.\ $n \geq (49/\zeta^2)\log(2n/\alpha)$. Thus if $n \geq \max\{\log_2(2/\alpha), (49/\zeta^2)\log(2n/\alpha) \}$, then the following holds with probability at-least $1 - 6n^{-2}$, 
\begin{equation*}
    \begin{split}
        &\left|\frac{\sqrt{n}M}{z_{\alpha/2}}\mathrm{Width}(\widehat{\mathrm{CI}}_{n,\alpha}) - 1\right| \\
        \leq& \frac{\sqrt{n}}{z_{\alpha/2}}\left(\frac{0.25z_{\alpha/2}^2+5.18}{n+1} + \sqrt{\frac{4(4+\sqrt{n}z_{\alpha/2})\log n}{(n+1)^2}}+ \frac{2\log n}{n+1}\right) + \frac{\sqrt{n}}{z_{\alpha/2}}\frac{2^{2+\delta}}{M^{1+\delta}}CA_n^{1+\delta} \\
        \leq & \frac{1}{\sqrt{n}z_{\alpha/2}} \left\{n^{1/4}z_{\alpha/2}\left( 1+ \frac{0.25z_{\alpha/2}}{n^{1/4}} \right) + n^{1/4}\log(n)\left( 1+ \frac{2\log(n) + 4\sqrt{\log(n)} + 5.18}{n^{1/4}\log(n)} \right) \right\} \\
        & + \frac{\sqrt{n}}{z_{\alpha/2}}\frac{2^{2+\delta}}{M^{1+\delta}}CA_n^{1+\delta} \\
        \leq & \frac{1}{n^{1/4}} \left( 1 + \frac{0.25 z_{\alpha/2}}{n^{1/4}} + \frac{13.1 \log(n)}{z_{\alpha/2}} \right) + \frac{\sqrt{n}}{z_{\alpha/2}}\frac{2^{2+\delta}}{M^{1+\delta}}C \left(7 \sqrt{\frac{\log(2n/\alpha)}{n}} \right)^{1+\delta} \\
        \leq & \frac{1 + 14(\log(n)/z_{\alpha/2})}{n^{1/4}} + \sqrt{\frac{\log(2/\alpha)}{8n}} + \frac{2C (14)^{1+\delta}(\log(2n/\alpha))^{(1+\delta)/2}}{z_{\alpha/2}M^{1+\delta}n^{\delta/2}}.
    \end{split}
\end{equation*}
This is the simpler version which has been stated in \Cref{thm:bahadur_fin_sample}. 

\begin{proof}[Proof of \Cref{thm:bahadur_fin_sample_1}]
We first state and prove the following lemma. 
\begin{lemma}
\label{lem:order_stat_behav}
Let $X_1,X_2,\cdots,X_n \stackrel{iid}{\sim} F $. Suppose that $F$ is a continuous CDF with median $\theta_0$. We assume that the following holds for the distribution function $F$,
\[
|F(\theta_0+h)-F(\theta_0)-Mh| \leq C|h|^{1+\delta} \quad\mbox{for all}\quad |h|<\eta,
\]
where $0<M,C,\delta, \eta<\infty$. Then for every sample size $n\geq \log_2(2/\alpha)$, we have the following, 
\begin{equation}
\begin{split}
    \mathbb{P}\left(\left|X_{(k_{n,\alpha}+1)}-\theta_0 \right| \leq \frac{2A_n}{M} \right) &\geq (1- 2n^{-2})\mathbbm{1}\{A_n < \zeta\},\\
    \mathbb{P}\left(\left|X_{(n-k_{n,\alpha})}-\theta_0 \right| \leq \frac{2B_n}{M} \right) &\geq (1- 2n^{-2})\mathbbm{1}\{B_n < \zeta \},
\end{split}
\end{equation}
where $\zeta=(M/2)\min\{\eta,(M/2C)^{1/\delta}\}$ and $A_n,B_n$ are as defined in \Cref{lem:conc_anbn}.
\end{lemma}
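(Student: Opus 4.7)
The plan is to convert the concentration of $F(X_{(k_{n,\alpha}+1)})$ around $F(\theta_0)=1/2$ supplied by \Cref{lem:conc_anbn} into concentration of $X_{(k_{n,\alpha}+1)}$ around $\theta_0$, using the quantitative local expansion $|F(\theta_0+h)-F(\theta_0)-Mh|\le C|h|^{1+\delta}$ as an invertible ``local Lipschitz lower bound.'' The proof for $X_{(n-k_{n,\alpha})}$ with bound $B_n$ is identical after changing constants, so I would do the argument once for $X_{(k_{n,\alpha}+1)}$ and then remark that it repeats verbatim.

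First, I would record the following deterministic consequence of the assumption: for every $|h|\le \min\{\eta,(M/(2C))^{1/\delta}\}$ one has $C|h|^{\delta}\le M/2$, so the reverse triangle inequality gives
\[
\tfrac{M}{2}|h| ~=~ M|h|-\tfrac{M}{2}|h| ~\le~ M|h|-C|h|^{1+\delta} ~\le~ |F(\theta_0+h)-F(\theta_0)|.
\]
Thus on the ``good'' neighbourhood $\mathcal{N} := [\theta_0-r,\,\theta_0+r]$ with $r:=\min\{\eta,(M/(2C))^{1/\delta}\}$, we simultaneously have $|F(\theta_0+h)-1/2|\le (M/2+C|h|^\delta)|h|\le M|h|$ and the lower bound $(M/2)|h|\le|F(\theta_0+h)-1/2|$. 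In particular $|F(\theta_0\pm r)-1/2|\ge (M/2)r=\zeta$, which is the quantity that makes the indicator $\mathbbm{1}\{A_n<\zeta\}$ appear.

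The main (and essentially only) step is then a localization argument. By \Cref{lem:conc_anbn}, with probability at least $1-2n^{-2}$ the event $\mathcal{A}:=\{|F(X_{(k_{n,\alpha}+1)})-1/2|\le A_n\}$ holds. On $\{A_n<\zeta\}\cap\mathcal{A}$ I will show $X_{(k_{n,\alpha}+1)}\in\mathcal{N}$ by contradiction: if instead $X_{(k_{n,\alpha}+1)}>\theta_0+r$, the monotonicity of $F$ and the lower bound of the previous paragraph give $F(X_{(k_{n,\alpha}+1)})\ge F(\theta_0+r)\ge 1/2+\zeta>1/2+A_n$, contradicting $\mathcal{A}$; similarly one rules out $X_{(k_{n,\alpha}+1)}<\theta_0-r$. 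Once $X_{(k_{n,\alpha}+1)}\in\mathcal{N}$ is established, applying the displayed inequality with $h=X_{(k_{n,\alpha}+1)}-\theta_0$ yields
\[
\tfrac{M}{2}\,|X_{(k_{n,\alpha}+1)}-\theta_0| ~\le~ |F(X_{(k_{n,\alpha}+1)})-1/2| ~\le~ A_n,
\]
which rearranges to the advertised bound $|X_{(k_{n,\alpha}+1)}-\theta_0|\le 2A_n/M$.

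The hard part of the argument is really just the localization, and the subtlety worth flagging is that $F$ need not be strictly increasing or smooth outside $\mathcal{N}$; the contradiction argument only uses the global monotonicity of $F$ together with the local quantitative lower bound, which is exactly what the assumption provides. Repeating the same two-sided comparison with $B_n$ in place of $A_n$ for $X_{(n-k_{n,\alpha})}$ delivers the second claim and completes the proof.
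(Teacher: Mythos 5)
Your proposal is correct and follows essentially the same route as the paper: both arguments reduce the claim to the concentration of $F(X_{(k_{n,\alpha}+1)})$ around $1/2$ from \Cref{lem:conc_anbn} and then invert via the observation that the hypothesis forces $|F(\theta_0+h)-F(\theta_0)|\ge M|h|-C|h|^{1+\delta}\ge (M/2)|h|$ whenever $|h|<\min\{\eta,(M/2C)^{1/\delta}\}$, which is exactly where the indicator $\mathbbm{1}\{A_n<\zeta\}$ comes from. The paper phrases this as a one-line chain of probability inequalities (choosing $\epsilon=2A_n/M$ in $\mathbb{P}(|X_{(k)}-\theta_0|\ge\epsilon)\le\mathbb{P}(|F(X_{(k)})-F(\theta_0)|\ge M\epsilon/2)$), whereas you make the localization step explicit event-by-event, but the underlying idea is identical.
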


\begin{proof}[Proof of \Cref{lem:order_stat_behav}]
We start by observing that for $0<\epsilon<\eta$ the following inequality holds,
\begin{equation*}
    \begin{split}
        \mathbb{P}\left(\left|X_{(k_{n,\alpha}+1)}-\theta_0 \right| \geq \epsilon \right) &= \mathbb{P}\left(X_{(k_{n,\alpha}+1)} \geq \theta_0+\epsilon \right) + \mathbb{P}\left(X_{(k_{n,\alpha}+1)} \leq \theta_0-\epsilon \right) \\
        &= \mathbb{P}\left(F(X_{(k_{n,\alpha}+1)}) \geq F(\theta_0+\epsilon) \right) + \mathbb{P}\left(F(X_{(k_{n,\alpha}+1)}) \leq F(\theta_0-\epsilon) \right) \\
        &= \mathbb{P}\left(F(X_{(k_{n,\alpha}+1)})-F(\theta_0) \geq F(\theta_0+\epsilon)-F(\theta_0) \right) \\
        & \quad + \mathbb{P}\left(F(X_{(k_{n,\alpha}+1)})-F(\theta_0) \leq F(\theta_0-\epsilon)-F(\theta_0) \right) \\
        &\leq \mathbb{P}\left(\left|F(X_{(k_{n,\alpha}+1)})-F(\theta_0) \right| \geq \min\{F(\theta_0+\epsilon)-F(\theta_0),F(\theta_0-\epsilon)-F(\theta_0)  \} \right)\\
        &\leq \mathbb{P}\left(\left|F(X_{(k_{n,\alpha}+1)})-F(\theta_0) \right| \geq M\epsilon -C\epsilon^{1+\delta} \right).
    \end{split}
\end{equation*}
We observe that if $\epsilon<(M/2C)^{1/\delta}$ then $M\epsilon -C\epsilon^{1+\delta} > M\epsilon - M\epsilon/2 = M\epsilon/2$. Thus if $0<\epsilon<\min\{\eta, (M/2C)^{1/\delta} \}$ we can say the following,
\[
\mathbb{P}\left(\left|X_{(k_{n,\alpha}+1)}-\theta_0 \right| \leq \epsilon \right) \geq \mathbb{P}\left(\left|F(X_{(k_{n,\alpha}+1)})-F(\theta_0) \right| \leq M\epsilon/2 \right).
\]
Setting $M\epsilon/2=A_n$ (i.e.,\ $\epsilon=2A_n/M$) we obtain the following provided $n\geq \log_2(2/\alpha)$ and $A_n<\zeta$ where $\zeta=(M/2)\min\{\eta,(M/2C)^{1/\delta}\}$,
\begin{equation*}
\begin{split}
\mathbb{P}\left(\left|X_{(k_{n,\alpha}+1)}-\theta_0 \right| \leq \frac{2A_n}{M} \right) &\geq \mathbb{P}\left(\left|F(X_{(k_{n,\alpha}+1)})-F(\theta_0) \right| \leq A_n \right)    \\
&\geq 1-2n^{-2}.
\end{split}    
\end{equation*}
The last inequality in the above derivation follows from \Cref{lem:conc_anbn}. This completes the proof of the first concentration inequality. The proof of the second concentration inequality follows exactly the same path.
\end{proof}
From \Cref{lem:order_stat_behav} we know that with probability greater than or equal to $(1-2n^{-2})\mathbbm{1}\{A_n<\zeta\}$, $\left|X_{(k_{n,\alpha}+1)}-\theta_0 \right| \leq 2A_n/M$. We observe that $A_n<\zeta$ implies that $A_n<(M/2)\eta$ (i.e.\ $2A_n/M<\eta$) which in turn implies that with probability greater than or equal to $(1-2n^{-2})\mathbbm{1}\{A_n<\zeta\}$, $\left|X_{(k_{n,\alpha}+1)}-\theta_0 \right| \leq \eta$. Therefore using the assumption on $F$ (for $|h|<\eta$) mentioned in the theorem we obtain the following for all $n\geq \log_2(2/\alpha)$,
\[
\begin{cases}
    \mathbb{P}\left(\left|F(X_{(k_{n,\alpha}+1)})-F(\theta_0)- M(X_{(k_{n,\alpha}+1)}-\theta_0) \right| \leq C\left|X_{(k_{n,\alpha}+1)}-\theta_0 \right|^{1+\delta}\right) \geq (1-2n^{-2})\mathbbm{1}\{A_n<\zeta\},\\
    \mathbb{P}\left(\left|F(X_{(n-k_{n,\alpha})})-F(\theta_0)- M(X_{(n-k_{n,\alpha})}-\theta_0) \right| \leq C\left|X_{(n-k_{n,\alpha})}-\theta_0 \right|^{1+\delta}\right) \geq (1-2n^{-2})\mathbbm{1}\{A_n<\zeta\}.
\end{cases}
\]
This implies that with probability greater than or equal to $(1-2n^{-2})\mathbbm{1}\{A_n<\zeta\}$ we have for all $n\geq \log_2(2/\alpha)$,
\begin{equation*}
    \begin{split}
& \frac{1}{M}\{F(X_{(k_{n,\alpha}+1)})-F(\theta_0) \}- \frac{C}{M}\left|X_{(k_{n,\alpha}+1)}-\theta_0 \right|^{1+\delta} \\
\leq & X_{(k_{n,\alpha}+1)}-\theta_0 \\
\leq & \frac{1}{M}\{F(X_{(k_{n,\alpha}+1)})-F(\theta_0)  \} +\frac{C}{M}\left|X_{(k_{n,\alpha}+1)}-\theta_0 \right|^{1+\delta} ,       
    \end{split}
\end{equation*}
and with probability greater than or equal to $(1-2n^{-2})\mathbbm{1}\{B_n<\zeta\}$ we have for all $n\geq \log_2(2/\alpha)$,
\begin{equation*}
    \begin{split}
& \frac{1}{M}\{F(X_{(n-k_{n,\alpha})})-F(\theta_0) \}- \frac{C}{M}\left|X_{(n-k_{n,\alpha})}-\theta_0 \right|^{1+\delta} \\
\leq & X_{(k_{n,\alpha}+1)}-\theta_0 \\
\leq & \frac{1}{M}\{F(X_{(n-k_{n,\alpha})})-F(\theta_0)\} +\frac{C}{M}\left|X_{(n-k_{n,\alpha})}-\theta_0 \right|^{1+\delta}.      
    \end{split}
\end{equation*}
Subtracting the second equation from the first and further subtracting $z_{\alpha/2}/(\sqrt{n}M)$ from all sides we obtain that with probability $(1-4n^{-2})\mathbbm{1}\{ \max\{A_n,B_n \}<\zeta\}$ (i.e.\ with probability $(1-4n^{-2})\mathbbm{1}\{A_n <\zeta\}$ as $B_n<A_n$) the following holds for all $n\geq \log_2(2/\alpha)$,
\begin{equation*}
    \begin{split}
&\frac{1}{M}\{F(X_{(k_{n,\alpha}+1)})-F(X_{(n-k_{n,\alpha})}) -\frac{z_{\alpha/2}}{\sqrt{n}} \} - \frac{C}{M}\left|X_{(k_{n,\alpha}+1)}-\theta_0 \right|^{1+\delta}-\frac{C}{M}\left|X_{(n-k_{n,\alpha})}-\theta_0 \right|^{1+\delta}\\
 \leq & X_{(k_{n,\alpha}+1)}-X_{(n-k_{n,\alpha})} -\frac{z_{\alpha/2}}{\sqrt{n}M}\\
  \leq &\frac{1}{M}\{F(X_{(k_{n,\alpha}+1)})-F(X_{(n-k_{n,\alpha})}) -\frac{z_{\alpha/2}}{\sqrt{n}} \} +\frac{C}{M}\left|X_{(k_{n,\alpha}+1)}-\theta_0 \right|^{1+\delta}  +\frac{C}{M}\left|X_{(n-k_{n,\alpha})}-\theta_0 \right|^{1+\delta}.      
    \end{split}
\end{equation*}
We take modulus and apply \Cref{lem:width_distr} and \Cref{lem:order_stat_behav}. We get that the following event occurs with probability $(1-6n^{-2})\mathbbm{1}\{A_n <\zeta\}$ for all $n\geq \log_2(2/\alpha)$,
\begin{equation*}
    \begin{split}
       &\quad \left|X_{(k_{n,\alpha}+1)}-X_{(n-k_{n,\alpha})} -\frac{z_{\alpha/2}}{\sqrt{n}M} \right| \\
       &\leq \frac{1}{M}\left| F(X_{(k_{n,\alpha}+1)})-F(X_{(n-k_{n,\alpha})}) -\frac{z_{\alpha/2}}{\sqrt{n}} \right|+\frac{C}{M}\left|X_{(k_{n,\alpha}+1)}-\theta_0 \right|^{1+\delta}  +\frac{C}{M}\left|X_{(n-k_{n,\alpha})}-\theta_0 \right|^{1+\delta} \\
       &\leq \frac{1}{M}\left(\frac{0.25z_{\alpha/2}^2+5.18}{n+1} + \sqrt{\frac{4(4+\sqrt{n}z_{\alpha/2})\log n}{(n+1)^2}}+ \frac{2\log n}{n+1}\right) + \frac{C}{M}\left(\left( \frac{2A_n}{M}\right)^{1+\delta}+ \left( \frac{2B_n}{M}\right)^{1+\delta}\right) \\
       &\leq \frac{1}{M}\left(\frac{0.25z_{\alpha/2}^2+5.18}{n+1} + \sqrt{\frac{4(4+\sqrt{n}z_{\alpha/2})\log n}{(n+1)^2}}+ \frac{2\log n}{n+1}\right)+ \left(\frac{2}{M}\right)^{2+\delta}CA_n^{1+\delta}.
    \end{split}
\end{equation*}
Note that the probability of occurrence of the above event is  $(1-6n^{-2})\mathbbm{1}\{A_n <\zeta\}$ because of the concentration inequality mentioned in \Cref{lem:width_distr} and because of the fact $\mathbb{P}(A \cap B)=\mathbb{P}(A)+\mathbb{P}(B)-\mathbb{P}(A \cup B) \geq \mathbb{P}(A)+\mathbb{P}(B)- 1$. This completes the proof of the theorem.
\end{proof}
We also provide an asymptotic version of \Cref{thm:bahadur_fin_sample}. \Cref{thm:bahadur_asump} stated below summarizes the asymptotic behavior of $\mathrm{WR}_{n,\alpha}$ as $n \rightarrow \infty$ under standard Bahadur assumptions.
\begin{theorem}
\label{thm:bahadur_asump}
Let $X_1,X_2,...,X_n \stackrel{iid}{\sim} F$. Assume that $F$ is continuously differentiable at the population median $\theta_0$ with $F'(\theta_0)>0$. Then for any $\alpha\in[0, 1]$, as $n\to\infty$,
\begin{equation}
    \mathrm{WR}_{n,\alpha}= 1 +o_p(1).
\end{equation}
Hence, the width of $\widehat{\mathrm{CI}}_{n,\alpha}$ defined in \eqref{eq:rewritten-CI} is asymptotically equal to the width of the Wald confidence interval of the median.
\end{theorem}
\begin{proof}[Proof of \Cref{thm:bahadur_asump}]
\label{appendix:theorem_bahadur_asump}
We have already seen that if the sample size $n \geq \log_2(2/\alpha)$, the confidence interval described in \Cref{alg:proposed-conf-int} is $\widehat{\mathrm{CI}}_{n,\alpha}=\left[X_{(\floor{{n}/{2}}-c_{n,\alpha})}, X_{(\ceil{{n}/{2}}+c_{n,\alpha}+1)}\right]$. We analyse the width of this confidence interval as follows,
\begin{equation}
\label{eq:analysing_width}
    \begin{split}
    \mathrm{Width}(\widehat{\mathrm{CI}}_{n,\alpha}) &= X_{(\ceil{\frac{n}{2}}+c_{n,\alpha}+1)} - X_{(\floor{\frac{n}{2}}-c_{n,\alpha})} \\
     &= F_n^{-1}\bigg(\frac{1}{2} + \frac{c_{n,\alpha}+1}{n} \bigg) - F_n^{-1}\bigg(\frac{1}{2} - \frac{c_{n,\alpha}}{n} \bigg) \\
    &= F^{-1}(1/2) + \frac{c_{n,\alpha}/n}{F'(F^{-1}(1/2))} + \frac{(1/2)-F_n(F^{-1}(1/2)}{F'(F^{-1}(1/2))} \\
    & \text{          } -F^{-1}(1/2) + \frac{c_{n,\alpha}/n}{F'(F^{-1}(1/2))} - \frac{(1/2)-F_n(F^{-1}(1/2)}{F'(F^{-1}(1/2))}+o_p(n^{-1/2}) \\
    &= \frac{2c_{n,\alpha}}{nF'(F^{-1}(1/2))}+o_p(n^{-1/2}).
    \end{split}
\end{equation}
\Cref{thm:assym_of_cn} states that,
\[
 -\frac{1}{2\sqrt{n}}\max\left\{\frac{z_{\alpha/2}^3}{5},\,\sqrt{1 + \frac{2\ln(2) - 1}{n^2}}\right\} - 1.5 \le c_{n,\alpha} - \frac{\sqrt{n}z_{\alpha/2}}{2} \le 1. 
\]
This implies that,
\begin{equation*}
    \begin{split}
& -\frac{1}{n^{3/2}F'(F^{-1}(1/2))}\max\left\{\frac{z_{\alpha/2}^3}{5},\,\sqrt{1 + \frac{2\ln(2) - 1}{n^2}}\right\} - \frac{3}{nF'(F^{-1}(1/2))} \\
\leq & \frac{2c_{n,\alpha}}{nF'(F^{-1}(1/2))} - \frac{z_{\alpha/2}}{\sqrt{n}F'(F^{-1}(1/2))} \\
\leq & \frac{2}{nF'(F^{-1}(1/2))}.        
    \end{split}
\end{equation*}
Thus we can say that,
\[
\frac{2c_{n,\alpha}}{nF'(F^{-1}(1/2))}=\frac{z_{\alpha/2}}{\sqrt{n}F'(F^{-1}(1/2))}+o_p(n^{-1/2}).
\]
Using this in \eqref{eq:analysing_width} and the fact that $F^{-1}(1/2) = \theta_0$, we obtain,
\begin{equation}
     \mathrm{Width}(\widehat{\mathrm{CI}}_{n,\alpha})= \frac{z_{\alpha/2}}{\sqrt{n}F'(\theta_0)}+o_p(n^{-1/2}).
\end{equation}
Note that $z_{\alpha/2}/(\sqrt{n}F'(\theta_0))$ is the width of the Wald confidence interval. This completes the proof of this theorem.
\end{proof}

\section{Proof of Theorem~\ref{thm:asym_result_irreg_case}}
\label{appendix_thm_asym_irreg}
We state and prove two theorems which are stronger versions of the result stated in \Cref{thm:asym_result_irreg_case}.
\begin{theorem}
\label{thm:fin_sample_non_bahadur}
Let $X_1,X_2,\cdots,X_n \stackrel{iid}{\sim} F $. Suppose that $F$ is a continuous CDF with median $\theta_0$. We assume that the following holds for the distribution function $F$,
\begin{equation}
\label{asump:fin_non_bahadur}
|F(\theta_0+h)-F(\theta_0)-M|h|^{\rho}\mathrm{sgn}(h)| \leq C|h|^{\rho+\Delta} \quad\mbox{for all}\quad |h|<\eta,   
\end{equation}
where $0<M,C,\rho,\Delta,\eta<\infty$. We define the following quantities,
\begin{equation*}
    \begin{split}
        k_{n,\alpha} &= c_{n,\alpha} + \ceil{n/2},\\
        \delta &=\Delta/\rho, \\
        A_n  &= \frac{z_{\alpha/2}}{2\sqrt{n}}+\frac{2}{n}+\sqrt{\frac{\log (n)}{n+1}} + \left(\frac{z_{\alpha/2}}{\sqrt{n}} + \frac{4}{n}\right)\frac{2\log (n)}{n+1},\\
       \zeta &=(M/2)\min\{\eta^{\rho},(M/2C)^{1/\delta}\}, \\
       \kappa(k/n,2,n) &= \max \left\{\sqrt{21\frac{k}{n}\left( 1-\frac{k}{n}\right)},21\sqrt{\frac{\log(n)}{n}} \right\},\\
  \Tilde{\kappa}(k/n,2,n) &= \max \left\{\sqrt{12\frac{k}{n+1}\left( 1-\frac{k}{n+1}\right)},12\sqrt{\frac{\log(n)}{n}} \right\} + \frac{1}{n}, \\
  K_n &= \{\kappa((k_{n,\alpha}+1)/n,2,n) >\Tilde{\kappa}((k_{n,\alpha}+1)/n,2,n) \} \\
  & \quad \cap \{ \kappa((n-k_{n,\alpha})/n,2,n) >\Tilde{\kappa}((n-k_{n,\alpha})/n,2,n)\}.
    \end{split}
\end{equation*}
Then we can say that with probability greater than or equal to $(1-(4+2D_2)n^{-2})\mathbbm{1}\{A_n < \zeta \}\mathbbm{1}\{K_n\}$ the following event occurs for all $n \geq \log_2(2/\alpha)$,
\begin{equation*}
    \begin{split}
 &\quad\frac{1}{M^{1/\rho}}\{|Q_{1,\boldsymbol{X},n}-\gamma_n|^{1/\rho}\mathrm{sgn}(Q_{1,\boldsymbol{X},n}-\gamma_n) -  |Q_{2,\boldsymbol{X},n}+\gamma_n|^{1/\rho}\mathrm{sgn}(Q_{2,\boldsymbol{X},n}+\gamma_n)     \} \\
 &\leq \mathrm{Width}(\widehat{\mathrm{CI}}_{n,\alpha}) \\
 &\leq \frac{1}{M^{1/\rho}}\{|Q_{1,\boldsymbol{X},n}+\gamma_n|^{1/\rho}\mathrm{sgn}(Q_{1,\boldsymbol{X},n}+\gamma_n)- |Q_{2,\boldsymbol{X},n}-\gamma_n|^{1/\rho}\mathrm{sgn}(Q_{2,\boldsymbol{X},n}-\gamma_n)\},
    \end{split}
\end{equation*}   
where,
\begin{equation*}
    \begin{split}
 Q_{1,\boldsymbol{X},n} &= \left(\frac{k_{n,\alpha}+1}{n} - \frac{1}{2}\right) - \frac{1}{n}\sum_{i=1}^n \mathbf{1}\{F(X_i) \le (k_{n,\alpha}+1)/n\} + \frac{k_{n,\alpha}+1}{n}, \\
 Q_{2,\boldsymbol{X},n}&= \left(\frac{n-k_{n,\alpha}}{n} - \frac{1}{2}\right) - \frac{1}{n}\sum_{i=1}^n \mathbf{1}\{F(X_i) \le (n-k_{n,\alpha})/n\} + \frac{n-k_{n,\alpha}}{n},\\
 D_2 &= 2+ e^{13/2}, \\
 \gamma_n &=\max\left\{\frac{35}{\sqrt{2}},\,\, 35\sqrt{\frac{35\log(n)}{n}} \right\}\left(\frac{\log(n)}{n}\right)^{3/4}+C\left(\frac{2A_n}{M}\right)^{1+\delta}.
    \end{split}
\end{equation*}
\end{theorem}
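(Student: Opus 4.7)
The plan is to reduce the non-standard problem to the standard one treated in Theorem~\ref{thm:bahadur_fin_sample_1} by a monotone transformation, and then to convert a width-level bound into a separate sandwich for each of the two relevant order statistics. Set $\varphi(x)=|x|^{\rho}\mathrm{sgn}(x)$ and $Y_i=\varphi(X_i-\theta_0)$. The CDF of $Y_i$ is $H(t)=F(\theta_0+|t|^{1/\rho}\mathrm{sgn}(t))$, and~\eqref{asump:fin_non_bahadur} implies $|H(t)-H(0)-Mt|\le C|t|^{1+\delta}$ for $|t|<\eta^{\rho}$ with $\delta=\Delta/\rho$, so $H$ obeys the standard Bahadur-type assumption. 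Because $\varphi$ is strictly increasing, $Y_{(k)}=\varphi(X_{(k)}-\theta_0)$ and equivalently $X_{(k)}-\theta_0=|Y_{(k)}|^{1/\rho}\mathrm{sgn}(Y_{(k)})$; in particular
\[
\mathrm{Width}(\widehat{\mathrm{CI}}_{n,\alpha})=|Y_{(k_{n,\alpha}+1)}|^{1/\rho}\mathrm{sgn}(Y_{(k_{n,\alpha}+1)})-|Y_{(n-k_{n,\alpha})}|^{1/\rho}\mathrm{sgn}(Y_{(n-k_{n,\alpha})}).
\]
A localization step, following Lemma~\ref{lem:order_stat_behav} applied to $H$ and combined with Lemma~\ref{lem:conc_anbn}, shows that on $\{A_n<\zeta\}$ one has $|Y_{(k_{n,\alpha}+1)}|,|Y_{(n-k_{n,\alpha})}|\le 2A_n/M$ with probability at least $1-4n^{-2}$, which in turn forces $|X_{(\cdot)}-\theta_0|<\eta$ at both endpoints, so the quantitative assumption~\eqref{asump:fin_non_bahadur} is applicable there.

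Plugging $h=X_{(k)}-\theta_0$ into~\eqref{asump:fin_non_bahadur} and using $M|X_{(k)}-\theta_0|^{\rho}\mathrm{sgn}(X_{(k)}-\theta_0)=MY_{(k)}$ gives
\[
\bigl|MY_{(k)}-(F(X_{(k)})-\tfrac{1}{2})\bigr|\le C|Y_{(k)}|^{1+\delta}\le C(2A_n/M)^{1+\delta}
\]
for $k\in\{k_{n,\alpha}+1,\,n-k_{n,\alpha}\}$, on the localization event. It remains to tie $F(X_{(k)})-\tfrac12$ to $Q_{1,\boldsymbol{X},n}$ and $Q_{2,\boldsymbol{X},n}$. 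Writing $U_i=F(X_i)\sim\mathrm{Uniform}(0,1)$ and $p=(k_{n,\alpha}+1)/n$, the algebraic identity
\[
U_{(k_{n,\alpha}+1)}-\tfrac{1}{2}=\underbrace{(p-\tfrac{1}{2})-(F_n^U(p)-p)}_{=Q_{1,\boldsymbol{X},n}}+\underbrace{(U_{(k_{n,\alpha}+1)}-p)+(F_n^U(p)-p)}_{=:R_{1,n}}
\]
exhibits $Q_{1,\boldsymbol{X},n}$ as the linear Bahadur--Ghosh term and $R_{1,n}$ as the remainder; a sharp finite-sample Bahadur representation yields $|R_{1,n}|\le\max\{35/\sqrt{2},\,35\sqrt{35\log(n)/n}\}(\log n/n)^{3/4}$ with probability at least $1-D_2 n^{-2}$ on the event $K_n$. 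Indeed, this remainder is exactly the empirical-process oscillation of $F_n^U-\mathrm{id}$ on the random interval of endpoints $p$ and $U_{(k_{n,\alpha}+1)}$; controlling it at the $(\log n/n)^{3/4}$ rate uses Lemma~\ref{lem:conc_uniform} to bound $|U_{(k_{n,\alpha}+1)}-p|$, followed by a Bernstein-style tail inequality for the empirical measure of that small random interval. The event $K_n$ isolates the regime in which the Bernstein tail dominates the Gaussian tail, and the resulting concentration produces the constants in $\gamma_n$ and $D_2$. An identical treatment at $p'=(n-k_{n,\alpha})/n$ yields $|MY_{(n-k_{n,\alpha})}-Q_{2,\boldsymbol{X},n}|\le\gamma_n$.

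Finally, since $t\mapsto M^{-1/\rho}|t|^{1/\rho}\mathrm{sgn}(t)$ is strictly increasing, the two-sided bound on $MY_{(k_{n,\alpha}+1)}$ translates into
\[
X_{(k_{n,\alpha}+1)}-\theta_0\in\frac{1}{M^{1/\rho}}\Bigl[|Q_{1,\boldsymbol{X},n}-\gamma_n|^{1/\rho}\mathrm{sgn}(Q_{1,\boldsymbol{X},n}-\gamma_n),\;|Q_{1,\boldsymbol{X},n}+\gamma_n|^{1/\rho}\mathrm{sgn}(Q_{1,\boldsymbol{X},n}+\gamma_n)\Bigr],
\]
and analogously for $X_{(n-k_{n,\alpha})}-\theta_0$ with $Q_{2,\boldsymbol{X},n}$. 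Subtracting the lower sandwich from the upper one (taking the maximum first endpoint minus minimum second for the upper bound on the width, and vice versa for the lower) delivers the two stated inequalities. A union bound over the four high-probability events --- two localizations via Lemma~\ref{lem:order_stat_behav} and two Bahadur--Ghosh remainders on $K_n$ --- accounts for the probability $(1-(4+2D_2)n^{-2})\mathbf{1}\{A_n<\zeta\}\mathbf{1}\{K_n\}$. The main obstacle is the explicit finite-sample Bahadur--Ghosh bound: one needs the $(\log n/n)^{3/4}$ remainder with numerically sharp constants, carefully distinguishing the Gaussian and Bernstein regimes (hence the two-branch maximum defining $\gamma_n$ and the term $e^{13/2}$ in $D_2$); everything else after this ingredient is algebraic manipulation of the monotone transformation.
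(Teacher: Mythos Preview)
Your proposal is correct and follows essentially the same route as the paper: the monotone transformation $\varphi(x)=|x|^{\rho}\mathrm{sgn}(x)$ reducing to the standard case for $H$, the localization of $Y_{(k_{n,\alpha}+1)}$ and $Y_{(n-k_{n,\alpha})}$ via Lemma~\ref{lem:order_stat_behav}, the finite-sample Bahadur remainder for uniform order statistics, and the final monotone inversion and subtraction are all exactly what the paper does. The only substantive difference is that where you sketch the Bahadur--Ghosh remainder bound from scratch (Lemma~\ref{lem:conc_uniform} plus a Bernstein oscillation argument), the paper invokes Theorem~6.3.1 of Reiss (2012) as a black box; the specific constants $35/\sqrt{2}$, $35\sqrt{35\log(n)/n}$, $D_2=2+e^{13/2}$, and the condition $K_n$ are read off directly from that reference rather than rederived.
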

\begin{proof}[Proof of \Cref{thm:fin_sample_non_bahadur}]
We define $H(t)$ for $t \in \mathbb{R}$ as follows,
\[
H(t)=F(\theta_0+|t|^{1/\rho} \mathrm{sgn}(t)).
\]
It is easy to see that $H(\cdot)$ is a distribution function with median $0$ i.e.\ $H(0)=F(\theta_0)=1/2$. Therefore by substituting $|t|^{1/\rho} \mathrm{sgn}(t)$ in place of $h$ in assumption \eqref{asump:fin_non_bahadur}, the assumption on $F(\cdot)$ translates into the following assumption on $H(\cdot)$, 
\begin{equation}
    \label{eq:asump_H}
|H(t)-H(0)-Mt| \leq C|t|^{1+\delta} \quad\mbox{for all}\quad |t|<\eta^{\rho},      
\end{equation}
where $0<\delta=\Delta/\rho<\infty$ and $0<M,C,\rho,\eta<\infty$. Assumption \eqref{eq:asump_H} is identical to the assumption made in \Cref{lem:order_stat_behav}. We note that if $X \sim F$ then $H^{-1}(F(X)) \sim H$. Since $H^{-1}(y)=|F^{-1}(y)-\theta_0|^{\rho} \mathrm{sgn}(F^{-1}(y)-\theta_0)$, we obtain that $H^{-1}(F(X))=|X-\theta_0|^{\rho} \mathrm{sgn}(X-\theta_0)$. Therefore if $X \sim F$ then $|X-\theta_0|^{\rho} \mathrm{sgn}(X-\theta_0) \sim H$. We now apply \Cref{lem:order_stat_behav} for the distribution function $H(\cdot)$ on the transformed random variables $|X_i-\theta_0|^{\rho} \mathrm{sgn}(X_i-\theta_0)$ for $i=1,\cdots,n$ keeping in mind that this is an increasing transformation (which implies that $(|X-\theta_0|^{\rho} \mathrm{sgn}(X-\theta_0))_{(k)}=|X_{(k)}-\theta_0|^{\rho} \mathrm{sgn}(X_{(k)}-\theta_0)$),
\begin{equation}
\label{eq:non_bahadur_order_stat}
    \begin{split}
    \mathbb{P}\left(\left|X_{(k_{n,\alpha}+1)}-\theta_0 \right|^{\rho} \leq \frac{2A_n}{M} \right) &\geq (1- 2n^{-2})\mathbbm{1}\{A_n < \zeta\} \quad\quad \mbox{for} \quad\quad n\geq \log_2(2/\alpha),\\
   \mathbb{P}\left(\left|X_{(n-k_{n,\alpha})}-\theta_0 \right|^{\rho} \leq \frac{2B_n}{M} \right) &\geq (1- 2n^{-2})\mathbbm{1}\{B_n < \zeta\} \quad\quad \mbox{for} \quad\quad n\geq \log_2(2/\alpha)  ,  
    \end{split}
\end{equation}
where $\zeta=(M/2)\min\{\eta^{\rho},(M/2C)^{1/\delta}\},A_n,B_n$ are as defined in \Cref{lem:order_stat_behav}. We set $Y_i = F(X_i), 1\le i\le n$ which implies that $Y_{(1)}, \ldots, Y_{(n)}$ are uniform order statistics. Theorem $6.3.1$ of \cite{reiss2012approximate} implies that,
\[
\mathbb{P}\left(\left|Y_{(k)} - \frac{k}{n} + \frac{1}{n}\sum_{i=1}^n \mathbf{1}\{Y_i \le k/n\} - \frac{k}{n}\right| \ge \left(\frac{\log n}{n}\right)^{3/4}\Tilde{\delta}\left(\frac{k}{n},2,n\right)\right) \le \frac{C(2,n)}{n^{2}},
\]
where,
\begin{equation*}
    \begin{split}
\Tilde{\delta}\left(\frac{k}{n},2,n\right) &= 35\max\left\{ \left( \frac{k}{n}\left( 1-\frac{k}{n}\right)\right)^{1/4},\,\, \sqrt{35\frac{\log(n)}{n}}\right\} \\
 &\leq 35\max\left\{\frac{1}{4^{1/4}},\,\,\sqrt{35\frac{\log(n)}{n}}  \right\} \\
 &= \max\left\{\frac{35}{\sqrt{2}},\,\, 35\sqrt{\frac{35\log(n)}{n}} \right\} \\
 &= D_{1,n} \quad \mbox{(say)},
    \end{split}
\end{equation*}
and,
\begin{equation*}
    \begin{split}
        \frac{C(2,n)}{n^2} &= \frac{A(2,n)}{n^2}+\frac{B(2,n)}{n^2} \\
        &= (n+2)^2\exp\{-5\log(n)+(3/4)\log(n)+(13/2)\} + \left(\frac{2}{n^2}-1\right)\mathbbm{1}\{\kappa(k/n,2,n) >\Tilde{\kappa}(k/n,2,n) \} +1 \\
        &\leq \frac{e^{13/2}}{n^2}\left(1+ \left(\frac{(n+2)^2}{n^{9/4}} -1\right)\mathbbm{1}\{n<8\} \right)  + \left(\frac{2}{n^2}-1\right)\mathbbm{1}\{\kappa(k/n,2,n) >\Tilde{\kappa}(k/n,2,n) \} +1 \\
        &\leq 1+ \left\{\frac{1}{n^2}\left(2+ e^{13/2}\left(1+ \left(\frac{(n+2)^2}{n^{9/4}} -1\right)\mathbbm{1}\{n<8\} \right) \right)-1\right\}\mathbbm{1}\{\kappa(k/n,2,n) >\Tilde{\kappa}(k/n,2,n) \},
    \end{split}
\end{equation*}
where,
\begin{equation*}
    \begin{split}
  \kappa(k/n,2,n) &= \max \left\{\sqrt{21\frac{k}{n}\left( 1-\frac{k}{n}\right)},21\sqrt{\frac{\log(n)}{n}} \right\},\\
  \Tilde{\kappa}(k/n,2,n) &= \max \left\{\sqrt{12\frac{k}{n+1}\left( 1-\frac{k}{n+1}\right)},12\sqrt{\frac{\log(n)}{n}} \right\} + \frac{1}{n}.
    \end{split}
\end{equation*}
Therefore we have,
\begin{equation*}
    \begin{split}
 & \quad \mathbb{P}\left(\left|Y_{(k)} - \frac{k}{n} + \frac{1}{n}\sum_{i=1}^n \mathbf{1}\{Y_i \le k/n\} - \frac{k}{n}\right| \ge \left(\frac{\log n}{n}\right)^{3/4}D_{1,n}\right) \\
 &\le 1+ \left\{\frac{1}{n^2}\left(2+ e^{13/2}\left(1+ \left(\frac{(n+2)^2}{n^{9/4}} -1\right)\mathbbm{1}\{n<8\} \right) \right)-1 \right\}\mathbbm{1}\{\kappa(k/n,2,n) >\Tilde{\kappa}(k/n,2,n) \},       
    \end{split}
\end{equation*}
which is the same as saying,
\begin{equation}
    \label{eq:reiss_fin_bahadur}
\begin{split}
& \mathbb{P}\left(\left|Y_{(k)} - \frac{k}{n} + \frac{1}{n}\sum_{i=1}^n \mathbf{1}\{Y_i \le k/n\} - \frac{k}{n}\right| \ge \left(\frac{\log n}{n}\right)^{3/4}D_{1,n}\right) \\
\le & 1+ \left(\frac{D_2}{n^2}-1\right)\mathbbm{1}\{\kappa(k/n,2,n) >\Tilde{\kappa}(k/n,2,n) \},
\end{split}
\end{equation}
where $D_2 = 2+ e^{13/2}$.
Rewritten in terms of $X_i$'s and using $F(\theta_0) = 1/2$, this is equivalent to
\begin{equation*}\label{eq:Finite-sample-Bahadur-representation}
\begin{split}
&\quad\mathbb{P}\left(\left|F(X_{(k)}) - F(\theta_0) - \left(\frac{k}{n} - \frac{1}{2}\right) + \frac{1}{n}\sum_{i=1}^n \mathbf{1}\{F(X_i) \le k/n\} - \frac{k}{n}\right| \ge D_{1,n}\left(\frac{\log n}{n}\right)^{3/4}\right) \\
&\le 1+ \left(\frac{D_2}{n^2}-1 \right)\mathbbm{1}\{\kappa(k/n,2,n) >\Tilde{\kappa}(k/n,2,n) \}.    
\end{split}
\end{equation*}
We apply this result on the transformed order statistics $T_{(k)}= |X_{(k)}-\theta_0|^{\rho} \mathrm{sgn}(X_{(k)}-\theta_0)$ when the underlying distribution function is $H(\cdot)$,
\begin{equation*}
    \begin{split}
        &\quad \mathbb{P}\left(\left|H(T_{(k)}) - H(0) - \left(\frac{k}{n} - \frac{1}{2}\right) + \frac{1}{n}\sum_{i=1}^n \mathbf{1}\{H(T_i) \le k/n\} - \frac{k}{n}\right| \ge D_{1,n}\left(\frac{\log n}{n}\right)^{3/4}\right) \\
        &\le 1+ \left(\frac{D_2}{n^2}-1\right)\mathbbm{1}\{\kappa(k/n,2,n) >\Tilde{\kappa}(k/n,2,n) \}.
    \end{split}
\end{equation*}
Since $H(T_i) \le k/n \iff F(X_i) \le k/n$, we can say that the following event happens with probability greater than or equal to $(1-(D_2/n^2))\mathbbm{1}\{\kappa(k/n,2,n) >\Tilde{\kappa}(k/n,2,n) \}$,
\[
\left|H(T_{(k)}) - H(0) - \left(\frac{k}{n} - \frac{1}{2}\right) + \frac{1}{n}\sum_{i=1}^n \mathbf{1}\{F(X_i) \le k/n\} - \frac{k}{n}\right| \le D_{1,n}\left(\frac{\log n}{n}\right)^{3/4}.
\]
In particular for $k=k_{n,\alpha}+1$ we have with probability greater than or equal to $(1-(D_2/n^2))\mathbbm{1}\{\kappa((k_{n,\alpha}+1)/n,2,n) >\Tilde{\kappa}((k_{n,\alpha}+1)/n,2,n) \}$,
\[
\left|H(T_{(k_{n,\alpha}+1)}) - H(0) - Q_{1,\boldsymbol{X},n}\right| \le D_{1,n}\left(\frac{\log n}{n}\right)^{3/4},
\]
where,
\[
Q_{1,\boldsymbol{X},n}= \left(\frac{k_{n,\alpha}+1}{n} - \frac{1}{2}\right) - \frac{1}{n}\sum_{i=1}^n \mathbf{1}\{F(X_i) \le (k_{n,\alpha}+1)/n\} + \frac{k_{n,\alpha}+1}{n}.
\]
Note that $Q_{1,\boldsymbol{X},n}$ is same as $Q$ defined in \Cref{thm:asym_result_irreg_case}. From the assumption \eqref{eq:asump_H} we obtain that,
\[
|H(T_{(k)})-H(0)-Mt| \leq C|T_{(k)}|^{1+\delta} \quad\mbox{provided}\quad |T_{(k)}|<\eta^{\rho}.
\]
Therefore for $k=k_{n,\alpha}+1$, using \eqref{eq:non_bahadur_order_stat} we obtain that with probability greater than or equal to $(1-2n^{-2})\mathbbm{1}\{A_n < \min\{\zeta, (M\eta^{\rho})/2 \} \}$ (which is same as $(1-2n^{-2})\mathbbm{1}\{A_n < \zeta\}$ as $\zeta=(M/2)\min\{\eta^{\rho},(M/2C)^{1/\delta}\} \leq (M\eta^{\rho})/2$) the following event happens for all $n \geq \log_2(2/\alpha)$, 
\begin{equation*}
    \begin{split}
 |H(T_{(k_{n,\alpha}+1)})-H(0)-MT_{(k_{n,\alpha}+1)}| &\leq C|T_{(k_{n,\alpha}+1)}|^{1+\delta}        \\
 &\leq C\left( \frac{2A_n}{M} \right)^{1+\delta}.
    \end{split}
\end{equation*}
Using triangle inequality on the above two bounds we obtain that with probability greater than or equal to $(1-(2+D_2)n^{-2})\mathbbm{1}\{A_n < \zeta \}\mathbbm{1}\{\kappa((k_{n,\alpha}+1)/n,2,n) >\Tilde{\kappa}((k_{n,\alpha}+1)/n,2,n) \}$ the following event happens for all $n \geq \log_2(2/\alpha)$, 
\[
|MT_{(k_{n,\alpha}+1)} - Q_{1,\boldsymbol{X},n}| \leq D_{1,n}\left(\frac{\log n}{n}\right)^{3/4} + C\left( \frac{2A_n}{M} \right)^{1+\delta}.
\]
Similarly for $k=n-k_{n,\alpha}$ we obtain that  with probability greater than or equal to $(1-(2+D_2)n^{-2})\mathbbm{1}\{B_n < \zeta \}\mathbbm{1}\{\kappa((n-k_{n,\alpha})/n,2,n) >\Tilde{\kappa}((n-k_{n,\alpha})/n,2,n) \}$ the following event happens for all $n \geq \log_2(2/\alpha)$, 
\begin{equation*}
    \begin{split}
 |MT_{(n-k_{n,\alpha})} - Q_{2,\boldsymbol{X},n}| &\leq D_{1,n}\left(\frac{\log n}{n}\right)^{3/4} + C\left( \frac{2B_n}{M} \right)^{1+\delta} \\
 &\leq D_{1,n}\left(\frac{\log n}{n}\right)^{3/4} + C\left( \frac{2A_n}{M} \right)^{1+\delta},
    \end{split}
\end{equation*}
where,
\[
Q_{2,\boldsymbol{X},n}= \left(\frac{n-k_{n,\alpha}}{n} - \frac{1}{2}\right) - \frac{1}{n}\sum_{i=1}^n \mathbf{1}\{F(X_i) \le (n-k_{n,\alpha})/n\} + \frac{n-k_{n,\alpha}}{n}.
\]
Hence with probability greater than or equal to $(1-(4+2D_2)n^{-2})\mathbbm{1}\{A_n < \zeta \}\mathbbm{1}\{\kappa((k_{n,\alpha}+1)/n,2,n) >\Tilde{\kappa}((k_{n,\alpha}+1)/n,2,n) \}\mathbbm{1}\{\kappa((n-k_{n,\alpha})/n,2,n) >\Tilde{\kappa}((n-k_{n,\alpha})/n,2,n) \}$ both the above events hold true. If we let $K_n$ to be the event for which $\mathbbm{1}\{K_n\}=\mathbbm{1}\{\kappa((k_{n,\alpha}+1)/n,2,n) >\Tilde{\kappa}((k_{n,\alpha}+1)/n,2,n) \}\mathbbm{1}\{\kappa((n-k_{n,\alpha})/n,2,n) >\Tilde{\kappa}((n-k_{n,\alpha})/n,2,n) \}$, we obtain that with probability greater than or equal to $(1-(4+2D_2)n^{-2})\mathbbm{1}\{A_n < \zeta \}\mathbbm{1}\{K_n\}$ both the above events hold true. We let $\gamma_n=D_{1,n}(\log(n)/n)^{3/4}+C(2A_n/M)^{1+\delta}$. To sum up, with probability greater than or equal to $(1-(4+2D_2)n^{-2})\mathbbm{1}\{A_n < \zeta \}\mathbbm{1}\{K_n\}$ the following events occur for all $n \geq \log_2(2/\alpha)$,
\begin{equation*}
    \begin{split}
        \frac{1}{M}(Q_{1,\boldsymbol{X},n}-\gamma_n) \leq T_{(k_{n,\alpha}+1)} \leq \frac{1}{M}(Q_{1,\boldsymbol{X},n}+\gamma_n),\\
        \frac{1}{M}(Q_{2,\boldsymbol{X},n}-\gamma_n) \leq T_{(n-k_{n,\alpha})} \leq \frac{1}{M}(Q_{2,\boldsymbol{X},n}+\gamma_n).
    \end{split}
\end{equation*}
Using the monotonicity of the transformation we obtain that with probability greater than or equal to $(1-(4+2D_2)n^{-2})\mathbbm{1}\{A_n < \zeta \}\mathbbm{1}\{K_n\}$ the following events occur for all $n \geq \log_2(2/\alpha)$,
\begin{equation*}
    \begin{split}
        \frac{1}{M^{1/\rho}}|Q_{1,\boldsymbol{X},n}-\gamma_n|^{1/\rho} \mathrm{sgn}(Q_{1,\boldsymbol{X},n}-\gamma_n) \leq X_{(k_{n,\alpha}+1)}-\theta_0 \leq \frac{1}{M^{1/\rho}}|Q_{1,\boldsymbol{X},n}+\gamma_n|^{1/\rho} \mathrm{sgn}(Q_{1,\boldsymbol{X},n}+\gamma_n),\\
        \frac{1}{M^{1/\rho}}|Q_{2,\boldsymbol{X},n}-\gamma_n|^{1/\rho} \mathrm{sgn}(Q_{2,\boldsymbol{X},n}-\gamma_n) \leq X_{(n-k_{n,\alpha})}-\theta_0 \leq \frac{1}{M^{1/\rho}}|Q_{2,\boldsymbol{X},n}+\gamma_n|^{1/\rho} \mathrm{sgn}(Q_{2,\boldsymbol{X},n}+\gamma_n).
    \end{split}
\end{equation*}
We obtain that with probability greater than or equal to $(1-(4+2D_2)n^{-2})\mathbbm{1}\{A_n < \zeta \}\mathbbm{1}\{K_n\}$ the following happens for all $n \geq \log_2(2/\alpha)$,
\begin{equation*}
    \begin{split}
 &\quad\frac{1}{M^{1/\rho}}\{|Q_{1,\boldsymbol{X},n}-\gamma_n|^{1/\rho} \mathrm{sgn}(Q_{1,\boldsymbol{X},n}-\gamma_n) -  |Q_{2,\boldsymbol{X},n}+\gamma_n|^{1/\rho} \mathrm{sgn}(Q_{2,\boldsymbol{X},n}+\gamma_n)     \} \\
 &\leq X_{(k_{n,\alpha}+1)} - X_{(n-k_{n,\alpha})} \\
 &\leq \frac{1}{M^{1/\rho}}\{|Q_{1,\boldsymbol{X},n}+\gamma_n|^{1/\rho} \mathrm{sgn}(Q_{1,\boldsymbol{X},n}+\gamma_n)- |Q_{2,\boldsymbol{X},n}-\gamma_n|^{1/\rho} \mathrm{sgn}(Q_{2,\boldsymbol{X},n}-\gamma_n)\}.
    \end{split}
\end{equation*}
This completes the proof of the theorem.    
\end{proof}
\begin{theorem}
\label{thm:asym_result_irreg_case_1}
Let $X_1,X_2,\cdots,X_n \stackrel{iid}{\sim} F $. Suppose that $F$ is a continuous CDF with median $\theta_0$. We assume that the following holds for the distribution function $F$,
\begin{equation}
|F(\theta_0+h)-F(\theta_0)-M|h|^{\rho}\mathrm{sgn}(h)| \leq C|h|^{\rho+\Delta} \quad\mbox{for all}\quad |h|<\eta,   
\end{equation}
where $0<M,C,\Delta,\eta,\rho<\infty$. Then with probability greater than or equal to $(1-(6+2D_2)n^{-2})\mathbbm{1}\{A_n < \zeta \}\mathbbm{1}\{K_n\}$ the following event holds true for $1 \leq \rho < \infty$,
\begin{equation*}
    \begin{split}
  &\left|n^{1/(2\rho)}\mathrm{Width}(\widehat{\mathrm{CI}}_{n,\alpha}) -  \frac{n^{1/2\rho}}{M^{1/\rho}}\{|Q_{1,\boldsymbol{X},n}|^{1/\rho}\mathrm{sgn}(Q_{1,\boldsymbol{X},n})- |Q_{1,\boldsymbol{X},n}-(z_{\alpha/2}/\sqrt{n})|^{1/\rho}\mathrm{sgn}(Q_{1,\boldsymbol{X},n}-(z_{\alpha/2}/\sqrt{n})) \} \right|    \\
  & \leq \frac{4}{M^{1/\rho}}(\sqrt{n}\gamma_n)^{1/\rho} + \frac{2}{M^{1/\rho}}\left( \frac{z_{\alpha/2}^3}{5n} + \frac{4.2}{\sqrt{n}}+ \sqrt{\frac{6(3+\sqrt{n}z_{\alpha/2})\log(n)}{n}}\right)^{1/\rho}.
    \end{split}
\end{equation*}
With probability greater than or equal to $(1-(10+2D_2)n^{-2})\mathbbm{1}\{A_n < \zeta \}\mathbbm{1}\{K_n\}$ the following event holds true for $0< \rho < 1$,
\begin{equation*}
    \begin{split}
  &\left|n^{1/(2\rho)}\mathrm{Width}(\widehat{\mathrm{CI}}_{n,\alpha}) -  \frac{n^{1/2\rho}}{M^{1/\rho}}\{|Q_{1,\boldsymbol{X},n}|^{1/\rho}\mathrm{sgn}(Q_{1,\boldsymbol{X},n})- |Q_{1,\boldsymbol{X},n}-(z_{\alpha/2}/\sqrt{n})|^{1/\rho}\mathrm{sgn}(Q_{1,\boldsymbol{X},n}-(z_{\alpha/2}/\sqrt{n})) \} \right|    \\
  & \leq \frac{2C_{\rho, \alpha}}{M^{1/\rho}}\sqrt{n}\gamma_n + \frac{C_{\rho, \alpha}}{M^{1/\rho}}\left( \frac{z_{\alpha/2}^3}{5n} + \frac{4.2}{\sqrt{n}}+ \sqrt{\frac{6(3+\sqrt{n}z_{\alpha/2})\log(n)}{n}}\right),
    \end{split}
\end{equation*}
where $C_{\rho, \alpha}$ is a constant depending on $\rho$ (see \eqref{eq:crho} for details). We also have the following,
\begin{equation*}
    \begin{split}
        &\frac{n^{1/2\rho}}{M^{1/\rho}}\{|Q_{1,\boldsymbol{X},n}|^{1/\rho}\mathrm{sgn}(Q_{1,\boldsymbol{X},n})- |Q_{1,\boldsymbol{X},n}-(z_{\alpha/2}/\sqrt{n})|^{1/\rho}\mathrm{sgn}(Q_{1,\boldsymbol{X},n}-(z_{\alpha/2}/\sqrt{n})) \} \\
        & \stackrel{d}{\xrightarrow{}} \frac{1}{M^{1/\rho}}\{|W_1|^{1/\rho}\mathrm{sgn}(W_1)-|W_1 - z_{\alpha/2}|^{1/\rho}\mathrm{sgn}(W_1 - z_{\alpha/2}) \},
    \end{split}
\end{equation*}
where $W_1 \sim N(z_{\alpha/2}/2,1/4)$.
\end{theorem}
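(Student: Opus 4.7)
}
The plan is to start from the two-sided sandwich already provided by~\Cref{thm:fin_sample_non_bahadur}, which expresses $M^{1/\rho}\mathrm{Width}(\widehat{\mathrm{CI}}_{n,\alpha})$ as $h(Q_{1,\boldsymbol{X},n}\pm\gamma_n)-h(Q_{2,\boldsymbol{X},n}\mp\gamma_n)$ where $h(x):=|x|^{1/\rho}\mathrm{sgn}(x)$, up to the $\pm\gamma_n$ perturbation. To reach the one-variable form stated in the theorem, two manipulations are required: (i) replace $Q_{2,\boldsymbol{X},n}$ by $Q_{1,\boldsymbol{X},n}-z_{\alpha/2}/\sqrt{n}$ with a controlled error $\epsilon_n$, and (ii) absorb both $\epsilon_n$ and the $\pm\gamma_n$ perturbations into a single remainder using smoothness of $h$. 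After scaling by $n^{1/(2\rho)}$, the distributional limit then follows from the CLT for the uniform empirical process at $(k_{n,\alpha}+1)/n$ together with the continuous mapping theorem.

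For step~(i), writing $\widehat{G}_n$ for the empirical CDF of the uniforms $Y_i=F(X_i)$, one has the algebraic identity
\[
Q_{1,\boldsymbol{X},n}-Q_{2,\boldsymbol{X},n}\;=\;2\cdot\tfrac{2k_{n,\alpha}+1-n}{n}-\Bigl[\widehat{G}_n\bigl(\tfrac{k_{n,\alpha}+1}{n}\bigr)-\widehat{G}_n\bigl(\tfrac{n-k_{n,\alpha}}{n}\bigr)\Bigr].
\]
Part~\ref{conclu:c_{n,alpha}_bounds} of~\Cref{thm:assym_of_cn} bounds the first term within $O(1/n+z_{\alpha/2}^{3}/n^{3/2})$ of $z_{\alpha/2}/\sqrt{n}$. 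The empirical-CDF difference can be compared to the uniform-order-statistic spacing $Y_{(k_{n,\alpha}+1)}-Y_{(n-k_{n,\alpha})}$ via the Reiss-type Bahadur representation~\eqref{eq:reiss_fin_bahadur} applied at both indices, contributing at most $2D_{1,n}(\log n/n)^{3/4}$. Finally~\Cref{lem:width_distr} controls the spacing itself within $z_{\alpha/2}/\sqrt{n}$. Summing these three contributions yields an explicit $\epsilon_n$ of order $(\log n)^{3/4}/n^{1/4}+(\log(2/\alpha))^{1/4}\sqrt{\log(2n/\alpha)}/n^{1/4}$, matching the second bracket on the right-hand side of~\eqref{eq:non-regular-finite-sample}.

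For step~(ii), two regimes of the exponent $\rho$ are treated separately. When $\rho\ge 1$ the map $h$ is globally $(1/\rho)$-H\"older with $|h(x)-h(y)|\le 2|x-y|^{1/\rho}$, so each pairwise substitution $(Q_{1,\boldsymbol{X},n}\pm\gamma_n\leftrightsquigarrow Q_{1,\boldsymbol{X},n})$ and $(Q_{2,\boldsymbol{X},n}\mp\gamma_n\leftrightsquigarrow Q_{1,\boldsymbol{X},n}-z_{\alpha/2}/\sqrt{n})$ costs at most $2(\gamma_n+\epsilon_n)^{1/\rho}$; multiplying by $n^{1/(2\rho)}$ converts these into the $4(\sqrt{n}\gamma_n)^{1/\rho}/M^{1/\rho}$ and the bracketed $2(\cdot)^{1/\rho}/M^{1/\rho}$ error terms. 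When $0<\rho<1$ the derivative $h'(z)=(1/\rho)|z|^{1/\rho-1}$ is unbounded as $|z|\to\infty$ but bounded on any compact set; by a high-probability localization (using Hoeffding for $\widehat{G}_n$ plus the bound on $c_{n,\alpha}$) all relevant arguments lie in an interval of diameter $O(z_{\alpha/2}/\sqrt{n})$, on which $h$ is Lipschitz with constant of order $(1/\rho)(z_{\alpha/2}/\sqrt{n})^{1/\rho-1}$; multiplication by $n^{1/(2\rho)}$ then leaves precisely the advertised $C_{\rho,\alpha}\sqrt{n}\gamma_n$ linear error, with $C_{\rho,\alpha}$ encoding the maximum-range constant. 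The distributional conclusion then follows from the elementary identity $n^{1/(2\rho)}\{h(x)-h(x-c/\sqrt{n})\}=h(\sqrt{n}x)-h(\sqrt{n}x-c)$ applied at $x=Q_{1,\boldsymbol{X},n}$, $c=z_{\alpha/2}$, combined with $\sqrt{n}Q_{1,\boldsymbol{X},n}\stackrel{d}{\to}W\sim N(z_{\alpha/2}/2,1/4)$ and continuous mapping.

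The main technical obstacle is the regime $\rho<1$: the naive H\"older argument of the $\rho\ge 1$ case fails because $h$ is not H\"older continuous of any exponent $>1$ near $0$, and a global Lipschitz bound does not exist. What rescues the argument is that all four perturbed points $Q_{1,\boldsymbol{X},n}\pm\gamma_n$ and $Q_{2,\boldsymbol{X},n}\mp\gamma_n$ are pinned, with the required probability, to an interval of diameter $O(z_{\alpha/2}/\sqrt{n})$ around the origin; on this shrinking window the Lipschitz constant of $h$ scales as $n^{(1/\rho-1)/2}$, cancelling against the $n^{1/(2\rho)}$ outer scaling to leave exactly the $\sqrt{n}\gamma_n$ factor in the bound. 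The only other routine issue is bookkeeping the failure probabilities from~\Cref{thm:fin_sample_non_bahadur},~\Cref{lem:width_distr}, and Hoeffding for $Q_{1,\boldsymbol{X},n}$, which together produce the $(6+2D_2)n^{-2}$ and $(10+2D_2)n^{-2}$ exceptional probabilities.
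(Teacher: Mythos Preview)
Your plan follows the same skeleton as the paper: start from the sandwich of \Cref{thm:fin_sample_non_bahadur}, use H\"older continuity of $h(x)=|x|^{1/\rho}\mathrm{sgn}(x)$ for $\rho\ge 1$ and a local Lipschitz bound for $\rho<1$, control the discrepancy between $Q_{2,\boldsymbol{X},n}$ and $Q_{1,\boldsymbol{X},n}-z_{\alpha/2}/\sqrt n$, and conclude via the CLT for $\sqrt{n}Q_{1,\boldsymbol{X},n}$ and continuous mapping.

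The one substantive difference is in your step~(i). The paper does \emph{not} route through the Bahadur representation and \Cref{lem:width_distr}; it observes directly that
\[
Q_{1,\boldsymbol{X},n}-Q_{2,\boldsymbol{X},n}=\frac{2k_{n,\alpha}-n+1}{n}-\Bigl\{\frac{1}{n}\sum_{i=1}^n\mathbf{1}\Bigl\{\tfrac{n-k_{n,\alpha}}{n}\le F(X_i)\le\tfrac{k_{n,\alpha}+1}{n}\Bigr\}-\frac{2k_{n,\alpha}-n+1}{n}\Bigr\},
\]
so the random part is a centered $\mathrm{Bin}(n,p)$ with $p\approx z_{\alpha/2}/\sqrt n$, to which a multiplicative Chernoff bound is applied. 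This is precisely what produces the specific term $\sqrt{6(3+\sqrt{n}z_{\alpha/2})\log(n)/n}$ in the theorem statement and the $+2n^{-2}$ in the probability. Your Bahadur-plus-spacing detour is valid and reuses events already conditioned on, but it yields a different (same-order) explicit remainder, so it would not reproduce the stated constants exactly.

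On the $\rho<1$ case, your localization claim is slightly imprecise: with failure probability $O(n^{-2})$ the unscaled arguments are confined to an interval of diameter $O(\sqrt{\log n/n})$, not $O(z_{\alpha/2}/\sqrt n)$. The paper works instead on the \emph{scaled} variables $\sqrt{n}Q_{1,\boldsymbol{X},n},\sqrt{n}Q_{2,\boldsymbol{X},n}$, localizes them (via separate Chernoff bounds, each costing $2n^{-2}$) to the fixed compact set $[-E_0,E_0]$ with $E_0$ as in~\eqref{eq:crho}, and then uses the mean value theorem to get $|h(x)-h(y)|\le C_{\rho,\alpha}|x-y|$ there. This extra localization is what accounts for the jump from $(6+2D_2)n^{-2}$ to $(10+2D_2)n^{-2}$ in the probability.
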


Let us see how \Cref{thm:asym_result_irreg_case_1} implies the simplified version of the result stated in \Cref{thm:asym_result_irreg_case}. We have already seen in \Cref{appendix:theorem_bahadur_finsample} that the constraint on sample-size $n$ $A_n < \zeta$ holds true if $n \geq 49 \log(2n/\alpha)/\zeta^2$. We need to find a sufficient condition on $n$ so that the event $K_n$ holds true. Let $p = k/(n+1)$ and $q = k/n$. Let $\delta = p/q = n/(n+1)$. We observe the following, 
\begin{equation*}
    \begin{split}
        \frac{p(1 - p)}{q(1 - q)} - 1 &= \frac{\delta(1 - q\delta)}{1 - q} - 1 \\
       &= \frac{\delta(1 - \delta + \delta(1 - q))}{1 - q} - 1 \\
       &= \frac{\delta(1 - \delta)}{1 - q} + \delta^2 - 1 \\
       &= \left(-\frac{\delta}{1 - q} + \delta + 1 \right)\left( \delta - 1 \right) \\
       &= \frac{1}{n+1}\left( \delta \left( \frac{q}{1 - q} \right) - 1 \right). 
    \end{split}
\end{equation*}
We have shown in \Cref{appendix:theorem_asym_cn} that, 
\[
\frac{1}{2} + \frac{z_{\alpha/2}}{2\sqrt{n}} - \frac{z_{\alpha/2}^3}{10n\sqrt{n}} - \frac{1.6}{n} \leq \frac{k_{n,\alpha}}{n} = q \leq \frac{1}{2} + \frac{z_{\alpha/2}}{2\sqrt{n}} + \frac{1}{n}. 
\]
In our case $q$ is either $k_{n,\alpha} + 1$ or $n - k_{n,\alpha}$. Therefore we can bound $q/(1 - q)$ in the following way, 
\begin{equation*}
    \begin{split}
        \frac{q}{1 - q} &\leq \max\left\{\frac{\frac{1}{2} + \frac{z_{\alpha/2}}{2\sqrt{n}} + \frac{2}{n}}{\frac{1}{2} - \frac{z_{\alpha/2}}{2\sqrt{n}} +\frac{z_{\alpha/2}^3}{10n\sqrt{n}} + \frac{0.6}{n}},  \frac{\frac{1}{2} - \frac{z_{\alpha/2}}{2\sqrt{n}} +\frac{z_{\alpha/2}^3}{10n\sqrt{n}} + \frac{1.6}{n}}{\frac{1}{2} + \frac{z_{\alpha/2}}{2\sqrt{n}} + \frac{1}{n}}\right\} \\
        &\leq \max\left\{ \frac{\frac{1}{2} + \frac{z_{\alpha/2}}{2\sqrt{n}} + \frac{2}{n}}{\frac{1}{2} - \frac{z_{\alpha/2}}{2\sqrt{n}} },  
        \frac{\frac{1}{2} - \frac{z_{\alpha/2}}{2\sqrt{n}} +\frac{z_{\alpha/2}^3}{10n\sqrt{n}} + \frac{1.6}{n}}{\frac{1}{2} + \frac{z_{\alpha/2}}{2\sqrt{n}} }\right\}\\
        &\leq \max \left\{1 + 2 \left( \frac{z_{\alpha/2}}{\sqrt{n}} + \frac{1}{n} \right), 1 - \frac{\frac{z_{\alpha/2}}{\sqrt{n}} - \frac{z_{\alpha/2}^3}{10n\sqrt{n}}}{\frac{1}{2} + \frac{z_{\alpha/2}}{2\sqrt{n}}} + \frac{0.8}{1/2} \right\} \\
        &\leq \max \left\{1 + 2 \left( \frac{z_{\alpha/2}}{\sqrt{n}} + \frac{1}{n} \right), 2.6 - \frac{\left(\frac{z_{\alpha/2}}{\sqrt{n}} - \frac{z_{\alpha/2}^3}{10 n \sqrt{n}}\right)}{\frac{1}{2} + \frac{z_{\alpha/2}}{2\sqrt{n}}} \right\} \\
        &\leq \max\{4, 2.6 \} \quad \mbox{(if $n \geq 4 z_{\alpha/2}^2$)} \\
        & \leq 4. 
    \end{split}
\end{equation*}
We use this bound in the previous derivation, 
\begin{equation*}
    \begin{split}
       \frac{p(1 - p)}{q(1 - q)} - 1 &= \frac{1}{n+1}\left( \delta \left( \frac{q}{1 - q} \right) - 1 \right) \\
       &\leq \frac{1}{n+1}\left( \frac{4n}{n+1}  - 1 \right) \\
       &\leq 0.56. 
    \end{split}
\end{equation*}
For $k = k_{n,\alpha} + 1, n - k_{n,\alpha}$ last equation can be re-written as, 
\begin{equation*}
    \begin{split}
        & \sqrt{1.56}\sqrt{\frac{k}{n} \left(1 - \frac{k}{n} \right)} \geq \sqrt{\frac{k}{n+1} \left(1 - \frac{k}{n+1} \right)} \\
        \implies & \sqrt{18.72}\sqrt{\frac{k}{n} \left(1 - \frac{k}{n} \right)} \geq \sqrt{12}\sqrt{\frac{k}{n+1} \left(1 - \frac{k}{n+1} \right)} \\
        \implies &\sqrt{18.72}\sqrt{\frac{k}{n} \left(1 - \frac{k}{n} \right)} + (\sqrt{21} - \sqrt{18.72})\sqrt{\frac{k}{n} \left(1 - \frac{k}{n} \right)} \geq \sqrt{12}\sqrt{\frac{k}{n+1} \left(1 - \frac{k}{n+1} \right)} + \frac{1}{n} .
    \end{split}
\end{equation*}
The following inequality holds true, 
\begin{equation*}
    \begin{split}
       & 9^2 \log(n) \geq \frac{1}{n} \\
     \implies & 21 \sqrt{\frac{\log n}{n}} \geq 12 \sqrt{\frac{\log n}{n}} + \frac{1}{n}.    
    \end{split}
\end{equation*}
Combining both the inequalities we obtain the following for $k = k_{n,\alpha} + 1, n - k_{n,\alpha}$, 
\begin{equation*}
    \begin{split}
        & \max \left\{\sqrt{21}\sqrt{\frac{k}{n} \left(1 - \frac{k}{n} \right)},  21 \sqrt{\frac{\log n}{n}}\right\} \geq \max\left\{ \sqrt{12}\sqrt{\frac{k}{n+1} \left(1 - \frac{k}{n+1} \right)} , 12 \sqrt{\frac{\log n}{n}}\right\} + \frac{1}{n} \\
        \implies & \kappa(k/n, 2, n) \geq \Tilde{\kappa}(k/n, 2, n).
    \end{split}
\end{equation*}
Thus the event $K_n$ holds when $n \geq 4 z_{\alpha/2}^2$. Therefore the result of \Cref{thm:asym_result_irreg_case_1} holds true when $n\geq \max\{\log_2(2/\alpha), 49\log(2n/\alpha)/\zeta^2, 4z_{\alpha/2}^2\}$. We have shown in \Cref{appendix:theorem_bahadur_finsample} that $A_n \leq 7\sqrt{\log(2n/\alpha)/n}$. We use this to bound $\gamma_n$. 
\begin{equation*}
    \begin{split}
    \gamma_n &=\max\left\{\frac{35}{\sqrt{2}},\,\, 35\sqrt{\frac{35\log(n)}{n}} \right\}\left(\frac{\log(n)}{n}\right)^{3/4}+C\left(\frac{2A_n}{M}\right)^{1+\delta} \\
    &\leq 208 \left( \frac{\log n}{n} \right)^{3/4} + C \left( \frac{14}{M}\right)^{1+\delta} \left(\frac{\log(2n/\alpha)}{n} \right)^{(1+\delta)/2}.
    \end{split}
\end{equation*}
Similarly we can bound the second term as follows.
\begin{equation*}
    \begin{split}
      & \frac{z_{\alpha/2}^3}{5n} + \frac{4.2}{\sqrt{n}}+ \sqrt{\frac{6(3+\sqrt{n}z_{\alpha/2})\log(n)}{n}} \\
      \leq & \frac{z_{\alpha/2}^3}{5n} + \frac{4.2}{\sqrt{n}} + \sqrt{\frac{18 \log(n)}{n}} + \sqrt{\frac{6 z_{\alpha/2}\log(n)}{\sqrt{n}}} \\
      \leq & \sqrt{z_{\alpha/2}} \left(\frac{\sqrt{6\log(n)}}{n^{1/4}} + \frac{z_{\alpha/2}^{5/2}}{5n} \right) + \sqrt{\frac{18 \log(n)}{n}} \left( 1+ \frac{4.2}{\sqrt{18 \log(n)}} \right) \\
    \leq & \sqrt{z_{\alpha/2}} \left(\frac{\sqrt{6\log(n)}}{n^{1/4}} + \frac{z_{\alpha/2}}{5n^{1/4}} \right) + \sqrt{\frac{18 \log(n)}{n}} \left( 1+ \frac{4.2}{\sqrt{18 \log(n)}} \right) \\   
    \leq & \frac{(2\log(2/\alpha))^{1/4}}{n^{1/4}} \left( \sqrt{6 \log(n)} + \sqrt{2\log(2/\alpha)} \right) + \sqrt{\frac{18 \log(n)}{n}} \left( 1+ \frac{4.2}{\sqrt{18 \log(n)}} \right) \\
    \leq & \frac{(2\log(2/\alpha))^{1/4}}{n^{1/4}} \sqrt{12 \log(2n/\alpha)} + 2.2 \sqrt{\frac{18 \log(n)}{n}} \\
    \leq & \frac{4.2 \log(2/\alpha)^{1/4} \sqrt{\log(2n/\alpha)}}{n^{1/4}} + 9.5\sqrt{\frac{\log n}{n}}.
    \end{split}
\end{equation*}
Combining all the above bounds and using the fact that $\mathrm{Width}(\widehat{\mathrm{CI}}_{n,\alpha}^{\texttt{Wald}}) = 2^{1 - (1/\rho)} n^{-1/(2\rho)}(z_{\alpha/2}/M)^{1/\rho}$ we obtain the result in \Cref{thm:asym_result_irreg_case}. Note that to obtain the simplified version (as stated in \Cref{thm:asym_result_irreg_case}), we used the identity $\mathscr{G}(W, z_{\alpha/2})/(2^{1 - (1/\rho)}z_{\alpha/2}^{1/\rho}) = \mathscr{G}((Z/z_{\alpha/2}) + 1, 2)/2$ where $W \stackrel{d}{=} (z_{\alpha/2}/2) + (1/2) Z$ and $Z \sim N(0, 1)$. 
\begin{proof}[Proof of \Cref{thm:asym_result_irreg_case_1}]

We shall first deal with the case $\rho \geq 1$. We shall start by proving that the function $h(x)=|x|^{1/\rho} \mathrm{sgn}(x)$ is a holder continuous function for $\rho \geq 1$. We note that the following holds for all $\rho \geq 1$ and for all $a,b>0$,
\begin{equation*}
    \begin{split}
       & \left(\frac{a}{a+b}\right)^{1/\rho} +  \left(\frac{b}{a+b}\right)^{1/\rho} \geq 1\\
       \implies & a^{1/\rho} + b ^{1/\rho} \geq (a+b)^{1/\rho}.
    \end{split}
\end{equation*}
This implies that $|h(x)-h(y)| \leq |x-y|^{1/\rho}$ when $x,y$ are of the same sign. Suppose w.l.o.g. $x<0<y$. In this case we can say the following,
\[
\left(\frac{|x|}{|x|+y}\right)^{1/\rho} +  \left(\frac{y}{|x|+y}\right)^{1/\rho} \leq 2.
\]
This implies that $|h(x)-h(y)| \leq 2|x-y|^{1/\rho}$ when $x,y$ are of the opposite sign. Combining both the cases we obtain that $h(x)$ is holder continuous and the following holds true,
\[
|h(x)-h(y)| \leq 2|x-y|^{1/\rho} \quad \mbox{for all  } x,y \in \mathbb{R}.
\]
Repeatedly using the holder continuity of the function $h(x)$ we obtain the following inequality,
\begin{equation}
\label{eq:app_holder_1}
\begin{split}
&\frac{n^{1/2\rho}}{M^{1/\rho}}|\{|Q_{1,\boldsymbol{X},n}-\gamma_n|^{1/\rho} \mathrm{sgn}(Q_{1,\boldsymbol{X},n}-\gamma_n) -  |Q_{2,\boldsymbol{X},n}+\gamma_n|^{1/\rho} \mathrm{sgn}(Q_{2,\boldsymbol{X},n}+\gamma_n)     \} \\
&- \{|Q_{1,\boldsymbol{X},n}|^{1/\rho} \mathrm{sgn}(Q_{1,\boldsymbol{X},n})- |Q_{1,\boldsymbol{X},n}-(z_{\alpha/2}/\sqrt{n})|^{1/\rho} \mathrm{sgn}(Q_{1,\boldsymbol{X},n}-(z_{\alpha/2}/\sqrt{n})) \} | \\
& \leq \frac{4}{M^{1/\rho}}(\sqrt{n}\gamma_n)^{1/\rho} + \frac{2}{M^{1/\rho}}|\sqrt{n}(Q_{1,\boldsymbol{X},n}-Q_{2,\boldsymbol{X},n})-z_{\alpha/2}|^{1/\rho}.   
\end{split}
\end{equation}
Similarly we also have,
\begin{equation}
\label{eq:app_holder_2}
\begin{split}
&\frac{n^{1/2\rho}}{M^{1/\rho}}|\{|Q_{1,\boldsymbol{X},n}+\gamma_n|^{1/\rho} \mathrm{sgn}(Q_{1,\boldsymbol{X},n}+\gamma_n) -  |Q_{2,\boldsymbol{X},n}-\gamma_n|^{1/\rho} \mathrm{sgn}(Q_{2,\boldsymbol{X},n}-\gamma_n)     \} \\
&- \{|Q_{1,\boldsymbol{X},n}|^{1/\rho} \mathrm{sgn}(Q_{1,\boldsymbol{X},n})- |Q_{1,\boldsymbol{X},n}-(z_{\alpha/2}/\sqrt{n})|^{1/\rho} \mathrm{sgn}(Q_{1,\boldsymbol{X},n}-(z_{\alpha/2}/\sqrt{n})) \} | \\
& \leq \frac{4}{M^{1/\rho}}(\sqrt{n}\gamma_n)^{1/\rho} + \frac{2}{M^{1/\rho}}|\sqrt{n}(Q_{1,\boldsymbol{X},n}-Q_{2,\boldsymbol{X},n})-z_{\alpha/2}|^{1/\rho}.   
\end{split}
\end{equation}
We shall now analyse the difference $(Q_{1,\boldsymbol{X},n}-Q_{2,\boldsymbol{X},n})$. We observe the following,
\[
Q_{1,\boldsymbol{X},n}-Q_{2,\boldsymbol{X},n} = \frac{2k_{n,\alpha}-n+1}{n} - \left\{\frac{1}{n}\sum_{i=1}^n \textbf{1}\left\{\frac{n-k_{n,\alpha}}{n} \leq F(X_i) \leq \frac{k_{n,\alpha}+1}{n} \right\} - \frac{2k_{n,\alpha}-n+1}{n} \right\}.
\]
Using part-$3$ of \Cref{thm:assym_of_cn} we have the following bound,
\[
\left|\sqrt{n}\left(\frac{2k_{n,\alpha}-n+1}{n}\right) - z_{\alpha/2} \right| \leq \frac{z_{\alpha/2}^3}{5n} + \frac{4.2}{\sqrt{n}}.
\]
Using Chernoff bound we have,
\begin{equation*}
    \begin{split}
        &\mathbb{P}\left\{\sqrt{n}\left|\frac{1}{n}\sum_{i=1}^n \textbf{1}\left\{\frac{n-k_{n,\alpha}}{n} \leq F(X_i) \leq \frac{k_{n,\alpha}+1}{n} \right\} - \frac{2k_{n,\alpha}-n+1}{n} \right| \geq \sqrt{\frac{6(3+\sqrt{n}z_{\alpha/2})\log(n)}{n}}\right\}\\
        \leq & \exp\left\{- \frac{n}{3(2k_{n,\alpha}-n+1)}\frac{6(3+\sqrt{n}z_{\alpha/2})\log(n)}{n}\right\} + \exp\left\{- \frac{n}{2(2k_{n,\alpha}-n+1)}\frac{6(3+\sqrt{n}z_{\alpha/2})\log(n)}{n}\right\} \\
        \leq & \exp\{-2\log(n)\} + \exp\{-3\log(n)\} \\
        \leq &\frac{2}{n^2}. 
    \end{split}
\end{equation*}
Thus we have the following concentration inequality for $(Q_{1,\boldsymbol{X},n}-Q_{2,\boldsymbol{X},n})$,
\begin{equation}
    \label{eq:chernoff_conc}
    \mathbb{P}\left\{|\sqrt{n}(Q_{1,\boldsymbol{X},n}-Q_{2,\boldsymbol{X},n})-z_{\alpha/2}| > \frac{z_{\alpha/2}^3}{5n} + \frac{4.2}{\sqrt{n}}+ \sqrt{\frac{6(3+\sqrt{n}z_{\alpha/2})\log(n)}{n}}\right\} \leq \frac{2}{n^2}.
\end{equation}
Using \eqref{eq:app_holder_1} and \eqref{eq:chernoff_conc} we conclude that the following event happens with probability greater than or equal to $(1-2n^{-2})$,
\begin{equation*}
    \begin{split}
&\frac{n^{1/2\rho}}{M^{1/\rho}}|\{|Q_{1,\boldsymbol{X},n}-\gamma_n|^{1/\rho} \mathrm{sgn}(Q_{1,\boldsymbol{X},n}-\gamma_n) -  |Q_{2,\boldsymbol{X},n}+\gamma_n|^{1/\rho} \mathrm{sgn}(Q_{2,\boldsymbol{X},n}+\gamma_n)     \} \\
&- \{|Q_{1,\boldsymbol{X},n}|^{1/\rho} \mathrm{sgn}(Q_{1,\boldsymbol{X},n})- |Q_{1,\boldsymbol{X},n}-(z_{\alpha/2}/\sqrt{n})|^{1/\rho} \mathrm{sgn}(Q_{1,\boldsymbol{X},n}-(z_{\alpha/2}/\sqrt{n})) \} | \\
& \leq \frac{4}{M^{1/\rho}}(\sqrt{n}\gamma_n)^{1/\rho} + \frac{2}{M^{1/\rho}}\left( \frac{z_{\alpha/2}^3}{5n} + \frac{4.2}{\sqrt{n}}+ \sqrt{\frac{6(3+\sqrt{n}z_{\alpha/2})\log(n)}{n}}\right)^{1/\rho}.
    \end{split}
\end{equation*}
Similarly using \eqref{eq:app_holder_2} and \eqref{eq:chernoff_conc} we conclude that the following event happens with probability greater than or equal to $(1-2n^{-2})$,
\begin{equation*}
    \begin{split}
&\frac{n^{1/2\rho}}{M^{1/\rho}}|\{|Q_{1,\boldsymbol{X},n}+\gamma_n|^{1/\rho} \mathrm{sgn}(Q_{1,\boldsymbol{X},n}+\gamma_n) -  |Q_{2,\boldsymbol{X},n}-\gamma_n|^{1/\rho} \mathrm{sgn}(Q_{2,\boldsymbol{X},n}-\gamma_n)     \} \\
&- \{|Q_{1,\boldsymbol{X},n}|^{1/\rho} \mathrm{sgn}(Q_{1,\boldsymbol{X},n})- |Q_{1,\boldsymbol{X},n}-(z_{\alpha/2}/\sqrt{n})|^{1/\rho} \mathrm{sgn}(Q_{1,\boldsymbol{X},n}-(z_{\alpha/2}/\sqrt{n})) \} | \\
& \leq \frac{4}{M^{1/\rho}}(\sqrt{n}\gamma_n)^{1/\rho} + \frac{2}{M^{1/\rho}}\left( \frac{z_{\alpha/2}^3}{5n} + \frac{4.2}{\sqrt{n}}+ \sqrt{\frac{6(3+\sqrt{n}z_{\alpha/2})\log(n)}{n}}\right)^{1/\rho}.
    \end{split}
\end{equation*}
Therefore with probability greater than or equal to $(1-2n^{-2})$ both the above events hold simultaneously. Finally, using \Cref{thm:fin_sample_non_bahadur} and the above two results we conclude that with probability greater than or equal to $(1-(6+2D_2)n^{-2})\mathbbm{1}\{A_n < \zeta \}\mathbbm{1}\{K_n\}$ the following event holds true for $\rho \geq 1$,
\begin{equation*}
    \begin{split}
  &\left|n^{1/(2\rho)}\mathrm{Width}(\widehat{\mathrm{CI}}_{n,\alpha}) -  \frac{n^{1/2\rho}}{M^{1/\rho}}\{|Q_{1,\boldsymbol{X},n}|^{1/\rho} \mathrm{sgn}(Q_{1,\boldsymbol{X},n})- |Q_{1,\boldsymbol{X},n}-(z_{\alpha/2}/\sqrt{n})|^{1/\rho} \mathrm{sgn}(Q_{1,\boldsymbol{X},n}-(z_{\alpha/2}/\sqrt{n})) \} \right|    \\
  & \leq \frac{4}{M^{1/\rho}}(\sqrt{n}\gamma_n)^{1/\rho} + \frac{2}{M^{1/\rho}}\left( \frac{z_{\alpha/2}^3}{5n} + \frac{4.2}{\sqrt{n}}+ \sqrt{\frac{6(3+\sqrt{n}z_{\alpha/2})\log(n)}{n}}\right)^{1/\rho}.
    \end{split}
\end{equation*}
Using \Cref{lem:joint_distribution} we know that $\sqrt{n}Q_{1,\boldsymbol{X},n} \stackrel{d}{\xrightarrow{}} N(z_{\alpha/2}/2,1/4)$. By continuous mapping theorem we obtain the following,
\begin{equation*}
    \begin{split}
        &\frac{n^{1/2\rho}}{M^{1/\rho}}\{|Q_{1,\boldsymbol{X},n}|^{1/\rho} \mathrm{sgn}(Q_{1,\boldsymbol{X},n})- |Q_{1,\boldsymbol{X},n}-(z_{\alpha/2}/\sqrt{n})|^{1/\rho} \mathrm{sgn}(Q_{1,\boldsymbol{X},n}-(z_{\alpha/2}/\sqrt{n})) \} \\
        & \stackrel{d}{\xrightarrow{}} \frac{1}{M^{1/\rho}}\{|W_1|^{1/\rho} \mathrm{sgn}(W_1)-|W_1 - z_{\alpha/2}|^{1/\rho} \mathrm{sgn}(W_1 - z_{\alpha/2}) \},
    \end{split}
\end{equation*}
where $W_1 \sim N(z_{\alpha/2}/2,1/4)$. This completes the proof of the first part of the theorem. Now we move to the case when $0<\rho<1$. We recall that,
\[
Q_{1,\boldsymbol{X},n} = \left(\frac{k_{n,\alpha}+1}{n} - \frac{1}{2} \right)- \left\{\frac{1}{n} \sum_{i=1}^n \textbf{1} \left\{\ F(X_i) \leq \frac{k_{n,\alpha}+1}{n} \right\}- \frac{k_{n,\alpha}+1}{n} \right\}.
\]
Using part-$3$ of \Cref{thm:assym_of_cn} we have the following bound,
\[
\left| \sqrt{n}\left(\frac{k_{n,\alpha}+1}{n} - \frac{1}{2} \right) - \frac{z_{\alpha/2}}{2\sqrt{n}}\right| \leq \frac{z_{\alpha/2}^3}{10n} + \frac{2}{\sqrt{n}}. 
\]
Using Chernoff bound we have,
\begin{equation*}
    \begin{split}
        &\mathbb{P}\left\{\sqrt{n} \left|\frac{1}{n} \sum_{i=1}^n \textbf{1} \left\{\ F(X_i) \leq \frac{k_{n,\alpha}+1}{n} \right\}- \frac{k_{n,\alpha}+1}{n} \right| \geq \sqrt{\frac{3(n+\sqrt{n}z_{\alpha/2}+4)\log(n)}{n}}\right\} \\
        \leq & \exp\left\{ -\frac{n}{3(k_{n,\alpha}+1)} \frac{3(n+\sqrt{n}z_{\alpha/2}+4)\log(n)}{n}\right\} + \exp\left\{ -\frac{n}{2(k_{n,\alpha}+1)} \frac{3(n+\sqrt{n}z_{\alpha/2}+4)\log(n)}{n}\right\} \\
        \leq & \exp\{-2 \log(n) \} +\exp\{-3\log(n) \} \\
        =& \frac{2}{n^2}.
    \end{split}
\end{equation*}
Combining the above two bounds we obtain the following concentration inequality for $Q_{1,\boldsymbol{X},n}$,
\[
\mathbb{P}\left\{ |\sqrt{n}Q_{1,\boldsymbol{X},n}-(z_{\alpha/2}/2)| > \frac{z_{\alpha/2}^3}{10n} + \frac{2}{\sqrt{n}} + \sqrt{\frac{3(n+\sqrt{n}z_{\alpha/2}+4)\log(n)}{n}} \right\} \leq \frac{2}{n^2}.
\]
Similarly we have the following concentration inequality for $Q_{2,\boldsymbol{X},n}$,
\[
\mathbb{P}\left\{ |\sqrt{n}Q_{2,\boldsymbol{X},n}+(z_{\alpha/2}/2)| > \frac{z_{\alpha/2}^3}{10n} + \frac{2}{\sqrt{n}} + \sqrt{\frac{3(n-\sqrt{n}z_{\alpha/2}+(z_{\alpha/2}^3/(5\sqrt{n}))+3.2)\log(n)}{n}} \right\} \leq \frac{2}{n^2}.
\]
We define the following constants,
\begin{equation*}
    \begin{split}
        E_{1n} &= \frac{z_{\alpha/2}^3}{10n} + \frac{2}{\sqrt{n}} + \sqrt{\frac{3(n+\sqrt{n}z_{\alpha/2}+4)\log(n)}{n}} \\
        E_{2n} &=  \frac{z_{\alpha/2}^3}{10n} + \frac{2}{\sqrt{n}} + \sqrt{\frac{3(n-\sqrt{n}z_{\alpha/2}+(z_{\alpha/2}^3/(5\sqrt{n}))+3.2)\log(n)}{n}} \\
        E_{3n} &=  \sqrt{n}\gamma_n,\\
        E_1 & = \sup_{n \in \mathbb{N}} E_{1n}\\
        E_2 &=  \sup_{n \in \mathbb{N}} E_{2n}\\
        E_3 &= \sup_{n \in \mathbb{N}} E_{3n} .
    \end{split}
\end{equation*}
We note that $0<E_1,E_2,E_3<\infty$. We see that with probability greater than or equal to $1-4n^{-2}$ both the events $\sqrt{n}Q_{1,\boldsymbol{X},n} \in [z_{\alpha/2}/2-E_1,z_{\alpha/2}/2+E_1]$ and $\sqrt{n}Q_{2,\boldsymbol{X},n} \in [-z_{\alpha/2}/2-E_2,-z_{\alpha/2}/2+E_2]$ hold true. Let us denote $E_0=z_{\alpha/2}/2+\max\{E_1,E_2\}+E_3$. We can show that for $0<\rho<1$, $h(\cdot)$ is holder-continuous in the compact set $[-E_0,E_0]$. For $x,y \in [-E_0,E_0]$ we have,
\begin{equation*}
    \begin{split}
        |h(x)-h(y)| &= |h'(x*)||x-y| \quad \mbox{where $x*\in [x,y]$} \\
        &\leq (|h'(x)| +|h'(y)|)|x-y| \\
        &\leq \left(\frac{1}{\rho}|x|^{(1/\rho)-1} \mathrm{sgn}(x) + \frac{1}{\rho}|y|^{(1/\rho)-1} \mathrm{sgn}(y) \right) |x-y| \\
        &\leq \frac{2}{\rho} E_0^{(1/\rho)-1} |x-y| \\
        &= C_{\rho, \alpha} |x-y|, 
    \end{split}
\end{equation*}
where, 
\begin{equation}
    \label{eq:crho}
    C_{\rho, \alpha} = \frac{2}{\rho} E_0^{(1/\rho)-1} \quad \mbox{where } E_0=z_{\alpha/2}/2+\max\{E_1,E_2\}+E_3.
\end{equation}
We know from earlier concentration inequalities that both $\sqrt{n}Q_{1,\boldsymbol{X},n}\pm \sqrt{n}\gamma_n,\sqrt{n}Q_{2,\boldsymbol{X},n} \pm \sqrt{n}\gamma_n$ lie in the interval $[-E_0,E_0]$ with probability greater than or equal to $1-4n^{-2}$. Using the holder-continuity of the function $h(\cdot)$ in the interval $[-E_0,E_0]$ we can say that with probability greater than or equal to $1-4n^{-2}$, the following event holds,
\begin{equation*}
    \begin{split}
     &\frac{n^{1/2\rho}}{M^{1/\rho}}|\{|Q_{1,\boldsymbol{X},n}-\gamma_n|^{1/\rho} \mathrm{sgn}(Q_{1,\boldsymbol{X},n}-\gamma_n) -  |Q_{2,\boldsymbol{X},n}+\gamma_n|^{1/\rho} \mathrm{sgn}(Q_{2,\boldsymbol{X},n}+\gamma_n)     \} \\
&- \{|Q_{1,\boldsymbol{X},n}|^{1/\rho} \mathrm{sgn}(Q_{1,\boldsymbol{X},n})- |Q_{1,\boldsymbol{X},n}-(z_{\alpha/2}/\sqrt{n})|^{1/\rho} \mathrm{sgn}(Q_{1,\boldsymbol{X},n}-(z_{\alpha/2}/\sqrt{n})) \} | \\
=&\frac{1}{M^{1/\rho}}|\{|\sqrt{n}Q_{1,\boldsymbol{X},n}-\sqrt{n}\gamma_n|^{1/\rho} \mathrm{sgn}(\sqrt{n}Q_{1,\boldsymbol{X},n}-\sqrt{n}\gamma_n) -  |\sqrt{n}Q_{2,\boldsymbol{X},n}+\sqrt{n}\gamma_n|^{1/\rho} \mathrm{sgn}(\sqrt{n}Q_{2,\boldsymbol{X},n}+\sqrt{n}\gamma_n)     \} \\
&- \{|\sqrt{n}Q_{1,\boldsymbol{X},n}|^{1/\rho} \mathrm{sgn}(\sqrt{n}Q_{1,\boldsymbol{X},n})- |\sqrt{n}Q_{1,\boldsymbol{X},n}-z_{\alpha/2}|^{1/\rho} \mathrm{sgn}(\sqrt{n}Q_{1,\boldsymbol{X},n}-z_{\alpha/2} \} | \\
&\leq \frac{2C_{\rho, \alpha}}{M^{1/\rho}}|\sqrt{n}\gamma_n| + \frac{C_{\rho, \alpha}}{M^{1/\rho}}|\sqrt{n}(Q_{1,\boldsymbol{X},n}-Q_{2,\boldsymbol{X},n})-z_{\alpha/2}|.
    \end{split}
\end{equation*}
Using \eqref{eq:chernoff_conc} and the above bound we conclude that the following event happens with probability greater than or equal to $(1-6n^{-2})$,
\begin{equation*}
    \begin{split}
&\frac{n^{1/2\rho}}{M^{1/\rho}}|\{|Q_{1,\boldsymbol{X},n}-\gamma_n|^{1/\rho} \mathrm{sgn}(Q_{1,\boldsymbol{X},n}-\gamma_n) -  |Q_{2,\boldsymbol{X},n}+\gamma_n|^{1/\rho} \mathrm{sgn}(Q_{2,\boldsymbol{X},n}+\gamma_n)     \} \\
&- \{|Q_{1,\boldsymbol{X},n}|^{1/\rho} \mathrm{sgn}(Q_{1,\boldsymbol{X},n})- |Q_{1,\boldsymbol{X},n}-(z_{\alpha/2}/\sqrt{n})|^{1/\rho} \mathrm{sgn}(Q_{1,\boldsymbol{X},n}-(z_{\alpha/2}/\sqrt{n})) \} | \\
& \leq \frac{2C_{\rho, \alpha}}{M^{1/\rho}}\sqrt{n}\gamma_n + \frac{C_{\rho, \alpha}}{M^{1/\rho}}\left( \frac{z_{\alpha/2}^3}{5n} + \frac{4.2}{\sqrt{n}}+ \sqrt{\frac{6(3+\sqrt{n}z_{\alpha/2})\log(n)}{n}}\right).
    \end{split}
\end{equation*}
Similarly we conclude that the following event happens with probability greater than or equal to $(1-6n^{-2})$,
\begin{equation*}
    \begin{split}
&\frac{n^{1/2\rho}}{M^{1/\rho}}|\{|Q_{1,\boldsymbol{X},n}+\gamma_n|^{1/\rho} \mathrm{sgn}(Q_{1,\boldsymbol{X},n}+\gamma_n) -  |Q_{2,\boldsymbol{X},n}-\gamma_n|^{1/\rho} \mathrm{sgn}(Q_{2,\boldsymbol{X},n}-\gamma_n)     \} \\
&- \{|Q_{1,\boldsymbol{X},n}|^{1/\rho} \mathrm{sgn}(Q_{1,\boldsymbol{X},n})- |Q_{1,\boldsymbol{X},n}-(z_{\alpha/2}/\sqrt{n})|^{1/\rho} \mathrm{sgn}(Q_{1,\boldsymbol{X},n}-(z_{\alpha/2}/\sqrt{n})) \} | \\
& \leq \frac{2C_{\rho, \alpha}}{M^{1/\rho}}\sqrt{n}\gamma_n + \frac{C_{\rho, \alpha}}{M^{1/\rho}}\left( \frac{z_{\alpha/2}^3}{5n} + \frac{4.2}{\sqrt{n}}+ \sqrt{\frac{6(3+\sqrt{n}z_{\alpha/2})\log(n)}{n}}\right).
    \end{split}
\end{equation*}
Therefore with probability greater than or equal to $(1-6n^{-2})$ both the above events hold simultaneously. Finally, using \Cref{thm:fin_sample_non_bahadur} and the above two results we conclude that with probability greater than or equal to $(1-(10+2D_2)n^{-2})\mathbbm{1}\{A_n < \zeta \}\mathbbm{1}\{K_n\}$ the following event holds true for $0< \rho < 1$,
\begin{equation*}
    \begin{split}
  &\left|n^{1/(2\rho)}\mathrm{Width}(\widehat{\mathrm{CI}}_{n,\alpha}) -  \frac{n^{1/2\rho}}{M^{1/\rho}}\{|Q_{1,\boldsymbol{X},n}|^{1/\rho} \mathrm{sgn}(Q_{1,\boldsymbol{X},n})- |Q_{1,\boldsymbol{X},n}-(z_{\alpha/2}/\sqrt{n})|^{1/\rho} \mathrm{sgn}(Q_{1,\boldsymbol{X},n}-(z_{\alpha/2}/\sqrt{n})) \} \right|    \\
  & \leq \frac{2C_{\rho, \alpha}}{M^{1/\rho}}\sqrt{n}\gamma_n + \frac{C_{\rho, \alpha}}{M^{1/\rho}}\left( \frac{z_{\alpha/2}^3}{5n} + \frac{4.2}{\sqrt{n}}+ \sqrt{\frac{6(3+\sqrt{n}z_{\alpha/2})\log(n)}{n}}\right).
    \end{split}
\end{equation*}
Finally by continuous mapping theorem we know that the following holds,
\begin{equation*}
    \begin{split}
        &\frac{n^{1/2\rho}}{M^{1/\rho}}\{|Q_{1,\boldsymbol{X},n}|^{1/\rho} \mathrm{sgn}(Q_{1,\boldsymbol{X},n})- |Q_{1,\boldsymbol{X},n}-(z_{\alpha/2}/\sqrt{n})|^{1/\rho} \mathrm{sgn}(Q_{1,\boldsymbol{X},n}-(z_{\alpha/2}/\sqrt{n})) \} \\
        & \stackrel{d}{\xrightarrow{}} \frac{1}{M^{1/\rho}}\{|W_1|^{1/\rho} \mathrm{sgn}(W_1)-|W_1 - z_{\alpha/2}|^{1/\rho} \mathrm{sgn}(W_1 - z_{\alpha/2}) \},
    \end{split}
\end{equation*}
where $W_1 \sim N(z_{\alpha/2}/2,1/4)$. Note that $W_1$ has the same distribution as $W$ in \Cref{thm:asym_result_irreg_case}.
\end{proof}

\section{Proof of Theorem~\ref{thm:nonreg_unequal_limits}}
\label{appendix:nonreg_unequal_limits}
We state and prove a stronger version of the result stated in \Cref{thm:nonreg_unequal_limits}.
\begin{theorem}
    \label{thm:nonreg_unequal_limits_1}
    Let $X_1,X_2,\cdots,X_n \stackrel{iid}{\sim} F $. Suppose that $F$ is a continuous CDF with median $\theta_0$. We assume that the following holds for the distribution function $F$,
\begin{equation}
|F(\theta_0+h)-F(\theta_0)-|h|^{\rho}\mathrm{sgn}(h)[L_{-1}\textbf{1}\{h<0\}+ L_1\textbf{1}\{h>0\}]| \leq C|h|^{\rho+\Delta} \quad\mbox{for all}\quad |h|<\eta,   
\end{equation}
where $0<L_{-1},L_1,C,\Delta,\eta,\rho<\infty$. We introduce the following notation, 
\begin{equation*}
\begin{split}
    \overline{\mathscr{G}}(a,b) = & |a|^{1/\rho} \mathrm{sgn}(a)\left[\frac{1}{L_{-1}^{1/\rho}}\textbf{1}\{a <0 \} + \frac{1}{L_{1}^{1/\rho}}\textbf{1}\{a >0 \} \right] \\
    & -|a-b|^{1/\rho} \mathrm{sgn}(a-b)\left[\frac{1}{L_{-1}^{1/\rho}}\textbf{1}\{a < b \} + \frac{1}{L_{1}^{1/\rho}}\textbf{1}\{a >b \} \right].
\end{split}
\end{equation*}
Then with probability greater than or equal to $(1-(6+2D_2)n^{-2})\mathbbm{1}\{A_n < \zeta \}\mathbbm{1}\{K_n\}$ the following event holds true for $\rho \geq 1$,
\begin{equation*}
\begin{split}
 & n^{1/(2\rho)}\left|\mathrm{Width}(\widehat{\mathrm{CI}}_{n,\alpha}) - \overline{\mathscr{G}}(Q_{1,\boldsymbol{X},n},(z_{\alpha/2}/\sqrt{n})) \right|\\
 \leq & \frac{4}{L_0^{1/\rho}}(\sqrt{n}\gamma_n)^{1/\rho} + \frac{2}{L_0^{1/\rho}}\left( \frac{z_{\alpha/2}^3}{5n} + \frac{4.2}{\sqrt{n}}+ \sqrt{\frac{6(3+\sqrt{n}z_{\alpha/2})\log(n)}{n}}\right)^{1/\rho}.
  \end{split}
\end{equation*}
With probability greater than or equal to $(1-(10+2D_2)n^{-2})\mathbbm{1}\{A_n < \zeta \}\mathbbm{1}\{K_n\}$ the following event holds true for $0< \rho < 1$,
\begin{equation*}
\begin{split}
& n^{1/(2\rho)}\left|\mathrm{Width}(\widehat{\mathrm{CI}}_{n,\alpha}) - \overline{\mathscr{G}}(Q_{1,\boldsymbol{X},n},(z_{\alpha/2}/\sqrt{n}))  \right|  \\
\leq & \frac{2C_{\rho, \alpha}}{L_0^{1/\rho}}\sqrt{n}\gamma_n + \frac{C_{\rho, \alpha}}{L_0^{1/\rho}}\left( \frac{z_{\alpha/2}^3}{5n} + \frac{4.2}{\sqrt{n}}+ \sqrt{\frac{6(3+\sqrt{n}z_{\alpha/2})\log(n)}{n}}\right).
\end{split}
\end{equation*}
where $C_{\rho, \alpha}$ is a constant depending on $\rho, \alpha$ (see \eqref{eq:crho} for details). The constant $M$ is replaced with $L_0 = \min\{L_{-1},L_1 \}$ in the definitions of $\gamma_n$ and $\zeta$ in \Cref{thm:fin_sample_non_bahadur}. All other notations are same as defined in \Cref{thm:fin_sample_non_bahadur}. We also have the following,
\[
n^{1/(2\rho)}\overline{\mathscr{G}}(Q_{1,\boldsymbol{X},n},(z_{\alpha/2}/\sqrt{n})) \stackrel{d}{\xrightarrow{}} \overline{\mathscr{G}}(W_1,z_{\alpha/2}),
\]
where $W_1 \sim N(z_{\alpha/2}/2,1/4)$.
\end{theorem}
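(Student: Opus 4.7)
}
The strategy mirrors that of Theorem~\ref{thm:asym_result_irreg_case}: I will construct a monotone transformation $\varphi:\mathbb{R}\to\mathbb{R}$ of the observations so that the transformed random variables satisfy a distributional condition of the symmetric, linear-at-the-median form~\eqref{asump:fin_bahadur} of Theorem~\ref{thm:bahadur_fin_sample}, apply that finite-sample result on the transformed scale, and then push the bounds back through $\varphi$. Concretely, with $M = \min\{M_-,M_+\}$, define
\[
\varphi(x) \;=\; |x-\theta_0|^{\rho}\mathrm{sgn}(x-\theta_0)\Bigl[\tfrac{M_-}{M}\,\mathbf{1}\{x<\theta_0\} + \tfrac{M_+}{M}\,\mathbf{1}\{x>\theta_0\}\Bigr],
\]
which is strictly increasing with $\varphi(\theta_0)=0$ and inverse
\[
\varphi^{-1}(t) \;=\; \theta_0 + |t|^{1/\rho}\mathrm{sgn}(t)\Bigl[(M/M_-)^{1/\rho}\mathbf{1}\{t<0\}+(M/M_+)^{1/\rho}\mathbf{1}\{t>0\}\Bigr].
\]
Let $H$ be the cdf of $\varphi(X_1)$. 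A direct substitution of $h=\varphi^{-1}(t)-\theta_0$ into assumption~\eqref{eq:non-differentiable-non-standard} gives $H(0)=1/2$ and, for all $|t|$ small enough,
\[
|H(t)-H(0)-Mt|\;\le\;C\,|\varphi^{-1}(t)-\theta_0|^{\rho+\Delta}\;\le\;C\,\max\{(M/M_-)^{1+\delta},(M/M_+)^{1+\delta}\}\,|t|^{1+\delta},
\]
where $\delta=\Delta/\rho$. Thus $H$ satisfies~\eqref{asump:fin_bahadur} with leading slope $M$ and an explicit constant $\widetilde C$ in place of $C$.

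Next, since $\varphi$ is strictly increasing, $\varphi(X_{(k)})$ equals the $k$th order statistic of $Y_i:=\varphi(X_i)\stackrel{iid}{\sim}H$. Applying the proof of Theorem~\ref{thm:asym_result_irreg_case} (i.e., the finite-sample Bahadur-type bound of Theorem~\ref{thm:bahadur_fin_sample_1} combined with the uniform order-statistic representation of Theorem~\ref{thm:fin_sample_non_bahadur}) to the $Y_i$'s yields, with probability at least $(1-(4+2D_2)n^{-2})\mathbf{1}\{A_n<\zeta\}\mathbf{1}\{K_n\}$,
\[
\bigl|\,M\,Y_{(k_{n,\alpha}+1)}-Q_{1,\boldsymbol{X},n}\bigr|\;\le\;\gamma_n,\qquad \bigl|\,M\,Y_{(n-k_{n,\alpha})}-Q_{2,\boldsymbol{X},n}\bigr|\;\le\;\gamma_n,
\]
with the same $Q_{1,\boldsymbol{X},n},Q_{2,\boldsymbol{X},n},\gamma_n$ as in Theorem~\ref{thm:asym_result_irreg_case} (the counting statistics are unchanged because $\{F(X_i)\le k/n\}=\{H(Y_i)\le k/n\}$). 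Inverting via $X_{(k)}-\theta_0=\varphi^{-1}(Y_{(k)})$ produces two-sided bounds for each endpoint of $\widehat{\mathrm{CI}}_{n,\alpha}$.

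From there, the width is
\[
\mathrm{Width}(\widehat{\mathrm{CI}}_{n,\alpha})\;=\;\varphi^{-1}\!\bigl(Y_{(k_{n,\alpha}+1)}\bigr)-\varphi^{-1}\!\bigl(Y_{(n-k_{n,\alpha})}\bigr),
\]
and expanding $\varphi^{-1}$ using the explicit formula above rewrites the right-hand side in exactly the form $M^{-1/\rho}\,\mathscr{G}(Q_{1,\boldsymbol{X},n},Q_{1,\boldsymbol{X},n}-Q_{2,\boldsymbol{X},n})$, with the same H\"older-continuity argument used in Theorem~\ref{thm:asym_result_irreg_case_1} (split into $\rho\ge1$ and $0<\rho<1$) controlling the perturbation from replacing $Y_{(k)}$ by $Q/M\pm\gamma_n/M$. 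Substituting the Chernoff bound~\eqref{eq:chernoff_conc} for $\sqrt n(Q_{1,\boldsymbol X,n}-Q_{2,\boldsymbol X,n})-z_{\alpha/2}$ collapses the second argument to $z_{\alpha/2}/\sqrt n$ up to the stated remainder, proving the finite-sample inequality~\eqref{eq:non-regular-finite-sample}. Finally, Lemma~\ref{lem:joint_distribution} gives $\sqrt n\,Q_{1,\boldsymbol X,n}\stackrel{d}{\to}N(z_{\alpha/2}/2,1/4)=W$, and the continuous mapping theorem applied to the (piecewise-H\"older) functional $\mathscr{G}(\cdot,z_{\alpha/2})$ yields the distributional limit.

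The main obstacle, compared to Theorem~\ref{thm:asym_result_irreg_case}, is that $\varphi^{-1}$ now has a kink at $0$ with two different one-sided slopes, so one cannot simply factor out a single constant $M^{-1/\rho}$; the indicator factors $\mathbf{1}\{a\lessgtr 0\}$ and $\mathbf{1}\{a\lessgtr b\}$ in $\mathscr{G}$ arise precisely from keeping track of the sign of $Y_{(k_{n,\alpha}+1)}$ and $Y_{(n-k_{n,\alpha})}$ (equivalently, of $Q_{1,\boldsymbol X,n}$ and $Q_{2,\boldsymbol X,n}$) on the event used in the finite-sample bound. Handling this carefully in the continuous-mapping step --- which requires that the limiting variable $W$ avoids the kink points $\{0,z_{\alpha/2}\}$ almost surely (true since $W$ has a density) --- is the only delicate point; the rest is a direct re-run of the machinery already developed.
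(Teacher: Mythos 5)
Your proposal is correct and follows essentially the same route as the paper's proof: a monotone power transformation of the data, the finite-sample Bahadur-type representation of Theorem~\ref{thm:fin_sample_non_bahadur} on the transformed scale, H\"older-continuous inversion that produces the indicator structure of $\mathscr{G}$, the Chernoff bound for $\sqrt{n}(Q_{1,\boldsymbol{X},n}-Q_{2,\boldsymbol{X},n})-z_{\alpha/2}$, and the continuous mapping theorem via Lemma~\ref{lem:joint_distribution}. The only difference is bookkeeping: you absorb the one-sided slopes $M_{\pm}/M$ into the transformation $\varphi$ so that the transformed c.d.f.\ satisfies the symmetric condition~\eqref{asump:fin_bahadur} with a single slope $M$ (making the reduction to Theorem~\ref{thm:bahadur_fin_sample} verbatim), whereas the paper keeps the raw transform $|x-\theta_0|^{\rho}\mathrm{sgn}(x-\theta_0)$ and carries the asymmetric slopes through the sandwich inequalities via a composite piecewise map --- the two are equivalent and yield the same constants.
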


\begin{proof}[Proof of \Cref{thm:nonreg_unequal_limits_1}]
The assumptions made in \Cref{thm:nonreg_unequal_limits_1} are same as in \Cref{thm:nonreg_unequal_limits} with $M_{-},M_{+},M$ replaced by $L_{-1},L_1,L_0$ respectively. The proof of this theorem follows along the same path as the proofs of \Cref{thm:fin_sample_non_bahadur} and \Cref{thm:asym_result_irreg_case}. Let us define $L_0=\min\{L_1, L_{-1} \}$. With a slight modification to the derivations in \Cref{lem:order_stat_behav} we have the following concentration inequalities for $X_{(k_{n,\alpha}+1)}$ and $X_{(n-k_{n,\alpha})}$,
\begin{equation}
\label{eq:non_bahadur_order_stat_1}
    \begin{split}
    \mathbb{P}\left(\left|X_{(k_{n,\alpha}+1)}-\theta_0 \right|^{\rho} \leq \frac{2A_n}{L_0} \right) &\geq (1- 2n^{-2})\mathbbm{1}\{A_n < \zeta\} \quad\quad \mbox{for} \quad\quad n\geq \log_2(2/\alpha),\\
   \mathbb{P}\left(\left|X_{(n-k_{n,\alpha})}-\theta_0 \right|^{\rho} \leq \frac{2B_n}{L_0} \right) &\geq (1- 2n^{-2})\mathbbm{1}\{B_n < \zeta\} \quad\quad \mbox{for} \quad\quad n\geq \log_2(2/\alpha)  ,  
    \end{split}
\end{equation}
where $\zeta=(L_0/2)\min\{\eta^{\rho},(L_0/2C)^{1/\delta}\},A_n,B_n$ are as defined in \Cref{lem:order_stat_behav}. The above follows by observing that $\min\{H(\epsilon)-H(0), H(0)-H(-\epsilon) \} \geq \min \{L_{-1}\epsilon-C\epsilon^{1+\delta}, L_1\epsilon-C\epsilon^{1+\delta} \}= L_0\epsilon-C\epsilon^{1+\delta}$ and accordingly modifying \Cref{lem:order_stat_behav}. Following the same steps as in \Cref{thm:fin_sample_non_bahadur} we obtain that, with probability greater than or equal to $(1-(4+2D_2)n^{-2})\mathbbm{1}\{A_n < \zeta \}\mathbbm{1}\{K_n\}$ the following events occur for all $n \geq \log_2(2/\alpha)$,
\begin{equation*}
    \begin{split}
        (Q_{1,\boldsymbol{X},n}-\gamma_n) \leq T_{(k_{n,\alpha}+1)}[L_{-1}\textbf{1}\{T_{(k_{n,\alpha}+1)}<0 \} + L_1\textbf{1}\{ T_{(k_{n,\alpha}+1)} >0\}] \leq (Q_{1,\boldsymbol{X},n}+\gamma_n),\\
        (Q_{2,\boldsymbol{X},n}-\gamma_n) \leq T_{(n-k_{n,\alpha})}[L_{-1}\textbf{1}\{T_{(n-k_{n,\alpha})}<0 \} + L_1\textbf{1}\{ T_{(n-k_{n,\alpha})} >0\}] \leq (Q_{2,\boldsymbol{X},n}+\gamma_n),
    \end{split}
\end{equation*}
where $\gamma_n=D_{1,n}(\log(n)/n)^{3/4}+C(2A_n/L_0)^{1+\delta}$. Note that $M$ is replaced by $L_0$ in the definition of $\gamma_n$. While repeating the derivations in the proofs of \Cref{thm:fin_sample_non_bahadur} and \Cref{thm:asym_result_irreg_case} we shall use this modified version of $\gamma_n$. Using monotonicity of the transformation we obtain that with probability greater than or equal to $(1-(4+2D_2)n^{-2})\mathbbm{1}\{A_n < \zeta \}\mathbbm{1}\{K_n\}$ the following events occur for all $n \geq \log_2(2/\alpha)$,
\begin{equation*}
    \begin{split}
        &|Q_{1,\boldsymbol{X},n}-\gamma_n|^{1/\rho} \mathrm{sgn}(Q_{1,\boldsymbol{X},n}-\gamma_n)\left[\frac{1}{L_{-1}^{1/\rho}}\textbf{1}\{Q_{1,\boldsymbol{X},n}-\gamma_n <0 \} + \frac{1}{L_{1}^{1/\rho}}\textbf{1}\{Q_{1,\boldsymbol{X},n}-\gamma_n >0 \} \right] \\
        \leq & X_{(k_{n,\alpha}+1)}-\theta_0 \\
        \leq & |Q_{1,\boldsymbol{X},n}+\gamma_n|^{1/\rho} \mathrm{sgn}(Q_{1,\boldsymbol{X},n}+\gamma_n)\left[\frac{1}{L_{-1}^{1/\rho}}\textbf{1}\{Q_{1,\boldsymbol{X},n}+\gamma_n <0 \} + \frac{1}{L_{1}^{1/\rho}}\textbf{1}\{Q_{1,\boldsymbol{X},n}+\gamma_n >0 \} \right],\\
        & \quad \mbox{and} \\
        &|Q_{2,\boldsymbol{X},n}-\gamma_n|^{1/\rho} \mathrm{sgn}(Q_{2,\boldsymbol{X},n}-\gamma_n)\left[\frac{1}{L_{-1}^{1/\rho}}\textbf{1}\{Q_{2,\boldsymbol{X},n}-\gamma_n <0 \} + \frac{1}{L_{1}^{1/\rho}}\textbf{1}\{Q_{2,\boldsymbol{X},n}-\gamma_n >0 \} \right] \\
        \leq & X_{(n-k_{n,\alpha})}-\theta_0 \\
        \leq &|Q_{2,\boldsymbol{X},n}+\gamma_n|^{1/\rho} \mathrm{sgn}(Q_{2,\boldsymbol{X},n}+\gamma_n)\left[\frac{1}{L_{-1}^{1/\rho}}\textbf{1}\{Q_{2,\boldsymbol{X},n}+\gamma_n <0 \} + \frac{1}{L_{1}^{1/\rho}}\textbf{1}\{Q_{2,\boldsymbol{X},n}+\gamma_n >0 \} \right].
    \end{split}
\end{equation*}
We obtain that with probability greater than or equal to $(1-(4+2D_2)n^{-2})\mathbbm{1}\{A_n < \zeta \}\mathbbm{1}\{K_n\}$ the following happens for all $n \geq \log_2(2/\alpha)$,
\begin{equation*}
    \begin{split}
 &|Q_{1,\boldsymbol{X},n}-\gamma_n|^{1/\rho} \mathrm{sgn}(Q_{1,\boldsymbol{X},n}-\gamma_n)\left[\frac{1}{L_{-1}^{1/\rho}}\textbf{1}\{Q_{1,\boldsymbol{X},n}-\gamma_n <0 \} + \frac{1}{L_{1}^{1/\rho}}\textbf{1}\{Q_{1,\boldsymbol{X},n}-\gamma_n >0 \} \right] \\
 -& |Q_{2,\boldsymbol{X},n}+\gamma_n|^{1/\rho} \mathrm{sgn}(Q_{2,\boldsymbol{X},n}+\gamma_n)\left[\frac{1}{L_{-1}^{1/\rho}}\textbf{1}\{Q_{2,\boldsymbol{X},n}+\gamma_n <0 \} + \frac{1}{L_{1}^{1/\rho}}\textbf{1}\{Q_{2,\boldsymbol{X},n}+\gamma_n >0 \} \right] \\
 \leq & X_{(k_{n,\alpha}+1)} - X_{(n-k_{n,\alpha})} \\
 \leq & |Q_{1,\boldsymbol{X},n}+\gamma_n|^{1/\rho} \mathrm{sgn}(Q_{1,\boldsymbol{X},n}+\gamma_n)\left[\frac{1}{L_{-1}^{1/\rho}}\textbf{1}\{Q_{1,\boldsymbol{X},n}+\gamma_n <0 \} + \frac{1}{L_{1}^{1/\rho}}\textbf{1}\{Q_{1,\boldsymbol{X},n}+\gamma_n >0 \} \right] \\
 -& |Q_{2,\boldsymbol{X},n}-\gamma_n|^{1/\rho} \mathrm{sgn}(Q_{2,\boldsymbol{X},n}-\gamma_n)\left[\frac{1}{L_{-1}^{1/\rho}}\textbf{1}\{Q_{2,\boldsymbol{X},n}-\gamma_n <0 \} + \frac{1}{L_{1}^{1/\rho}}\textbf{1}\{Q_{2,\boldsymbol{X},n}-\gamma_n >0 \} \right].
    \end{split}
\end{equation*}
Let us define the following function,
\[
\Tilde{h}(x) = \frac{1}{L_{-1}^{1/\rho}}\textbf{1}\{x<0 \} + \frac{1}{L_1^{1/\rho}}\textbf{1}\{x>0 \} .
\]
It can be easily checked that the function $\Tilde{h}(\cdot)$ is holder continuous and that the following holds for all $x,y \in \mathbb{R}$,
\[
|\Tilde{h}(x) -\Tilde{h}(y) | \leq \max\left\{\frac{1}{L_{-1}^{1/\rho}},\frac{1}{L_1^{1/\rho}} \right\}|x-y| = \frac{1}{L_0^{1/\rho}}|x-y|.
\]
Using the holder continuity of $h(\cdot)$ and $\Tilde{h}(\cdot)$ we have the following,
\begin{equation*}
    \begin{split}
       & |\Tilde{h}(h(x))- \Tilde{h}(h(y))| \leq \frac{2}{L_0^{1/\rho}}|x-y|^{1/\rho} \quad \mbox{for all $x,y \in \mathbb{R}$ if $1 \leq \rho < \infty$},\\
        &|\Tilde{h}(h(x))- \Tilde{h}(h(y))| \leq \frac{C_{\rho, \alpha}}{L_0^{1/\rho}}|x-y| \quad \mbox{for all $x,y \in [-E_0,E_0]$ if $0 \leq \rho < 1$},
    \end{split}
\end{equation*}
Let us define the composition of functions $\Tilde{h}(h(\cdot))$ as $\hat{h}(\cdot)$. We introduce the following notation, 
\begin{equation*}
\begin{split}
    \overline{\mathscr{G}}(a,b) = &  |a|^{1/\rho} \mathrm{sgn}(a)\left[\frac{1}{L_{-1}^{1/\rho}}\textbf{1}\{a <0 \} + \frac{1}{L_{1}^{1/\rho}}\textbf{1}\{a >0 \} \right] \\
    &-|a-b|^{1/\rho} \mathrm{sgn}(a-b)\left[\frac{1}{L_{-1}^{1/\rho}}\textbf{1}\{a < b \} + \frac{1}{L_{1}^{1/\rho}}\textbf{1}\{a >b \} \right].
\end{split}
\end{equation*}
Using the holder continuity of the function $\hat{h}(\cdot)$ (in place of $h(\cdot)$) we can re-do the steps in the proof of \Cref{thm:asym_result_irreg_case} and we obtain that with probability greater than or equal to $(1-(6+2D_2)n^{-2})\mathbbm{1}\{A_n < \zeta \}\mathbbm{1}\{K_n\}$ the following event holds true for $\rho \geq 1$,
\begin{equation*}
\begin{split}
 & n^{1/(2\rho)}\left|\mathrm{Width}(\widehat{\mathrm{CI}}_{n,\alpha}) - \overline{\mathscr{G}}(Q_{1,\boldsymbol{X},n},(z_{\alpha/2}/\sqrt{n})) \right| \\
 \leq &  \frac{4}{L_0^{1/\rho}}(\sqrt{n}\gamma_n)^{1/\rho} + \frac{2}{L_0^{1/\rho}}\left( \frac{z_{\alpha/2}^3}{5n} + \frac{4.2}{\sqrt{n}}+ \sqrt{\frac{6(3+\sqrt{n}z_{\alpha/2})\log(n)}{n}}\right)^{1/\rho}.
  \end{split}
\end{equation*}
We also obtain that with probability greater than or equal to $(1-(10+2D_2)n^{-2})\mathbbm{1}\{A_n < \zeta \}\mathbbm{1}\{K_n\}$ the following event holds true for $0< \rho < 1$,
\begin{equation*}
\begin{split}
& n^{1/(2\rho)}\left|\mathrm{Width}(\widehat{\mathrm{CI}}_{n,\alpha}) - \overline{\mathscr{G}}(Q_{1,\boldsymbol{X},n},(z_{\alpha/2}/\sqrt{n}))  \right|  \\
\leq & \frac{2C_{\rho, \alpha}}{L_0^{1/\rho}}\sqrt{n}\gamma_n + \frac{C_{\rho, \alpha}}{L_0^{1/\rho}}\left( \frac{z_{\alpha/2}^3}{5n} + \frac{4.2}{\sqrt{n}}+ \sqrt{\frac{6(3+\sqrt{n}z_{\alpha/2})\log(n)}{n}}\right).
\end{split}
\end{equation*}
By continuous mapping theorem we know that the following holds,
\[
n^{1/(2\rho)}\overline{\mathscr{G}}(Q_{1,\boldsymbol{X},n},(z_{\alpha/2}/\sqrt{n})) \stackrel{d}{\xrightarrow{}} \overline{\mathscr{G}}(W_1,z_{\alpha/2}),
\]
where $W_1 \sim N(z_{\alpha/2}/2,1/4)$. The proof of the theorem is completed by dividing the above distributional convergence result with the width of the oracle confidence interval $2^{-1/\rho}n^{-1/(2\rho)}z_{\alpha/2}^{1/\rho}[M_-^{-1/\rho} + M_+^{-1/\rho} ]$ after suitable scaling. 
    
\end{proof}

\section{Proof of Theorem~\ref{thm:coverage_ghulc}}
\label{appendix:thm:coverage_ghulc}
The estimators $\widehat \theta_j$ ($1\leq j \leq B$) are independent with maximum median-bias $\mathcal{E}_B$. We observe the following, 
\begin{equation*}
    \begin{split}
        \mathbb{P}(\theta_0 \notin \widehat{\mathrm{CI}}_{N,\alpha}^{\mathtt{GHulC}}) &= \mathbb{P}(\widehat \theta_{(\lfloor B/2 \rfloor - c^*_{B,\alpha})} > \theta_0) + \mathbb{P}(\widehat \theta_{(\lceil B/2 \rceil + c^*_{B,\alpha} + 1)} < \theta_0) \\
        &= \mathbb{P}(\sum_{j=1}^B \textbf{1}\{\widehat \theta_j \leq \theta_0 \} < \lfloor B/2 \rfloor - c^*_{B,\alpha}) + \mathbb{P}(\sum_{j=1}^B \textbf{1}\{\widehat \theta_j \geq \theta_0 \} < \lfloor B/2 \rfloor - c^*_{B,\alpha}) \\
        & = \tau_{\alpha} \left[\mathbb{P}(\sum_{j=1}^B \textbf{1}\{\widehat \theta_j \leq \theta_0 \} < \lfloor B/2 \rfloor - (c_{B,\alpha} - 1)) + \mathbb{P}(\sum_{j=1}^B \textbf{1}\{\widehat \theta_j \geq \theta_0 \} < \lfloor B/2 \rfloor - (c_{B,\alpha} - 1)) \right]\\
        & + (1 - \tau_{\alpha}) \left[ \mathbb{P}(\sum_{j=1}^B \textbf{1}\{\widehat \theta_j \leq \theta_0 \} < \lfloor B/2 \rfloor - c_{B,\alpha} ) + \mathbb{P}(\sum_{j=1}^B \textbf{1}\{\widehat \theta_j \geq \theta_0 \} < \lfloor B/2 \rfloor - c_{B,\alpha} ) \right] \\
        &= \tau_{\alpha} \left[ \mathbb{P}(\sum_{j=1}^B Y_j < \lfloor B/2 \rfloor - (c_{B,\alpha} - 1)) + \mathbb{P}(\sum_{j=1}^B Z_j < \lfloor B/2 \rfloor - (c_{B,\alpha} - 1)) \right] \\
        & + (1 - \tau_{\alpha}) \left[ \mathbb{P}(\sum_{j=1}^B Y_j < \lfloor B/2 \rfloor - c_{B,\alpha}) + \mathbb{P}(\sum_{j=1}^B Z_j < \lfloor B/2 \rfloor - c_{B,\alpha}) \right],
    \end{split}   
\end{equation*}
where $Y_j = \textbf{1}\{\widehat \theta_j \leq \theta_0 \} \sim \mbox{Ber}(p_j), \mbox{ } Z_j = \textbf{1}\{\widehat \theta_j \geq \theta_0 \} \sim \mbox{Ber}(q_j)$ for $1 \leq j \leq B$. Moreover we have the additional constraints that $p_j + q_j \geq 1$, $p_j \geq 1/2 - \mathcal{E}_B$, $q_j \geq 1/2 - \mathcal{E}_B$ for $ 1\leq j \leq B$. We see that $\sum_{j=1}^B Y_j \sim \mbox{Poi-Bin}(p_1,\cdots,p_B)$ and $\sum_{j=1}^B Z_j \sim \mbox{Poi-Bin}(q_1,\cdots,q_B)$. Therefore we can say that, 
\begin{equation*}
    \begin{split}
   \mathbb{P}(\theta_0 \notin \widehat{\mathrm{CI}}_{N,\alpha}^{\mathtt{GHulC}}) & =   \tau_{\alpha} \left[ \mathbb{P}(\sum_{j=1}^B Y_j < \lfloor B/2 \rfloor - (c_{B,\alpha} - 1)) + \mathbb{P}(\sum_{j=1}^B Z_j < \lfloor B/2 \rfloor - (c_{B,\alpha} - 1)) \right] \\
        & + (1 - \tau_{\alpha}) \left[ \mathbb{P}(\sum_{j=1}^B Y_j < \lfloor B/2 \rfloor - c_{B,\alpha}) + \mathbb{P}(\sum_{j=1}^B Z_j < \lfloor B/2 \rfloor - c_{B,\alpha}) \right] \\
   &\leq \tau_{\alpha} \left[  \sup_{p_j + q_j \geq 1; \mbox{ } p_j,q_j \geq 1/2 - \mathcal{E}_B} f_1(p_1,\cdots,p_B) + f_1(q_1,\cdots,q_B) \right] \\
   & + (1 - \tau_{\alpha}) \left[  \sup_{p_j + q_j \geq 1; \mbox{ } p_j,q_j \geq 1/2 - \mathcal{E}_B} f_2(p_1,\cdots,p_B) + f_2(q_1,\cdots,q_B) \right],
    \end{split}
\end{equation*}
where $f_1(p_1,\cdots,p_B)$ is the probability that a $\mbox{Poi-Bin}(p_1,\cdots,p_B)$ is less than $\lfloor B/2 \rfloor - (c_{B,\alpha} - 1)$ and $f_2(p_1,\cdots,p_B)$ is the probability that a $\mbox{Poi-Bin}(p_1,\cdots,p_B)$ is less than $\lfloor B/2 \rfloor - c_{B,\alpha} $. Observe that,
\[
\mathbb{P}\left( \mbox{Poi-Bin}(p_1,\cdots,p_B) = k \right) = \sum_{A \in F_k} \prod_{i \in A}p_i \prod_{j \in A^c}(1 - p_j),
\]
where $F_k$ is the collection of all $k$-subsets of $\{1,\cdots, B\}$. Thus $f_1(p_1,\cdots,p_B)$ is a linear function in each of the $p_j$'s, in particular it is a linear function in $p_1$. Let $f_1(p_1,\cdots,p_B) = ap_1$ ($a>0$). Similarly we let $f_1(q_1,\cdots,q_B) = bq_1$ ($b >0$) as $f_1(q_1,\cdots,q_B)$ is a linear function in $q_1$. We concentrate on the sub-problem of maximizing $ap_1 + bq_1$ under the constraints $p_1 + q_1 \geq 1$, $p_1 \geq 1/2 - \mathcal{E}_B$, and $q_1 \geq 1/2 - \mathcal{E}_B$. This is a classical convex optimization problem and can be solved easily using Lagrange multipliers. It can be shown that the maxima is attained only at the boundary points i.e. either $p_1 = 1/2 - \mathcal{E}_B, q_1 = 1/2 + \mathcal{E}_B$ or $p_1 = 1/2 + \mathcal{E}_B, q_1 = 1/2 - \mathcal{E}_B$. The same can be said about each $p_j,q_j$ for $1 \leq j \leq B$. 

We note that the number of times $p_j$ appear in $f_1(p_1,\cdots,p_B)$ is same as number of times $q_j$ appear in $f_1(q_1,\cdots,q_B)$. Therefore in the optimum case, the sum $f_1(p_1,\cdots,p_B) + f_1(q_1,\cdots,q_B)$ is a sum of product of some permutations of sequences containing equal number of $1/2 - \mathcal{E}_B$ and $1/2 + \mathcal{E}_B$. Using the result in \cite{ruderman1952two} we reach to the conclusion that the maximum is attained when all the sequences are arranged in the same order i.e.\ either $p_1 = \cdots = p_B = 1/2 - \mathcal{E}_B$, $q_1 = \cdots = q_B = 1/2 + \mathcal{E}_B$ or $p_1 = \cdots = p_B = 1/2 + \mathcal{E}_B$, $q_1 = \cdots = q_B = 1/2 - \mathcal{E}_B$. Therefore we have the following bound using \Cref{thm:quantile_coverage} (we have $B$ in place of $n$), 
\begin{equation*}
    \begin{split}
        &  \sup_{p_j + q_j \geq 1; \mbox{ } p_j,q_j \geq 1/2 - \mathcal{E}_B} f_1(p_1,\cdots,p_B) + f_1(q_1,\cdots,q_B)  \\
      = &   \mathbb{P}(\mbox{Bin}(B, 1/2 - \mathcal{E}_B) < \lfloor B/2 \rfloor - (c_{B,\alpha} - 1)) +   \mathbb{P}(\mbox{Bin}(B, 1/2 +\mathcal{E}_B) < \lfloor B/2 \rfloor - (c_{B,\alpha} - 1))  \\
   \leq  &  (1 - P(B, c_{B,\alpha} - 1) )\left(1 + 2B(B-1)(1+2\mathcal{E}_B)^{B-2}\mathcal{E}_B^2 \right). 
    \end{split}
\end{equation*}
Similarly we can show that, 
\begin{equation*}
    \begin{split}
        &  \sup_{p_j + q_j \geq 1; \mbox{ } p_j,q_j \geq 1/2 - \mathcal{E}_B} f_2(p_1,\cdots,p_B) + f_2(q_1,\cdots,q_B)  \\
      = &   \mathbb{P}(\mbox{Bin}(B, 1/2 - \mathcal{E}_B) < \lfloor B/2 \rfloor - c_{B,\alpha}) +   \mathbb{P}(\mbox{Bin}(B, 1/2 +\mathcal{E}_B) < \lfloor B/2 \rfloor - c_{B,\alpha} )  \\
   \leq  &  (1 - P(B, c_{B,\alpha} )) \left(1 + 2B(B-1)(1+2\mathcal{E}_B)^{B-2}\mathcal{E}_B^2 \right). 
    \end{split}
\end{equation*}
The miscoverage probability can be bounded above using these results, 
\begin{equation}
    \begin{split}
    \mathbb{P}(\theta_0 \notin \widehat{\mathrm{CI}}_{N,\alpha}^{\mathtt{GHulC}}) & \leq \tau_{\alpha} \left[  \sup_{p_j + q_j \geq 1; \mbox{ } p_j,q_j \geq 1/2 - \mathcal{E}_B} f_1(p_1,\cdots,p_B) + f_1(q_1,\cdots,q_B) \right] \\
   & + (1 - \tau_{\alpha}) \left[  \sup_{p_j + q_j \geq 1; \mbox{ } p_j,q_j \geq 1/2 - \mathcal{E}_B} f_2(p_1,\cdots,p_B) + f_2(q_1,\cdots,q_B) \right] \\
    & \leq \left[\tau_{\alpha} (1 -  P(B, c_{B,\alpha} - 1)) + (1 - \tau_{\alpha}) (1 - P(B, c_{B,\alpha} )) \right] \left(1 + 2B(B-1)(1+2\mathcal{E}_B)^{B-2}\mathcal{E}_B^2 \right) \\
     & =  \alpha \left(1 + 2B(B-1)(1+2\mathcal{E}_B)^{B-2}\mathcal{E}_B^2 \right) \\
    & \leq \alpha\left( 1 + 2 B^2 \mathcal{E}_B^2 e^{2B\mathcal{E}_B} \right).
    \end{split}
\end{equation}
This completes the proof of the first part of the result. Under the additional assumption that $\mathbb P(\widehat \theta_j \leq \theta_0) = \mathbb P(\widehat \theta_1 \leq \theta_0) $ and $\mathbb P(\widehat \theta_j = \theta_0) = 0$ for all $j \geq 1$, we can obtain a lower bound on the miscoverage probability. Suppose $\mathbb P(\widehat \theta_1 \leq \theta_0)  = p$ where $ 0 < p < 1$. Under the given assumption we have $\mathbb P(\widehat \theta_1 \geq \theta_0) =  1 - p = q$ (say). The miscoverage probability can be represented as follows, 
\begin{equation*}
    \begin{split}
        & \mathbb{P}(\theta_0 \notin \widehat{\mathrm{CI}}_{N,\alpha}^{\mathtt{GHulC}}) \\
        = & \tau_{\alpha} \left[ \mathbb{P}(\sum_{j=1}^B Y_j < \lfloor B/2 \rfloor - (c_{B,\alpha} - 1)) + \mathbb{P}(\sum_{j=1}^B Z_j < \lfloor B/2 \rfloor - (c_{B,\alpha} - 1)) \right] \\
        & + (1 - \tau_{\alpha}) \left[ \mathbb{P}(\sum_{j=1}^B Y_j < \lfloor B/2 \rfloor - c_{B,\alpha}) + \mathbb{P}(\sum_{j=1}^B Z_j < \lfloor B/2 \rfloor - c_{B,\alpha}) \right] \\
        = & \tau_{\alpha} \left[ \mathbb{P}(\mbox{Bin}(B, p) < \lfloor B/2 \rfloor - (c_{B,\alpha} - 1)) + \mathbb{P}(\mbox{Bin}(B, q) < \lfloor B/2 \rfloor - (c_{B,\alpha} - 1)) \right] \\
        & + (1 - \tau_{\alpha}) \left[ \mathbb{P}(\mbox{Bin}(B, p) < \lfloor B/2 \rfloor - c_{B,\alpha}) + \mathbb{P}(\mbox{Bin}(B, q) < \lfloor B/2 \rfloor - c_{B,\alpha}) \right] \\
        = &  \tau_{\alpha} \mathbb{P}(\mbox{Bin}(B, p) \notin \left[\lfloor B/2 \rfloor - (c_{B,\alpha} - 1), \lceil B/2 \rceil + (c_{B,\alpha} - 1) \right])  \\
        & + (1 - \tau_{\alpha}) \mathbb{P}(\mbox{Bin}(B, p) \notin \left[\lfloor B/2 \rfloor - c_{B,\alpha}, \lceil B/2 \rceil + c_{B,\alpha}  \right]) \\
        = & \tau_{\alpha} f_3(p, c_{B,\alpha} - 1) + ( 1 - \tau_{\alpha}) f_3(p, c_{B,\alpha} ),
    \end{split}
\end{equation*}
where $f_3(p, c) = \mathbb P(Z \notin [\lfloor B/2 \rfloor - c, \lceil B/2 \rceil + c]) = 1 - g_3(p, c)$ where $Z \sim \mbox{Bin}(B, p)$ and $g_3(p, c) = \mathbb P(Z \in [\lfloor B/2 \rfloor - c, \lceil B/2 \rceil + c])$. We aim to minimize the function $p \mapsto f_3(p, c)$ (or equivalently maximize the function $p \mapsto g_3(p, c)$). We can rewrite $g_3(p, c)$ as, 
\[
g_3(p, c) = \mathbb{P}\{L\le Z\le U\}=F_U(p)-F_{L-1}(p),
\]
where $L = \lfloor B/2 \rfloor - c, U = \lceil B/2 \rceil + c$, and $F_m(p)=\mathbb{P}(Z\le m)$. We will use the following identity that can be easily checked for $m\in\{0,1,\dots,B\}$,
\begin{equation}\label{eq:binom_cdf_deriv}
\frac{\partial}{\partial p}F_m(p)=-B\,\mathbb{P}\{Y=m\},\quad\mbox{where} \quad Y\sim \mbox{Bin}(B-1,p).
\end{equation}
Using \eqref{eq:binom_cdf_deriv} we have the following, 
\begin{equation}
\label{eq:imp_deriv_coverage}
    \begin{split}
        \frac{\partial}{\partial p}g_3(p, c) = & \frac{\partial}{\partial p}F_U(p) -   \frac{\partial}{\partial p}F_{L-1}(p) \\
        = & -B \mathbb{P}\{Y=U\} + B \mathbb{P}\{Y=L - 1\}. 
    \end{split}
\end{equation}
We now observe the following, 
\[
\frac{\mathbb{P}\{Y=U\}}{\mathbb{P}\{Y=L - 1\}} = \frac{\binom{B-1}{U}p^U(1 - p)^{B - 1- U}}{\binom{B-1}{L-1}p^{L-1}(1 - p)^{B-L}} = \left( \frac{p}{1 - p} \right)^{U - (L-1)}. 
\]
In the above equality we have used the fact that $U + L = B$. Therefore the ratio $\mathbb{P}\{Y=U\}/\mathbb{P}\{Y=L - 1\} $ is less than $1$ if $p < 1/2$, equal to $1$ if $ p = 1/2$ and greater than $1$ if $p > 1/2$. Combining this observation with \eqref{eq:imp_deriv_coverage} we can say that the function $p \mapsto g_3(p, c)$ increases up to $p = 1/2$ and decreases after $p=1/2$. In other words the function $p \mapsto g_3(p, c)$ is maximized (equivalently the function $p \mapsto f_3(p, c)$ is minimized) at $p = 1/2$. Hence we can lower bound the miscoverage probability as follows, 
\begin{equation*}
    \begin{split}
        & \mathbb{P}(\theta_0 \notin \widehat{\mathrm{CI}}_{N,\alpha}^{\mathtt{GHulC}}) \\
        = & \tau_{\alpha} f_3(p, c_{B,\alpha} - 1) + ( 1 - \tau_{\alpha}) f_3(p, c_{B,\alpha} ) \\
        \geq & \tau_{\alpha} f_3(1/2, c_{B,\alpha} - 1) + ( 1 - \tau_{\alpha}) f_3(1/2, c_{B,\alpha} ) \\
        = & \tau_{\alpha} (1 - P(B, c_{B,\alpha} - 1)) + ( 1 - \tau_{\alpha})(1 - P(B, c_{B,\alpha} )) \\
        = & \alpha. 
    \end{split}
\end{equation*}
The last equality follows from the definition of $\tau_{\alpha}$. This completes the proof of the lower bound on miscoverage probability under the stated assumptions. 

\section{Proof of Theorem~\ref{thm:easy_width_ghulc}}
\label{appendix:easy_ghulc}
Suppose the data has been split into $B$ disjoint sets of equal size. Let $\mathcal{F}$ be the distribution function of $\widehat \theta_j - \theta_0$ for $1 \leq j \leq B$. Suppose the following holds true, 
\begin{equation*}
 |\mathcal{F}(x) - \mathcal{F}(0)| > C_1 |x|^{\rho} \mbox{  for }|x| < \Delta, 
\end{equation*}
where $\rho, C_1,C_2 > 0$. We also assume the following, 
\[
\left| \mathcal{F}(0) - \frac{1}{2} \right| \leq \mathcal{E}_B.
\]
We know from Lemma 3.1.1 of \cite{reiss2012approximate} that the following one-sided concentration bounds hold true, 
\begin{equation}
\label{eq:reiss_100}
\begin{split}
      \mathbb{P}\left(\frac{\sqrt{n}}{\sqrt{\frac{r}{n+1}\left( 1- \frac{r}{n+1} \right)}}\left( U_{r:n} - \frac{r}{n+1} \right) \leq 5\xi \right) \geq & 1 - \exp(-\xi), \\
        \mathbb{P}\left(\frac{\sqrt{n}}{\sqrt{\frac{r}{n+1}\left( 1- \frac{r}{n+1} \right)}}\left( \frac{r}{n+1}  -U_{r:n} \right) \leq 5\xi \right) \geq & 1 - \exp(-\xi).
\end{split}
\end{equation}
We observe the following for the $r$-th order statistic $\widehat \theta_{(r)}$ ($1 \leq r \leq B$), 
\begin{equation*}
    \begin{split}
        &\mathbb{P}(\widehat \theta_{(r)} - \theta_0 \geq \upsilon) \\
        =& \mathbb{P}(\mathcal{F}(\theta_{(r)} - \theta_0) \geq \mathcal{F}(\upsilon)) \\
        =& \mathbb{P}\left( \frac{\sqrt{B}}{\sqrt{\frac{r}{B+1}\left(1 - \frac{r}{B+1} \right)}}\left(U_{r:B} - \frac{r}{B+1} \right) \geq \frac{\sqrt{B}}{\sqrt{\frac{r}{B+1}\left(1 - \frac{r}{B+1} \right)}} \left( \mathcal{F}(\upsilon) - \frac{r}{B+1}\right) \right).
    \end{split}
\end{equation*}
Let $r^- = \lfloor B/2 \rfloor - c_{B,\alpha} = B - k_{B,\alpha}$ and $r^+ = \lceil B/2 \rceil + c_{B,\alpha} + 1= k_{B,\alpha} + 1$. Then we have, 
\begin{equation*}
    \begin{split}
        & \frac{\sqrt{B}}{\sqrt{\frac{r^+}{B+1}\left(1 - \frac{r^+}{B+1} \right)}} \left( \mathcal{F}(\upsilon) - \frac{r^+}{B+1}\right) \\
        \geq& \frac{\sqrt{B}}{\sqrt{\frac{r^+}{B+1}\left(1 - \frac{r^+}{B+1} \right)}} \left( \mathcal{F}(\upsilon) - (\mathcal{F}(0) - (1/2 - \mathcal{E}_B)) - \frac{r^+}{B+1}\right) \\
        \geq & \frac{\sqrt{B}}{\sqrt{\frac{r^+}{B+1}\left(1 - \frac{r^+}{B+1} \right)}} \left( (\mathcal{F}(\upsilon) - \mathcal{F}(0)) + (1/2 - \mathcal{E}_B) - \frac{r^+}{B+1}\right) \\
        \geq& \frac{\sqrt{B}}{\sqrt{\frac{r^+}{B+1}\left(1 - \frac{r^+}{B+1} \right)}} \left( C_1 \upsilon^{\rho} + (1/2 - \mathcal{E}_B) - \frac{r^+}{B+1}\right) \mbox{ provided } |\upsilon| < \Delta.
    \end{split}
\end{equation*}
The final quantity in the above analysis is greater than or equal to $5 \xi$ if,
\begin{equation*}
    \begin{split}
        \upsilon &= \frac{1}{C_1^{1/\rho}}\left\{\frac{\sqrt{\frac{r^+}{B+1}\left(1 - \frac{r^+}{B+1} \right)}}{\sqrt{B}} 5 \xi - \frac{1}{2} + \mathcal{E}_B + \frac{r^+}{B+1}  \right\}^{1/\rho} \\
        &\leq \frac{1}{C_1^{1/\rho}}\left\{\frac{5\xi}{2\sqrt{B}} + \frac{z_{\alpha/2}}{2\sqrt{B}} + \frac{2}{B} + \mathcal{E}_B \right\}^{1/\rho} \\
        &\leq \frac{1}{C_1^{1/\rho}}\left\{\frac{5\xi + \sqrt{2\log(2/\alpha)}}{2\sqrt{B}} + \frac{2}{B} + \mathcal{E}_B \right\}^{1/\rho}\\
        &= c_+ \mbox{ (say)}.
    \end{split}
\end{equation*}
Thus using \eqref{eq:reiss_100} we can say that, 
\begin{equation}
    \label{eq:c+}
    \mathbb{P}(\widehat \theta_{(r^+)} - \theta_0 \leq c_+) \geq 1 - \exp(-\xi)\quad  \mbox{provided} \quad c_+ < \Delta.  
\end{equation}
We can also put an upper bound in the following way,
\begin{equation*}
    \begin{split}
        & \frac{\sqrt{B}}{\sqrt{\frac{r^-}{B+1}\left(1 - \frac{r^-}{B+1} \right)}} \left( \mathcal{F}(\upsilon) - \frac{r^-}{B+1}\right) \\
        \leq& \frac{\sqrt{B}}{\sqrt{\frac{r^-}{B+1}\left(1 - \frac{r^-}{B+1} \right)}} \left( \mathcal{F}(\upsilon) + ((1/2 + \mathcal{E}_B)- \mathcal{F}(0)) - \frac{r^-}{B+1}\right) \\
        \leq & \frac{\sqrt{B}}{\sqrt{\frac{r^-}{B+1}\left(1 - \frac{r^-}{B+1} \right)}} \left( (\mathcal{F}(\upsilon) - \mathcal{F}(0)) + (1/2 + \mathcal{E}_B) - \frac{r^-}{B+1}\right) \\
        \leq& \frac{\sqrt{B}}{\sqrt{\frac{r^-}{B+1}\left(1 - \frac{r^-}{B+1} \right)}} \left( -C_1 (-\upsilon)^{\rho} + (1/2 + \mathcal{E}_B) - \frac{r^-}{B+1}\right) \mbox{ provided } |\upsilon| < \Delta. 
    \end{split}
\end{equation*}
The final quantity in the above analysis is less than or equal to $-5\xi$ if, 
\begin{equation*}
    \begin{split}
        -\upsilon &= \frac{1}{C_1^{1/\rho}}\left\{\frac{\sqrt{\frac{r^-}{B+1}\left(1 - \frac{r^-}{B+1} \right)}}{\sqrt{B}} 5 \xi + \frac{1}{2} + \mathcal{E}_B - \frac{r^-}{B+1}  \right\}^{1/\rho} \\
        &\leq \frac{1}{C_1^{1/\rho}}\left\{\frac{5\xi}{2\sqrt{B}} + \frac{z_{\alpha/2}}{2\sqrt{B}} + \frac{3}{2B} + \mathcal{E}_B  \right\}^{1/\rho}\\
        &\leq \frac{1}{C_1^{1/\rho}}\left\{\frac{5\xi + \sqrt{2\log(2/\alpha)}}{2\sqrt{B}} + \frac{3}{2B} + \mathcal{E}_B  \right\}^{1/\rho}\\
        &= c_- \mbox{ (say)}.
    \end{split}
\end{equation*}
Thus using \eqref{eq:reiss_100} we can say that, 
\begin{equation}
    \label{eq:c-}
    \mathbb{P}(\widehat \theta_{(r^-)} - \theta_0 \geq -c_-) \geq 1 - \exp(-\xi)\quad  \mbox{provided} \quad c_- < \Delta.  
\end{equation}
Combining \eqref{eq:c+} and \eqref{eq:c-} we can say that the following holds with probability at-least $1 - \delta$ (we substitute $\xi = \log(2/\delta)$),
\begin{equation*}
    \begin{split}
        & \quad\mathrm{Width}(\widehat{\mathrm{CI}}_{N,\alpha}^{\mathtt{GHulC}}) \\
        &= \widehat \theta_{(r^+)} - \widehat \theta_{(r^-)} \\
        &= (\widehat \theta_{(r^+)} - \theta_0) - (\widehat \theta_{(r^-)} - \theta_0) \\
        & \leq c_+ + c_- \\
        &= \frac{1}{C_1^{1/\rho}} \left[\left\{\frac{5\log(2/\delta) + \sqrt{2\log(2/\alpha)}}{2\sqrt{B}} + \frac{2}{B} + \mathcal{E}_B \right\}^{1/\rho} +  \left\{\frac{5\log(2/\delta) + \sqrt{2\log(2/\alpha)}}{2\sqrt{B}} + \frac{3}{2B} + \mathcal{E}_B  \right\}^{1/\rho}\right] \\
        &< \frac{2}{C_1^{1/\rho}} \left\{\frac{5\log(2/\delta) + \sqrt{2\log(2/\alpha)}}{2\sqrt{B}} + \frac{2}{B} + \mathcal{E}_B \right\}^{1/\rho}.
    \end{split}
\end{equation*}
provided the last expression is less than $2\Delta$. 

Now we return to the condition given in the theorem. The sample-size of each of the splits is approximately $N/B$. The distribution function $\Tilde{\mathcal{F}}$ (this is same as $\Tilde F_{N/B}$ mentioned in the theorem) of $r_{N/B}(\widehat \theta_j - \theta_0)$ satisfies the following, 
\[
|\Tilde{\mathcal{F}}(x) - \Tilde{\mathcal{F}}(0)| > \mathscr{C}|x|^{\rho} \quad \mbox{ for } |x| < \Tilde{\Delta},
\]
for some $\mathcal{C}, \Tilde{\Delta} > 0$. Note the following relation between $\mathcal{F}$ and $\Tilde{\mathcal{F}}$, $\mathcal{F}(x) = \mathbb{P}(\widehat \theta_j - \theta_0 \leq x)$ which is same as $\mathbb{P}(r_{N/B}(\widehat \theta_j - \theta_0) \leq r_{N/B} x) = \Tilde{\mathcal{F}}(r_{N/B}x)$. Thus the condition on $\Tilde{\mathcal{F}}$ translates to the following condition on $\mathcal{F}$, 
\[
|\mathcal{F}(x) - \mathcal{F}(0) | > C_1 |x|^{\rho} \quad \mbox{for } |x| < \Delta,
\]
where $C_1 = \mathscr{C}r_{N/B}^{\rho}$ and $\Delta = \Tilde{\Delta}/r_{N/B}$. Using our previous analysis we obtain that the following event holds with probability at-least $1 - \delta$, 
\begin{equation*}
    \begin{split}
        &  \mathrm{Width}(\widehat{\mathrm{CI}}_{N,\alpha}^{\mathtt{GHulC}})  \\
        < & \frac{2}{C_1^{1/\rho}} \left\{\frac{5\log(2/\delta) + \sqrt{2\log(2/\alpha)}}{2\sqrt{B}} + \frac{2}{B} + \mathcal{E}_B \right\}^{1/\rho} \\
        = & \frac{2}{\mathscr{C}^{1/\rho}r_{N/B}} \left\{\frac{5\log(2/\delta) + \sqrt{2\log(2/\alpha)}}{2\sqrt{B}} + \frac{2}{B} + \mathcal{E}_B \right\}^{1/\rho},
    \end{split}
\end{equation*}
provided the last expression is less than $2\Delta = 2 \Tilde{\Delta}/r_{N/B}$. This completes the proof of the result.

\section{Proof of Theorem~\ref{thm:width_ghulc}}
\label{appendix:width_ghulc}
We state and prove a stronger version of the result stated in \Cref{thm:width_ghulc}.
\begin{theorem}
\label{thm:width_ghulc_1}
Suppose $ \widehat{\mathrm{CI}}_{N,\alpha}^{\mathtt{GHulC}}$ is the confidence interval returned by GHulC (\Cref{alg:ghulc}) using approximately equal $B$ splits. Let $\widehat \theta^m$ be an estimator of $\theta_0$ based on a sample of size $m$ and let $r_m$ be its rate of convergence i.e., 
\[
r_m(\widehat \theta^m - \theta_0) = O_p(1) \quad \mbox{ as } m \rightarrow \infty. 
\]
We assume the following regarding the distribution function $\Tilde F_{N/B}(\cdot)$ of $r_{N/B}(\widehat \theta_j^{N/B} - \theta_0)$, 
\[
\left|\Tilde F_{N/B}(t) - \Tilde F_{N/B}(0) - M_N|t|^{\rho}\mathrm{sgn}(t) \right| \leq C_N |t|^{\rho+\Delta} \quad \mbox{for } |t|< \eta,
\]
where $0< M_N,C_N,\Delta,\eta,\rho< \infty$. Let $|\Tilde F_{N/B}(1/2) - (1/2)| \leq \mathcal{E}_B$. We introduce the following notations,
\begin{equation*}
    \begin{split}
        A_{B,\xi} &= \frac{z_{\alpha/2}}{2\sqrt{B}} + \frac{2}{B} + \sqrt{\frac{\xi}{2(B+1)}} + \left(\frac{z_{\alpha/2}}{\sqrt{B}}+ \frac{4}{B} \right) \frac{\xi}{B+1}, \\
        \zeta_{\rho,N} &= \frac{M_N}{2} \min\left\{\eta^{\rho}, \left(\frac{M_N}{2C_N} \right)^{1/\delta} \right\}, \\
        D_{\xi,n} &= \left\{ \frac{11}{4}\sqrt{\frac{\log(n+2)}{n}} + \frac{\xi + (13/2)}{\sqrt{n\log(n+2)}} \right\}\max\left\{\sqrt{ \frac{5\xi}{2\sqrt{n}}} , \sqrt{\frac{\log(n+2)}{n}}\right\} , \\
        \gamma_{\xi,B,N} &= D_{\xi,B} + \mathcal{E}_B + C_N \left(\frac{2(A_{B,\xi}+\mathcal{E}_B)}{M_N} \right)^{1+\delta}, \\
         Q_B &= \left(\frac{c_{B,\alpha} + \lceil B/2 \rceil+1}{B} - \frac{1}{2}\right) - \left(\frac{1}{B}\sum_{i=1}^B \mathbf{1}\left\{U_i \le \frac{c_{B,\alpha} + \lceil B/2 \rceil+1}{B}\right\} - \frac{c_{B,\alpha} + \lceil B/2 \rceil+1}{B}\right), 
    \end{split}
\end{equation*}
where $U_1,\cdots,U_B$ are iid uniform(0,1) random variables and $\delta = \Delta/\rho$ and
\[
\mathscr{G}(a,b) := |a|^{1/\rho}\mathrm{sgn}(a) - |a - b|^{1/\rho}\mathrm{sgn}(a-b).
\]
Then the following event holds with probability greater than or equal to $(1 - 16\exp(-\xi))\boldsymbol{1}\{A_{B,\xi} + \mathcal{E}_B < \zeta_{\rho,N} \}$, 
\begin{equation*}
    \begin{split}
  &\left|(B)^{1/2\rho}r_{N/B}M_N^{1/\rho}\mathrm{Width}(\widehat{\mathrm{CI}}_{N,\alpha}^{\mathtt{GHulC}}) -  (B)^{1/2\rho}\mathscr{G}(Q_B, z_{\alpha/2}/\sqrt{B})  \right|    \\
  \leq&   \max\{4, 2C_{\rho, \xi, \alpha}\}(\sqrt{B}\gamma_{\xi,B,N})^{\min\{1, 1/\rho\}} + \max\{2, C_{\rho, \xi, \alpha}\}\left( \frac{z_{\alpha/2}^3}{5B} + \frac{4.2}{\sqrt{B}}+ \sqrt{\frac{3(3+\sqrt{B}z_{\alpha/2})\xi}{B}}\right)^{\min\{1, 1/\rho\}},
    \end{split}
\end{equation*}

where $C_{\rho, \xi, \alpha}$ is a constant (see \eqref{eq:crhoxi}). We have the following distributional convergence as $N/B,B \rightarrow \infty$,
\begin{equation*}
    \begin{split}
       (B)^{1/2\rho}r_{N/B}M_N^{1/\rho}\mathrm{Width}(\widehat{\mathrm{CI}}_{N,\alpha}^{\mathtt{GHulC}}) 
    \stackrel{d}{\xrightarrow{}} \mathscr{G}(W, z_{\alpha/2}),
    \end{split}
\end{equation*}
where $W \sim N(z_{\alpha/2}/2,1/4)$.
\end{theorem}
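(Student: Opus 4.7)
}
The strategy is to reduce the analysis of $\mathrm{Width}(\widehat{\mathrm{CI}}_{N,\alpha}^{\mathtt{G-HulC}})$ to a problem essentially covered by Theorem~\ref{thm:fin_sample_non_bahadur} applied to a rescaled sample, with the additional complication of the median bias $\mathcal{E}_B$. Because the splits are disjoint, the base estimators $\widehat{\theta}_1,\ldots,\widehat{\theta}_B$ are independent with a common distribution, and G-HulC returns $[\widehat{\theta}_{(\lfloor B/2\rfloor - c_{B,\alpha}^*)},\widehat{\theta}_{(\lceil B/2\rceil + c_{B,\alpha}^* + 1)}]$. Setting $Y_j := r_{N/B}(\widehat{\theta}_j - \theta_0)$, the $Y_j$'s are i.i.d.\ with distribution $\Tilde F_{N/B}$, and
\[
r_{N/B}\cdot\mathrm{Width}(\widehat{\mathrm{CI}}_{N,\alpha}^{\mathtt{G-HulC}}) = Y_{(\lceil B/2\rceil + c_{B,\alpha}^* + 1)} - Y_{(\lfloor B/2\rfloor - c_{B,\alpha}^*)}.
\]
Since $|c_{B,\alpha}^* - c_{B,\alpha}|\le 1$, the randomization in Step 4 of Algorithm~\ref{alg:ghulc} only perturbs indices by at most one, which contributes lower-order terms that can be absorbed into $D_{\xi,B}$; I will therefore proceed as if $c_{B,\alpha}^*=c_{B,\alpha}$ and indicate at the end why the randomization does not change the conclusion.

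Next, mirroring the proof of Theorem~\ref{thm:fin_sample_non_bahadur}, I apply the monotone transformation $\varphi(t) = |t|^\rho\mathrm{sgn}(t)$, setting $T_j = \varphi(Y_j)$. The distribution function $H(t) := \Tilde F_{N/B}(|t|^{1/\rho}\mathrm{sgn}(t))$ of $T_j$ satisfies
\[
|H(t) - H(0) - M_N t| \le C_N|t|^{1+\delta},\qquad |t| < \eta^\rho,
\]
with $\delta = \Delta/\rho$. Unlike in Theorem~\ref{thm:fin_sample_non_bahadur}, $H(0)=\Tilde F_{N/B}(0)$ is not necessarily $1/2$, but lies within $\mathcal{E}_B$ of $1/2$ by assumption. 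I will (i) establish concentration of $T_{(\lceil B/2\rceil + c_{B,\alpha}+1)}$ and $T_{(\lfloor B/2\rfloor - c_{B,\alpha})}$ around their respective uniform-based linearizations via the version of Reiss' finite-sample Bahadur representation used in the proof of Theorem~\ref{thm:fin_sample_non_bahadur}, substituting $\Tilde F_{N/B}(0)$ for $1/2$ and paying an extra $\mathcal{E}_B$-sized additive term for this shift; and (ii) use the local Lipschitz bound on $H$ to upgrade these concentration results into bounds of the form $|M_N T_{(r^\pm)} - \widetilde Q_{\pm}| \le \gamma_{\xi,B,N}$, where $\widetilde Q_\pm$ are sample-proportion linearizations. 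Inverting $\varphi$ via $\varphi^{-1}(y)=|y|^{1/\rho}\mathrm{sgn}(y)$ recovers $Y_{(r^\pm)}$ and hence the width.

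The final step is to pass from the sample-proportion linearizations $\widetilde Q_\pm$ to the function $\mathscr{G}(Q_B, z_{\alpha/2}/\sqrt{B})$ using the Hölder continuity of $|y|^{1/\rho}\mathrm{sgn}(y)$ exactly as in the proof of Theorem~\ref{thm:asym_result_irreg_case_1}: when $\rho\ge 1$ the function is globally Hölder with constant $2$, and when $0<\rho<1$ it is Lipschitz on a high-probability compact set (yielding the constant $C_{\rho,\xi,\alpha}$). Combining this with a Chernoff bound on $\sqrt{B}(Q_{1,\mathbf{Y},B}-Q_{2,\mathbf{Y},B}) - z_{\alpha/2}$ (the analogue of~\eqref{eq:chernoff_conc}) and the bound from Theorem~\ref{thm:assym_of_cn} part~\ref{conclu:c_{n,alpha}_bounds} on $c_{B,\alpha}$ produces the stated finite-sample inequality. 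For the distributional convergence, observe that $\sqrt{B}Q_B \convd N(z_{\alpha/2}/2, 1/4)$ by Lemma~\ref{lem:joint_distribution} applied at sample size $B$, and the finite-sample inequality forces the error $B^{1/(2\rho)}|r_{N/B}M_N^{1/\rho}\mathrm{Width}(\widehat{\mathrm{CI}}_{N,\alpha}^{\mathtt{G-HulC}}) - \mathscr{G}(Q_B,z_{\alpha/2}/\sqrt{B})|$ to vanish in probability as $B,N/B\to\infty$, because $\gamma_{\xi,B,N}\sqrt{B}\to 0$ once $\mathcal{E}_B\to 0$, $M_N$ stays bounded below, and $\xi=2\log B$ is chosen; continuous mapping then gives the limit $\mathscr{G}(W,z_{\alpha/2})$.

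\textbf{Main obstacle.} The principal difficulty is bookkeeping around the median bias: $H(0)\ne 1/2$, so the Bahadur-type linearizations have to be recentered at $\Tilde F_{N/B}(0)$ rather than at $1/2$, and every inequality must carry an additional $\mathcal{E}_B$ term without blowing up the Hölder-step constants. Ensuring that the accumulated $\mathcal{E}_B$ contributions combine into the single term $\gamma_{\xi,B,N}$ defined in the statement, and that the indicator $\mathbf{1}\{A_{B,\xi}+\mathcal{E}_B < \zeta_{\rho,N}\}$ continues to guarantee that the order statistics fall inside the window where the local expansion for $\Tilde F_{N/B}$ is valid, is the delicate part. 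A secondary obstacle is verifying that the randomization in Step~4 of Algorithm~\ref{alg:ghulc}, together with the unequal-split sizes, perturbs the analysis only at the level of $B^{-1/2}$, which is dominated by $\gamma_{\xi,B,N}$ in the ultimate bound.
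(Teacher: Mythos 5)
Your plan follows essentially the same route as the paper's proof: transform via $\varphi(t)=|t|^{\rho}\mathrm{sgn}(t)$ so that the transformed distribution satisfies a linear expansion with exponent $1+\delta$, apply the Reiss-type finite-sample Bahadur representation with the median-bias recentring $|H(0)-1/2|\le\mathcal{E}_B$ absorbed into $\gamma_{\xi,B,N}$, invert via the H\"older continuity of $|y|^{1/\rho}\mathrm{sgn}(y)$ (global constant $2$ for $\rho\ge1$, local Lipschitz constant $C_{\rho,\xi,\alpha}$ for $0<\rho<1$), and finish with a Chernoff bound on $\sqrt{B}(Q_{1,B}-Q_{2,B})-z_{\alpha/2}$ and the continuous mapping theorem. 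The only place you go beyond the paper is your explicit remark on the randomized index $c_{B,\alpha}^{*}$, which the paper's proof silently replaces by $c_{B,\alpha}$.
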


\begin{proof}[Proof of \Cref{thm:width_ghulc_1}]
We know the following regarding uniform order statistics ($U_{k:n}$) from the proof of \Cref{thm:assym_of_cn},
\begin{equation}
\begin{split}
\label{eq:conc_unif_reiss}
   & \mathbb{P}(\Psi(U_{k:n}, k/(n+1)) \geq \xi) \leq 2 \exp(-(n+1)\xi) \\
   \implies & \mathbb{P}((n+1)\Psi(U_{k:n}, k/(n+1)) \geq \xi) \leq 2 \exp(-\xi) \\
   \implies & \mathbb{P}\left\{\left|U_{k:n} - \frac{k}{n+1} \right| \leq \sqrt{\frac{2k}{n+1}\left( 1 - \frac{k}{n+1}\right) \frac{\xi}{n+1}} + \left| 1 - \frac{2k}{n+1}\right|\frac{\xi}{n+1} \right\} \geq 1 - 2\exp(-\xi).
\end{split}
\end{equation}
We denote by $\widehat \theta_j^{N/B}$ ($1 \leq j \leq B$) the estimators of $\theta_0$ based on the equal sized splits of size roughly equal to $N/B$. It is given that $r_{N/B}$ is the rate of convergence of each of the above estimators i.e.\ $r_{N/B}(\widehat \theta_j^{N/B} - \theta_0) = O_P(1)$. The following is known for the distribution function $\Tilde F_{N/B}(\cdot)$ of $r_{N/B}(\widehat \theta_j^{N/B} - \theta_0)$, 
\[
\left|\Tilde F_{N/B}(t) - \Tilde F_{N/B}(0) - M_N|t|^{\rho} \mathrm{sgn}(t) \right| \leq C_N |t|^{\rho+\Delta} \quad \mbox{for } |t|< \eta.
\]
The above condition translates to the following condition for the distribution function $F_{N/B}(\cdot)$ of $\widehat \theta_j^{N/B}$, 
\[
\left| F_{N/B}(\theta_0 + s) -  F_{N/B}(\theta_0) - M_Nr_{N/B}^{\rho}|s|^{\rho} \mathrm{sgn}(s) \right| \leq C_N r_{N/B}^{\rho + \Delta}|s|^{\rho+\Delta} \quad \mbox{for } r_{N/B}|s|< \eta.
\]
We define the distribution function $H_{N/B}(\cdot)$ as follows, $H_{N/B}(t) = F_{N/B}(\theta_0 + |t|^{1/\rho} \mathrm{sgn}(t))$. The condition for $F_{N/B}(\cdot)$ translates to the following condition for $H_{N/B}(\cdot)$,
\begin{equation}
\label{eq:H_result_ghulc}
\left| H_{N/B}(t) -  H_{N/B}(0) - M_Nr_{N/B}^{\rho}t \right| \leq C_N r_{N/B}^{\rho(1 + \delta)}|t|^{1 + \delta} \quad \mbox{for } r_{N/B}^{\rho}|t|< \eta^{\rho},
\end{equation}
where $0< \delta = \Delta/\rho < \infty$. From here on we shall denote $\widehat \theta_j^{N/B}$ by $\widehat \theta_j$ unless otherwise mentioned. Since $\widehat \theta_j \sim F_{N/B}$ we obtain that $H_{N/B}^{-1}(F_{N/B}(\widehat \theta_j)) = |\widehat \theta_j - \theta_0|^{\rho} \mathrm{sgn}(\widehat \theta_j - \theta_0) = T_j \mbox{ (say)} \sim H_{N/B}$. We prove some concentration bounds on uniform order-statistics below which will be used in our proof later on. We note that $H_{N/B}(T_j) \sim U(0,1)$ for $1 \leq j \leq B$. Using \eqref{eq:conc_unif_reiss} we obtain the following with probability greater than or equal to $1 - 2 \exp(-\xi)$,
\begin{equation*}
    \begin{split}
       &\left|H_{N/B}(T_{( k_{B,\alpha}+1)}) - H_{N/B}(0) \right| \\
       \leq& \left|H_{N/B}(T_{( k_{B,\alpha}+1})) - \frac{ k_{B,\alpha}+1}{B+1} \right| + \left|\frac{ k_{B,\alpha}+1}{B+1} - H_{N/B}(0) \right|  \\
        \leq & \sqrt{\frac{2( k_{B,\alpha}+1)}{B+1}\left( 1 - \frac{( k_{B,\alpha}+1)}{B+1}\right) \frac{\xi}{B+1}} + \left| 1 - \frac{2( k_{B,\alpha}+1)}{B+1}\right|\frac{\xi}{B+1} + \left|\frac{ k_{B,\alpha}+1}{B+1} - \frac{1}{2} \right| + \left|\frac{1}{2} - H_{N/B}(0) \right| \\
        \leq & \sqrt{\frac{2( k_{B,\alpha}+1)}{B+1}\left( 1 - \frac{( k_{B,\alpha}+1)}{B+1}\right) \frac{\xi}{B+1}} + \left| 1 - \frac{2( k_{B,\alpha}+1)}{B+1}\right|\frac{\xi}{B+1} + \frac{z_{\alpha/2}}{2\sqrt{B}} + \frac{2}{B} + \mathcal{E}_B \\
        \leq & \frac{z_{\alpha/2}}{2\sqrt{B}} + \frac{2}{B} + \sqrt{\frac{\xi}{2(B+1)}} + \left(\frac{z_{\alpha/2}}{\sqrt{B}}+ \frac{4}{B} \right) \frac{\xi}{B+1} + \mathcal{E}_B \\
       =& A_{B,\xi} \mbox{ (say) } + \mathcal{E}_B,
    \end{split}
\end{equation*}
where $\mathcal{E}_B$ is the median bias of the estimator $\widehat \theta_j^{N/B}$ based on a sub-sample of size approximately $N/(B)$. Similarly we can show that with probability greater than or equal to $1 - 2\exp(-\xi)$ the following holds,
\begin{equation*}
    \begin{split}
       &\left|H_{N/B}(T_{(B -  k_{B,\alpha})}) - H_{N/B}(0) \right| \\ 
       \leq & \frac{z_{\alpha/2}}{2\sqrt{B}} + \frac{3}{2B} + \sqrt{\frac{\xi}{2(B+1)}} + \left(\frac{z_{\alpha/2}}{\sqrt{B}}+ \frac{3}{B} \right) \frac{\xi}{B+1} + \mathcal{E}_B \\
       =& B_{l,\xi} \mbox{ (say) } + \mathcal{E}_B \\
       \leq & A_{B,\xi} \mbox{ (say) } + \mathcal{E}_B. 
    \end{split}
\end{equation*}
Following the steps in the proof of \Cref{lem:order_stat_behav} we obtain the following bound,
\[
\mathbb{P}(|T_{( k_{B,\alpha}+1)}| \leq \epsilon) \geq \mathbb{P}\left(\left|H_{N/B}(T_{( k_{B,\alpha}+1)}) - H_{N/B}(0) \right| \leq \frac{M_Nr_{N/B}^{\rho}\epsilon}{2} \right) \mbox{  } \mbox{for } \epsilon < \frac{1}{r_{N/B}^{\rho}}\min \left\{\eta^{\rho}, \left(\frac{M_N}{2C_N}\right)^{1/\delta} \right\}.
\]
We set $\epsilon = (2(A_{B,\xi}+ \mathcal{E}_B))/(M_Nr_{N/B}^{\rho})$ in the above inequality. The earlier bound on $\epsilon$ now changes to a bound on $A_{B,\xi} + \mathcal{E}_B$. We thus have the following concentration inequality for the order statistic $T_{( k_{B,\alpha}+1)}$,
\begin{equation*}
    \begin{split}
   \mathbb{P}\left(|T_{( k_{B,\alpha}+1)}| \leq \frac{2}{M_N r_{N/B}^{\rho}}\left(A_{B,\xi} + \mathcal{E}_B \right) \right) \geq (1 - 2 \exp(-\xi))  \boldsymbol 1 \left\{A_{B,\xi} + \mathcal{E}_B < \zeta_{\rho,N}  \right\}     
    \end{split}
\end{equation*}

where,
\[
\zeta_{\rho,N} = \frac{M_N}{2} \min\left\{\eta^{\rho}, \left(\frac{M_N}{2C_N} \right)^{1/\delta} \right\}.
\]
Similarly we have a concentration inequality for the order statistic $T_{(B -  k_{B,\alpha})}$, 
\begin{equation*}
  \mathbb{P}\left(|T_{(B -  k_{B,\alpha})}| \leq \frac{2}{M_N r_{N/B}^{\rho}}\left(A_{B,\xi} + \mathcal{E}_B \right) \right) \geq (1 - 2 \exp(-\xi))  \boldsymbol 1 \left\{A_{B,\xi} + \mathcal{E}_B < \zeta_{\rho,N}  \right\}       
\end{equation*}
The above statements also imply that with probability greater than or equal to \\ $(1 - 2 \exp(-\xi))  \boldsymbol 1 \left\{A_{B,\xi} + \mathcal{E}_B < \zeta_{\rho,N}  \right\}$ both $|T_{( k_{B,\alpha}+1)}|$ and $|T_{(B -  k_{B,\alpha})}|$ are less than $(2(A_{B,\xi} + \mathcal{E}_B))/(M_N r_{N/B}^{\rho})$ which is less than $(2\zeta_{\rho,N})/(M_N r_{N/B}^{\rho}) < \eta^{\rho}/r_{N/B}^{\rho} $. Using \eqref{eq:H_result_ghulc} we can say that with probability greater than or equal to $(1 - 2 \exp(-\xi))  \boldsymbol 1 \left\{A_{B,\xi} + \mathcal{E}_B < \zeta_{\rho,N}  \right\}$ each the following two events hold,
\begin{equation}
\label{ghulc_trinq_1}
    \begin{split}
        & \left|H_{N/B}(T_{( k_{B,\alpha}+1)}) - H_{N/B}(0) - M_Nr_{N/B}^{\rho} T_{( k_{B,\alpha}+1)} \right| \leq C_Nr_{N/B}^{\rho(1+\delta)}T_{( k_{B,\alpha}+1)}^{1+\delta} \leq C_N \left(\frac{2(A_{B,\xi}+\mathcal{E}_B)}{M_N} \right)^{1+\delta}, \\
        & \left|H_{N/B}(T_{(B -  k_{B,\alpha})}) - H_{N/B}(0) - M_Nr_{N/B}^{\rho} T_{(B -  k_{B,\alpha})} \right| \leq C_Nr_{N/B}^{\rho(1+\delta)}T_{( k_{B,\alpha}+1)}^{1+\delta} \leq C_N \left(\frac{2(A_{B,\xi}+\mathcal{E}_B)}{M_N} \right)^{1+\delta}. 
    \end{split}
\end{equation}
We shall now prove a result regarding uniform order statistics ($U_{k:n}$) based on the derivations in \cite{reiss2012approximate}. Let $V$ and $V_n$ denote the CDF of Uniform(0,1) and empirical CDF of $U_{1:n},\cdots,U_{n:n}$ respectively. We know from Lemma 3.1.1 of \cite{reiss2012approximate} that, 
\begin{equation}
\label{eq:reiss_3}
\begin{split}
    & \mathbb{P}\left(\frac{\sqrt{n}}{\sqrt{\frac{r}{n+1}\left( 1- \frac{r}{n+1} \right)}}\left| U_{r:n} - \frac{r}{n+1} \right| \leq 5\xi \right) \geq 1 - 2\exp(-\xi) \\
 \implies &  \mathbb{P}\left(\left|V_n^{-1}(q) - q \right| \leq \sqrt{\frac{r(q)}{n+1}\left( 1- \frac{r(q)}{n+1} \right)} \frac{5\xi}{\sqrt{n}} \mbox{ for some } q \in (0,1) \right) \geq 1 - 2\exp(-\xi), 
\end{split}
\end{equation}
where $r(q) = nq$ if $nq$ is integer and $r(q) = \lfloor nq \rfloor + 1$ otherwise. We also know from the proof of Lemma 6.3.2 of \cite{reiss2012approximate} that for given $\epsilon,\rho>0$ the following holds,
\begin{equation*}
    \mathbb{P}\left\{\sup_{I \in \mathcal{I}} \frac{\sqrt{n}|Q_n(I) - Q_0(I)|}{\max\{ \sigma(I),\rho/\sqrt{n}\}} \geq \epsilon \right\} \leq (n+2)^2 \exp\left[-\epsilon\rho + \frac{3}{4}\rho^2 + \frac{13}{2} \right] .
\end{equation*}
We set $\rho = \sqrt{\log(n+2)}$ and $\epsilon = (11/4)\sqrt{\log(n+2)} + (\xi+13/2)/\sqrt{log(n+2)}$ in the above statement to obtain the following, 
\begin{equation}
\label{eq:reiss_6}
    \mathbb{P}\left\{\sup_{I \in \mathcal{I}} \frac{\sqrt{n}|Q_n(I) - Q_0(I)|}{\max\{ \sigma(I),\sqrt{\log(n+2)}/\sqrt{n}\}} \leq \frac{11}{4}\sqrt{\log(n+2)} + \frac{\xi + (13/2)}{\sqrt{\log(n+2)}} \right\} \geq 1 - \exp(-\xi). 
\end{equation}
We use \eqref{eq:reiss_3} and \eqref{eq:reiss_6} and follow the same steps as in the proof of the Theorem 6.3.1 of \cite{reiss2012approximate}. We obtain that the following event holds with probability greater than or equal to $1 - 3\exp(-\xi)$ for some $q \in (0,1)$, 
\begin{equation*}
\begin{split}
    & |V_n^{-1}(q) - q + V_n(q) - q| \\
    \leq & \left\{ \frac{11}{4}\sqrt{\frac{\log(n+2)}{n}} + \frac{\xi + (13/2)}{\sqrt{n\log(n+2)}} \right\}\max\left\{\sqrt{\sqrt{\frac{r(q)}{n+1}\left( 1- \frac{r(q)}{n+1} \right)} \frac{5\xi}{\sqrt{n}}} , \sqrt{\frac{\log(n+2)}{n}}\right\} .
    \end{split}
\end{equation*}
In particular for $q = k/n$ we can say that the following event holds with probability greater than or equal to $1 - 3 \exp(-\xi)$,
\begin{equation*}
\begin{split}
   & \left|U_{k:n} - \frac{1}{2} - \left( \frac{k}{n} - \frac{1}{2} \right) + \frac{1}{n}\sum_{i = 1}^n \boldsymbol 1 \left\{U_{i:n} \leq \frac{k}{n}\right\} - \frac{k}{n}\right| \\
    \leq &\left\{ \frac{11}{4}\sqrt{\frac{\log(n+2)}{n}} + \frac{\xi + (13/2)}{\sqrt{n\log(n+2)}} \right\}\max\left\{\sqrt{\sqrt{\frac{k(n+1 - k)}{n}} \frac{5\xi}{n+1}} , \sqrt{\frac{\log(n+2)}{n}}\right\} \\
    \leq & \left\{ \frac{11}{4}\sqrt{\frac{\log(n+2)}{n}} + \frac{\xi + (13/2)}{\sqrt{n\log(n+2)}} \right\}\max\left\{\sqrt{ \frac{5\xi}{2\sqrt{n}}} , \sqrt{\frac{\log(n+2)}{n}}\right\} = D_{\xi,n} \mbox{ (say) }. 
\end{split}
\end{equation*}
Using the above result we obtain that the following two events hold, each with probability greater than or equal to $1 - 3 \exp(-\xi)$, 
\begin{equation}
\label{eq:ghulc_tring_2}
    \begin{split}
        &\left|H_{N/B}(T_{( k_{B,\alpha}+1)}) - H_{N/B}(0) - Q_{1,B} \right| \leq D_{\xi,B} + \mathcal{E}_B, \\
       &  \left|H_{N/B}(T_{(B -  k_{B,\alpha})}) - H_{N/B}(0) - Q_{2,B} \right| \leq D_{\xi,B} + \mathcal{E}_B,
    \end{split}
\end{equation}
where,
\begin{equation*}
    \begin{split}
        Q_{1,B} &= \left(\frac{ k_{B,\alpha}+1}{B} - \frac{1}{2} \right) - \frac{1}{B}\sum_{i = 1}^{B} \boldsymbol{1} \left\{ F_{N/B}(\widehat \theta_j) \leq \frac{ k_{B,\alpha}+1}{B}\right\} + \frac{ k_{B,\alpha}+1}{B}, \\
        Q_{2,B} &= \left(\frac{B -  k_{B,\alpha}}{B} - \frac{1}{2} \right) - \frac{1}{B}\sum_{i = 1}^{B} \boldsymbol{1} \left\{ F_{N/B}(\widehat \theta_j) \leq \frac{B -  k_{B,\alpha}}{B}\right\} + \frac{B -  k_{B,\alpha}}{B}.
    \end{split}
\end{equation*}
Note that $Q_{1,B}$ is same as $Q_B$ defined in \Cref{thm:width_ghulc}.  We note that in the above deduction we used the fact that $|H_{N/B}(0) - 1/2| = | F_{N/B}(\theta_0) - 1/2| = \mathcal{E}_B$. Combining \eqref{ghulc_trinq_1} and \eqref{eq:ghulc_tring_2} using triangle inequality we obtain that the following two events hold, each with probability greater than or equal to $(1 - 5\exp(-\xi))\boldsymbol{1}\{A_{B,\xi} + \mathcal{E}_B < \zeta_{\rho,N} \}$, 
\begin{equation*}
    \begin{split}
      & \left|M_Nr_{N/B}^{\rho}T_{( k_{B,\alpha}+1)} -  Q_{1,B}  \right| \leq \gamma_{\xi,B,N}, \\
      & \left|M_Nr_{N/B}^{\rho}T_{(B -  k_{B,\alpha})} -  Q_{2,B}  \right| \leq \gamma_{\xi,B,N},
    \end{split}
\end{equation*}
where, 
\[
\gamma_{\xi,B,N} = D_{\xi,B} + \mathcal{E}_B + C_N \left(\frac{2(A_{B,\xi}+\mathcal{E}_B)}{M_N} \right)^{1+\delta}. 
\]
Combining the above two equations and using the monotonicity of the transformation of $T_j$ from $\widehat \theta_j$ we conclude that the following event holds with probability greater than or equal to $(1 - 10\exp(-\xi))\boldsymbol{1}\{A_{B,\xi} + \mathcal{E}_B < \zeta_{\rho,N} \}$, 
\begin{equation*}
    \begin{split}
       & \frac{1}{M_N^{1/\rho}r_{N/B}} \{|Q_{1,B} - \gamma_{\xi,B,N}|^{1/\rho} \mathrm{sgn}(Q_{1,B} - \gamma_{\xi,B,N}) - |Q_{2,B} + \gamma_{\xi,B,N}|^{1/\rho} \mathrm{sgn}(Q_{2,B} +\gamma_{\xi,B,N})  \} \\
      \leq & \widehat \theta_{ k_{B,\alpha} +1} - \widehat \theta_{B -  k_{B,\alpha}} = \mathrm{Width}\left(\widehat{\mathrm{CI}}_{N,\alpha}^{\mathtt{GHulC}} \right) \\
      \leq & \frac{1}{M_N^{1/\rho}r_{N/B}} \{|Q_{1,B} + \gamma_{\xi,B,N}|^{1/\rho} \mathrm{sgn}(Q_{1,B} + \gamma_{\xi,B,N}) - |Q_{2,B} - \gamma_{\xi,B,N}|^{1/\rho} \mathrm{sgn}(Q_{2,B} -\gamma_{\xi,B,N})  \}.
    \end{split}
\end{equation*}
For the remaining part of the proof we follow exactly the same steps as in the proof of \Cref{thm:asym_result_irreg_case}. We update the Chernoff bounds used in that proof. It can shown that each of the three events below hold with probability greater than or equal to $(1 - 2\exp(-\xi))$, 
\begin{equation*}
    \begin{split}
       & |\sqrt{B}(Q_{1,B}-Q_{2,B})-z_{\alpha/2}| \leq \frac{z_{\alpha/2}^3}{5B} + \frac{4.2}{\sqrt{B}}+ \sqrt{\frac{3(3+\sqrt{B}z_{\alpha/2})\xi}{B}}, \\
       & |\sqrt{B}Q_{1,B}-(z_{\alpha/2}/2)| > \frac{z_{\alpha/2}^3}{10B} + \frac{2}{\sqrt{B}} + \sqrt{\frac{3(B+\sqrt{B}z_{\alpha/2}+4)\xi}{2B}}, \\
       & |\sqrt{B}Q_{2,B}+(z_{\alpha/2}/2)| > \frac{z_{\alpha/2}^3}{10B} + \frac{2}{\sqrt{B}} + \sqrt{\frac{3(B-\sqrt{B}z_{\alpha/2}+(z_{\alpha/2}^3/(5\sqrt{B}))+3.2)\xi}{2B}} .
    \end{split}
\end{equation*}
Analogous to the proof of \Cref{thm:asym_result_irreg_case} we define the following quantities using the above Chernoff bounds, 
\begin{equation*}
    \begin{split}
        E_{1,B,\xi} &=\frac{z_{\alpha/2}^3}{10B} + \frac{2}{\sqrt{B}} + \sqrt{\frac{3(B+\sqrt{B}z_{\alpha/2}+4)\xi}{2B}}, \\
        E_{2,B,\xi} &= \frac{z_{\alpha/2}^3}{10B} + \frac{2}{\sqrt{B}} + \sqrt{\frac{3(B-\sqrt{B}z_{\alpha/2}+(z_{\alpha/2}^3/(5\sqrt{B}))+3.2)\xi}{2B}}, \\
        E_{3,B,\xi} &=  \sqrt{B}\gamma_{\xi,B,N},\\
        E_{1,\xi} & = \sup_{l \in \mathbb{N}} E_{1,B,\xi},\\
        E_{2,\xi} &=  \sup_{l \in \mathbb{N}} E_{2,B,\xi},\\
        E_{3,\xi} &= \sup_{l \in \mathbb{N}} E_{3,B,\xi} .
    \end{split}
\end{equation*}
We also define, 
\begin{equation}
\label{eq:crhoxi}
C_{\rho, \xi, \alpha} = (2/\rho)E_{0,\xi}^{(1/\rho)-1} \mbox{  where } E_{0,\xi} = (z_{\alpha/2}/2) + \max\{E_{1,\xi}, E_{2,\xi} \}+ E_{3,\xi},    
\end{equation}
for $0< \rho < 1$. On going through all the steps in the proof of \Cref{thm:asym_result_irreg_case} as they are, we obtain that for $\rho \geq 1$, the following event holds with probability greater than or equal to $(1 - 12\exp(-\xi))\boldsymbol{1}\{A_{B,\xi} + \mathcal{E}_B < \zeta_{\rho,N} \}$, 
\begin{equation*}
    \begin{split}
  &\left|(B)^{1/(2\rho)}\mathrm{Width}(\widehat{\mathrm{CI}}_{N,\alpha}^{\mathtt{GHulC}}) -  \frac{(B)^{1/2\rho}}{M_N^{1/\rho}r_{N/B}}\left\{|Q_{1,B}|^{1/\rho} \mathrm{sgn}(Q_{1,B})- \left|Q_{1,B}-\frac{z_{\alpha/2}}{\sqrt{B}}\right|^{1/\rho} \mathrm{sgn}\left(Q_{1,B}-\frac{z_{\alpha/2}}{\sqrt{B}}\right) \right\} \right|    \\
  \leq & \frac{4}{M_N^{1/\rho}r_{N/B}}(\sqrt{B}\gamma_{\xi,B,N})^{1/\rho} + \frac{2}{M_N^{1/\rho}r_{N/B}}\left( \frac{z_{\alpha/2}^3}{5B} + \frac{4.2}{\sqrt{B}}+ \sqrt{\frac{3(3+\sqrt{B}z_{\alpha/2})\xi}{B}}\right)^{1/\rho},
    \end{split}
\end{equation*}
On the other hand for $0< \rho <1$ we get that the following event holds with probability greater than or equal to $(1 - 16\exp(-\xi))\boldsymbol{1}\{A_{B,\xi} + \mathcal{E}_B < \zeta_{\rho,N} \}$,
\begin{equation*}
    \begin{split}
  &\left|(B)^{1/(2\rho)}\mathrm{Width}(\widehat{\mathrm{CI}}_{N,\alpha}^{\mathtt{GHulC}}) -  \frac{(B)^{1/2\rho}}{M_N^{1/\rho}r_{N/B}}\left\{|Q_{1,B}|^{1/\rho} \mathrm{sgn}(Q_{1,B})- \left|Q_{1,B}-\frac{z_{\alpha/2}}{\sqrt{B}}\right|^{1/\rho} \mathrm{sgn}\left(Q_{1,B}-\frac{z_{\alpha/2}}{\sqrt{B}}\right) \right\} \right|    \\
  \leq & \frac{2C_{\rho, \xi, \alpha}}{M_N^{1/\rho}r_{N/B}}\sqrt{B}\gamma_{\xi,B,N}^{1/\rho} + \frac{C_{\rho, \xi, \alpha}}{M_N^{1/\rho}r_{N/B}}\left( \frac{z_{\alpha/2}^3}{5B} + \frac{4.2}{\sqrt{B}}+ \sqrt{\frac{3(3+\sqrt{B}z_{\alpha/2})\xi}{B}}\right).
    \end{split}
\end{equation*}
As before we have the following distributional convergence as $l \rightarrow \infty$,
\begin{equation*}
    \begin{split}
       &\frac{(B)^{1/2\rho}}{M_N^{1/\rho}}\left\{|Q_{1,B}|^{1/\rho} \mathrm{sgn}(Q_{1,B})- \left|Q_{1,B}-\frac{z_{\alpha/2}}{\sqrt{B}}\right|^{1/\rho} \mathrm{sgn}\left(Q_{1,B}-\frac{z_{\alpha/2}}{\sqrt{B}}\right) \right\}   \\
    \stackrel{d}{\xrightarrow{}} &\frac{1}{M_N^{1/\rho}}\{|W_1|^{1/\rho} \mathrm{sgn}(W_1)-|W_1 - z_{\alpha/2}|^{1/\rho} \mathrm{sgn}(W_1 - z_{\alpha/2}) \},   
    \end{split}
\end{equation*}
where $W_1 \sim N(z_{\alpha/2}/2,1/4)$. This completes the proof of the theorem.  
\end{proof}

\section{Further simulations}
\label{appendix:further_simulations}
\subsection{Performance of distribution-free CI for median under standard assumptions}
\label{subsec:bahadur}

\begin{figure}[!htb]
    \centering
    \includegraphics[width=\textwidth,keepaspectratio]{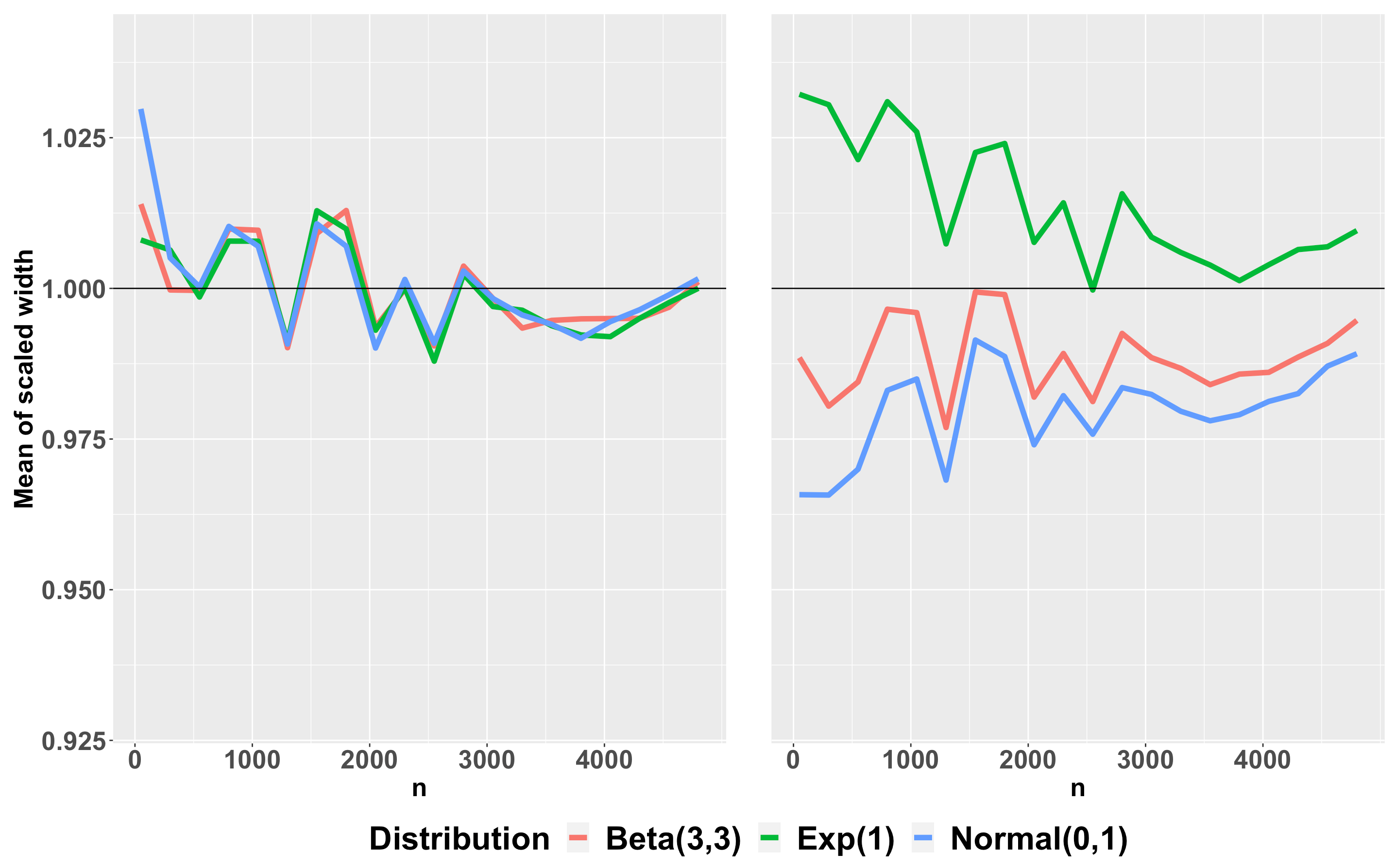}
    \caption{In the first plot (from left) we see the comparison of the mean of $\mathrm{WR}_{n,\alpha}$ with maximum likelihood based variance estimator (set-up \texttt{B-I}). In the second plot we see the comparison of the mean of $\mathrm{WR}_{n,\alpha}$ with kernel density based variance estimator (set-up \texttt{B-II}).}
    \label{fig:mean_scaled_width}
\end{figure}

\begin{figure}[!ht]
    \centering
    \includegraphics[width=\textwidth,keepaspectratio]{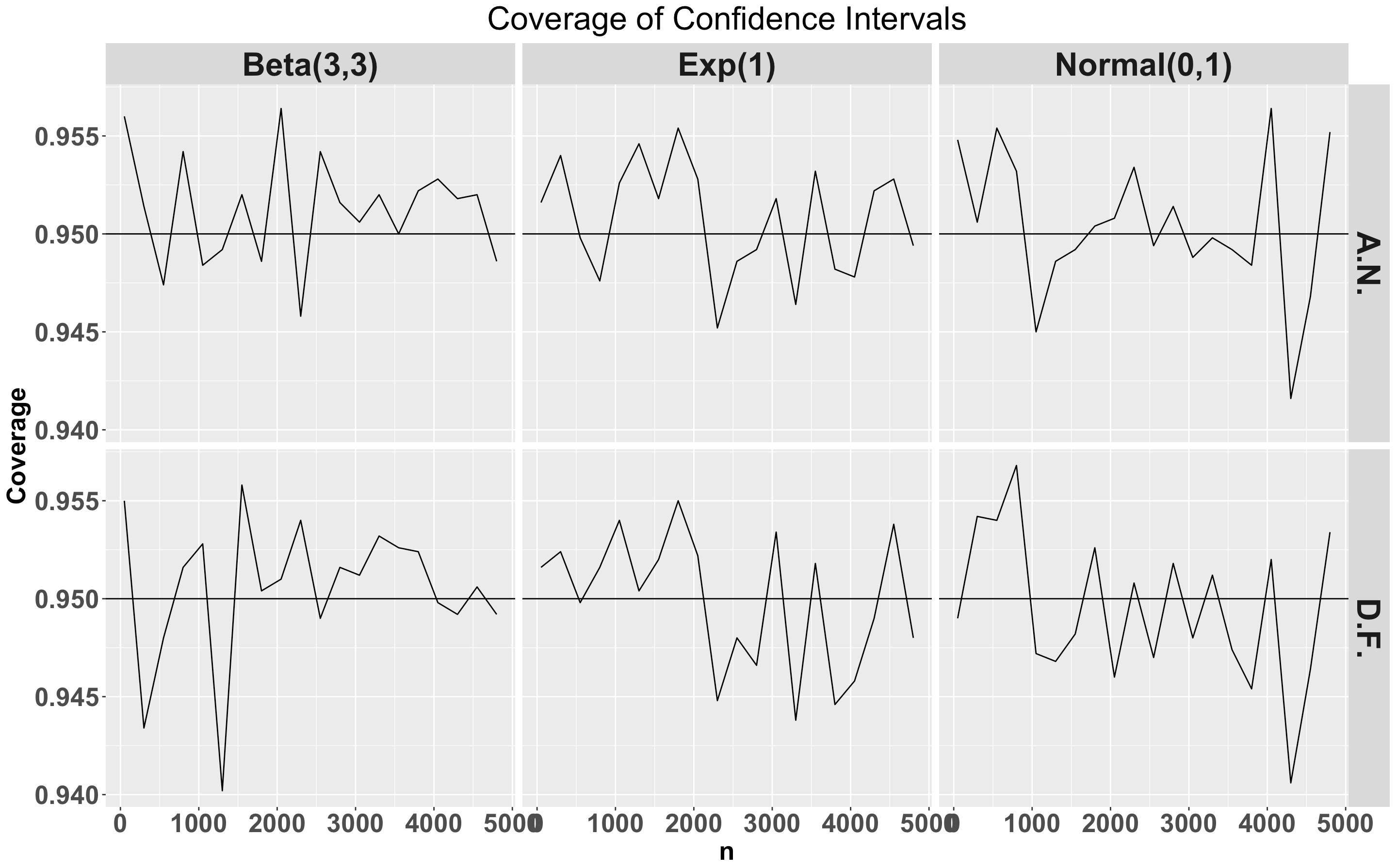}
    \caption{\texttt{B-I}: Comparison of the coverage obtained by using $\widehat{\mathrm{CI}}_{n,\alpha}$ \eqref{eq:rewritten-CI} and the traditional Wald confidence interval for various sample sizes, $n$. Here \textbf{A.N.} denotes the asymptotic normality-based confidence interval and \textbf{D.F.} denotes the distribution-free confidence interval.}
    \label{fig:coverage}
\end{figure}

\begin{figure}[!ht]
    \centering
    \includegraphics[width=\textwidth,keepaspectratio]{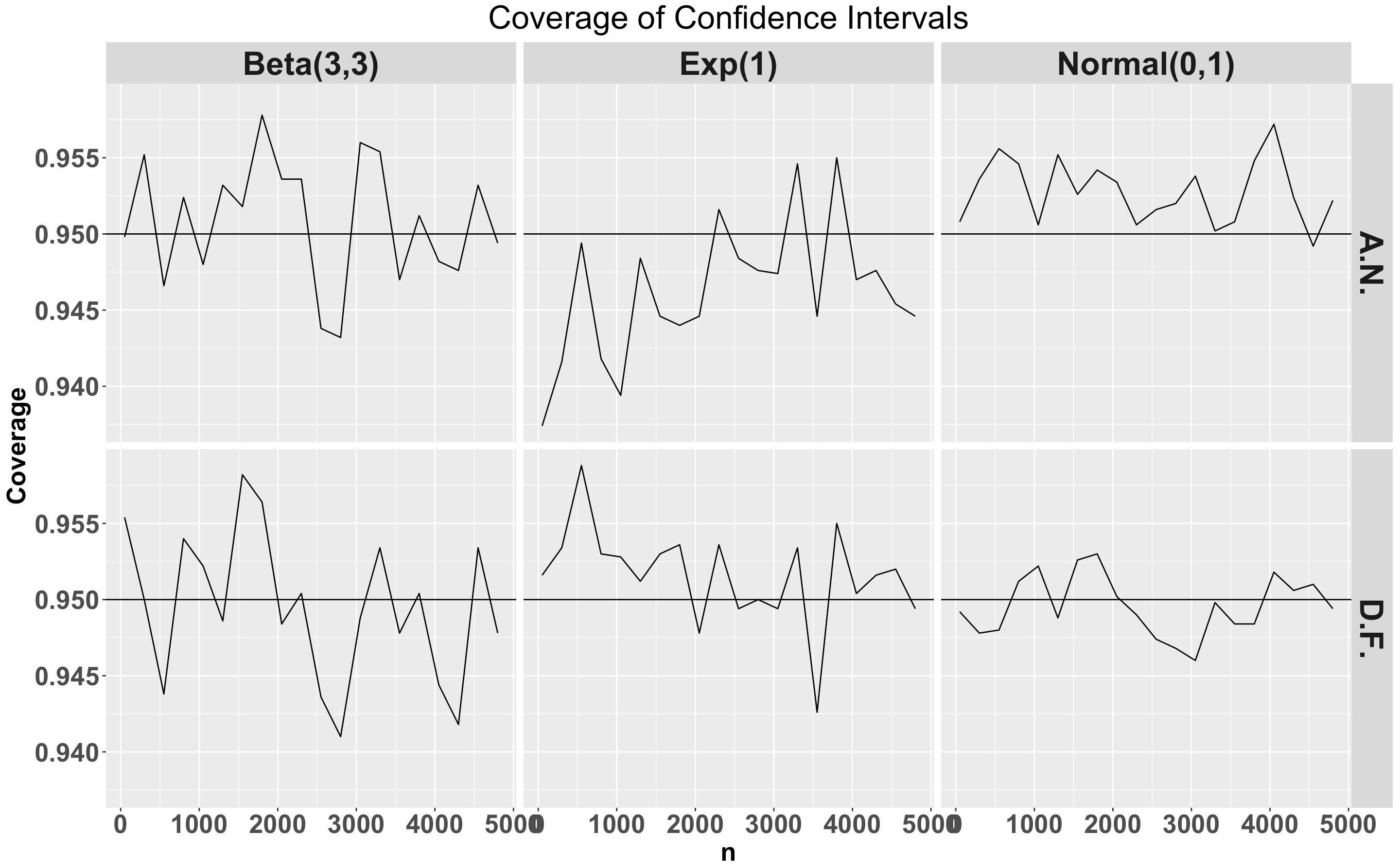}
    \caption{\texttt{B-II}: Comparison of the coverage obtained by using $\widehat{\mathrm{CI}}_{n,\alpha}$ \eqref{eq:rewritten-CI} and the traditional Wald confidence interval for various sample sizes, $n$. Here \textbf{A.N.} denotes the asymptotic normality-based confidence interval and \textbf{D.F.} denotes the distribution-free confidence interval.}
    \label{fig:coverage_unknown}
\end{figure}
In this sub-section, we shall test the performance of $\widehat{\mathrm{CI}}_{n,\alpha}$ \eqref{eq:rewritten-CI} for distributions for which the assumptions of Bahadur representation hold true (in other words the assumptions of \Cref{thm:bahadur_asump} hold true) i.e.\ the underlying distribution $F$ is differentiable at the population median $\theta_0$ with $F'(\theta_0)>0$ and $F(\theta_0) = 1/2$. We shall compare the performance of $\widehat{\mathrm{CI}}_{n,\alpha}$ \eqref{eq:rewritten-CI} against the Wald confidence interval. See \Cref{alg:proposed-conf-int} and \Cref{alg:asym_norm-conf-int} for details regarding construction of these confidence intervals. We describe the simulation procedure below for $N(0,1)$ distribution. We shall repeat the same simulation steps for $\text{Exp}(1)$ and $\text{Beta}(3,3)$ distribution. 

\begin{algorithm}
    \caption{Wald confidence interval for the population median}
    \label{alg:asym_norm-conf-int}
    \KwIn{Sample: $X_1,\cdots,X_n$ and Confidence Level: $1-\alpha$}
    \KwOut{Wald confidence interval for the population median}
   Compute the sample median $\widehat{\theta}_n$ as follows,
   \[
\widehat{\theta}_n = \begin{cases}
    X_{((n+1)/2)} \text{  if $n$ is odd.} \\
    X_{(n/2)} \text{ if $n$ is even.} 
\end{cases}
\] \\
  Compute an estimator $\widehat{\sigma}$ of the standard deviation of the asymptotic standard deviation of the sample median $\widehat{\theta}_n$,
  \[
  \widehat{\sigma} = \begin{cases}
  \frac{1}{2\widehat{f}(\widehat{\theta}_n)} \text{  if the family of densities is unknown.} \\
  \frac{1}{2f_{\widehat{\eta}}(\widehat{\theta}_n)} \text{  if the family of densities is known.}
  \end{cases}
  \]
Here in the first case (when the family of densities is unknown) $\widehat{f}(.)$ is the kernel density estimate of the underlying density using a gaussian kernel. To compute the bandwidth of the kernel density estimator, the \texttt{bw.nrd0} function of the \texttt{stats} package (\cite{rsoftware}) is used which uses a rule-of-thumb for choosing the bandwidth of a Gaussian kernel density estimator (see Section-$3.1$ of \cite{sheather2004density}). In the second case, if the family of densities is known to be $\{f_\eta\}$ (where $\eta$ is the parameter), an estimate of $\eta$ namely $\widehat{\eta}$ is obtained from the sample and used for computing $\widehat{\sigma}$. \\
Compute the set,
\[
\widehat{\mathrm{CI}}_{n,\alpha}^{\mathtt{AN}} := \left[\widehat{\theta}_n - \frac{\widehat{\sigma}z_{\alpha/2}}{\sqrt{n}},\, \widehat{\theta}_n + \frac{\widehat{\sigma}z_{\alpha/2}}{\sqrt{n}}\right].
\] \\
Return the confidence interval $\widehat{\mathrm{CI}}_{n,\alpha}^{\mathtt{AN}}$.

\end{algorithm}

\begin{enumerate}
    \item For each value of $n$ ($n$ may vary from $50$ to $5000$), we generate $n$ many observations from the $N(0,1)$ distribution $5000$ times.
    \item For each time we compute both $\widehat{\mathrm{CI}}_{n,\alpha}$ \eqref{eq:rewritten-CI} and the Wald confidence interval (see \Cref{alg:asym_norm-conf-int}) for the given $n$ observations. 
    \item From this data, we calculate the coverage (the proportion of times the computed confidence interval contains the true parameter i.e.\ median) for both the types of confidence intervals. 
    \item We also estimate the mean of the ratio of width of distribution-free confidence interval to the width of the Wald confidence interval) from this data using the idea of delta-method. 
\end{enumerate}

It should be noted that there are two ways of evaluating $\widehat{\sigma}$ as mentioned in \Cref{alg:asym_norm-conf-int}. We will perform simulation under both the scenarios for all three distributions. We shall refer the simulation setup as \texttt{B-I} for the case when assumptions of \Cref{thm:bahadur_asump} hold and the family of densities is known and we shall refer the simulation setup as \texttt{B-II} for the case when assumptions of \Cref{thm:bahadur_asump} hold and the family of densities is unknown. The inferences made from the simulations are noted below.
\begin{itemize}
    \item We observe the plot of the mean of $\mathrm{WR}_{n,\alpha}$ with $n$ in \Cref{fig:mean_scaled_width}. It can be observed that although there are initial fluctuations, the ratio of widths approaches $1$ as the value of $n$ increases. This supports the conclusions of \Cref{thm:bahadur_asump}.
    \item We also observe that the behaviour of mean of $\mathrm{WR}_{n,\alpha}$ is almost same for all the three distributions in \Cref{fig:mean_scaled_width}.

    \item  We further analyse the coverage of both the confidence intervals under consideration for all the three distributions in \Cref{fig:coverage,fig:coverage_unknown}. We note that not only does the coverage lies very close to the expected value of $0.95$ in all the six scenarios, both the range and the fluctuations of the coverage match for the distribution-free confidence interval and the Wald confidence interval under both the scenarios. 
\end{itemize}

\subsection{Performance of distribution-free CI for median under non-standard assumptions}
\label{sec:simulation}
In this section, we shall compare the observed distribution of $\mathrm{WR}_{n,\alpha}$ with its limiting distribution. We shall also demonstrate the performance of the distribution-free confidence interval. The performance shall be primarily assessed on the basis of coverage and the width of the confidence interval.
\subsubsection{Comparing observed and limiting distribution of $\mathrm{WR}_{n,\alpha}$}
In order to compare the observed distribution of $\mathrm{WR}_{n,\alpha}$ with the limiting distribution $\mathscr{G}((Z/z_{\alpha/2}) + 1, 2)/2$ ($Z \sim N(0, 1 )$) we simulate observations from the distribution $F_{\rho}$, 
\begin{equation}
 \label{eq:frho}
 F_{\rho}(x) = 0.5|x|^\rho \mathrm{sgn}(x) +0.5 \mbox{ for } x \in \left[ -1,1 \right] \mbox{ and } \rho >0.
\end{equation}
Note that median of $F_{\rho}$ is $\theta_0 = 0$ and we have the following limit, 
    \[
    \lim_{h\xrightarrow{} 0}\frac{|F_{\rho}(\theta_0+h)-F_{\rho}(\theta_0)|}{|h|^\rho} = 0.5 = M.
    \]
For $\rho = 0.75, 2, 10$ we generate $n = 20000$ observations from $F_{\rho}$ and compare the resulting histogram of the observed width ratio $\mathrm{WR}_{n,\alpha}$ (based on $1000$ Monte-Carlo draws) with the corresponding limiting distribution. The resulting plots can be seen in \Cref{fig:observedvslimit}. We observe that the observed distribution of $\mathrm{WR}_{n,\alpha}$ converges to the limiting distribution $\mathscr{G}((Z/z_{\alpha/2}) + 1, 2)/2$ and the convergence to the limiting distribution is quite slow. 
\begin{figure}[!htb]
    \centering
    \includegraphics[width=\textwidth,keepaspectratio]{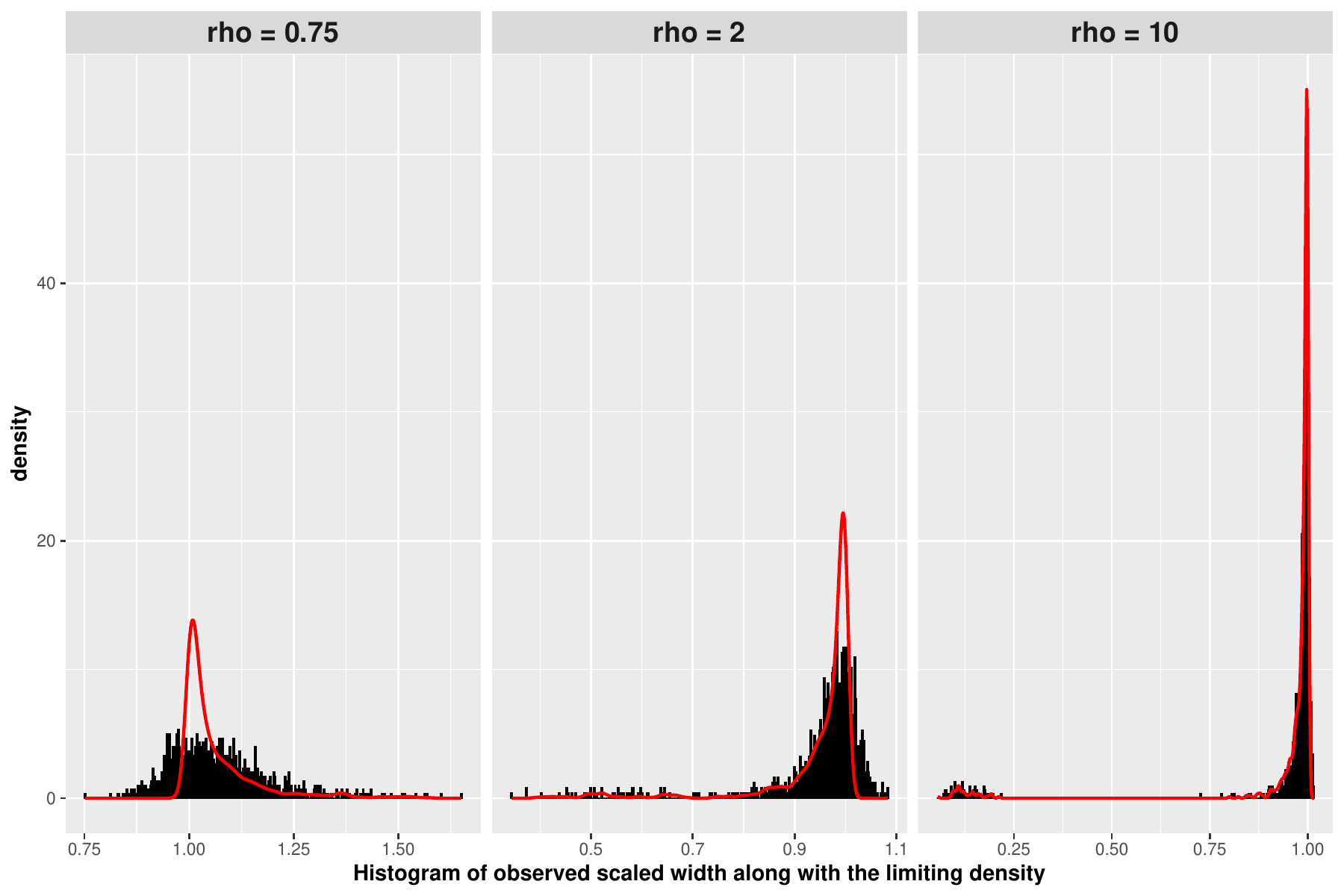}
    \caption{Histogram of the width ratio $\mathrm{WR}_{n,\alpha}$ (for sample-size $n = 20000$ with $1000$ Monte Carlo replications) along with the density of the limiting distribution of $\mathrm{WR}_{n,\alpha}$ for different values $\rho = 0.75, 2, 10$. }
    \label{fig:observedvslimit}
\end{figure}

\subsubsection{Performance of distribution-free C.I.\ against benchmark methods.}
We describe the process of generating observations from different distributions and comparing the distribution-free confidence interval $\widehat{\mathrm{CI}}_{n,\alpha}$ with various benchmark methods such as classical bootstrap and sub-sampling with estimated rate of convergence. 
\begin{enumerate}
    \item We shall use the distribution function $F_{\rho}$ (see \eqref{eq:frho}). We consider $n$ to be $50,200,500,1000$. We vary $\rho$ from $0.2$ to $10$. 
     \item For each value of $\rho$ we generate $n$ many observations from the distribution $F_{\rho}$ $1000$ times. 
    \item For each iteration we compute the distribution-free C.I.\, the sub-sample based C.I.\ (with estimated rate of convergence and with sub-sample size $n^{1/2}$), and the classical bootstrap based C.I.\ for the given $n$ observations at the level of confidence $100(1-\alpha)\%$ (we use $\alpha=0.05$). 
    \item From this data, we calculate the coverage for all the three types of C.I.'s. 
    \item We also provide box-plots for $n^{1/(2\rho)}\mathrm{Width}$ for all the three types of C.I.'s obtained from this data.
\end{enumerate}
\begin{figure}[!htb]
    \centering
    \includegraphics[width=\textwidth,keepaspectratio]{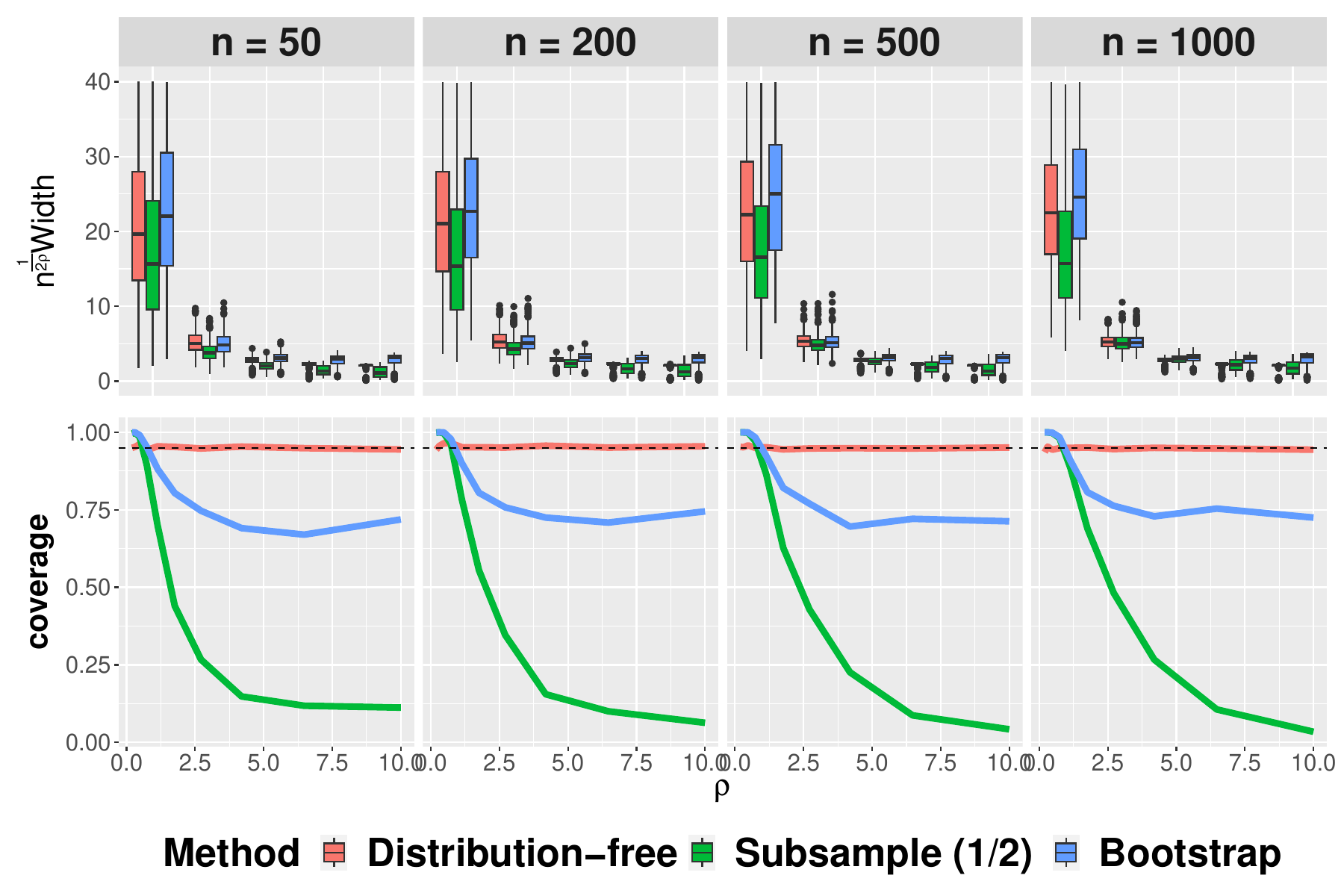}
    \caption{Comparison of the width and coverage of different types of confidence intervals (the distribution-free C.I.\, the sub-sample based C.I.\ (with estimated rate of convergence and with sub-sample size $n^{1/2}$), and the classical bootstrap based C.I.\ ) for different values of sample sizes ($n = 50, 200, 500, 1000$) and for different growth-rates of the distribution function on either side of the median ($\rho$ takes values from $0.2$ to $10$). The box-plots have been thresholded at $y = 40$ for better visibility, so there might be some outlying observations beyond the threshold.}
    \label{fig:subsample_compare}
\end{figure}
The result of the simulation can be seen in \Cref{fig:subsample_compare}. We see that only the distribution-free confidence interval maintains the required coverage of $95\%$ for all sample sizes and for all values of $\rho$. The classical bootstrap and the sub-sample based confidence intervals on the other hand do the maintain the required coverage for $\rho \neq 1$. It can also be seen from the inter-quartile ranges of the confidence intervals that the variance of the the width of the distribution-free confidence interval is in general lower than that of the bootstrap and sub-sampling based confidence intervals for higher values of $\rho$. More simulations illustrating the performance of the distribution-free confidence interval against the benchmark methods can be found in \Cref{appendix:further_simulations}.

\subsection{Application of GHulC to univariate quantile regression}
To understand the validity and power of the confidence intervals generated by GHulC we consider the following numerical example of univariate quantile regression. Suppose $(X_i,Y_i) \in \mathbb{R}^2$, $1 \leq i \leq n$ are independent and identically distributed random vectors. We define the estimator $\widehat \theta_n$ as follows,
\[
\widehat \theta_n = \mbox{arg min}_{\theta \in \mathbb{R}}\sum_{i = 1}^n |Y_i - \theta X_i|.
\]
The objective function is a convex function of $\theta$. Suppose $M_n(\theta) = \sum_{i = 1}^n |Y_i - \theta X_i|$ then we have the following, 
\[
\{\widehat \theta_n \geq \theta_0\} = \{\Dot M_n(\theta_0) \leq 0 \} \quad \mbox{if} \quad \{\widehat \theta_n \leq \theta_0\} = \{\Dot M_n(\theta_0) \geq 0 \}.
\]
Since $\Dot M_n(\theta_0)$ is a sum of mean $0$ independent random variables by CLT we have that $\mathbb{P}(\Dot M_n(\theta_0) \leq 0 ) \rightarrow 1/2$ and $\mathbb{P}(\Dot M_n(\theta_0) \geq 0 ) \rightarrow 1/2$ as $n \rightarrow \infty$. Thus the estimator $\widehat \theta_n$ is asymptotically median-unbiased. 

Suppose $X_i \sim \mbox{Unif}[-1,1]$ and $Y_i = X_i + \epsilon$. We suppose that $\epsilon_i$ and $X_i$ are independent and $F_i(x) = \mathbb{P}(\epsilon_i \leq x) = 0.5(1 +  \mathrm{sgn}(x)|x|^{\beta})$ where $x \in [-1,1]$ for some $\beta > 0$. If $\beta = 1$ then this is the classical setting of error distribution with density bounded away from zero. If $\beta < 1$ then the rate of convergence of the quantile estimator is faster than $n^{1/2}$. If $\beta > 1$ then the rate of convergence is slower than $n^{1/2}$. We generate data for values of $\beta \in \left[0,2\right)$ and compare the performance of HulC and GHulC (at level $\alpha = 0.05$) for higher values of $B$. In particular for the purpose of simulations, we have take $B = 24, 48, 96$ which are essentially multiples of $\lceil \log_2(2/\alpha) \rceil = 6$ (for $\alpha = 0.05$). The performance of each procedure is based on $1000$ Monte Carlo replications for each sample size ($n = 250, 500, 1000, 1500$) and each $\beta$. We observe from \Cref{fig:compare_ghulc} that like HulC, the generalized version of HulC also maintains the coverage at the nominal level of $0.95$ for all sample sizes. Moreover from \Cref{fig:compare_ghulc}, we can also infer that GHulC with higher value of $B$ yields confidence intervals of smaller width. \Cref{fig:ghulc_c} clearly suggests that the dispersion of the width of the confidence interval returned by GHulC also tends to decrease with increasing values of $B$. 
 \begin{figure}[!ht]
    \centering
    \includegraphics[width=\textwidth,keepaspectratio]{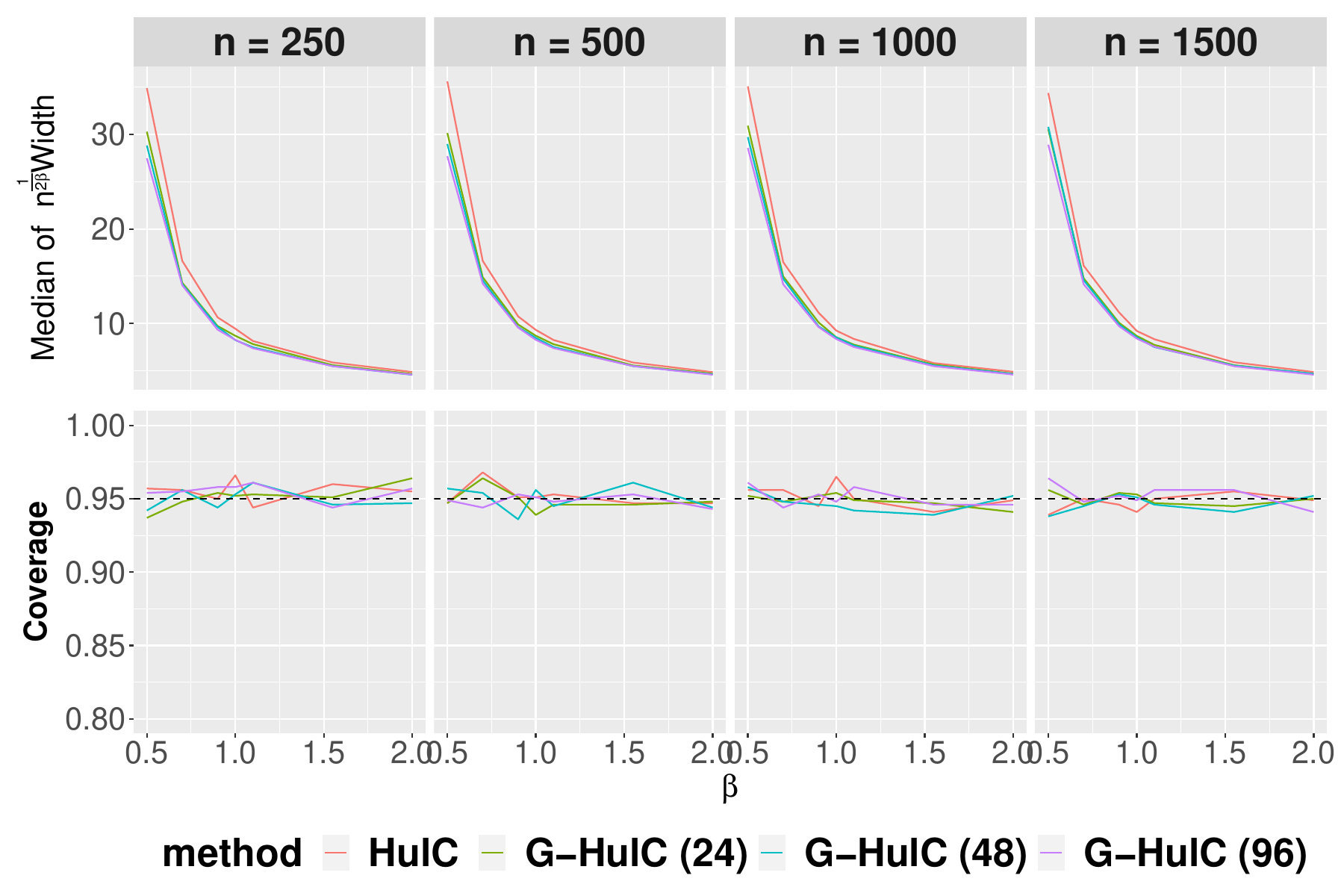}
    \caption{Comparison of the coverage and median of the scaled width ($n^{1/(2\beta)}$Width) of HulC and GHulC (for $B > \log_2(2/\alpha)$) in univariate quantile regression under non-standard conditions. The sample size is mentioned at the top of each plot and the smoothness parameter of the distribution $\beta$ is on the $x$-axis. The tuning parameter $B$ is mentioned in the parenthesis.}
    \label{fig:compare_ghulc}
\end{figure}

\begin{figure}[!ht]
    \centering
    \includegraphics[width=\textwidth,keepaspectratio]{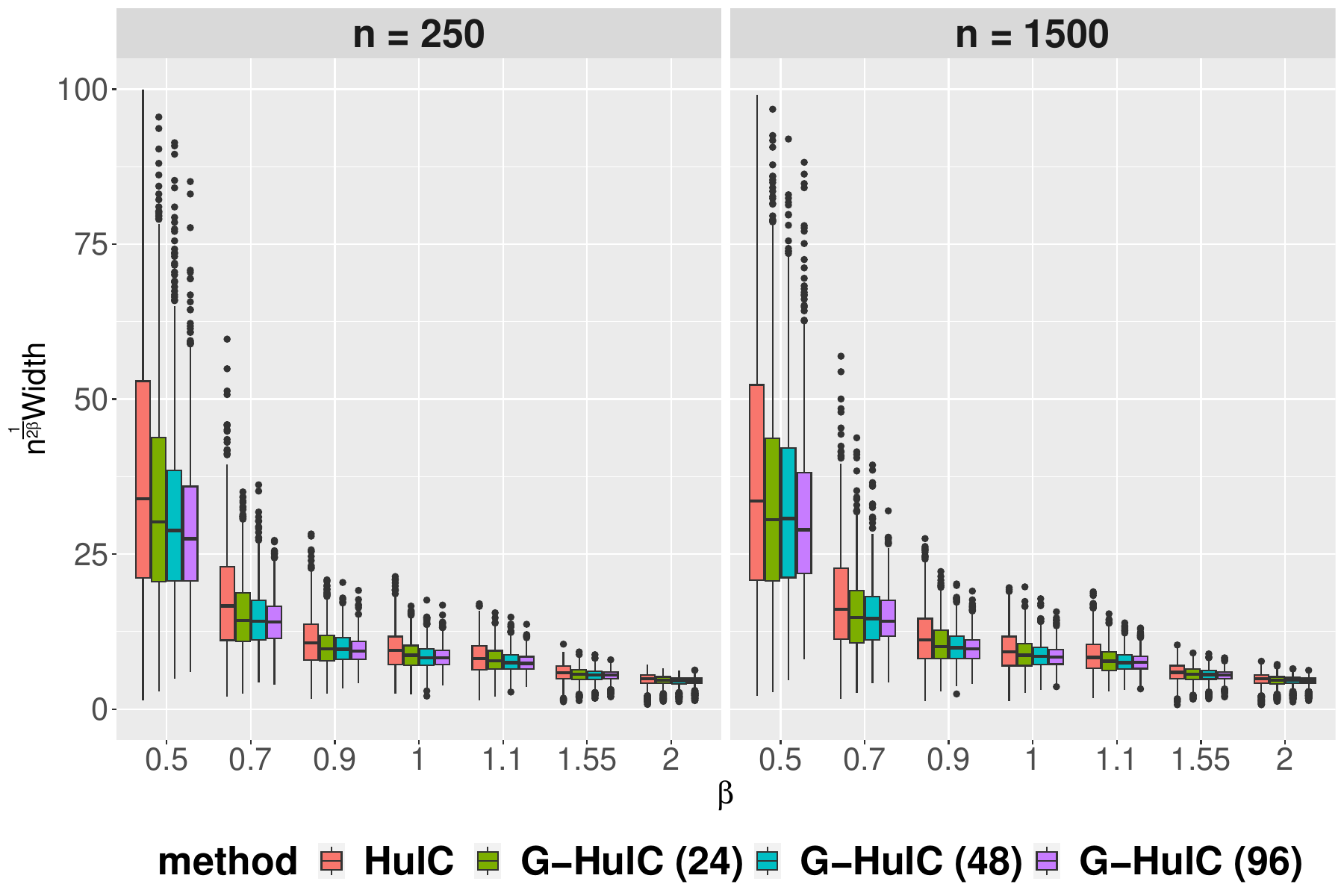}
    \caption{Comparison of box-plots of the scaled width ($n^{1/(2\beta)}$Width) of HulC and GHulC (for $B > \log_2(2/\alpha)$) in univariate quantile regression under non-standard conditions. The sample size is mentioned at the top of each plot and the smoothness parameter of the distribution $\beta$ is on the $x$-axis. The tuning parameter $B$ is mentioned in the parenthesis. The box-plots have been thresholded at $y = 100$ for better visibility, so there might be some outlying observations beyond the threshold.}
    \label{fig:ghulc_c}
\end{figure}
\end{document}